\newtheorem{thm}{Theorem}[section]
\newtheorem{lem}[thm]{Lemma}
\newtheorem{cor}[thm]{Corollary}
\newtheorem{prop}[thm]{Proposition}
\theoremstyle{definition} 		
\newtheorem{defn}[thm]{Definition}
\newtheorem{remark}[thm]{Remark}
\newtheorem*{note}{Notation}
\def\nat{{\mathbb N}}
\def\zat{{\mathbb Z}}
\def\A{{\mathcal A}}
\def\F{{\mathcal F}}
\def\G{{\mathcal G}}
\def\R{{\mathcal R}}
\def\U{{\mathcal U}}
\def\bs{{\mathbf s}}
\def\bu{{\mathbf u}}
\def\bw{{\mathbf w}}
\def\Tau{{\mathfrak T}}
\begin{document}
\title{Ramsey theory for words representing rationals}
\author{Vassiliki  Farmaki and Andreas Koutsogiannis }

\begin{abstract} Ramsey theory for words over a finite alphabet was unified in the work of Carlson and Furstenberg-Katznelson. Carlson, in the same work, outlined a method to extend the theory for words over an infinite alphabet, but subject to a fixed dominating principle, proving in particular an Ellentuck version, and a corresponding Ramsey theorem for $k=1.$ In the present work we develop in a systematic way a Ramsey theory for words (in fact for $\omega$-$\mathbb{Z}^\ast$-located words) over a doubly infinite alphabet extending Carlson's approach (to countable ordinals and Schreier-type families), and we apply this theory, exploiting the Budak-I\c{s}ik-Pym representation, to obtain a partition theory for the set of rational numbers. Furthermore, we show that the theory can be used to obtain partition theorems for arbitrary semigroups, stronger than known ones.
\end{abstract}

\keywords{Partition theorems, $\omega$-$\mathbb{Z}^\ast$-located words, rational numbers, Schreier families}
\subjclass{Primary 05C55; Secondary 05A18, 05A05}

\maketitle
\baselineskip=18pt
\pagestyle{plain}


\section*{Introduction}

The concept of a word over a finite alphabet was introduced in Ramsey theory by Hales-Jewett \cite{HaJ}, for the purpose of providing a purely combinatorial proof of van der Waerden's theorem \cite{vdW} on the existence of arbitrarily long arithmetic progressions in one of the cells of any partition of the positive integers. Subsequently, words over a finite alphabet, in the work of Carlson \cite{C} and Furstenberg-Katznelson \cite{FuK}, proved an essential tool for the unification of the two branches of Ramsey theory, the one involving Ramsey's classical theorem \cite{R}, and its extensions by Hindman \cite{H} and Milliken \cite{M}-Taylor \cite{T}, the other the van der Waerden and the Hales-Jewett theorems, just mentioned. These tools were extended and strengthened, with the systematic introduction of Schreier sets, in \cite{FN1}, \cite{FN2}.

The concept of a located word over a finite alphabet $\Sigma$ introduced formally by Bergelson-Blass-Hindman in \cite{BBH} as a function from a finite subset of the set $\nat$ of natural numbers, the support and location of the word, into the alphabet $\Sigma$. They established a partition theorem for located words over a finite alphabet and also a Ramsey type and a Nash-Williams type partition theorem \cite{NW} for located words over a finite alphabet.

In all these results, the alphabet $\Sigma$ under consideration is always assumed to be finite. It is clear that these combinatorial results do not generally hold if $\Sigma$ is assumed to be infinite, since e.g. it is generally impossible to find a monochromatic infinite arithmetic progression for every finite coloring of the set of natural numbers. In the work of Carlson ([C], Section 10) it was first realized that it is possible to relax the strict finiteness condition for the alphabet and to consider words ($\omega$-words) over an infinite alphabet $\Sigma=\{\alpha_1, \alpha_2, \ldots \}$ dominated by a sequence $\vec{k}=(k_n)_{n\in\nat}$ of positive integers. So, an $\omega$-word over $\Sigma$ dominated by $\vec{k}$ is a word $w=w_{1}\ldots w_{l}$ over $\Sigma$ such that in addition $ w_{i}\in \{\alpha_1, \ldots, \alpha_{k_{i}} \}$ for every $1\leq i \leq l$. Similarly a located word $w=w_{n_1}\ldots w_{n_l}$ over $\Sigma$ is an $\omega$-located word over $\Sigma$ dominated by $\vec{k},$ according to [F4], if $w_{n_i}\in \{\alpha_1, \ldots, \alpha_{k_{n_i}}\}$ for every $1\leq i \leq l$. Thus words and located words are $\omega$-words and $\omega$-located words, in case the dominating sequence $(k_n)_{n\in\nat}$ is a constant sequence.

In the present work we introduce the notion of $\omega$-$\zat^\ast$-located words over an alphabet $\Sigma=\{\alpha_n: \;n\in \zat^\ast\}$ ($\zat^\ast=\zat\setminus\{0\}$) dominated by a two-sided sequence $\vec{k}=(k_n)_{n\in \zat^\ast}$ of natural numbers to be a located word $w=w_{n_1}\ldots w_{n_l}$ over $\Sigma$ such that in addition $\{n_1<\ldots<n_l\}\subseteq \zat^\ast$ and $w_{n_i}\in \{\alpha_1, \ldots, \alpha_{k_{n_i}}\}$ if $n_i\in \nat,$ $w_{n_i}\in \{\alpha_{-k_{n_i}}, \ldots, \alpha_{-1}\}$ if $-n_i\in \nat.$ The inspiration for this notion came from the representation of rational numbers introduced by T. Budak, N. I\c{s}ik and J. Pym in [BIP], according to which a rational number $q$ has a unique representation as $$\sum^{\infty}_{s=1}q_{-s}\frac{(-1)^{s}}{(s+1)!}\;+\;\sum^{\infty}_{r=1}q_{r}(-1)^{r+1}r!$$ where $(q_n)_{n \in \mathbb{Z}^\ast}\subseteq \nat\cup\{0\}$ with $\;0\leq q_{-s}\leq s$ for every $s>0$, $ 0\leq q_r\leq r$ for every $r> 0$ and $q_{-s}=q_r=0$ for all but finite many $r,s$. So, the set of non-zero rational numbers can be identified with the set of all the $\omega$-$\zat^\ast$-located words over the alphabet $\Sigma=\{\alpha_n: \;n\in \zat^\ast\},$ where $\alpha_{-n}=\alpha_n=n$ for $n\in \nat$ dominated by the sequence $(k_n)_{n\in \zat^\ast}$ where $k_{-n}=k_n=n$ for $n\in\nat$.

It turns out that the whole of infinitary Ramsey theory can be obtained for $\omega$-$\zat^\ast$-located words. We thus obtain:

(a) Partition theorems for variable and constant $\omega$-$\zat^\ast$-located words over an alphabet $\Sigma=\{\alpha_n: \;n\in \zat^\ast\}$ dominated by a two-sided sequence $\vec{k}=(k_n)_{n\in \zat^\ast}$ of natural numbers, in Theorems~\ref{thm:block-Ramsey02} and ~\ref{thm:block-Ramsey2} (in stronger form), providing proper extensions of Carlson's partition theorem ([C], Theorem 15) for $\omega$-(located) words (see Corollary~\ref{thm:block-Ramsey1}) and consequently Bergelson-Blass-Hindman's partition theorem (Theorem 4.1 in \cite{BBH}) for located words over a finite alphabet and Carlson's partition theorem for words over a finite alphabet (Lemma 5.9 in \cite{C}).

(b) Extended Ramsey type partition theorems for each countable ordinal $\xi$ for variable and constant $\omega$-$\zat^\ast$-located words (in Theorem~\ref{thm:block-Ramsey}), involving the $\xi$-Schreier sequences of $\omega$-$\zat^\ast$-located words (Definition~\ref{recursivethinblock}), which imply Ramsey type partition theorems for each countable ordinal $\xi$ for variable and constant $\omega$-located words proved in [F4] (see Corollary~\ref{cor:nat}), extending Carlson's partition theorem corresponding to $\xi=1$ (Corollary~\ref{thm:block-Ramsey1}), and consequently an ordinal extension of Bergelson-Blass-Hindman's Ramsey type partition theorem for located words over a finite alphabet (Theorem 5.1 in  \cite{BBH}) corresponding to the case of finite ordinals and the block-Ramsey partition theorem for every countable ordinal, proved in \cite{FN1}.

(c) A strengthening of the extended Ramsey type partition theorem (Theorem~\ref{thm:block-Ramsey}), in case the partition family $\F$ of the finite orderly sequences of variable $\omega$-$\zat^\ast$-located words is a tree, providing a criterion, in terms of a Cantor-Bendixson type index of $\F$ to decide whether the $\xi$-homogeneous family falls in $\F$ or in its complement in Theorem~\ref{block-NashWilliams2}. Theorem~\ref{block-NashWilliams2} can be considered as a strengthened Nash-Williams type partition theorem for variable $\omega$-$\mathbb{Z}^\ast$-located words (see Theorem~\ref{cor:blockNW}), strengthening and extending the Nash-Williams type partition theorem for infinite sequences of variable $\omega$-(located) words proved by Carlson (that follows from Theorem 15 in [C]) and also of variable located words over a finite alphabet proved in  \cite{BBH} (Theorem 6.1).

As consequences of this Ramsey Theory, via the representation of rational numbers given by Budak-I\c{s}ik-Pym in [BIP] (Theorem 4.2), we can obtain analogous partition theorems for the rational numbers. So, we present, in Theorem~\ref{thm:block-Ramsey05}, a partition theorem for the set $\mathbb{Q}$ of rational numbers, which can be considered as a strengthened van der Waerden theorem for $\mathbb{Q},$ Ramsey type partition theorems for the rational numbers for each countable order $\xi,$ in Theorem~\ref{thm:block-Ramsey005}, defining the $\xi$-Schreier sequences of rational numbers for every countable ordinal $\xi,$ and finally a Nash-Williams type partition theorem for the infinite sequences of rational numbers in Theorem~\ref{thm:block-Ramsey0005}. Of course, analogous partition theorems can be obtained for semigroups which can be represented in the same way as $\omega$-$\mathbb{Z}^\ast$-located words.

Also, fixing sequences in a semigroup we can get partition theorems for commutative or non-commutative semigroups (in Theorems~\ref{thm: 011} and ~\ref{thm:block-Ramsey06} respectively), extending the analogous results of Hindman and Strauss in \cite{HS} (Theorems 14.12, 14.15), as well as multidimentional partition theorems corresponding to each countable order (Theorem~\ref{thm: xilocated}), extending the partition theorem of Beiglb$\ddot{o}$ck in [Be] for commutative semigroups corresponding to the case of finite ordinals (see Corollaries~\ref{thm:block-Ramsey006} and ~\ref{thm: 1011}), and finally Nash-Williams type partition theorems for infinite sequences in a commutative and in a non-commutative semigroup (in Theorems~\ref{thm: 11011} and ~\ref{cor:ncentralNW}).

The essentially stronger nature of this Ramsey theory for $\omega$-$\zat^\ast$-words developed in this paper makes it reasonable to expect that will find substantial applications in Ramsey ergodic theory, Analysis, Logic and in various other branches of mathematics.

We will use the following notation.
\begin{note}
Let $\nat=\{1,2,\ldots\}$ be the set of natural numbers, $\mathbb{Z}=\{\ldots,-2,-1,0,1,2,\ldots\}$ be the set of integer numbers,  $\mathbb{Z}^{-}=\{n\in\mathbb{Z} : n<0\}$ and $\zat^\ast=\zat\setminus\{0\}.$ For a non-empty set $X$ we denote by $[X]^{<\omega}$ the set of all the finite subsets of $X$ and by $[X]_{>0}^{<\omega}$ the set of all the non-empty finite subsets of $X$.
\end{note}

\section{A Partition Theorem for $\omega$-$\mathbb{Z}^\ast$-located words}

The purpose of this section is to prove partition theorems for $\omega$-$\zat^\ast$-located words over an alphabet $\Sigma=\{\alpha_n: \;n\in \zat^\ast\}$ dominated by a two-sided sequence $\vec{k}=(k_n)_{n\in \zat^\ast}$ of natural numbers (Theorems~\ref{thm:block-Ramsey02} and ~\ref{thm:block-Ramsey2} in stronger form) using methods introduced in [C] and [BBH]. These theorems extends fundamental results for words supported on $\nat$ as the partition theorems for $\omega$-(located) words and words proved in [C] (Theorem 15, Lemma 5.9) and consequently the partition theorem for located words proved in [BBH] (Theorem 4.1).

As consequences of Theorem~\ref{thm:block-Ramsey02} we get a strengthened Hales-Jewett theorem involving the $\omega$-$\zat^\ast$-located words in Corollary~\ref{thm:HJ} and also strong partition theorems for the set of rational numbers and more generally for an arbitrary semigroup in Theorem~\ref{thm:block-Ramsey05} and Theorems~\ref{thm:block-Ramsey06}, ~\ref{thm: 011} respectively.

An \textit{$\omega$-$\mathbb{Z}^\ast$-located word} over the alphabet $\Sigma=\{\alpha_n: \;n\in \zat^\ast\}$ dominated by $\vec{k}=(k_n)_{n\in \zat^\ast},$ with $k_n\in \nat$ for every $n\in \zat^\ast,$ is a function $w$ from a non-empty,  finite subset $F$ of $\mathbb{Z}^\ast$ into the alphabet $\Sigma$ such that $w(n)=w_n\in \{\alpha_1, \ldots, \alpha_{k_n}\}$ for every $n\in F\cap \nat$ and $w_n\in \{\alpha_{-k_{n}}, \ldots,\alpha_{-1}\}$ for every $n\in F\cap \mathbb{Z}^{-}$. So, the set
$\widetilde{L}(\Sigma, \vec{k})$ of all (constant) $\omega$-$\mathbb{Z}^\ast$-located words over $\Sigma$ dominated by $\vec{k}$ is:
\begin{center}$\widetilde{L}(\Sigma, \vec{k})= \{w=w_{n_1}\ldots w_{n_l} : l\in\nat, n_1<\ldots<n_l\in \mathbb{Z}^\ast$ and $ w_{n_i}\in
\{\alpha_1, \ldots, \alpha_{k_{n_i}}\}$ if $\;\;\;$\\
 $n_i> 0$, $ w_{n_i}\in \{\alpha_{-k_{n_i}}, \ldots,\alpha_{-1}\}$ if $n_i< 0$
for every $1\leq i \leq l \}.$
\end{center}
Let $\upsilon \notin \Sigma$ be an entity which is called a \textit{variable}. The set of \textit{variable  $\omega$-$\mathbb{Z}^\ast$-located words} over $\Sigma$ dominated by $\vec{k}$ is:
\begin{center}$\widetilde{L}(\Sigma, \vec{k} ; \upsilon) = \{w=w_{n_1}\ldots w_{n_l} : l\in\nat, n_1<\ldots<n_l\in \mathbb{Z}^\ast,$ $ w_{n_i}\in \{\upsilon, \alpha_1, \ldots, \alpha_{k_{n_i}} \}$ if $\;\;\;$\\
 $\;\;\;\;\;\;\;\;\;\;\;\;\;\;\;\;\;\;n_i> 0$, $ w_{n_i}\in \{\upsilon,\alpha_{-k_{n_i}}, \ldots,\alpha_{-1}\}$ if $n_i< 0$ for all $1\leq i \leq l $ and there\\ exists $1\leq i \leq l$  with $w_{n_i}=\upsilon \}.\;\;\;\;\;\;\;\;\;\;\;\;\;\;\;\;\;\; \;\;\;\;\;\;\;\;\;\;\;\;\;\;\;\;\;\; \;\;\;\;$\end{center}
We set $\widetilde{L}(\Sigma\cup\{\upsilon\}, \vec{k})=\widetilde{L}(\Sigma, \vec{k})\cup \widetilde{L}(\Sigma, \vec{k} ; \upsilon)$. For $w=w_{n_1}\ldots w_{n_l}\in \widetilde{L}(\Sigma\cup\{\upsilon\}, \vec{k})$ the set $dom(w)=\{n_1,\ldots ,n_l\}$ is the \textit{domain} of $w$. Let $dom^-(w)=\{n\in dom(w):\;n<0\}$ and $dom^+(w)=\{n\in dom(w):\;n>0\}$.
We define the sets
\begin{center}
$\;\;\widetilde{L}_{0}(\Sigma,\vec{k};\upsilon)=\{w\in \widetilde{L}(\Sigma,\vec{k};\upsilon): w_{i_1}=\upsilon=w_{i_2}$ for some $i_1\in dom^-(w),i_2\in dom^+(w)\}$,\\
$\widetilde{L}_{0}(\Sigma,\vec{k})=\{w\in \widetilde{L}(\Sigma,\vec{k}): dom^-(w)\neq \emptyset$ and $ dom^+(w)\neq\emptyset \},$ and $\;\;\;\;\;\;\;\;\;\;\;\;\;\;\;\;\;\;\;\;\;\;\;\;\;\;\;\;$\\
$\widetilde{L}_0(\Sigma\cup\{\upsilon\}, \vec{k})=\widetilde{L}_0(\Sigma, \vec{k})\cup \widetilde{L}_0(\Sigma, \vec{k} ; \upsilon).\;\;\;\;\;\;\;\;\;\;\;\;\;\;\;\;\;\;\;\;\;\;\;\;\;\;\;\;\;\; \;\;\;\;\;\;\;\;\;\;\;\;\;\;\;\;\;\;\;\;\;\;\;\;\;\;\;\;\;\;\;\;\;\;\;\;\;\;$\end{center}
For $w=w_{n_1}\ldots w_{n_r},  u=u_{m_1}\ldots u_{m_l}\in \widetilde{L}(\Sigma \cup\{\upsilon\}, \vec{k})$ with $dom(w)\cap dom(u)=\emptyset$ we  define the concatenating word:
\begin{center}
$w \star u =z_{q_1}\ldots z_{q_{r+l}}\in \widetilde{L}(\Sigma \cup\{\upsilon\}, \vec{k}),$
\end{center}
where $\{q_1<\ldots<q_{r+l}\}=dom(w)\cup dom(u),\;z_i=w_i$ if $i \in dom(w)$ and $z_i=u_i$ if $i \in dom(u)$.

We endow the set $\widetilde{L}_0(\Sigma\cup\{\upsilon\}, \vec{k})$ with a relation $<_{\textsl{R}_1}$ defining for $w,u\in \widetilde{L}_0(\Sigma\cup\{\upsilon\}, \vec{k})$
\begin{center}
$w <_{\textsl{R}_1} u \Longleftrightarrow \;dom(u)=A_1\cup A_2$ with $A_1,A_2\neq \emptyset$ such that \\ $\;\;\;\;\;\;\;\;\;\;\;\;\;\;\;\;\;\;\;\;\;\;\;\; \;\;\;\;\max A_1<\min dom(w)\leq \max dom(w)<\min A_2.$
\end{center}
Let $\widetilde{L}^\infty (\Sigma, \vec{k} ; \upsilon) = \{\vec{w} = (w_n)_{n\in\nat} : w_n\in \widetilde{L}_0(\Sigma, \vec{k} ; \upsilon)$
and  $w_n<_{\textsl{R}_1}w_{n+1}$ for every $ n\in\nat\}$.

We define the functions $T_{(p,q)} : \widetilde{L}(\Sigma\cup\{\upsilon\}, \vec{k}) \longrightarrow \widetilde{L}(\Sigma\cup\{\upsilon\}, \vec{k})$ for every $(p,q)\in \nat\times \nat\cup\{(0,0)\}$, setting,  for $w=w_{n_1}\ldots w_{n_l}\in \widetilde{L}(\Sigma\cup\{\upsilon\}, \vec{k})$, $T_{(0,0)}(w)=w$ and, for  $(p,q)\in\nat\times \nat$, $T_{(p,q)}(w)=u_{n_1}\ldots u_{n_l}$, where, for $1\leq i\leq l$
\newline
$u_{n_i}=w_{n_i}$ if $w_{n_i}\in \Sigma$,
\newline
$u_{n_i}=\alpha_p$ if $w_{n_i}=\upsilon,$ $n_i> 0$ and $p\leq k_{n_i},$
\newline
$u_{n_i}=\alpha_{k_{n_i}}$ if $w_{n_i}=\upsilon,$ $n_i> 0$ and $p> k_{n_i},$
\newline
$u_{n_i}=\alpha_{-q}$ if $w_{n_i}=\upsilon,$ $n_i< 0$ and $q\leq k_{n_i},$ and
\newline
$u_{n_i}=\alpha_{-k_{n_i}}$ if $w_{n_i}=\upsilon,$ $n_i< 0$ and $q> k_{n_i}$.
\newline
We remark that for every $(p,q)\in \nat\times \nat\cup\{(0,0)\}$ we have  $dom(T_{(p,q)} (w))=dom(w)$ for $w\in $ $\widetilde{L}(\Sigma\cup\{\upsilon\}, \vec{k}),$ $T_{(p,q)} (w)=w$ for $w\in \widetilde{L}(\Sigma, \vec{k})$ and $T_{(p,q)} (w\star u)=T_{(p,q)} (w)\star T_{(p,q)} (u)$ for every $w,u\in \widetilde{L}(\Sigma\cup\{\upsilon\}, \vec{k})$ with $dom(w)\cap dom(u)=\emptyset$. Also,
$T_{(p,q)} (\widetilde{L}(\Sigma\cup\{\upsilon\}, \vec{k}))\subseteq\widetilde{L}(\Sigma, \vec{k})$ for every $(p,q)\in \nat\times \nat$.

With the previous terminology we can state the following partition theorem for $\omega$-$\mathbb{Z}^\ast$-located words. The theorem is a natural generalization of a corresponding result for a simply infinite alphabet (Corollary~\ref{thm:block-Ramsey1}, below), which follows easily from Carlson's Theorem 15 in [C], Section 10, and included here for the purpose of making the paper self-contained (cf. also Theorem 1.1 in [F4]). The fact that Carlson's results are for $\omega$-words, while our results are phrased for $\omega$-$\mathbb{Z}^\ast$-located words does not make much difference for the essential arguments involved.

\begin{thm}
[\textsf{Partition theorem for $\omega$-$\mathbb{Z}^\ast$-located words}]
\label{thm:block-Ramsey02}
Let $\Sigma=\{\alpha_n:\;n\in \mathbb{Z}^\ast\}$ be an alphabet, $\upsilon \notin \Sigma$ a variable and $\vec{k}=(k_n)_{n\in\mathbb{Z}^\ast}\subseteq \nat$. If $\widetilde{L}(\Sigma, \vec{k} ; \upsilon)=A_1\cup\ldots\cup A_r$ and $\widetilde{L}(\Sigma, \vec{k})=C_1\cup\ldots\cup C_s,$ where $r,s \in \nat,$ then for each sequence  $(F_n)_{n\in\nat}$ of non-empty, finite subsets of $\nat\times \nat$ and $w \in \widetilde{L}_0(\Sigma, \vec{k} ; \upsilon)$ there exists a sequence $(w_{n})_{n\in\nat}\in \widetilde{L}^\infty(\Sigma, \vec{k} ; \upsilon)$ with $w<_{\textsl{R}_1}w_1$ and $1\leq i_{0} \leq r$, $1\leq j_{0} \leq s$ such that
\begin{center}
$T_{(p_1,q_1)}(w_{n_1})\star \ldots \star T_{(p_\lambda,q_\lambda)}(w_{n_\lambda})\in A_{i_{0}}$,
\end{center}
for every $\lambda\in\nat$, $n_1<\ldots<n_\lambda\in\nat$, $(p_i,q_i) \in F_{n_i}\cup\{(0,0)\}$ for every $1\leq i \leq \lambda$ with $(0,0) \in \{ (p_1,q_1),\ldots,(p_\lambda,q_\lambda)\}$; and
\begin{center}
$T_{(p_1,q_1)}(w_{n_1})\star \ldots \star T_{(p_\lambda,q_\lambda)}(w_{n_\lambda})\in C_{j_{0}}$,
\end{center}
for every $\lambda\in\nat$, $n_1<\ldots<n_\lambda\in\nat$ and $(p_i,q_i) \in F_{n_i}$ for every $1\leq i \leq \lambda$.
\end{thm}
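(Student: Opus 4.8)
The plan is to run the idempotent-ultrafilter argument that underlies the one-sided located-word theorem (Corollary~\ref{thm:block-Ramsey1}, hence Carlson's Theorem 15 in [C]), in the style of [BBH], now carrying the two-sided bookkeeping. First I would view $\widetilde{L}_0(\Sigma\cup\{\upsilon\},\vec{k})$ as a partial semigroup under $\star$, with $w\star u$ declared whenever $dom(w)\cap dom(u)=\emptyset$; the relation $<_{\textsl{R}_1}$ then organizes the recursion, since $w<_{\textsl{R}_1}u$ records exactly that the support of $u$ surrounds that of $w$ on both the negative and the positive side. Restricting to the ultrafilters concentrated on the $<_{\textsl{R}_1}$-cofinal subsets of $\widetilde{L}_0(\Sigma,\vec{k};\upsilon)$ gives a compact right-topological semigroup, in which I would fix an idempotent ultrafilter $\mathcal{U}$.

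The genuinely new feature is the two-parameter substitution family. Each $T_{(p,q)}$, with $(p,q)\in\nat\times\nat$, sends a variable word to a constant word by instantiating $\upsilon$ with $\alpha_p$ (capped at $k_n$) on positive positions and with $\alpha_{-q}$ on negative positions, while $T_{(0,0)}$ is the identity. Using the recorded identity $T_{(p,q)}(w\star u)=T_{(p,q)}(w)\star T_{(p,q)}(u)$, each $T_{(p,q)}$ is a partial-semigroup homomorphism and so induces a continuous map $\widetilde{T}_{(p,q)}$ on the ultrafilter semigroup. The crucial step is to arrange that $\mathcal{U}$ be compatible with all of these maps at once, i.e.\ that a variable word lying in a $\mathcal{U}$-large set can be instantiated by any admissible $(p,q)$ while keeping its image in a fixed color class. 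This is the Hales-Jewett step, and for a doubly infinite alphabet it is exactly here that Carlson's domination device enters: the finite sets $F_n\subseteq\nat\times\nat$ confine the substitutions examined at level $n$ to finitely many, so that each stage stays finitely branching even though $\Sigma$ is infinite.

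Granting such a $\mathcal{U}$, the two homogeneous cells are fixed by pigeonhole: take $i_0$ with $A_{i_0}\in\mathcal{U}$, and $j_0$ so that the constant images collapse into a single $C_{j_0}$. I would then build $(w_n)_{n\in\nat}$ recursively in $\widetilde{L}_0(\Sigma,\vec{k};\upsilon)$, beginning strictly above the given $w$ so as to secure $w<_{\textsl{R}_1}w_1$, and at each stage choosing $w_{m+1}$ with $w_m<_{\textsl{R}_1}w_{m+1}$ inside the intersection of a $\mathcal{U}$-large tail set with the finitely many sets $\{v: T_{(p,q)}(v)\text{ and its products with earlier factors land in }A_{i_0}\text{ or }C_{j_0}\}$ indexed by $(p,q)\in F_{m+1}\cup\{(0,0)\}$. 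Idempotency of $\mathcal{U}$ keeps every such set $\mathcal{U}$-large, and an induction on the length $\lambda$ then places each admissible product $T_{(p_1,q_1)}(w_{n_1})\star\cdots\star T_{(p_\lambda,q_\lambda)}(w_{n_\lambda})$ in $A_{i_0}$ when some $(p_i,q_i)=(0,0)$ and in $C_{j_0}$ when all parameters are genuine.

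The step I expect to be the main obstacle is producing the idempotent $\mathcal{U}$ that respects the whole family $\{T_{(p,q)}\}$ simultaneously. Because the positive parameter $p$ and the negative parameter $q$ vary independently, the substitution structure is genuinely two-dimensional, and one must ensure that the preimage sets $\widetilde{T}_{(p,q)}^{-1}(\text{cell})$ stay large under $\mathcal{U}$ coherently across all $(p,q)$. Maintaining, throughout the recursion, that each $w_n\in\widetilde{L}_0$ carries a variable in both $dom^-(w_n)$ and $dom^+(w_n)$, and that $<_{\textsl{R}_1}$ forces simultaneous outward growth on both sides, all without destroying $\mathcal{U}$-largeness, is the technical heart of the argument.
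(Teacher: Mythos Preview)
Your overall architecture matches the paper's: an ultrafilter/idempotent argument in the Stone--\v{C}ech compactification, followed by a recursive construction using $<_{\textsl{R}_1}$ and induction on $\lambda$. The recursive part and the role of the finite sets $F_n$ at each stage are correctly described.

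The gap you flag is real, and it is precisely the point your outline does not resolve. A single idempotent $\mathcal{U}$ will not by itself make the images $\widetilde{T}_{(p,q)}(\mathcal{U})$ coincide for all $(p,q)\in\nat\times\nat$; and the finiteness of $F_n$ does not help here, since that constraint enters only in the recursion, after the ultrafilters are fixed. The paper resolves this with a \emph{pair} of idempotents and a minimality argument in the style of Furstenberg--Katznelson. Concretely: the paper totalizes $\star$ to an operation $+$ on $X=\widetilde{L}(\Sigma\cup\{\upsilon\},\vec{k})$ (so $\beta X$ is a genuine left-topological semigroup rather than a partial one), and passes to the closed subsemigroups $\theta C\subseteq\theta X$ and the two-sided ideal $\theta V_0\subseteq\theta X$ of ultrafilters concentrated on $<_{\textsl{R}_1}$-cofinal sets of constant and of two-sided variable words respectively. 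One then picks a \emph{minimal} idempotent $\mu_1\in\theta C$, and an idempotent $\mu\in\theta V_0$ with $\mu\le\mu_1$ in $\theta X$. Since each $\beta T_{(p,q)}$ restricts to a homomorphism $\theta X\to\theta C$ fixing $\theta C$ pointwise, one gets $\beta T_{(p,q)}(\mu)\le\beta T_{(p,q)}(\mu_1)=\mu_1$, and minimality of $\mu_1$ forces $\beta T_{(p,q)}(\mu)=\mu_1$ for \emph{every} $(p,q)\in\nat\times\nat$. This single equation, together with $\mu+\mu_1=\mu_1+\mu=\mu$, is exactly the uniform compatibility you were seeking, and it holds for all $(p,q)$ simultaneously without any appeal to $F_n$.

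Two smaller points. First, you should work with two ultrafilters throughout the recursion (one controlling membership in $A_{i_0}$, the other in $C_{j_0}$), tracking two decreasing chains of large sets $B_n\subseteq V_0$ and $D_n\subseteq C$; a single $\mathcal{U}$ cannot simultaneously live on variable words and pick out a constant color. Second, your remark that the two-dimensional parameter $(p,q)$ is ``genuinely new'' is slightly overstated: once $\beta T_{(p,q)}(\mu)=\mu_1$ is established for all $(p,q)$, the two-sidedness introduces no further difficulty beyond the one-sided case, and the recursion (the paper's Claim~2) proceeds exactly as in [BBH] or [C].
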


In the proof of Theorem~\ref{thm:block-Ramsey02} we will apply some results of the theory of left compact semigroups, which we mention below.

\subsection*{Left compact semigroups}

A \textit{left compact semigroup} is a non-empty semigroup $(X, +),$ endowed with a topology $\Tau$ such that $(X,\Tau)$ is a compact Hausdorff space and the functions $f_y : X\rightarrow X$ with $f_y(x)=x+y$ for $x\in X$ are continuous for every $y\in X$.

An element $x$ of a semigroup $(X,+)$ such that $x+x=x$ is an \textit{idempotent} of $(X, +).$ According to a fundamental result due to Ellis, every non-empty, left compact semigroup contains an idempotent. On the set of all the idempotents of $(X, +)$ is defined a partial order $\leq$ by the rule
\begin{center}
$x_1 \leq x_2 \Longleftrightarrow x_1+x_2=x_2+x_1=x_1$.
\end{center}
An idempotent $x$ of $(X, +)$ is \textit{minimal for $X$} if every idempotent $x_1$ of $X$ satisfying the relation $x_1\leq x $ is equal to $x$. According to \cite{FuK}, a left compact semigroup $(X, +)$ contains an idempotent $x$ minimal for $X$, moreover, for every idempotent $x_1$ of $X$ there exists an idempotent $x$ of $X$ which is minimal for $X$ and $x\leq x_1$. Every two-sided ideal of a semigroup $(X,+)$ (a subset $I$ of $X$ is called \textit{two-sided ideal} of $(X,+)$ if $X+I\subseteq I$ and $I+X\subseteq I$) contains all the minimal for $X$ idempotents of $X$.

\subsection*{Ultrafilters}

Let $X$ be a non-empty set. An \textit{ultrafilter} on the set $X$ is a zero-one finite additive measure $\mu$ defined on all the subsets of $X$. The set of all ultrafilters on the set $X$ is denoted by $\beta X$. So, $\mu\in\beta X$ if and only if
\begin{itemize}
\item[{(i)}] $\mu(A)\in\{0,1\}$ for every $A\subseteq X$, $\mu(X)=1$, and
\item[{(ii)}] $\mu(A\cup B)=\mu(A)+\mu(B)$ for every $A,B\subseteq X$ with $A\cap B=\emptyset$.
\end{itemize}
For $\mu\in\beta X,$ it is easy to see that $\mu(A\cap B)=1,$ if $\mu(A)=1$ and $\mu(B)=1$. For every $x\in X$ is defined the \textit{principal ultrafilter} $\mu_x$ on $X$ which corresponds a set $A\subseteq X$ to $\mu_x(A)=1$ if $x\in A$ and  $\mu_x(A)=0$ if $x\notin A$.  So, $\mu$ is a non-principal ultrafilter on $X$ if and only if  $\mu(A)=0$ for every finite subset $A$ of $X$.

The set $\beta X$ becomes a compact Hausdorff space if it be endowed with the topology $\Tau$ which has basis the family $\{A^* :  A\subseteq X \}$, where $A^*=\{\mu\in\beta X : \mu(A)=1 \}$. It is easy to see that $(A\cap B)^*=A^*\cap B^*$, $(A\cup B)^*=A^*\cup B^*$ and $(X\setminus A)^*=\beta X\setminus A^*$ for every $A,B\subseteq X$. We always consider the set $\beta X$ endowed with the topology $\Tau$. Hence, for a function $T : X\rightarrow Y$ is defined the function
\begin{center}
$\beta T : \beta X\rightarrow \beta Y$  with  $\beta T(\mu)(B)=\mu(T^{-1}(B))$ for $\mu\in\beta X$ and $B\subseteq Y$,
\end{center}
which is always continuous.

If $(X,+)$ is a semigroup, then a binary operation $+$ is defined on $\beta X$ corresponding to every $\mu_1, \mu_2\in \beta X$ the ultrafilter $\mu_1 + \mu_2\in \beta X$ given by
\begin{center}
$(\mu_1 + \mu_2)(A)= \mu_1(\{x\in X : \mu_2(\{y\in X : x+y\in A\})=1\})$ for every $A\subseteq X$.
\end{center}
With this operation the set $\beta X$ becomes a semigroup and for every $\mu\in\beta X$ the function $f_\mu : \beta X\rightarrow \beta X$ with $f_\mu(\mu_1)=\mu_1+\mu$ is continuous. Hence, if $(X,+)$ is a semigroup, then $(\beta X, +)$ becomes a left compact semigroup.

\begin{proof}[Proof of Theorem~\ref{thm:block-Ramsey02}] Let $X=\widetilde{L}(\Sigma\cup\{\upsilon\},\vec{k}).$
We endow $X$ with an operation defining for $w=w_{n_1}\ldots w_{n_r},  u=u_{m_1}\ldots u_{m_l}\in X$
\begin{center}
$w+u=z_{q_1}\ldots z_{q_s}\in X$,
\end{center}
where $\{q_1,\ldots,q_s\}=dom(w)\cup dom(u)$, $q_1<\ldots<q_s$ and, for $1\leq i\leq s$, $z_{q_i}=w_{q_i}$ if $q_i\notin dom(u)$, $z_{q_i}=u_{q_i}$ if $q_i\notin dom(w),$ $z_{q_i}=\upsilon$ if either $w_{q_i}=\upsilon$ or $u_{q_i}=\upsilon$,
 $z_{q_i}=\alpha_{\max\{\mu, \nu\}}$ if $ q_i\in dom^+(w)\cap dom^+(u)$ and $w_{q_i}=\alpha_{\mu},\;u_{q_i}=\alpha_{\nu}$, and $z_{q_i}=\alpha_{-\max\{\mu, \nu\}}$ if $q_i\in dom^-(w)\cap dom^-(u)$ and $w_{q_i}=\alpha_{-\mu},\;u_{q_i}=\alpha_{-\nu}$.
\newline
Observe that $(X,+)$ is a semigroup, $C=\widetilde{L}(\Sigma,\vec{k})$ is a non-empty subsemigroup of $(X,+)$ and $V_0=\widetilde{L}_0(\Sigma,\vec{k};\upsilon)$ is a two-sided ideal of $(X, +)$. Also, $w+ u=w\star u$ for $w, u\in X$ with $w<_{\textsl{R}_1}u$.

Since $(X, +)$ is a semigroup, $(\beta X, +)$ has the structure of a left compact semigroup. For every $A\subseteq X$ and $w\in X$ we set
\begin{center}
$A_{w}=\{u\in A :\; w<_{\textsl{R}_1}u\}\;\;$ and
$\;\;\theta A=\bigcap \{(A_{w})^* : w\in X \}$,
\end{center}
where $(A_{w})^*=\{\mu\in\beta X : \mu(A_{w})=1 \}$.

\noindent {\bf Claim 1}
 $\theta X, \theta C$ and  $\theta V_0$ are non-empty left compact subsemigroups of $\beta X$ consisted of non-principal ultrafilters on $X$.

Indeed, let $A\in \{X, C, V_0\}$. For every $w\in X$ the set $(A_{w})^*=\beta X\setminus(X\setminus A_{w})^*$ is a compact subset of $\beta X$ , so $\theta A$ is a compact subset of $\beta X$. For every $w\in X$ we have $A_{w}\neq\emptyset$ and consequently $(A_{w})^*\neq\emptyset$. Since  the family
$\{(A_{w})^* :\; w\in X\}$ has the finite intersection property, we have $\theta A\neq\emptyset$. Using
the fact that if $w<_{\textsl{R}_1}y$ and $w+y<_{\textsl{R}_1}z,$ then $w<_{\textsl{R}_1}y+z$, it can be proved that $(\theta A, +)$ is a semigroup. Indeed, for $\mu_1,\mu_2\in \theta A$ and $w\in X$, we have that
\begin{center}$(\mu_1 +\mu_2)(A_{w})=\mu_1(\{u_1\in A_{w} : \mu_2(\{u_2\in A_{w+u_1} : u_1+u_2\in A_{w}\})=1 \})\;\;\;\;\;\;\;\;\;\;\;\;\;\;\;\;\;\;$\\
$=\mu_1(\{u_1\in A_{w} : \mu_2( A_{w+u_1})=1 \})=\mu_1(A_{w})=1.\;\;\;\;\;\;\;$
\end{center}
Hence, $\theta A$ is a non-empty left compact subsemigroup of $\beta X$ and it contains only non-principal ultrafilters on $X$, since $\mu_{w}\notin (A_w)^\ast$ for every $w\in X$.

 $\theta V_0  \subseteq \theta X $ is a two sided ideal of $\theta X$. Indeed, for $\mu_1\in \theta V_0$, $\mu_2\in \theta X$ and $w\in X,$
\begin{center} $(\mu_1+\mu_2)((V_{0})_{w})=\mu_1(\{u_1\in (V_{0})_{w} : \mu_2(\{u_2\in X_{w+u_1} : u_1+u_2\in (V_{0})_{w}\})=1 \})\;\;\;\;\;\;\;\;\;\;\;$\\

$=\mu_1(\{u_1\in (V_{0})_{w} : \mu_2(X_{w+u_1})=1 \})\;\;\;\;\;\;\;\;\;\;\;\;\;\;\;\;\;\;\;\;\;\;\;$\\ $=\mu_1((V_{0})_{w})=1=(\mu_2+\mu_1)((V_{0})_{w}).\;\;\;\;\;\;\;\;\;\;\;\;\;\;\;\;\;\;\;\;\;\;\;$\end{center}

\noindent Let $(p,q)\in \nat\times \nat$ and let the function $\beta T_{(p,q)} : \beta X\rightarrow \beta X$ with
$\beta T_{(p,q)}(\mu)(A)=\mu((T_{(p,q)})^{-1}(A))$ for every $\mu\in\beta X$ and $A\subseteq X$. We note that:

i) $\beta T_{(p,q)}(\mu)=\mu$ for every $\mu\in\theta C$, since $T_{(p,q)}(w)=w$ for every $w\in C$.

ii) $\beta T_{(p,q)}(\theta X)\subseteq \theta C$, since for $\mu\in \theta X$ and $w\in X$ we have \begin{center}
$\beta T_{(p,q)}(\mu)(C_{w})=\mu(\{u\in X_{w} : T_{(p,q)}(u)\in C_{w} \})=\mu(X_{w})=1,\;\;\;\;\;\;\;\;\;\;\;\;\;\;\;\;\;\;\;\;\;\;\;\;\;\;\;\;\;\;\;\;\;\;\;\;$\end{center}

iii) the restriction $\beta T_{(p,q)}|_{\theta X}$ of $\beta T_{(p,q)}$ to $\theta X$ is a homomorphism, since for $\mu_1, \mu_2\in \theta X$\\ \indent \indent $\;$ and $A\subseteq X$
\begin{center}$\beta T_{(p,q)}(\mu_1+\mu_2)(A)= \mu_1(\{u_1\in X : \mu_2(\{u_2\in X_{u_1} : T_{(p,q)}(u_1+u_2)\in A\})=1\})\;\;\;\;\;\;\;\;\;\;\;$\\
$\;\;\;\;\;\;\;\;\;\;\;\;\;\;\;\;\;\;\;\;\;\;\;\;\;\;\;\;\;=\mu_1(\{u_1\in X : \mu_2(\{u_2\in X_{u_1} : T_{(p,q)}(u_1)+T_{(p,q)}(u_2)\in A\})=1\})$\\ $=(\beta T_{(p,q)}(\mu_1)+ \beta T_{(p,q)}(\mu_2))(A).\;\;\;\;\;\;\;\;\;\;\;\;\;\;\;\;\;\;\;\;\;$ \end{center}
According to \cite{FuK}, there exists an idempotent $\mu_1$ in the non-empty, left compact semigroup $\theta C,$ minimal for $\theta C$. Since $\theta V_0$ is a two-sided ideal of the left compact semigroup $\theta X$ and
$\mu_1\in \theta C\subseteq\theta X$ is an idempotent of $\theta X$ there exists an idempotent $\mu\in \theta V_0\subseteq \beta X$ minimal for $\theta X$ with $\mu\leq \mu_1$. Since for each $(p,q)\in \nat\times \nat$ the restriction of $\beta T_{(p,q)}$ to $\theta X$ is an homomorphism, we have that $\beta T_{(p,q)}(\mu)\leq \beta T_{(p,q)}(\mu_1)$ for every $(p,q)\in \nat\times \nat$. But $\beta T_{(p,q)}(\mu_1)= \mu_1$ for every $(p,q)\in \nat\times \nat$, since $\mu_1\in\theta C$, hence, $\beta T_{(p,q)}(\mu)\leq \mu_1$ for every $(p,q)\in \nat\times \nat$. Now, since $\mu_1$ is minimal for $\theta C$ and
$\beta T_{(p,q)}(\mu)\in \theta C$ for every $(p,q)\in \nat\times \nat$, we have $\beta T_{(p,q)}(\mu)=\mu_1$ for every $(p,q)\in \nat\times \nat$. In conclusion, there exist non-principal ultrafilters
$\mu\in \theta V_0 \subseteq\theta X\subseteq \beta X$ and $\mu_1\in\theta C\subseteq \theta X\subseteq \beta X$ such that:

$(1)\;\mu+\mu=\mu$, $\mu_1+\mu_1=\mu_1 $,

$(2)\;\mu_1=\beta T_{(p,q)}(\mu)$ for every $(p,q)\in \nat\times \nat$, and

$(3)\;\mu+\mu_1=\mu_1+\mu=\mu $.\\
\noindent {\bf Claim 2}
For every $w\in V_0$, $B\subseteq V_0$ with $\mu(B)=1$, $D\subseteq C$ with $\mu_1(D)=1$ and $F$ a non-empty, finite subset of $\nat\times \nat$ there exist $w_{0}\in V_0$, $w<_{\textsl{R}_1}w_{0}$, $B_{1}\subseteq B\subseteq V_0$ and $D_{1}\subseteq D\subseteq C$ with $\mu(B_{1})=1$, $\mu_1(D_{1})=1$ such that:

$w_{0}\in B$, $w<_{\textsl{R}_1}w_{0}$, $T_{(p,q)}(w_{0})\in D$ for every $(p,q) \in F$,

$B_{1}=\{u\in B_{w_{0}} : w_{0} + u\in B$ and $ T_{(p,q)}(w_{0})+ u\in B$ for all $(p,q) \in F$\}, and

$D_{1}=\{z\in D_{w_{0}} : w_{0}+z\in B$ and $T_{(p,q)}(w_{0})+ z\in D$ for all $ (p,q) \in F$\}.
\newline
The proof of Claim $2$ follows from the properties (1), (2) and (3) of the ultrafilters
$\mu\in\theta V_0 \subseteq \beta X$ and $\mu_1\in\theta C\subseteq \beta X$, since $ \mu_1=\beta T_{(p,q)}(\mu)=\beta T_{(p,q)}(\mu)+\mu_1$ and $\mu=\mu+\mu=\beta T_{(p,q)}(\mu)+\mu=\mu+\mu_1$ for every
$(p,q)\in F$.

We will construct, by induction on $n$, the required sequence $(w_{n})_{n\in\nat}\subseteq V_0$. Since, $V=A_1\cup\ldots\cup A_r$ and $C=C_1\cup\ldots\cup C_s$, there exist $1\leq i_{0} \leq r$, $1\leq j_{0} \leq s$ such that $\mu(A_{i_{0}})=1$ and $\mu_1(C_{j_{0}})=1$. Let $w\in V_0$. According to Claim $2$, starting with $w\in V_0$, $B_{1}=A_{i_{0}}\cap V_0$ and $D_{1}=C_{j_{0}}$, we can construct an increasing sequence $w<_{\textsl{R}_1}w_{1}<_{\textsl{R}_1}w_{2}<_{\textsl{R}_1}\ldots$ in $V_0$ and two decreasing sequences $V_0\supseteq B_{1}\supseteq B_{2}\supseteq \ldots$, and $C\supseteq D_{1}\supseteq D_{2}\supseteq \ldots$ such that for every $n\in\nat$ to satisfy:

$\mu(B_{n})=1$ and $\mu_1(D_{n})=1$,
$w_{n}\in B_{n}$ and $T_{(p,q)}(w_{n})\in D_{n}$ for every $(p,q)\in F_n$,

$B_{n+1}=\{u\in (B_{n})_{w_{n}} : w_{n} + u\in B_{n}$ and $ T_{(p,q)}(w_{n})+ u\in B_{n}$ for all $ (p,q)\in F_n \}$, and

$D_{n+1}=\{z\in (D_{n})_{w_{n}} : w_{n}+z\in B_{n}$ and $T_{(p,q)}(w_{n})+ z\in D_{n}$ for all $ (p,q)\in F_n \}$.

\noindent The sequence $(w_{n})_{n\in\nat}$ has the required properties. We will prove, by induction on $\lambda$, that, for every $\lambda\in\nat$,
\begin{center}
$T_{(p_1,q_1)}(w_{n_1})\star \ldots \star T_{(p_\lambda,q_\lambda)}(w_{n_\lambda})\in B_{n_1}\subseteq B_{1}\subseteq A_{i_{0}}$,
\end{center}
for every $n_1<\ldots<n_\lambda\in\nat$, $(p_i,q_i) \in F_{n_i}\cup\{(0,0)\}$ for every $1\leq i \leq \lambda$ and $(0,0) \in \{ (p_1,q_1),\ldots,(p_\lambda,q_\lambda)\}$; and that
\begin{center}
$T_{(p_1,q_1)}(w_{n_1})\star \ldots \star T_{(p_\lambda,q_\lambda)}(w_{n_\lambda})\in D_{n_1}\subseteq D_{1}=C_{j_{0}}$,
\end{center}
for every $n_1<\ldots<n_\lambda\in\nat$, $(p_i,q_i) \in F_{n_i}$ for every $1\leq i \leq \lambda$.

Indeed, for $n_1\in\nat$ and $(p_1,q_1)\in F_{n_1}$, we have $w_{n_1}\in B_{n_1}$ and $T_{(p_1,q_1)}(w_{n_1})\in D_{n_1}$.  Assume that the assertion  holds for $\lambda\geq 1$ and let $n_1<\ldots<n_\lambda<n_{\lambda+1}\in\nat$ and $ (p_i,q_i) \in F_{n_i}\cup\{(0,0)\}$ for every $1\leq i \leq \lambda+1$.

\noindent {\bf Case 1} Let $(0,0)\in \{(p_2,q_2),\ldots,(p_{\lambda+1},q_{\lambda+1})\}$. \\ Then $u=T_{(p_2,q_2)}(w_{n_2})\star \ldots \star T_{(p_{\lambda+1},q_{\lambda+1})}(w_{n_{\lambda+1}})\in B_{n_2}\subseteq B_{n_1 +1}\;$ according to the induction hypothesis. Hence,
$ T_{(p_1,q_1)}(w_{n_1})+ u=T_{(p_1,q_1)}(w_{n_1})\star \ldots \star T_{(p_{\lambda+1},q_{\lambda+1})}(w_{n_{\lambda+1}})\in B_{n_1}$.

\noindent {\bf Case 2} Let $(0,0)\notin \{(p_2,q_2),\ldots,(p_{\lambda+1},q_{\lambda+1})\}$. \\ Then  $z=T_{(p_2,q_2)}(w_{n_2})\star \ldots \star T_{(p_{\lambda+1},q_{\lambda+1})}(w_{n_{\lambda+1}})\in D_{n_2}\subseteq D_{n_1 +1}\;$ according to the induction hypothesis.
If $(p_1,q_1)=(0,0)$, then
$ w_{n_1}+ z=T_{(p_1,q_1)}(w_{n_1})\star \ldots \star T_{(p_{\lambda+1},q_{\lambda+1})}(w_{n_{\lambda+1}})\in B_{n_1}$.

\noindent If $(p_1,q_1)\in F_{n_1}$, then
$ T_{(p_1,q_1)}(w_{n_1})+ z=T_{(p_1,q_1)}(w_{n_1})\star \ldots \star T_{(p_{\lambda+1},q_{\lambda+1})}(w_{n_{\lambda+1}})\in D_{n_1}$.
\newline
This finishes the proof.
\end{proof}

For an alphabet $\Sigma=\{\alpha_1, \alpha_2, \ldots \}$, an increasing sequence $\vec{k}=(k_n)_{n\in\nat}\subseteq\nat$ and $\upsilon \notin \Sigma$ let the sets of \textit{$\omega$-located words} over $\Sigma$ dominated by $\vec{k}$ be
\begin{center}
$L(\Sigma, \vec{k})= \{w=w_{n_1}\ldots w_{n_l} :\; l\in\nat, n_1<\ldots<n_l\in \nat$, $ w_{n_i}\in \{\alpha_1,\ldots,\alpha_{k_{n_i}}\}\;\;\;\;\;\;\;\;\;\;\;\;\;$\\ $\;$ for every $1\leq i \leq l \},\;\;\;\;\;\;\;\;\;\;\;\;\;\;\;\;\;\;\;\;\;\;\;\;\;\;\;\;\;\;\;\;\;\;\; \;\;\;\;\;\;\;\;\;\;\;\;\;\;\;\;\;\;\;\;\;\;\;\;\;\;\;\;$\\
$L(\Sigma, \vec{k} ; \upsilon) = \{w=w_{n_1}\ldots w_{n_l} :\; l\in\nat, n_1<\ldots<n_l\in \nat, w_{n_i}\in \{\upsilon, \alpha_1, \ldots, \alpha_{k_{n_i}} \}\;\;\;\;\;\;$\\
 $\;\;\;\;\;\;\;\;\;\;\;\;\;\;$ for every $ 1\leq i \leq l$ and there exists $1\leq i \leq l$ with $w_{n_i}=\upsilon \}$, and\\
$L(\Sigma\cup\{\upsilon\}, \vec{k})=L(\Sigma, \vec{k})\cup L(\Sigma, \vec{k} ; \upsilon).\;\;\;\;\;\;\;\;\;\;\;\;\;\; \;\;\;\;\;\;\;\;\;\;\;\;\;\;\;\;\;\;\;\;\;\;\;\;\;\;\;\;\;\;\;\;\;\;\; \;\;\;\;\;\;\;\;\;\;\;\;\;\;\;\;\;\;\;\;\;\;$
\end{center}
For $w,u\in L(\Sigma\cup\{\upsilon\}, \vec{k})$ we write $w <_{\textsl{R}_2}u\;\Longleftrightarrow\;\max dom(w)< \min dom(u)$.
\newline
Let $L^\infty(\Sigma, \vec{k} ; \upsilon)=\{(w_n)_{n\in\nat}\subseteq L(\Sigma, \vec{k} ; \upsilon) : w_n<_{\textsl{R}_2}w_{n+1}$ for every $n\in\nat\}$.
\newline
For every $p\in\nat\cup\{0\}$ are defined the functions $T_p : L(\Sigma\cup\{\upsilon\}, \vec{k}) \rightarrow L(\Sigma\cup\{\upsilon\}, \vec{k})$ setting for $w=w_{n_1}\ldots w_{n_l}\in L(\Sigma\cup\{\upsilon\}, \vec{k})$: $T_0(w)=w$ and, for $p\in\nat$, $T_p(w)=u_{n_1}\ldots u_{n_l}$, where, for $1\leq i\leq l$, $u_{n_i}=w_{n_i}$ if $w_{n_i}\in \Sigma$, $u_{n_i}=\alpha_{p} $ if $w_{n_i}=\upsilon$ and $p\leq k_{n_i}$ and finally $u_{n_i}=\alpha_{k_{n_i}} $ if $w_{n_i}=\upsilon$ and $p>k_{n_i}$.
\newline
Let $\vec{w}=(w_n)_{n\in\nat}\in L^\infty(\Sigma, \vec{k} ; \upsilon)$. The set $EV(\vec{w})$ of all the \textit{extracted variable $\omega$-located words} of $\vec{w}$ contains the words of the form
$T_{p_1}(w_{n_1})\star \ldots \star T_{p_\lambda}(w_{n_\lambda})$,
where $\lambda\in\nat$, $n_1<\ldots<n_\lambda\in\nat$ and $p_1,\ldots,p_\lambda\in\nat\cup\{0\}$ such that $0\leq p_i \leq k_{n_i}$ for every $1\leq i \leq \lambda$ and $0 \in \{ p_1,\ldots,p_\lambda\}$, and the set $E(\vec{w})$ of all the \textit{extracted $\omega$-located words} of $\vec{w}$ contains the words of the form
$T_{p_1}(w_{n_1})\star \ldots \star T_{p_\lambda}(w_{n_\lambda})$,
where $\lambda\in\nat$, $n_1<\ldots<n_\lambda\in\nat$ and $p_1,\ldots,p_\lambda\in\nat$ such that $1\leq p_i \leq k_{n_i}$ for every $1\leq i \leq \lambda$. Let

$EV^{\infty}(\vec{w}) = \{\vec{u}=(u_n)_{n\in\nat} \in L^\infty (\Sigma, \vec{k} ; \upsilon) : u_n\in EV(\vec{w})$ for every $n\in\nat \}$.
\newline
If $\vec{u} \in EV^{\infty}(\vec{w})$, then we say that $\vec{u}$ is an \textit{extraction} of $\vec{w}$ and we write $\vec{u} \prec \vec{w}$. Notice that $\vec{u} \prec \vec{w}$ if and only if $EV(\vec{u})\subseteq EV(\vec{w}).$

\begin{cor}
[\textsf{Partition theorem for $\omega$-located words}(Carlson, \cite{C})]
\label{thm:block-Ramsey1}
Let $\Sigma=\{\alpha_n:\;n\in \nat\}$ be an infinite countable alphabet, $\upsilon \notin \Sigma$ a variable, $\vec{k}=(k_n)_{n\in\nat}\subseteq \nat$ an increasing sequence and $r,s\in\nat$. If $L(\Sigma, \vec{k} ; \upsilon)=A_1\cup\ldots\cup A_r$ and
$L(\Sigma, \vec{k})=C_1\cup\ldots\cup C_s$, then there exists a sequence $\vec{w}=(w_n)_{n\in\nat}\in L^\infty(\Sigma, \vec{k} ; \upsilon)$ and $1\leq i_0 \leq r$, $1\leq j_0 \leq s$ such that
\begin{center}
$EV(\vec{w})\in A_{i_0}$ and  $E(\vec{w})\in C_{j_0}$.
\end{center}
\end{cor}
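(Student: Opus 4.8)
The plan is to deduce Corollary~\ref{thm:block-Ramsey1} from the already established Theorem~\ref{thm:block-Ramsey02} by viewing the simply-infinite alphabet $\Sigma=\{\alpha_n:n\in\nat\}$ as the positive half of a doubly-infinite one and then projecting everything onto the positive coordinates; this keeps the argument self-contained. First I would fix the $\omega$-$\zat^\ast$ data: put $\widetilde\Sigma=\{\alpha_n:n\in\zat^\ast\}$ by choosing the negative letters in any way, and let $\widetilde{\vec k}=(\widetilde k_n)_{n\in\zat^\ast}$ with $\widetilde k_n=k_n$ for $n>0$ and $\widetilde k_{-n}=k_n$ for $n>0$, so that the positive half of $(\widetilde\Sigma,\widetilde{\vec k})$ reproduces $(\Sigma,\vec k)$ verbatim. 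Define $\phi(\tilde u)$ to be the restriction of $\tilde u$ to $dom^+(\tilde u)$, for any $\tilde u\in\widetilde L(\widetilde\Sigma\cup\{\upsilon\},\widetilde{\vec k})$ with $dom^+(\tilde u)\neq\emptyset$; then $\phi(\tilde u)\in L(\Sigma\cup\{\upsilon\},\vec k)$. The three facts I would record and reuse are: (i) if $\tilde w<_{\textsl{R}_1}\tilde u$ then, unwinding the straddling definition and using that $\min dom(\tilde w)<0<\max dom(\tilde w)$, one gets $\max dom^+(\tilde w)<\min dom^+(\tilde u)$, so $\phi$ sends $<_{\textsl{R}_1}$-chains to $<_{\textsl{R}_2}$-chains and, the positive supports being pairwise disjoint and ordered, $\phi$ commutes with $\star$; (ii) $\phi\circ T_{(p,q)}=T_p\circ\phi$, since at each positive position $m$ both operators use the same threshold $\widetilde k_m=k_m$, so the identity holds on the nose, clamping included, while $q$ is discarded; and (iii) $(p,q)=(0,0)$ is the unique pair leaving a positive variable intact, matching $p=0$.

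Next I would transport the colourings along $\phi$. Partition $\widetilde L(\widetilde\Sigma,\widetilde{\vec k};\upsilon)$ by assigning $\tilde u$ to class $i$ whenever $dom^+(\tilde u)\neq\emptyset$, $\phi(\tilde u)$ still contains the variable and $\phi(\tilde u)\in A_i$, and placing every remaining variable word into class $1$; partition $\widetilde L(\widetilde\Sigma,\widetilde{\vec k})$ in the same manner through $C_1,\dots,C_s$. This produces exactly $r$ and $s$ cells. Taking $F_n=\{1,\dots,k_n\}\times\{1\}$ and any $w\in\widetilde L_0(\widetilde\Sigma,\widetilde{\vec k};\upsilon)$, I apply Theorem~\ref{thm:block-Ramsey02} to obtain $(\tilde w_n)_{n\in\nat}\in\widetilde L^\infty(\widetilde\Sigma,\widetilde{\vec k};\upsilon)$ and indices $i_0,j_0$.

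Finally set $w_n=\phi(\tilde w_n)$. As each $\tilde w_n\in\widetilde L_0$ carries a variable in $dom^+$, every $w_n$ lies in $L(\Sigma,\vec k;\upsilon)$, and by (i) the $w_n$ are $<_{\textsl{R}_2}$-increasing, so $\vec w=(w_n)_{n\in\nat}\in L^\infty(\Sigma,\vec k;\upsilon)$. For a typical member $T_{p_1}(w_{n_1})\star\cdots\star T_{p_\lambda}(w_{n_\lambda})$ of $EV(\vec w)$ (where $0\le p_i\le k_{n_i}$ and some $p_i=0$), lift index $i$ to $(0,0)$ when $p_i=0$ and to $(p_i,1)\in F_{n_i}$ otherwise; by (iii) the pair $(0,0)$ then occurs, so Theorem~\ref{thm:block-Ramsey02} puts $T_{(p_1,q_1)}(\tilde w_{n_1})\star\cdots\star T_{(p_\lambda,q_\lambda)}(\tilde w_{n_\lambda})$ in class $i_0$, and applying $\phi$ together with (i) and (ii) identifies its image as exactly $T_{p_1}(w_{n_1})\star\cdots\star T_{p_\lambda}(w_{n_\lambda})$, a word that retains a positive variable and hence lands in $A_{i_0}$. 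Running the same lifting with all $(p_i,q_i)\in F_{n_i}$ gives $E(\vec w)\subseteq C_{j_0}$; this is the assertion $EV(\vec w)\in A_{i_0}$, $E(\vec w)\in C_{j_0}$.

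I expect the only delicate point to be the bookkeeping of the two transported partitions. One must check that the ``dummy'' words dumped into class $1$ can never be produced by the extractions above---those relevant to the $A$-conclusion always keep a positive variable, while those relevant to the $C$-conclusion are constant on their positive support---so that membership of a projected word in class $i_0$ or $j_0$ truly records its colour under the original partitions. Verifying (i)--(iii), in particular that $<_{\textsl{R}_1}$ separates the positive halves, is the combinatorial core but is routine once the definition of $<_{\textsl{R}_1}$ is unpacked.
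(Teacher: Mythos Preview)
Your proposal is correct and follows essentially the same route as the paper: embed $(\Sigma,\vec k)$ as the positive half of a doubly-infinite alphabet, pull the colourings back along the ``project to positive coordinates'' map, apply Theorem~\ref{thm:block-Ramsey02}, and then push forward. The only cosmetic differences are that the paper uses the diagonal lift $p\mapsto(p,p)$ where you use $p\mapsto(p,1)$ (both work since the negative coordinate is discarded by the projection), and that you are more explicit than the paper about dumping the words outside $\widetilde L_0$ into a dummy class and checking they never arise among the relevant extractions.
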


\begin{proof}
 We set $\widetilde{\Sigma}=\{\alpha_n:n\in \zat^\ast\}$  and $\vec{k}_\ast=(\widetilde{k}_{n})_{n \in \zat^\ast}\subseteq \mathbb{N},$ where $\alpha_{-n}=\alpha_n$ and $\widetilde{k}_{-n}=\widetilde{k}_{n}=k_{n}$ for every $n\in  \mathbb{N}.$ Let $\varphi:\widetilde{L}_0(\widetilde{\Sigma}\cup\{\upsilon\},\vec{k}_\ast)\rightarrow L(\Sigma\cup\{\upsilon\},\vec{k})$ with \begin{center} $\varphi(w_{n_{1}}\ldots w_{n_{l}})=w_{n_{i_0}}\ldots w_{n_{l}},$ where $n_{i_0}=\min dom^+(w_{n_{1}}\ldots w_{n_{l}}).$\end{center}
  We have that $\varphi(\widetilde{L}_0(\widetilde{\Sigma},\vec{k}_\ast))=L(\Sigma,\vec{k})$ and $\varphi(\widetilde{L}_0(\widetilde{\Sigma},\vec{k}_\ast;\upsilon))=L(\Sigma,\vec{k};\upsilon),$ so
$\widetilde{L}_0(\widetilde{\Sigma},\vec{k}_\ast;\upsilon)$ $=\bigcup^{r}_{i=1}\varphi^{-1}(A_{i})$ and $ \widetilde{L}_0(\widetilde{\Sigma},\vec{k}_\ast)=\bigcup^{s}_{j=1}\varphi^{-1}(C_{j}).$ According to Theorem~\ref{thm:block-Ramsey02}, there exists $(\widetilde{w}_{n})_{n \in \mathbb{N}}\subseteq \widetilde{L}_0(\widetilde{\Sigma},\vec{k}_\ast;\upsilon)$ with $\widetilde{w}_{n}<_{\textsl{R}_1}\widetilde{w}_{n+1}$ for every $n \in \mathbb{N}$ and $1\leq i_{0}\leq r,1\leq j_{0}\leq s$ such that
 $T_{(p_{1},q_1)}(\widetilde{w}_{n_{1}})\star\ldots\star T_{(p_{\lambda},q_{\lambda})}(\widetilde{w}_{n_{\lambda}}) \in \varphi^{-1}(A_{i_{0}})$
for every $\lambda \in \mathbb{N},\; n_{1}<\ldots<n_{\lambda} \in \mathbb{N},\; (p_{i},q_i) \in \mathbb{N}\times\nat\cup \{(0,0)\}$ with $0 \leq p_{i},q_i\leq k_{n_{i}}$ for every $1\leq i\leq \lambda$ and $(0,0) \in \{(p_{1},q_1),\ldots,(p_{\lambda},q_{\lambda})\},$ and
$T_{(p_{1},q_1)}(\widetilde{w}_{n_{1}})\star\ldots\star T_{(p_{\lambda},q_{\lambda})}(\widetilde{w}_{n_{\lambda}}) \in \varphi^{-1}(C_{j_{0}})$
for every $\lambda \in \mathbb{N},$ $\; n_{1}<\ldots<n_{\lambda} \in \mathbb{N},\; (p_{i},q_i) \in \mathbb{N}\times\nat$ with $1 \leq p_{i},q_i\leq k_{n_{i}}$ for every $1\leq i\leq \lambda.$

We set $w_n= \varphi(\widetilde{w}_{n})\in L(\Sigma,\vec{k};\upsilon)$ for every $n\in \nat.$ Observe that $w_n<_{\textsl{R}_2}w_{n+1}$ for every $n\in \nat,$ since $\widetilde{w}_{n}<_{\textsl{R}_1}\widetilde{w}_{n+1}$ for every $n \in \mathbb{N},$ hence $(w_n)_{n\in \nat}\in L^{\infty}(\Sigma,\vec{k};\upsilon).$ The sequence $(w_n)_{n\in \nat}$ satisfies the conclusion, since $T_p(w_n)=\varphi(T_{(p,p)}(\widetilde{w}_{n}))$ for $0\leq p\leq k_n.$
\end{proof}

Now, we will prove, in Theorem~\ref{thm:block-Ramsey2} below, a stronger version of Theorem~\ref{thm:block-Ramsey02}, using the notion of extracted $\omega$-$\mathbb{Z}^\ast$-located words of a given orderly sequence of variable $\omega$-$\mathbb{Z}^\ast$-located words.

\subsection*{Extracted $\omega$-$\mathbb{Z}^\ast$-located words, Extractions}

Let $\Sigma=\{\alpha_n:\;n\in\mathbb{Z}^\ast\}$ be an alphabet,  $\vec{k}=(k_n)_{n\in\mathbb{Z}^\ast}\subseteq\nat$ such that
$(k_n)_{n\in\nat}$, $(k_{-n})_{n\in\nat}$ are increasing sequences ($(k_{-n})_{n\in\nat}$ is \textit{increasing} if $k_{-n}\leq k_{-(n+1)}$ for every $n\in \nat$) and let $\upsilon \notin \Sigma$.
We fix a sequence $\vec{w} = (w_n)_{n\in\nat}\in \widetilde{L}^\infty (\Sigma, \vec{k} ; \upsilon)$.

An \textit{extracted variable $\omega$-$\mathbb{Z}^\ast$-located word} of $\vec{w}$ is a variable $\omega$-$\mathbb{Z}^\ast$-located word $u\in \widetilde{L}_0(\Sigma, \vec{k} ; \upsilon)$ such that
$u=T_{(p_1,q_1)}(w_{n_1})\star \ldots \star T_{(p_\lambda,q_\lambda)}(w_{n_\lambda})$,
where $\lambda\in\nat$, $n_1<\ldots<n_\lambda\in\nat$, $(p_i,q_i) \in \nat\times \nat\cup\{(0,0)\}$ with $0\leq p_i\leq k_{n_i}$, $0\leq q_i \leq k_{-n_i}$ for every $1\leq i \leq \lambda$ and $(0,0) \in \{ (p_1,q_1),\ldots,(p_\lambda,q_\lambda)\}$.
The set of all the extracted variable $\omega$-$\mathbb{Z}^\ast$-located words of $\vec{w}$ is denoted by $\widetilde{EV}(\vec{w})$.

An \textit{extracted $\omega$-$\mathbb{Z}^\ast$-located word} of $\vec{w}$ is an $\omega$-$\mathbb{Z}^\ast$-located word $z\in \widetilde{L}_0(\Sigma, \vec{k})$ with
$z=T_{(p_1,q_1)}(w_{n_1})\star \ldots \star T_{(p_\lambda,q_\lambda)}(w_{n_\lambda})$,
where $\lambda\in\nat$, $n_1<\ldots<n_\lambda\in\nat$ and $(p_i,q_i) \in \nat\times \nat$ with $1\leq p_i\leq k_{n_i}$, $1\leq q_i \leq k_{-n_i}$ for every $1\leq i \leq \lambda$.
The set of all the extracted $\omega$-$\mathbb{Z}^\ast$-located words of $\vec{w}$ is denoted by $\widetilde{E}(\vec{w})$.

Let $\widetilde{EV}^{\infty}(\vec{w}) = \{\vec{u}=(u_n)_{n\in\nat} \in \widetilde{L}^\infty (\Sigma, \vec{k} ; \upsilon) : u_n\in \widetilde{EV}(\vec{w})$ for every $n\in\nat \}$.
\newline
If $\vec{u} \in \widetilde{EV}^{\infty}(\vec{w})$, then we say that $\vec{u}$ is an \textit{extraction} of $\vec{w}$ and we write $\vec{u} \prec \vec{w}$. Notice that for $\vec{u},\vec{w}\in \widetilde{L}^{\infty} (\Sigma, \vec{k} ; \upsilon)$ we have $\vec{u} \prec \vec{w}$ if and only if $\widetilde{EV}(\vec{u})\subseteq \widetilde{EV}(\vec{w})$.

Now we can state and prove a strengthening of Theorem~\ref{thm:block-Ramsey02}.
\begin{thm}
\label{thm:block-Ramsey2}
Let $\Sigma=\{\alpha_n \;:\;n \in \mathbb{Z}^{\ast}\}$ be an alphabet,  $\vec{k}=(k_n)_{n\in\mathbb{Z}^\ast}\subseteq \nat$ such that
$(k_n)_{n\in\nat}$, $(k_{-n})_{n\in\nat}$ are increasing sequences, $\upsilon \notin \Sigma$ and let $\vec{w}=(w_n)_{n\in \nat}\in \widetilde{L}^{\infty} (\Sigma, \vec{k} ; \upsilon)$, $r,s\in\nat$. If $\widetilde{L}(\Sigma, \vec{k} ; \upsilon)=A_1\cup\ldots\cup A_r$ and $\widetilde{L}(\Sigma, \vec{k})=C_1\cup\ldots\cup C_s$, then there exists an extraction $\vec{u}=(u_n)_{n\in\nat}$ of $\vec{w}$ and $1\leq i_0 \leq r$, $1\leq j_0 \leq s$ such that
\begin{center}
$\widetilde{EV}(\vec{u})\subseteq A_{i_0}$ and $\widetilde{E}(\vec{u})\subseteq C_{j_0}$.
\end{center}
\end{thm}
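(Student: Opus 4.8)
The plan is to re-run the ultrafilter argument used in the proof of Theorem~\ref{thm:block-Ramsey02}, but to \emph{localize} every relevant subsemigroup of $\beta X$ so that it concentrates on the extracted words of the fixed sequence $\vec{w}$. Keeping $X=\widetilde{L}(\Sigma\cup\{\upsilon\},\vec{k})$, $C=\widetilde{L}(\Sigma,\vec{k})$ and $V_0=\widetilde{L}_0(\Sigma,\vec{k};\upsilon)$ as before, I would set, for each $m\in\nat$,
$$\widetilde{EV}_m=\widetilde{EV}((w_n)_{n\ge m}),\qquad \widetilde{E}_m=\widetilde{E}((w_n)_{n\ge m}),\qquad R_m=\widetilde{EV}_m\cup\widetilde{E}_m,$$
and then define, for $A\subseteq X$, the localized operator
$$\theta_{\vec{w}}A=\bigcap_{m\in\nat}(A\cap R_m)^{*}.$$
Thus $\theta_{\vec{w}}V_0=\bigcap_m(\widetilde{EV}_m)^{*}$, $\theta_{\vec{w}}C=\bigcap_m(\widetilde{E}_m)^{*}$ and $\theta_{\vec{w}}X=\bigcap_m(R_m)^{*}$ are meant to play the roles of $\theta V_0$, $\theta C$ and $\theta X$, with the extra feature that every ultrafilter in them gives measure one to the extracted words of arbitrarily far tails of $\vec{w}$.

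The first task is to establish the analogue of Claim~1: that $\theta_{\vec{w}}V_0$, $\theta_{\vec{w}}C$, $\theta_{\vec{w}}X$ are non-empty left compact subsemigroups of $\beta X$ consisting of non-principal ultrafilters, with $\theta_{\vec{w}}V_0$ a two-sided ideal of $\theta_{\vec{w}}X$. Non-emptiness and compactness follow because the nested family $R_1\supseteq R_2\supseteq\cdots$ consists of non-empty sets (each $w_m\in\widetilde{EV}_m$) and hence has the finite intersection property. The subsemigroup property rests on the combinatorial fact that extracted words of $\vec{w}$ are closed under $+$ where it reduces to $\star$: if $u_1$ is extracted from blocks $w_{m_1}<_{\textsl{R}_1}\cdots<_{\textsl{R}_1}w_{m_j}$ and $u_2$ from strictly later blocks, then the straddling nature of $<_{\textsl{R}_1}$ forces $u_1<_{\textsl{R}_1}u_2$, so $u_1+u_2=u_1\star u_2\in\widetilde{E}_m$ (resp. $\widetilde{EV}_m$); this is exactly the computation displayed in Claim~1, now carried out inside the extraction structure. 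For the transformation maps one checks, as before, that $\beta T_{(p,q)}$ restricted to $\theta_{\vec{w}}X$ is a homomorphism fixing $\theta_{\vec{w}}C$ pointwise, and that $\beta T_{(p,q)}(\theta_{\vec{w}}X)\subseteq\theta_{\vec{w}}C$ — here using that $T_{(p,q)}$ sends a variable extracted word of $\vec{w}$ to a constant one, since it only clamps the surviving variables. The monotonicity of $(k_n)_{n\in\nat}$ and $(k_{-n})_{n\in\nat}$ enters precisely to guarantee that substituting into an extracted word of a tail again produces an admissible extracted word, i.e. that $\widetilde{EV}(\vec{u})\subseteq\widetilde{EV}(\vec{w})$ for every extraction $\vec{u}\prec\vec{w}$.

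With these localized semigroups in hand, the remainder is a transcription of the proof of Theorem~\ref{thm:block-Ramsey02}. By Ellis and the minimality results of \cite{FuK} one obtains idempotents $\mu\in\theta_{\vec{w}}V_0$ and $\mu_1\in\theta_{\vec{w}}C$, with $\mu_1$ minimal for $\theta_{\vec{w}}C$, $\mu$ minimal for $\theta_{\vec{w}}X$ and $\mu\le\mu_1$, satisfying $\mu+\mu=\mu$, $\mu_1+\mu_1=\mu_1$, $\beta T_{(p,q)}(\mu)=\mu_1$ for all $(p,q)\in\nat\times\nat$, and $\mu+\mu_1=\mu_1+\mu=\mu$. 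Choosing $1\le i_0\le r$ and $1\le j_0\le s$ with $\mu(A_{i_0}\cap R_m)=1$ and $\mu_1(C_{j_0}\cap R_m)=1$ for every $m$, I would then run the inductive construction of Claim~2, but at the $n$-th stage draw $u_n$ from the ultrafilter set lying inside a tail $\widetilde{EV}_{m_n}$ far enough to force $u_n<_{\textsl{R}_1}u_{n+1}$, and take $F_n=\{(p,q):1\le p\le k_n,\ 1\le q\le k_{-n}\}$, the finite set of parameters that actually occur in the definitions of $\widetilde{EV}(\vec{u})$ and $\widetilde{E}(\vec{u})$. The sequence $\vec{u}=(u_n)$ so produced is an extraction of $\vec{w}$, and the same induction on $\lambda$ as in Theorem~\ref{thm:block-Ramsey02} yields $\widetilde{EV}(\vec{u})\subseteq A_{i_0}$ and $\widetilde{E}(\vec{u})\subseteq C_{j_0}$.

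The main obstacle lies entirely in the second step: verifying that $\theta_{\vec{w}}V_0$, $\theta_{\vec{w}}C$ and $\theta_{\vec{w}}X$ are genuine left compact subsemigroups and that $\beta T_{(p,q)}$ acts on them as it does on $\theta X$, $\theta C$. Everything reduces to two closure properties of the set of extracted words of $\vec{w}$ — closure under ordered concatenation and under the clamping maps $T_{(p,q)}$ — together with the compatibility of the two-sided (straddling) order $<_{\textsl{R}_1}$ with the block structure of $\vec{w}$, which is exactly where the monotonicity hypotheses on $\vec{k}$ are needed. Once these are in place, the idempotent extraction and the concluding induction are routine repetitions of the corresponding parts of the proof of Theorem~\ref{thm:block-Ramsey02}.
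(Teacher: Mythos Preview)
Your approach is sound but takes a genuinely different route from the paper's. The paper does \emph{not} re-run the ultrafilter machinery; instead it reduces Theorem~\ref{thm:block-Ramsey2} to Corollary~\ref{thm:block-Ramsey1} (Carlson's theorem for $\omega$-located words over $\nat$) by an encoding trick. It linearly orders $\nat\times\nat$ as $\{\beta_1<_*\beta_2<_*\ldots\}$ so that the allowable substitution pairs for the $n$-th block, namely $\{(p,q):1\le p\le k_n,\ 1\le q\le k_{-n}\}$, form an initial segment $\{\beta_1,\ldots,\beta_{l_n}\}$ for some increasing sequence $\vec l=(l_n)$. Setting $\widetilde\Sigma=\{\beta_n:n\in\nat\}$, one defines a bijection $h:L(\widetilde\Sigma\cup\{\upsilon\},\vec l\,)\to\widetilde{EV}(\vec w)\cup\widetilde E(\vec w)$ by $h(t_{n_1}\ldots t_{n_\lambda})=T_{(p_1,q_1)}(w_{n_1})\star\cdots\star T_{(p_\lambda,q_\lambda)}(w_{n_\lambda})$, where the letter $t_{n_i}=\beta_\mu$ decodes to the pair $\beta_\mu=(p_i,q_i)$ and $\upsilon$ decodes to $(0,0)$. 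Corollary~\ref{thm:block-Ramsey1} applied to the pulled-back colourings $h^{-1}(A_i)$, $h^{-1}(C_j)$ yields $\vec s\in L^\infty(\widetilde\Sigma,\vec l;\upsilon)$, and $u_n=h(s_n)$ gives the desired extraction. The monotonicity of $(k_n),(k_{-n})$ is precisely what makes the allowed-pair sets nested, so that a single dominating sequence $\vec l$ exists.

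Your localization of the $\theta$-semigroups to tails of $\vec w$ also works, and you have correctly located the crux: one must check that $\beta T_{(p,q)}$ carries $\theta_{\vec w}X$ into $\theta_{\vec w}C$. This holds, but for a slightly subtler reason than you state. For fixed $(p,q)$ the map $T_{(p,q)}$ need \emph{not} send an arbitrary element of $R_m$ into $\widetilde E_m$: if $p$ exceeds some but not all of the $k_j$ for positions $j$ in a block, clamping produces a non-uniform substitution that is not of the extracted form. What rescues the argument is that any $\mu\in\theta_{\vec w}X$ concentrates on every $R_{m'}$, and for $m'$ large enough (either $k_{m'}\ge p$ and $k_{-m'}\ge q$, or the sequences have stabilized---one of these must eventually occur for a monotone integer sequence) one does get $T_{(p,q)}(R_{m'})\subseteq\widetilde E_m$. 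The paper's encoding sidesteps this issue entirely and yields a proof of a few lines once Corollary~\ref{thm:block-Ramsey1} is available; your route is self-contained but essentially re-proves Theorem~\ref{thm:block-Ramsey02} inside the extraction structure.
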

\begin{proof}[Proof]
We will order the set $\nat\times \nat$. For $q\in \nat$ set $i(q)=1$ if $q\leq k_{-1}$ and $i(q)=n+1$ if $k_{-n}<q\leq k_{-n-1}$ for $n\geq 1.$ Let $(p_1,q_1), (p_2,q_2)\in \nat\times \nat$. We write $(p_1,q_1)<_\ast(p_2,q_2)$ in cases (1) $i(q_1)<i(q_2)$ or (2) $i(q_1)=i(q_2)$ and $p_1<p_2$ or (3) $i(q_1)=i(q_2)$ and $p_1=p_2$ and $q_1< q_2$. Let  $\nat\times \nat=\{\beta_1<_\ast\beta_2<_\ast\beta_3<_\ast\ldots\}$ and the increasing sequence $\vec{l}=(l_n)_{n\in\nat}\subseteq\nat$ such that $\beta_{l_n}=(k_n, k_{-n})$. We set $\widetilde{\Sigma}=\{\beta_n:\;n\in \nat\}$ and we define the function
$h:L(\widetilde{\Sigma}\cup\{\upsilon\}, \vec{l} \;)\rightarrow \widetilde{EV}(\vec{w})\cup \widetilde{E}(\vec{w})$ which sends $\;$ $t_{n_1}\ldots t_{n_\lambda}\in L(\widetilde{\Sigma}\cup\{\upsilon\}, \vec{l} \;)$ to
$$h(t_{n_1}\ldots t_{n_\lambda})=T_{(p_1,q_1)}(w_{n_1})\star \ldots \star T_{(p_\lambda,q_\lambda)}(w_{n_\lambda}),$$ where for $1\leq i \leq \lambda$, $(p_i,q_i)=(0,0)$ if $t_{n_i}=\upsilon$ and $(p_i,q_i)=(\mu_1, \mu_2)$ if $t_{n_i}=\beta_{\mu}=(\mu_1,\mu_2)$. The function $h$ is one-to-one and onto $\widetilde{EV}(\vec{w})\cup \widetilde{E}(\vec{w})$. Also, $h(L(\widetilde{\Sigma}, \vec{l} ; \upsilon))=\widetilde{EV}(\vec{w})$ and $h(L(\widetilde{\Sigma}, \vec{l}\; ))=\widetilde{E}(\vec{w})$.

According to Theorem~\ref{thm:block-Ramsey1}, there exist a sequence $\vec{s}=(s_n)_{n\in\nat}\in
L^\infty (\widetilde{\Sigma}, \vec{l} ; \upsilon)$ and $1\leq i_0 \leq r$, $1\leq j_0 \leq s$ such that $EV(\vec{s})\subseteq h^{-1}(A_{i_0})$ and $E(\vec{s})\subseteq h^{-1}(C_{j_0})$. Set $u_n=h(s_n)\in \widetilde{EV}(\vec{w})$ for every $n\in\nat$ and $\vec{u}=(u_n)_{n\in\nat}$. Then $\vec{u}=(u_n)_{n\in\nat}$ is an extraction of $\vec{w}$ and $\widetilde{EV}(\vec{u})\subseteq h(EV(\vec{s}))\subseteq A_{i_0}$ and $\widetilde{E}(\vec{u})\subseteq h(E(\vec{s}))\subseteq C_{j_0}$.
\end{proof}

 We will now prove a finitistic form of Theorem~\ref{thm:block-Ramsey02}, which can be considered as a strengthened Hales-Jewett theorem. We will need the following notation.

\begin{note} We denote by $\widetilde{L}(\Sigma,\vec{k},n)$ the set of all (constant) $\omega$-$\zat^\ast$-located words over $\Sigma$ dominated by $\vec{k}$ with length $n\in \nat,$ and if $\upsilon\notin \Sigma$ be a variable, we denote by $\widetilde{L}(\Sigma,\vec{k};\upsilon,n)$ the set of variable $\omega$-$\zat^\ast$-located words over $\Sigma$ dominated by $\vec{k}$ with length $n\in \nat.$ So, if $|A|$ is the cardinality of a finite set $A,$ then,

 $\widetilde{L}(\Sigma,\vec{k},n)=\{w\in \widetilde{L}(\Sigma,\vec{k}):\;|dom(w)|=n\},$ and

$\widetilde{L}(\Sigma,\vec{k};\upsilon,n)=\{w\in \widetilde{L}(\Sigma,\vec{k};\upsilon):\;|dom(w)|=n\}.$\end{note}

\begin{cor}\label{thm:HJ}
Let $\Sigma=\{\alpha_n:\;n\in \zat^\ast\}$ be an alphabet, $\upsilon\notin \Sigma$ a variable, $r,m\in \nat,$ $n_1,\ldots,n_m\in \nat$ and $\vec{k}=(k_n)_{n\in \zat^\ast}\subseteq \nat$ such that $(k_n)_{n\in \nat},$ $(k_{-n})_{n\in \nat}$ are increasing sequences. Then, there exists $n_0\equiv n_0(r,m,n_1,\ldots,n_m,\vec{k})\in \nat$ such that if $\widetilde{L}(\Sigma,\vec{k},n_0)=C_1\cup\ldots\cup C_r,$ there exist $t_1<_{\textsl{R}_1}\ldots<_{\textsl{R}_1}t_m\in \widetilde{L}_0(\Sigma,\vec{k};\upsilon)$ with $t_1\star\ldots\star t_m\in \widetilde{L}(\Sigma,\vec{k};\upsilon,n_0)$ such that for some $1\leq i_0\leq r$ to satisfy $$T_{(p_1,q_1)}(t_1)\star\ldots\star T_{(p_m,q_m)}(t_m)\in C_{i_0}$$ for all $1\leq p_i\leq k_{n_i},\;1\leq q_i\leq k_{-n_i}$ for $1\leq i\leq m.$
\end{cor}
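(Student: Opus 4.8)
The plan is to derive this finitistic statement from the infinitary Theorem~\ref{thm:block-Ramsey02} by a compactness argument, carried out by contradiction. Suppose no such $n_0$ exists. Then for every $N\in\nat$ there is a partition $\widetilde{L}(\Sigma,\vec{k},N)=C^N_1\cup\ldots\cup C^N_r$ admitting no configuration of the required form, i.e.\ there are no $t_1<_{\textsl{R}_1}\ldots<_{\textsl{R}_1}t_m\in\widetilde{L}_0(\Sigma,\vec{k};\upsilon)$ with $t_1\star\ldots\star t_m$ of length $N$ and a single $i_0$ for which $T_{(p_1,q_1)}(t_1)\star\ldots\star T_{(p_m,q_m)}(t_m)\in C^N_{i_0}$ throughout the box $1\le p_i\le k_{n_i}$, $1\le q_i\le k_{-n_i}$. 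I would encode all of these ``bad'' colorings at once into a single coloring $c$ of the whole space $\widetilde{L}(\Sigma,\vec{k})$ of constant words, assigning to $v$ its colour under the bad partition of its own length class, so that $v\in C^{|dom(v)|}_{c(v)}$. This produces a partition $\widetilde{L}(\Sigma,\vec{k})=C_1\cup\ldots\cup C_r$ whose restriction to each length class reproduces the corresponding bad coloring.

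Next I would feed $c$ into Theorem~\ref{thm:block-Ramsey02}, taking the constant-word partition to be $C_1\cup\ldots\cup C_r$ (so $s=r$), the variable-word partition to be trivial ($A_1=\widetilde{L}(\Sigma,\vec{k};\upsilon)$), an arbitrary starting word $w\in\widetilde{L}_0(\Sigma,\vec{k};\upsilon)$, and --- this is the key choice --- the sequence of finite sets $F_i=\{1,\ldots,k_{n_i}\}\times\{1,\ldots,k_{-n_i}\}\subseteq\nat\times\nat$ for $1\le i\le m$ (and, say, $F_i=\{(1,1)\}$ for $i>m$), so that the $i$-th prescribed box is installed as exactly the admissible set of substitution pairs at position $i$. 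The theorem yields $(w_n)_{n\in\nat}\in\widetilde{L}^\infty(\Sigma,\vec{k};\upsilon)$ and a colour $j_0$ such that every constant extraction $T_{(p_1,q_1)}(w_{n_1})\star\ldots\star T_{(p_\lambda,q_\lambda)}(w_{n_\lambda})$ with $(p_i,q_i)\in F_{n_i}$ lies in $C_{j_0}$. I would then set $t_i:=w_i$ for $1\le i\le m$; since $w_1<_{\textsl{R}_1}\ldots<_{\textsl{R}_1}w_m$, the convex spans of the $w_i$ nest and their domains are pairwise disjoint, so $t_1\star\ldots\star t_m$ is well defined, a variable word whose length I call $n_0$.

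Specialising the conclusion to $\lambda=m$, $n_i=i$ and $(p_i,q_i)\in F_i$, every word $T_{(p_1,q_1)}(t_1)\star\ldots\star T_{(p_m,q_m)}(t_m)$ with $1\le p_i\le k_{n_i}$, $1\le q_i\le k_{-n_i}$ is a constant word (each $T_{(p_i,q_i)}$ with $(p_i,q_i)\in\nat\times\nat$ maps into $\widetilde{L}(\Sigma,\vec{k})$) of length exactly $n_0$, and all of them lie in $C_{j_0}$. Because these words share the length $n_0$, membership in $C_{j_0}$ is membership in $C^{n_0}_{j_0}$; thus $t_1,\ldots,t_m$ together with $i_0=j_0$ form precisely a forbidden configuration for the bad coloring $c_{n_0}$, contradicting its choice and establishing the existence of $n_0$.

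The step I expect to carry the real weight is the alignment in the middle paragraph: recognising that the prescribed ranges $1\le p_i\le k_{n_i}$, $1\le q_i\le k_{-n_i}$ can be installed as the finite sets $F_i$ of Theorem~\ref{thm:block-Ramsey02}, and --- crucially --- that the resulting monochromatic family is confined to the single length class $\widetilde{L}(\Sigma,\vec{k},n_0)$, which is exactly what lets the one diagonal coloring $c$ simulate every bad finite coloring simultaneously. The remaining points (disjointness of the domains of $w_1,\ldots,w_m$ coming from $<_{\textsl{R}_1}$, so that the concatenations are defined, and constancy of the $T_{(p,q)}$-images for $(p,q)\in\nat\times\nat$) are routine consequences of the definitions. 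Note that this derivation invokes only the finite-partition hypothesis of Theorem~\ref{thm:block-Ramsey02}, so the assumed monotonicity of $(k_n)$ and $(k_{-n})$ plays no essential role here.
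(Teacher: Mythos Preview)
Your argument is correct and is essentially the same compactness-by-contradiction proof as the paper's: assume each length class admits a bad $r$-coloring, glue them into a single coloring of $\widetilde{L}(\Sigma,\vec{k})$, apply Theorem~\ref{thm:block-Ramsey02}, and read off a monochromatic configuration inside a single length class to obtain the contradiction. Your version is in fact more explicit than the paper's about the choice of the finite sets $F_i$ and the specialisation $n_i=i$, which clarifies a small notational overload in the original write-up.
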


\begin{proof}
Let the conclusion does not hold. Then, for every $n\in \nat$ we have that there exists a partition $\widetilde{L}(\Sigma,\vec{k},n)=C_1^n\cup\ldots\cup C_r^n,$ such that for every $t_1<_{\textsl{R}_1}\ldots<_{\textsl{R}_1}t_m$ $\in \widetilde{L}_0(\Sigma,\vec{k};\upsilon)$ with $t_1\star\ldots\star t_m\in \widetilde{L}(\Sigma,\vec{k};\upsilon,n)$ the conclusion does not hold. For $1\leq i\leq r$ let $B_i=\cup_{n\in \nat}C_i^n.$ Then $\widetilde{L}(\Sigma,\vec{k})=B_1\cup\ldots\cup B_r.$ According to Theorem~\ref{thm:block-Ramsey02} there exist $(w_n)_{n\in \nat}\in \widetilde{L}^{\infty}(\Sigma,\vec{k};\upsilon)$ and $1\leq i_0\leq r$ such that $T_{(p_1,q_1)}(w_{n_1})\star\ldots\star T_{(p_m,q_m)}(w_{n_m})\in B_{i_0}$ for all $1\leq p_i\leq k_{n_i},\;1\leq q_i\leq k_{-n_i}$ for $1\leq i\leq m.$ If $|dom(w_{n_1}\star\ldots\star w_{n_m})|=n,$ then $T_{(p_1,q_1)}(w_{n_1})\star\ldots\star T_{(p_m,q_m)}(w_{n_m})\in C_{i_0}^n$ for all $1\leq p_i\leq k_{n_i},\;1\leq q_i\leq k_{-n_i}$ for $1\leq i\leq m,$
a contradiction.
\end{proof}

\section{Extended Ramsey type partition theorems for $\omega$-$\mathbb{Z}^\ast$-located words}

The main result of this section is Theorem~\ref{thm:block-Ramsey}, where we prove an extended to every countable order Ramsey-type partition theorem for variable and constant $\omega$-$\mathbb{Z}^\ast$-located words over an alphabet $\Sigma=\{ \alpha_n :\; n \in \mathbb{Z}^{\ast}\},$ dominated by a sequence $\vec{k}=(k_n)_{n\in\mathbb{Z}^\ast}\subseteq\nat.$ It is an extension to every countable order $\xi$ of Theorem~\ref{thm:block-Ramsey2} corresponding to the case $\xi=1.$ In the proof we apply some technics, introduced in [FN2] and [F4].

As consequences of Theorem~\ref{thm:block-Ramsey} we get an extended to every countable order Ramsey-type partition theorem for $\omega$-located words (see Corollary~\ref{cor:nat}) and consequently for located words over a finite alphabet.

The vehicle for proving this extended Ramsey type partition theorem (Theorem~\ref{thm:block-Ramsey}) is the Schreier systems  $(\widetilde{L}^\xi(\Sigma, \vec{k}))_{\xi<\omega_1}$ and $(\widetilde{L}^\xi(\Sigma, \vec{k} ; \upsilon))_{\xi<\omega_1}$ (Definition~\ref{recursivethinblock}), consisting of families of finite orderly sequences of (constant and variable respectively) $\omega$-$\mathbb{Z}^\ast$-located words over the alphabet $\Sigma$ dominated by the sequence
$\vec{k}$. Instrumental for this definition are the Schreier sets $\A_\xi,$ consisting of finite subsets of $\nat,$ which are defined below, employing (in case 3(iii)) the Cantor normal form of ordinals
(cf. \cite{KM}, \cite{L}). Schreier sets were systematically studied in [F1] and [F3].

For $s_1, s_2 \in [\nat]^{<\omega}_{>0}$ we write $s_1 < s_2\;\Longleftrightarrow\;\max s_1 < \min s_2$.

\begin{defn}[\textsf{The Schreier system},
{[F1, Def. 7], [F2, Def. 1.5], [F3, Def. 1.4]}]
\label{Irecursivethin}
For every non-zero, countable, limit ordinal $\lambda$ choose and fix a strictly
increasing sequence $(\lambda_n)_{n \in \nat}$ of successor ordinals smaller than $\lambda$
with $\sup_n \lambda_n = \lambda$.
The system $(\A_\xi)_{\xi<\omega_1}$ is defined recursively as follows: \begin{center}(1) $\A_0 = \{\emptyset\}$ and $\A_1 = \{\{n\} : n\in \nat\};\;\;\;\;\;\;\;\;\;\;\;\;\;\;\;\;\;\; \;\;\;\;\;\;\;\;\;\;\;\;\;\;\;\;\;\;\;\;\;\;\;\;\;\;\;\;\;\;\;\;\;\;\;\; \;\;\;\;\;\;\;\;\;\;\;\;\;\;\;\;$\\
\item[(2)] $\A_{\zeta+1} = \{s\in [\nat]^{<\omega}_{>0} : s= \{n\} \cup s_1$,
where $n\in \nat$, $\{n\} <s_1$ and $s_1\in \A_\zeta\};\;\;\;\;\;\;\;\;\;\;\;\;$\\
\item[(3i)] $\A_{\omega^{\beta+1}} = \{s\in [\nat]^{<\omega}_{>0} :
s = \bigcup_{i=1}^n s_i$, where $n= \min s_1$, $s_1<\ldots < s_n$ and $\;\;\;\;\;\;\;\;\;\;\;\;\;$\\
$s_1,\ldots, s_n\in \A_{\omega^\beta}\};\;\;\;\;\;\;\;\;\;\;\;\;\;\;\;\;\;\;\;\;\;\;\;\; \;\;\;\;\;\;\;\;\;\;\;\;\;\;\;\;\;\;\;\;\;\;\;\;\;\;\;\;\;\;\;\;\;\;\;\;$\\
 \item[(3ii)] for a non-zero, countable limit ordinal $\lambda,\;\;\;\;\;\;\;\;\;\;\;\;\;\;\;\;\;\;\;\;\;\;\;\;\;\;\;\;\;\;\;\;\;\;\;\;\;\;\;\;\;\;\;\;\;\;\;\; \;\;\;\;\;\;\;\;\;\;\;$\\
$\A_{\omega^\lambda} = \{s\in [\nat]^{<\omega}_{>0} : s\in \A_{\omega^{\lambda_n}}$
with $n= \min s\}$; and $\;\;\;\;\;\;\;\;\;\;\;\;\;\;\;\;\;\;\;\;\;\;\;\;\;\;\;\;\;\;\;\;\;\;\;\;$\\
\item[(3iii)] for a limit ordinal $\xi$ such that $\omega^{\alpha}< \xi < \omega^{\alpha +1}$ for some $0< \alpha <\omega_1$, if $\;\;\;\;\;\;\;\;\;\;\;\;\;$\\
$\;\;\;\;\;\;\;\;\;\;\;\;\xi = \omega^{\alpha} p
+ \sum_{i=1}^m \omega^{a_i} p_i$, where $m\in \nat$ with $m\ge0$,
 $p,p_1,\ldots,p_m$ are natural $\;\;\;\;\;\;$\\ $\;\;\;\;$ numbers with $p,p_1,\ldots,p_m\ge1$
(so that either $p>1$, or $p=1$ and  $m\ge 1$) \\ and
$a,a_1,\ldots,a_m$ are ordinals with $a>a_1>\ldots >a_m >0,\;\;\;\;\;\;\;\;\;\;\;\;\;\;\;\;\;\;\;\;$\\
$\;\;\;\;\;\;\;\;\;\;\;\;\A_\xi = \{s\in [\nat]^{<\omega}_{>0} :s= s_0 \cup (\bigcup_{i=1}^m s_i)$
with $s_m < \ldots < s_1 <s_0$,
$s_0= s_1^0\cup\ldots\cup s_p^0$\\ $\;\;\;\;\;\;\;\;\;\;\;\;\;\;\;\;\;\;\;$ with $s_1^0<\ldots < s_p^0\in \A_{\omega^a}$,
and $s_i = s_1^i \cup\ldots\cup s_{p_i}^i$ with
$s_1^i <\ldots<\ s_{p_i}^i\in \;$\\ $\A_{\omega^{a_i}}$ for every $1\le i\le m\}.\;\;\;\;\;\;\;\;\;\;\;\;\;\;\;\;\;\;\;\;\;\;\;\;\;\;\;\;\;\;\;\;\;\;\;\;\;\;\;\;\;\; \;\;\;\;\;\;$
\end{center}
\end{defn}

Let $\Sigma=\{ \alpha_n\;:\; n \in \mathbb{Z}^{\ast}\}$ be an alphabet, $\upsilon \notin \Sigma$ a variable and $\vec{k}=(k_n)_{n\in\mathbb{Z}^\ast}$ a two-sided sequence of natural numbers. We  define the \textit{finite orderly sequences of $\omega$-$\mathbb{Z}^\ast$-located words} over $\Sigma$ dominated by $\vec{k}$ as follows:
\begin{center}
$\widetilde{L}^{<\infty}(\Sigma, \vec{k}) = \{\bw = (w_1,\ldots,w_l) : l\in\nat, $
$w_1<_{\textsl{R}_1}\ldots <_{\textsl{R}_1} w_l\in \widetilde{L}_0(\Sigma, \vec{k}) \} \cup \{\emptyset \},$ and
\end{center}
\begin{center}
$\widetilde{L}^{<\infty}(\Sigma, \vec{k} ; \upsilon) = \{\bw = (w_1,\ldots,w_l) : l\in\nat, $
$w_1<_{\textsl{R}_1}\ldots <_{\textsl{R}_1} w_l\in \widetilde{L}_0(\Sigma, \vec{k} ; \upsilon) \} \cup \{\emptyset \}$.\end{center}

Finally we set $\widetilde{L}^{<\infty}(\Sigma\cup\{\upsilon\}, \vec{k})=\widetilde{L}^{<\infty}(\Sigma, \vec{k})\cup \widetilde{L}^{<\infty}(\Sigma, \vec{k} ; \upsilon).$

We will define now the Schreier systems $(\widetilde{L}^\xi(\Sigma, \vec{k}))_{\xi<\omega_1}$ and $(\widetilde{L}^\xi(\Sigma, \vec{k} ; \upsilon))_{\xi<\omega_1}$.
\begin{defn}
[\textsf{The Schreier systems $(\widetilde{L}^\xi(\Sigma, \vec{k}))_{\xi<\omega_1}$ and $(\widetilde{L}^\xi(\Sigma, \vec{k} ; \upsilon))_{\xi<\omega_1}$}]
\label{recursivethinblock}
Let $\Sigma=\{\alpha_n:\;n \in \mathbb{Z}^\ast\}$ be an alphabet, $\upsilon \notin \Sigma$ a variable and $\vec{k}=(k_n)_{n\in\mathbb{Z}^\ast}\subseteq\nat$. We define
\newline
$\widetilde{L}^0(\Sigma, \vec{k})= \{\emptyset\}=\widetilde{L}^0(\Sigma, \vec{k} ; \upsilon) $, and
for every countable ordinal $\xi\ge1$,
\newline
$\widetilde{L}^\xi(\Sigma, \vec{k}) = \{(w_1,\ldots,w_l)\in \widetilde{L}^{<\infty}(\Sigma, \vec{k}) :
\{\min dom^{+}(w_1),\ldots, \min dom^{+}(w_l)\}\in\A_\xi \}$,
\newline
$\widetilde{L}^\xi(\Sigma, \vec{k}; \upsilon) = \{(w_1,\ldots,w_l)\in \widetilde{L}^{<\infty}(\Sigma, \vec{k}; \upsilon) :
\{\min dom^{+}(w_1),\ldots, \min dom^{+}(w_l)\}\in\A_\xi \}$.
\end{defn}

\begin{remark}\label{rem1.4}
(i)
$\emptyset \notin \widetilde{L}^\xi(\Sigma, \vec{k}; \upsilon)$ for every $\xi\ge1$.

(ii) $\widetilde{L}^m(\Sigma, \vec{k} ; \upsilon) = \{(w_1,\ldots, w_m) : w_1<_{\textsl{R}_1}\ldots <_{\textsl{R}_1} w_m \in \widetilde{L}_0(\Sigma, \vec{k} ; \upsilon)\}$ for $m\in \nat$.

(iii) $\widetilde{L}^\omega(\Sigma, \vec{k} ; \upsilon) = \{(w_1,\ldots, w_n)\in \widetilde{L}^{<\infty}(\Sigma, \vec{k} ; \upsilon) : n\in \nat$,
and $\min dom^{+}(w_1) = n\}$.

(iv) Alternatively we could define the sets $\widetilde{L}^\xi(\Sigma, \vec{k}),\;\widetilde{L}^\xi(\Sigma, \vec{k};\upsilon)$ via the negative part of the domain of the words as follows
\begin{center}
$\widetilde{L}^\xi(\Sigma, \vec{k}) = \{(w_1,\ldots,w_l)\in \widetilde{L}^{<\infty}(\Sigma, \vec{k}) :
\{|\max dom^{-}(w_1)|,\ldots, |\max dom^{-}(w_l)|\}\in\A_\xi \},\;\;$\\
$\widetilde{L}^\xi(\Sigma, \vec{k};\upsilon) = \{(w_1,\ldots,w_l)\in \widetilde{L}^{<\infty}(\Sigma, \vec{k};\upsilon) :\{|\max dom^{-}(w_1)|,\ldots, |\max dom^{-}(w_l)|\}\in\;\;$ \\ $\A_\xi \}.\;\;\;\;\;\; \;\;\;\;\;\; \;\;\;\;\;\; \;\;\;\;\;\; \;\;\;\;\;\; \;\;\;\;\;\; \;\;\;\;\;\;\;\;\;\;\;\;\;\;\;\;\;\; \;\;\;\;\;\; \;\;\;\;\;\;\;\;\;\;\;\;\;\;\;\;\;\;\;\;\;$\end{center}
\end{remark}

The following proposition justifies the recursiveness of the systems $(\widetilde{L}^\xi(\Sigma, \vec{k}))_{\xi<\omega_1}$ and $(\widetilde{L}^\xi(\Sigma, \vec{k} ; \upsilon))_{\xi<\omega_1}$.
\newline
For a family $\F \subseteq \widetilde{L}^{<\infty} (\Sigma\cup\{\upsilon\}, \vec{k})$ and $t\in \widetilde{L}_0(\Sigma\cup\{\upsilon\}, \vec{k})$, we set

\begin{center}
$\F(t) = \{\bw\in \widetilde{L}^{<\infty} (\Sigma\cup\{\upsilon\}, \vec{k}) : $ either $\bw=(w_1,\ldots,w_l)\neq \emptyset$ and $(t,w_1, \ldots,w_l) \in \F$\\
or $\bw=\emptyset$ and $(t) \in\F\},\;\;\;\;\;\;\;\;\;\;\;\;\;\;\;\;\;\;\;\;\;\;\;\;\;\;\;\;\;\;\;\;\;\;\;\;\;\;\;\;\;\;\;\;\;\;\;\; \;\;\;\;\;\;\;\;\;\;\;\;\;\;\;\;\;\;$\\
$\F -t = \{\bw \in \F :$ either $\bw = (w_1,\ldots,w_l)\neq \emptyset$ and $t <_{\textsl{R}_1}w_1$, or
$\bw =\emptyset\}.\;\;\;\;\;\;\;\;\;\;\;\;\;\;\;\;\;\;$
\end{center}

\begin{prop}\label{justification}
For every countable ordinal $\xi\ge 1$, there exists a concrete sequence
$(\xi_n)_{n\in\nat}$ of countable ordinals with $\xi_n<\xi$ such that for $\Sigma=\{ \alpha_n:\;n \in \mathbb{Z}^\ast\}$ an alphabet, $\upsilon \notin \Sigma$ a variable, $\vec{k}=(k_n)_{n\in\mathbb{Z}^\ast}\subseteq\nat,$ $s\in \widetilde{L}_0(\Sigma, \vec{k})$ and $t\in \widetilde{L}_0(\Sigma, \vec{k}; \upsilon)$, with $\min dom^+(s)=\min dom^+(t)=n$, we have
\begin{center} $\widetilde{L}^\xi(\Sigma, \vec{k}) (s) = \widetilde{L}^{\xi_n}(\Sigma, \vec{k})\cap (\widetilde{L}^{<\infty}(\Sigma, \vec{k}) - s),\;\;$ and \\
$\widetilde{L}^\xi(\Sigma, \vec{k}; \upsilon) (t) = \widetilde{L}^{\xi_n}(\Sigma, \vec{k}; \upsilon)\cap (\widetilde{L}^{<\infty}(\Sigma, \vec{k}; \upsilon) - t)$.
\end{center}

Moreover, $\xi_n =\zeta$ for every $n\in\nat$ if $\xi = \zeta+1$, and
$(\xi_n)_{n\in\nat}$ is a strictly increasing sequence with $\sup_n \xi_n=\xi$ if $\xi$
is a limit ordinal.
\end{prop}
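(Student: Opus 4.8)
The plan is to reduce the whole statement to the analogous recursiveness property of the Schreier families $(\A_\xi)_{\xi<\omega_1}$ themselves, and then transport that property to words through the natural projection onto $\nat$. The recursiveness of the Schreier families is the assertion that for each $\xi\ge 1$ there are ordinals $\xi_n<\xi$ such that, for every $n\in\nat$ and every $u\in[\nat]^{<\omega}$ with $n<\min u$ (the case $u=\emptyset$ included),
\begin{center}
$\{n\}\cup u\in\A_\xi\iff u\in\A_{\xi_n}$,
\end{center}
with $\xi_n=\zeta$ for all $n$ when $\xi=\zeta+1$, and $(\xi_n)_n$ strictly increasing with supremum $\xi$ when $\xi$ is a limit ordinal. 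This is established by a transfinite induction reading off the clauses of Definition~\ref{Irecursivethin}, and it is these $\xi_n$ that I would use in the proposition.

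The bridge is the map $\Phi$ sending a finite orderly sequence $\bw=(w_1,\ldots,w_l)$ to $\Phi(\bw)=\{\min dom^+(w_1),\ldots,\min dom^+(w_l)\}\in[\nat]^{<\omega}$. First I would check that $\Phi(\bw)$ is an admissible Schreier input: since each $w_i\in\widetilde{L}_0$ has $dom^-(w_i)\neq\emptyset$ and $dom^+(w_i)\neq\emptyset$, the relation $w_i<_{\textsl{R}_1}w_{i+1}$ places all of $dom^-(w_{i+1})$ below $dom(w_i)$ and all of $dom^+(w_{i+1})$ above it, so $\min dom^+(w_i)<\min dom^+(w_{i+1})$ and the values of $\Phi(\bw)$ are strictly increasing. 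By Definition~\ref{recursivethinblock}, $\widetilde{L}^\xi(\Sigma,\vec k)=\{\bw:\Phi(\bw)\in\A_\xi\}$ and likewise for variable words, i.e. the Schreier system on words is exactly the $\Phi$-pullback of the one on $\nat$.

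Next I would unwind the two set operations against $\Phi$. Fix $s$ with $\min dom^+(s)=n$. From the definition of $\F(s)$, a nonempty $\bw=(w_1,\ldots,w_l)$ lies in $\widetilde{L}^\xi(\Sigma,\vec k)(s)$ iff $s<_{\textsl{R}_1}w_1$ and $(s,w_1,\ldots,w_l)\in\widetilde{L}^\xi(\Sigma,\vec k)$, that is iff $s<_{\textsl{R}_1}w_1$ and $\{n\}\cup\Phi(\bw)\in\A_\xi$; and $\emptyset\in\widetilde{L}^\xi(\Sigma,\vec k)(s)$ iff $\{n\}\in\A_\xi$. Because $s<_{\textsl{R}_1}w_1$ forces $n<\min\Phi(\bw)$ by the straddling computation above, the Schreier recursiveness yields $\{n\}\cup\Phi(\bw)\in\A_\xi\iff\Phi(\bw)\in\A_{\xi_n}$, and for the empty sequence $\{n\}\in\A_\xi\iff\emptyset\in\A_{\xi_n}$. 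Hence $\bw\in\widetilde{L}^\xi(\Sigma,\vec k)(s)$ iff $\Phi(\bw)\in\A_{\xi_n}$ and ($s<_{\textsl{R}_1}w_1$ or $\bw=\emptyset$), which is exactly membership in $\widetilde{L}^{\xi_n}(\Sigma,\vec k)\cap(\widetilde{L}^{<\infty}(\Sigma,\vec k)-s)$. The variable-word identity is proved verbatim, replacing $\widetilde{L}_0(\Sigma,\vec k)$ by $\widetilde{L}_0(\Sigma,\vec k;\upsilon)$ throughout; the same $(\xi_n)$ serves, since it depends only on $\A_\xi$, and the ``moreover'' clause is inherited directly from the corresponding clause for the Schreier families.

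The main obstacle is the Schreier recursiveness itself, and within it the genuinely delicate case is the limit ordinal governed by the Cantor normal form (clause (3iii) of Definition~\ref{Irecursivethin}): there one must track how the forced decomposition of $\{n\}\cup u$ into the blocks prescribed by $\A_{\omega^a}$ and the $\A_{\omega^{a_i}}$ reorganizes after deleting $n$, so as to read off $\xi_n$, and then verify both $\xi_n<\xi$ and, for limit $\xi$, that the $\xi_n$ increase to $\xi$. A secondary point needing care is the boundary at $\bw=\emptyset$: one must confirm $\{n\}\in\A_\xi\iff\xi_n=0$ (equivalently $\emptyset\in\A_{\xi_n}$), ensuring the empty sequence is included or excluded consistently on both sides of each identity.
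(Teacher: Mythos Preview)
Your proposal is correct and follows essentially the same approach as the paper: reduce everything to the recursiveness of the Schreier families $\A_\xi$ on $\nat$ (the paper simply cites this as Theorem~1.6 in \cite{F3}, whereas you sketch the transfinite induction), and then transport it to $\omega$-$\mathbb{Z}^\ast$-located words via the projection $\bw\mapsto\{\min dom^+(w_i)\}$. Your expanded verification of the $\emptyset$-case and of why $<_{\textsl{R}_1}$ forces strict increase of the $\min dom^+$ values is more explicit than the paper's two-line proof, but the underlying argument is the same.
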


\begin{proof}
It follows from Theorem~1.6 in \cite{F3}, according to which for every countable ordinal $\xi >0$ there exists a concrete sequence
$(\xi_n)_{n\in\nat}$ of countable ordinals with  $\xi_n <\xi$, such that $\A_\xi (n) = \A_{\xi_n} \cap [\{n+1,n+2,\ldots\}]^{<\omega}$ for every $ n\in \nat$, where,
$\A_\xi(n) = \{s\in [\nat]^{<\omega} : s \in [\nat]^{<\omega}_{>0}, n < \min s$ and $\{n\}\cup s\in \A_\xi$ or $s=\emptyset$ and $\{n\}\in \A_\xi \}$.
Moreover, $\xi_n = \zeta$ for every $n\in\nat$ if $\xi = \zeta+1$, and
$(\xi_n)_{n\in\nat}$ is a strictly increasing sequence with $\sup_n \xi_n=\xi$ if $\xi$
is a limit ordinal.
\end{proof}

In order to state and prove the principal result of this section, a Ramsey type partition theorem for $\omega$-$\mathbb{Z}^\ast$-located words extended to every countable order, we need the following notation:

\subsection*{Notation}
Let $\Sigma=\{\alpha_n:\; n \in \mathbb{Z}^\ast\}$ be an alphabet, $\upsilon \notin \Sigma$ a variable and  $\vec{k}=(k_n)_{n\in\mathbb{Z}^\ast}\subseteq \nat$ such that  $(k_n)_{n\in \nat}$ and $(k_{-n})_{n\in \nat}$ are increasing sequences. For $\vec{w}=(w_n)_{n\in\nat} \in \widetilde{L}^\infty (\Sigma, \vec{k} ; \upsilon)$, $\bw = (w_1,\ldots,w_l)\in \widetilde{L}^{<\infty}(\Sigma, \vec{k} ; \upsilon)$ and $t\in \widetilde{L}_0(\Sigma, \vec{k} ; \upsilon)$, we set:

$\widetilde{E}^{<\infty}(\vec{w}) = \{\bu=(u_1,\ldots,u_l)\in \widetilde{L}^{<\infty} (\Sigma, \vec{k}) :\; l\in\nat, u_1,\ldots,u_l\in \widetilde{E}(\vec{w})\}\cup \{\emptyset \}$,

$\widetilde{EV}^{<\infty}(\vec{w}) = \{\bu=(u_1,\ldots,u_l)\in \widetilde{L}^{<\infty} (\Sigma, \vec{k} ; \upsilon) :\; l\in\nat, u_1,\ldots,u_l\in \widetilde{EV}(\vec{w})\}\cup \{\emptyset \}$,

$\widetilde{E}(\bw)=\{T_{(p_1,q_1)}(w_{n_1})\star \ldots \star T_{(p_\lambda,q_\lambda)}(w_{n_\lambda})\in \widetilde{L}_0(\Sigma, \vec{k})  :\; 1\leq n_1<\ldots<n_\lambda\leq l$ and \begin{center}
$\;\;\;\;\;\;\;\;\;(p_i,q_i)\in\nat\times \nat$ with $1\leq p_i \leq k_{n_i},\;1\leq q_i\leq k_{-n_i}$ for every $1\leq i \leq \lambda \}$,
\end{center}

$\widetilde{EV}(\bw)=\{T_{(p_1,q_1)}(w_{n_1})\star \ldots \star T_{(p_\lambda,q_\lambda)}(w_{n_\lambda})\in \widetilde{L}_0(\Sigma, \vec{k} ; \upsilon)  :\; 1\leq n_1<\ldots<n_\lambda\leq l$ and \begin{center}
$\;\;\;\;\;\;\;\;\;\;\;\;\;\;(p_i,q_i)\in(\nat\times\nat)\cup\{(0,0)\}$ with $0\leq p_i \leq k_{n_i},\;0\leq q_i \leq k_{-n_i}$ for every\\ $1\leq i \leq \lambda$ and $(0,0) \in \{ (p_1,q_1),\ldots,(p_\lambda,q_\lambda)\} \},$ $\;\;\;\;\;\;\;\;\;\;\;\;\;\;\;\;\;\;\;\;$
\end{center}

$\widetilde{E}^{<\infty}(\bw) = \{\bu=(u_1,\ldots,u_l)\in \widetilde{L}^{<\infty} (\Sigma, \vec{k}) : l\in\nat, u_1,\ldots,u_l\in \widetilde{E}(\bw)\}\cup \{\emptyset \},$ and

$\widetilde{EV}^{<\infty}(\bw) = \{\bu=(u_1,\ldots,u_l)\in \widetilde{L}^{<\infty} (\Sigma, \vec{k} ; \upsilon) : l\in\nat, u_1,\ldots,u_l\in \widetilde{EV}(\bw)\}\cup \{\emptyset \}$.
\newline
Observe that the sets $\widetilde{EV}(\bw)$, $\widetilde{E}(\bw)$ are finite. Also, we set

$\vec{w} -t = (w_n)_{n\geq l}\in \widetilde{L}^\infty (\Sigma, \vec{k} ; \upsilon)$, where $l=\min \{n\in\nat : t<_{\textsl{R}_1}w_n\}$, and

 $\vec{w}-\bw =  \vec{w} -w_l$.

\begin{thm}
[\textsf{Ramsey type partition theorem on Schreier families for $\omega$-$\mathbb{Z}^\ast$-located words}]
\label{thm:block-Ramsey}
Let $\xi\geq 1$ be a countable ordinal, $\Sigma=\{\alpha_n :\;n \in \mathbb{Z}^\ast\}$ be an alphabet, $\upsilon \notin \Sigma$ a variable and $\vec{k}=(k_n)_{n\in\mathbb{Z}^\ast}\subseteq \nat$ such that  $(k_n)_{n\in \nat},\;(k_{-n})_{n\in \nat}$ are increasing sequences. For every $\G\subseteq \widetilde{L}^{<\infty}(\Sigma, \vec{k}),$ $\F \subseteq \widetilde{L}^{<\infty}(\Sigma, \vec{k} ; \upsilon)$ and every infinite orderly sequence $\vec{w} \in \widetilde{L}^\infty (\Sigma, \vec{k} ; \upsilon)$ of variable $\omega$-$\mathbb{Z}^\ast$-located words there exists a variable extraction $\vec{u}\prec \vec{w}$ of $\vec{w}$ such that:

either $\widetilde{L}^\xi(\Sigma, \vec{k}) \cap \widetilde{E}^{<\infty}(\vec{u})\subseteq \G$, or
$\widetilde{L}^\xi(\Sigma, \vec{k}) \cap \widetilde{E}^{<\infty}(\vec{u})\subseteq \widetilde{L}^{<\infty}(\Sigma, \vec{k})\setminus \G,$ and

 either $\widetilde{L}^\xi(\Sigma, \vec{k} ; \upsilon) \cap \widetilde{EV}^{<\infty}(\vec{u})\subseteq \F$, or
$\widetilde{L}^\xi(\Sigma, \vec{k} ; \upsilon) \cap \widetilde{EV}^{<\infty}(\vec{u})\subseteq \widetilde{L}^{<\infty}(\Sigma, \vec{k} ; \upsilon)\setminus \F$.
\end{thm}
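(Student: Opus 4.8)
\emph{The plan} is to argue by transfinite induction on $\xi$, handling the two families $\F$ and $\G$ simultaneously, with Theorem~\ref{thm:block-Ramsey2} as the base case and Proposition~\ref{justification} supplying the ordinal drop at each recursive step. The case $\xi=0$ is trivial, since $\widetilde{L}^0(\Sigma,\vec{k})=\{\emptyset\}=\widetilde{L}^0(\Sigma,\vec{k};\upsilon)$ and the dichotomy is then automatic for any extraction; I keep it in the induction hypothesis because the ordinals $\xi_n$ below may equal $0$. For $\xi=1$, since $\A_1=\{\{n\}:n\in\nat\}$, the sets $\widetilde{L}^1(\Sigma,\vec{k};\upsilon)\cap\widetilde{EV}^{<\infty}(\vec{u})$ and $\widetilde{L}^1(\Sigma,\vec{k})\cap\widetilde{E}^{<\infty}(\vec{u})$ reduce to the one-term sequences, i.e. to $\widetilde{EV}(\vec{u})$ and $\widetilde{E}(\vec{u})$; so I would color a variable word $v$ according to whether $(v)\in\F$ and a constant word $z$ according to whether $(z)\in\G$, and apply Theorem~\ref{thm:block-Ramsey2} with $r=s=2$ to $\vec{w}$ to obtain an extraction $\vec{u}\prec\vec{w}$ on which both colorings are constant, which is exactly the two dichotomies for $\xi=1$.

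For the inductive step I assume the statement for every countable ordinal $<\xi$ and fix the sequence $(\xi_n)_{n\in\nat}$ of Proposition~\ref{justification}, so that $\xi_n<\xi$ and, for any first block $t$ with $\min dom^{+}(t)=n$,
$$\widetilde{L}^\xi(\Sigma,\vec{k};\upsilon)(t)=\widetilde{L}^{\xi_n}(\Sigma,\vec{k};\upsilon)\cap\bigl(\widetilde{L}^{<\infty}(\Sigma,\vec{k};\upsilon)-t\bigr),$$
and likewise for constant words. The key consequence is that every $(t_1,\ldots,t_l)\in\widetilde{L}^\xi(\Sigma,\vec{k};\upsilon)$ decomposes into its first block $t_1$ and a tail $(t_2,\ldots,t_l)\in\widetilde{L}^{\xi_n}(\Sigma,\vec{k};\upsilon)$ with $n=\min dom^{+}(t_1)$, the empty tail being permitted so that one-term sequences are covered as well. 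The step divides into a local-decision (fusion) phase and a stabilization phase.

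\emph{Local decision.} I would construct, by induction on $m$, a decreasing chain of extractions $\vec{w}\succ\vec{v}_1\succ\vec{v}_2\succ\cdots$ and blocks $u_m$ taken as the first word of $\vec{v}_{m-1}$, and set $\vec{u}=(u_m)_{m\in\nat}$. At stage $m$ the first blocks that can be assembled from $u_1,\ldots,u_m$ form the \emph{finite} sets $\widetilde{EV}(u_1,\ldots,u_m)$ and $\widetilde{E}(u_1,\ldots,u_m)$; for each such word $w$, variable or constant, with $n=\min dom^{+}(w)$, I apply the induction hypothesis at the ordinal $\xi_n<\xi$ to decide the conditional family it governs ($\F(w)$ if $w$ is variable, $\G(w)$ if $w$ is constant) over the current tail, refining finitely many times to obtain $\vec{v}_m\prec\vec{v}_{m-1}$ that $\xi_n$-decides all of them. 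The diagonal $\vec{u}$ is then an extraction of $\vec{w}$ with the property that for every first block $t\in\widetilde{EV}(\vec{u})$ the family $\F(t)$ is $\xi_n$-decided on the tail $\vec{u}-t$ (all later blocks refine the deciding $\vec{v}_m$), and similarly for $\G$ on $\widetilde{E}(\vec{u})$.

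\emph{Stabilization and conclusion.} The local decision attaches to each $t\in\widetilde{EV}(\vec{u})$ a color recording whether its $\xi_n$-decision sends the admissible tails into $\F(t)$ or into its complement, and likewise a color on $\widetilde{E}(\vec{u})$ via $\G$; applying Theorem~\ref{thm:block-Ramsey2} to these two colorings produces $\vec{u}'\prec\vec{u}$ on which both colors are constant. Then any $(t_1,\ldots,t_l)\in\widetilde{L}^\xi(\Sigma,\vec{k};\upsilon)\cap\widetilde{EV}^{<\infty}(\vec{u}')$ has first block $t_1\in\widetilde{EV}(\vec{u}')$ of the fixed color and tail in $\widetilde{L}^{\xi_n}(\Sigma,\vec{k};\upsilon)\cap\widetilde{EV}^{<\infty}(\vec{u}'-t_1)$; since $\vec{u}'-t_1$ is an extraction of $\vec{u}-t_1$, on which $\F(t_1)$ is $\xi_n$-decided, membership of $(t_1,\ldots,t_l)$ in $\F$ is forced uniformly by that single color, yielding one side of the dichotomy, and the constant-word dichotomy for $\G$ follows identically with $\widetilde{E}$ in place of $\widetilde{EV}$. \emph{The main obstacle} is the local-decision fusion: one must invoke the induction hypothesis infinitely often (once per candidate first block) at the varying ordinals $\xi_n$ — which in the limit case increase to $\xi$ as $\min dom^{+}(t)$ grows — and diagonalize into a single extraction of $\vec{w}$, all the while ensuring that each $\xi_n$-decision persists under the further extractions that follow it; the finiteness of $\widetilde{EV}(u_1,\ldots,u_m)$ and $\widetilde{E}(u_1,\ldots,u_m)$ at each stage is precisely what makes the repeated application of the hypothesis and the diagonalization legitimate.
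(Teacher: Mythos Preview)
Your proposal is correct and follows essentially the same strategy as the paper: transfinite induction on $\xi$ with Theorem~\ref{thm:block-Ramsey2} as the base case, Proposition~\ref{justification} supplying the ordinal drop, a fusion/diagonal argument to $\xi_n$-decide every candidate first block, and a final application of the $\xi=1$ case to stabilize the two resulting colorings. The only difference is organizational: the paper packages the diagonal argument into two abstract lemmas (Lemmas~\ref{lem:block-Ramsey0} and~\ref{lem:block-Ramsey}) and applies them in two separate passes (first for the variable family $\F$, then for the constant family $\G$ on the resulting extraction), whereas you do one combined fusion handling both $\widetilde{EV}(u_1,\ldots,u_m)$ and $\widetilde{E}(u_1,\ldots,u_m)$ at each stage; both organizations work and yield the same conclusion.
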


In the proof of this partition theorem we will make use of a diagonal argument, contained in the following lemmas.

\begin{lem}\label{lem:block-Ramsey0}
Let $\Sigma=\{\alpha_n:\;n \in \mathbb{Z}^\ast\}$ be an alphabet, $\upsilon \notin \Sigma$ a variable, $\vec{k}=(k_n)_{n\in\mathbb{Z}^\ast}\subseteq \nat$ such that  $(k_n)_{n\in \nat},\;(k_{-n})_{n\in \nat}$ are increasing sequences, $\vec{w} = (w_n)_{n\in\nat} \in \widetilde{L}^\infty (\Sigma, \vec{k} ; \upsilon)$, and
$$\Pi = \{(t,\vec{s}): t\in \widetilde{L}_0(\Sigma, \vec{k}),\vec{s} = (s_n)_{n\in\nat}\in \widetilde{L}^\infty (\Sigma, \vec{k} ; \upsilon)\;\text{with}\;\vec{s}\prec \vec{w}\;\text{and}\;t<_{\textsl{R}_1}s_1\}.$$

If a subset $\R$ of $\Pi$ satisfies
\begin{itemize}
\item[(i)] for every $(t,\vec{s})\in\Pi$, there exists $(t,\vec{s}_1)\in\R$ with
$\vec{s}_1 \prec \vec{s}$; and
\item[(ii)] for every $(t,\vec{s}) \in\R$ and $\vec{s}_1 \prec \vec{s}$, we have $(t,\vec{s}_1)\in\R$,
\end{itemize}
then there exists $\vec{u} \prec \vec{w}$, such that
$(t,\vec{s})\in \R$ for all $t\in \widetilde{E}(\vec{u})$ and $\vec{s} \prec \vec{u}-t$.
\end{lem}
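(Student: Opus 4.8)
The plan is to construct $\vec{u}$ by a diagonal argument, building it as the ``diagonal'' of a decreasing sequence of extractions that stabilizes the membership in $\R$ for ever-larger initial segments of extracted words. The key structural feature to exploit is that each $\vec{w}\in\widetilde{L}^\infty(\Sigma,\vec{k};\upsilon)$ has only \emph{finitely many} extracted variable words whose support begins at or before any fixed index, so at each stage only finitely many new words $t$ enter the picture.

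\medskip

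\noindent\textbf{Step 1 (Building a single diagonal step).} First I would show that, given any $(t_0,\vec{s})\in\Pi$, one can pass to $\vec{s}'\prec\vec{s}$ with $(t_0,\vec{s}')\in\R$: this is immediate from hypothesis (i). The real work is to handle \emph{all} relevant prefixes at once. For a fixed $\vec{s}\prec\vec{w}$, I would enumerate the finite set of extracted constant words $t\in\widetilde{E}(\bw)$ arising from the first few terms of $\vec{s}$, and successively shrink $\vec{s}$: starting from $\vec{s}^{(0)}=\vec{s}$, for each such $t$ in turn, apply (i) to the pair $(t,\vec{s}^{(j-1)}-t)$ to obtain $\vec{s}^{(j)}\prec\vec{s}^{(j-1)}-t$ with $(t,\vec{s}^{(j)})\in\R$. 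Since $\R$ is closed under further extraction by hypothesis (ii), once $(t,\vec{s}^{(j)})\in\R$ holds it continues to hold for every extraction of $\vec{s}^{(j)}$. After finitely many steps I obtain an extraction of $\vec{s}$ that lies in $\R$ simultaneously for every $t$ in the chosen finite collection.

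\medskip

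\noindent\textbf{Step 2 (Iterating and diagonalizing).} I would then iterate Step 1 to produce a chain $\vec{w}=\vec{v}^{(0)}\succ\vec{v}^{(1)}\succ\vec{v}^{(2)}\succ\cdots$ of extractions, where at stage $n$ the extraction $\vec{v}^{(n)}$ has been arranged so that $(t,\vec{s})\in\R$ for every $t\in\widetilde{E}$ built from the first $n$ terms of $\vec{v}^{(n-1)}$ and every $\vec{s}\prec\vec{v}^{(n)}-t$, using closure property (ii) to guarantee that the condition, once secured, is never destroyed by later shrinking. I would simultaneously record the first term $u_n$ of $\vec{v}^{(n)}$ (chosen so that $u_1<_{\textsl{R}_1}u_2<_{\textsl{R}_1}\cdots$, which is possible since each $\vec{v}^{(n)}$ is an extraction of the previous one) and define $\vec{u}=(u_n)_{n\in\nat}$. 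A standard verification shows $\vec{u}\prec\vec{w}$, because each $u_n\in\widetilde{EV}(\vec{v}^{(n-1)})\subseteq\widetilde{EV}(\vec{w})$, and so $\widetilde{EV}(\vec{u})\subseteq\widetilde{EV}(\vec{w})$.

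\medskip

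\noindent\textbf{Step 3 (Verifying the conclusion).} Finally, for the diagonal sequence $\vec{u}$, take any $t\in\widetilde{E}(\vec{u})$ and any $\vec{s}\prec\vec{u}-t$. Since $t$ is extracted from finitely many terms of $\vec{u}$, there is some stage $n$ by which $t$ was already among the finite collection treated in Step 1, so $(t,\vec{v}^{(n)})\in\R$; and since $\vec{u}-t\prec\vec{v}^{(n)}$ and hence $\vec{s}\prec\vec{v}^{(n)}$, closure property (ii) gives $(t,\vec{s})\in\R$, as required. I expect the main obstacle to be the bookkeeping in Steps 1--2: one must track precisely which extracted words $t$ have been ``captured'' at each stage and verify that the tail-extraction relation $\vec{s}\prec\vec{u}-t$ indeed places $\vec{s}$ below the stage at which $t$ was secured, so that the closure hypothesis (ii) genuinely applies. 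Matching these indices correctly, while ensuring the diagonal $\vec{u}$ is itself a legitimate extraction of $\vec{w}$ (i.e. that $\widetilde{EV}(\vec{u})\subseteq\widetilde{EV}(\vec{w})$ and the $<_{\textsl{R}_1}$-ordering is preserved), is the delicate part of the argument.
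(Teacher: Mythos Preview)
Your proposal is correct and follows essentially the same diagonal-fusion argument as the paper: at each stage one enumerates the finitely many $t\in\widetilde{E}$ of the portion built so far, successively refines using hypothesis (i), invokes (ii) to preserve earlier gains, records the first term of the current extraction as the next diagonal term, and then verifies by chasing back to the stage where a given $t$ first appeared. One small point of bookkeeping worth tightening: in Step~2 you secure $\R$-membership for $t$ built from the first $n$ terms of $\vec{v}^{(n-1)}$, whereas in Step~3 you need it for $t$ built from the diagonal terms $u_1,\ldots,u_m$; these two finite sets need not coincide as stated. The paper resolves this cleanly by taking, at stage $n$, the finite set $\widetilde{E}((u_0,\ldots,u_n))$ of the \emph{diagonal terms already chosen} (rather than the first $n$ terms of the ambient sequence), which makes the index-matching in Step~3 immediate.
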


\begin{proof}
Let $u_0 = w_1$.
According to condition $(i)$, there exists $\vec{s}_1 = (s^1_n)_{n\in\nat} \in
\widetilde{L}^\infty (\Sigma, \vec{k} ; \upsilon)$
with $\vec{s}_1\prec\vec{w}-u_0$ such that
$(T_{(p,q)}(u_0),\vec{s}_1)\in \R$ for every $1\leq p\leq k_{1},$ $1\leq q\leq k_{-1}.$
Let $u_1 = s^1_1$.
Of course, $u_0<_{\textsl{R}_1}u_1$ and $u_0, u_1 \in \widetilde{EV}(\vec{w})$.
We assume now that there have been constructed
$\vec{s}_1,\ldots,\vec{s}_n \in \widetilde{L}^\infty (\Sigma, \vec{k} ; \upsilon)$ and $u_0,u_1,\ldots,u_n \in \widetilde{EV}(\vec{w})$, with
$\vec{s}_n \prec \ldots \prec \vec{s}_1 \prec \vec{w}$, $u_0<_{\textsl{R}_1}u_1<_{\textsl{R}_1}\ldots <_{\textsl{R}_1} u_n$ and
$(t,\vec{s}_i)\in\R$ for all $t\in \widetilde{E}((u_0,\ldots,u_{i-1}))$ and $1\le i\le n.$

We will construct $\vec{s}_{n+1}$ and $u_{n+1}$.
Let $\{ t_1,\ldots, t_l\} = \widetilde{E} ((u_0,\ldots, u_n))$.
According to condition $(i)$, there exists
$\vec{s}_{n+1}^1,\ldots, \vec{s}_{n+1}^l \in \widetilde{L}^\infty (\Sigma, \vec{k} ; \upsilon)$ such that
$\vec{s}_{n+1}^l \prec\ldots \prec \vec{s}_{n+1}^1 \prec \vec{s}_n -u_n$ and
$(t_i, \vec{s}_{n+1}^i)\in \R$ for every $1\le i\le l$.
Set $\vec{s}_{n+1} = \vec{s}_{n+1}^l$. If $\vec{s}_{n+1} = (s^{n+1}_n)_{n\in\nat}$, set $u_{n+1} = s^{n+1}_1$.
Of course $u_n <_{\textsl{R}_1} u_{n+1}$, $u_{n+1} \in \widetilde{EV}(\vec{w})$ and, according to condition (ii),
$(t_i, \vec{s}_{n+1})\in \R$ for all $1\le i\le l$.

Set $\vec{u} = (u_0,u_1,u_2,\ldots) \in \widetilde{L}^\infty (\Sigma, \vec{k} ; \upsilon)$. Then
$\vec{u} \prec \vec{w}$, since $u_0<_{\textsl{R}_1}u_1<_{\textsl{R}_1} \ldots \in \widetilde{EV}(\vec{w})$.
Let $t\in \widetilde{E}(\vec{u})$ and $\vec{s} \prec \vec{u}-t$.
Set $n_0 = \min \{n\in\nat  :\; t\in \widetilde{E}((u_0,u_1,\ldots,u_n))\}$. Since $t\in \widetilde{E}((u_0,u_1,\ldots,u_{n_0}))$,
we have $(t,\vec{s}_{n_0+1})\in \R$. Then, according to $(ii)$, we have $(t,\vec{u}-u_{n_0})\in\R$, since
$\vec{u}-u_{n_0}\prec \vec{s}_{n_0+1}$, and also $(t,\vec{s}) \in\R$, since $\vec{s}\prec\vec{u}-u_{n_0}=\vec{u}-t$.
\end{proof}

\begin{lem}\label{lem:block-Ramsey}
Let $\Sigma=\{\alpha_n:\;n \in \mathbb{Z}^\ast\}$ be an alphabet, $\upsilon \notin \Sigma$ a variable, $\vec{k}=(k_n)_{n\in\mathbb{Z}^\ast}\subseteq \nat$ such that  $(k_n)_{n\in \nat},\;(k_{-n})_{n\in \nat}$ are increasing sequences, $\vec{w} = (w_n)_{n\in\nat} \in \widetilde{L}^\infty (\Sigma, \vec{k} ; \upsilon)$, and
$$\Pi = \{(t,\vec{s}): t\in \widetilde{L}_0(\Sigma, \vec{k} ; \upsilon),\vec{s} = (s_n)_{n\in\nat}\in \widetilde{L}^\infty (\Sigma, \vec{k} ; \upsilon)\;\text{with}\;\vec{s}\prec \vec{w}\;\text{and}\;t<_{\textsl{R}_1}s_1\}.$$

If a subset $\R$ of $\Pi$ satisfies
\begin{itemize}
\item[(i)] for every $(t,\vec{s})\in\Pi$, there exists $(t,\vec{s}_1)\in\R$ with
$\vec{s}_1 \prec \vec{s}$; and
\item[(ii)] for every $(t,\vec{s}) \in\R$ and $\vec{s}_1 \prec \vec{s}$, we have $(t,\vec{s}_1)\in\R$,
\end{itemize}
then there exists $\vec{u} \prec \vec{w}$, such that
$(t,\vec{s})\in \R$ for all $t\in \widetilde{EV}(\vec{u})$ and $\vec{s} \prec \vec{u}-t$.
\end{lem}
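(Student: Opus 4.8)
The plan is to mirror, essentially verbatim, the diagonal construction used for Lemma~\ref{lem:block-Ramsey0}, replacing the set of extracted words $\widetilde{E}$ by the set of extracted \emph{variable} words $\widetilde{EV}$ throughout. The two ingredients that make the scheme run are unchanged: for a finite orderly sequence $\bw$ the set $\widetilde{EV}(\bw)$ is finite, and conditions (i) and (ii) let us, given any finite list of constituents $t_1,\dots,t_l$ and any $\vec{s}\prec\vec{w}$, produce by $l$ successive applications of (i) (passing each time to a further extraction and invoking (ii) to preserve the memberships already secured) a single $\vec{s}'\prec\vec{s}$ with $(t_i,\vec{s}')\in\R$ for all $i$.

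First I would set $u_0=w_1$. Here the variable case is actually simpler than the constant one: since an extracted variable word of the one-term sequence $(u_0)$ must take $(p_1,q_1)=(0,0)$, we have $\widetilde{EV}((u_0))=\{u_0\}$, so a single application of (i) yields $\vec{s}_1\prec\vec{w}-u_0$ with $(u_0,\vec{s}_1)\in\R$; put $u_1$ equal to the first term of $\vec{s}_1$. Then I would recurse: having built $u_0<_{\textsl{R}_1}\cdots<_{\textsl{R}_1}u_n$ in $\widetilde{EV}(\vec{w})$ and $\vec{s}_n\prec\cdots\prec\vec{s}_1\prec\vec{w}$ with $(t,\vec{s}_i)\in\R$ for every $t\in\widetilde{EV}((u_0,\dots,u_{i-1}))$ and $1\le i\le n$, I enumerate the finite set $\widetilde{EV}((u_0,\dots,u_n))=\{t_1,\dots,t_l\}$, use the refinement argument above to get $\vec{s}_{n+1}\prec\vec{s}_n-u_n$ with $(t_i,\vec{s}_{n+1})\in\R$ for all $i$, and let $u_{n+1}$ be the first term of $\vec{s}_{n+1}$. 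Setting $\vec{u}=(u_0,u_1,u_2,\dots)$ gives an extraction $\vec{u}\prec\vec{w}$, since each $u_i\in\widetilde{EV}(\vec{w})$ and the $u_i$ are $<_{\textsl{R}_1}$-increasing.

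For the verification I would take any $t\in\widetilde{EV}(\vec{u})$ and any $\vec{s}\prec\vec{u}-t$. Since $t$ involves only finitely many of the $u_i$, the index $n_0=\min\{n:\,t\in\widetilde{EV}((u_0,\dots,u_n))\}$ is well defined, and by construction $(t,\vec{s}_{n_0+1})\in\R$. Because $u_{n_0+1},u_{n_0+2},\dots$ are successively extracted from $\vec{s}_{n_0+1}$, we have $\vec{u}-u_{n_0}\prec\vec{s}_{n_0+1}$, so condition (ii) gives $(t,\vec{u}-u_{n_0})\in\R$, and a further application of (ii) along $\vec{s}\prec\vec{u}-u_{n_0}$ gives $(t,\vec{s})\in\R$, as required.

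The one point that needs the structure of the $<_{\textsl{R}_1}$ ordering, rather than mere bookkeeping, is the identity $\vec{u}-t=\vec{u}-u_{n_0}$ used in the last step. This is exactly the fact already exploited in Lemma~\ref{lem:block-Ramsey0} and carries over unchanged: $t$ has $u_{n_0}$ as its highest-indexed constituent, so $\max dom(t)=\max dom(u_{n_0})$ and the surrounding/nesting shape of $<_{\textsl{R}_1}$ forces $\min\{n:\,t<_{\textsl{R}_1}u_n\}=\min\{n:\,u_{n_0}<_{\textsl{R}_1}u_n\}$. I expect this verification of the tail identity, shared with the constant case, to be the only genuinely delicate step; the remainder is a transcription of the proof of Lemma~\ref{lem:block-Ramsey0} with $\widetilde{E}$ replaced by $\widetilde{EV}$.
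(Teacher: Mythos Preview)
Your proposal is correct and follows essentially the same approach as the paper. The paper's own proof is extremely brief: it sets $u_0=w_1$, applies condition~(i) once to obtain $\vec{s}_1\prec\vec{w}-u_0$ with $(u_0,\vec{s}_1)\in\R$, sets $u_1=s^1_1$, and then simply says ``the proof can be continued analogously to the proof of Lemma~\ref{lem:block-Ramsey0}''; your write-up unpacks exactly this, including the observation that $\widetilde{EV}((u_0))=\{u_0\}$ makes the base step simpler than in the constant case, and the tail identity $\vec{u}-t=\vec{u}-u_{n_0}$ (note that one actually needs both $\min dom(t)=\min dom(u_{n_0})$ and $\max dom(t)=\max dom(u_{n_0})$, which the nesting shape of $<_{\textsl{R}_1}$ indeed gives).
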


\begin{proof}
Let $u_0 = w_1$.
According to condition $(i)$, there exists $\vec{s}_1 = (s^1_n)_{n\in\nat} \in
\widetilde{L}^\infty (\Sigma, \vec{k} ; \upsilon)$
with $\vec{s}_1\prec\vec{w}-u_0$ such that
$(u_0,\vec{s}_1)\in \R$.
Let $u_1 = s^1_1$.
Of course, $u_0<_{\textsl{R}_1}u_1$ and $u_0, u_1 \in \widetilde{EV}(\vec{w})$. The proof can be continued analogously to the proof of Lemma~\ref{lem:block-Ramsey0}.
\end{proof}

We will now prove Theorem~\ref{thm:block-Ramsey}.

\begin{proof}[Proof of Theorem~\ref{thm:block-Ramsey}]
Let $\G\subseteq \widetilde{L}^{<\infty}(\Sigma, \vec{k}),$ $\F\subseteq \widetilde{L}^{<\infty}(\Sigma, \vec{k} ; \upsilon)$ and $\vec{w} \in \widetilde{L}^\infty (\Sigma, \vec{k} ; \upsilon)$. For $\xi=1$ the theorem is valid, according to Theorem~\ref{thm:block-Ramsey2}. Let $\xi>1$. Assume that the theorem is valid for every $\zeta <\xi$. Let $t\in \widetilde{L}_0(\Sigma, \vec{k} ; \upsilon)$ with $\min dom^+(t)=n$ and $\vec{s} = (s_n)_{n\in\nat}\in \widetilde{L}^\infty (\Sigma, \vec{k} ; \upsilon)$ with $\vec{s}\prec \vec{w}$ and $t<_{\textsl{R}_1}s_1.$ According to Proposition~\ref{justification}, there exists $\xi_n<\xi$ such that
\begin{center}
$\widetilde{L}^\xi(\Sigma, \vec{k}; \upsilon) (t) = \widetilde{L}^{\xi_n}(\Sigma, \vec{k}; \upsilon)\cap (\widetilde{L}^{<\infty}(\Sigma, \vec{k}; \upsilon) - t)$.
\end{center}
Using the induction hypothesis, there exists $\vec{s}_1 \prec\vec{s}$ such that

either $\widetilde{L}^{\xi_n}(\Sigma, \vec{k} ; \upsilon) \cap \widetilde{EV}^{<\infty}(\vec{s}_1)\subseteq \F(t)$,

 or $\widetilde{L}^{\xi_n}(\Sigma, \vec{k} ; \upsilon) \cap \widetilde{EV}^{<\infty}(\vec{s}_1)\subseteq \widetilde{L}^{<\infty}(\Sigma, \vec{k} ; \upsilon)\setminus \F(t)$.
\newline
Then $\vec{s}_1 \prec \vec{s} \prec \vec{w}$, and

either $\widetilde{L}^{\xi}(\Sigma, \vec{k} ; \upsilon)(t) \cap \widetilde{EV}^{<\infty}(\vec{s}_1)\subseteq \F(t),$

or $\widetilde{L}^{\xi}(\Sigma, \vec{k} ; \upsilon)(t) \cap \widetilde{EV}^{<\infty}(\vec{s}_1)\subseteq \widetilde{L}^{<\infty}(\Sigma, \vec{k} ; \upsilon)\setminus \F(t)$.
\begin{center} Let $\R_1= \{(t,\vec{s}):\; t\in \widetilde{L}_0(\Sigma, \vec{k} ; \upsilon)$,
$\vec{s} = (s_n)_{n\in\nat}\in \widetilde{L}^\infty (\Sigma, \vec{k} ; \upsilon),\vec{s}\prec \vec{w}$, $t<_{\textsl{R}_1}s_1,$ and \\  either $\widetilde{L}^{\xi}(\Sigma, \vec{k} ; \upsilon)(t) \cap \widetilde{EV}^{<\infty}(\vec{s})\subseteq \F(t)\;\;\;\;\;\;\;\;\;\;\;\;\;\;\;\;\;\;\;\; \;\;\;\;\;\;\;\;\; \;\;\;$ \\
 or $\widetilde{L}^{\xi}(\Sigma,\vec{k}; \upsilon)(t)\cap \widetilde{EV}^{<\infty}(\vec{s})\subseteq \widetilde{L}^{<\infty}(\Sigma,\vec{k}; \upsilon)\setminus\F(t)\}.\;\;\;\;\;\;\;\;\;\;\;\;$\end{center}

The family $\R_1$ satisfies the conditions $(i)$ (by the above arguments) and
$(ii)$ (obviously) of Lemma~\ref{lem:block-Ramsey}.
Hence, there exists $\vec{w}_1=(w_n^1)_{n\in \nat} \prec \vec{w}$ such that
$(t,\vec{s})\in \R_1$ for all $t\in \widetilde{EV}(\vec{w}_1)$ and $\vec{s} \prec \vec{w}_1-t$.

Analogously, defining the family

 \begin{center} $\R_2= \{(t,\vec{s}):\; t\in \widetilde{L}_0(\Sigma, \vec{k})$,
$\vec{s} = (s_n)_{n\in\nat}\in \widetilde{L}^\infty (\Sigma, \vec{k} ; \upsilon),\vec{s}\prec \vec{w_1}$, $t<_{\textsl{R}_1}s_1,$ and \\ either $\widetilde{L}^{\xi}(\Sigma, \vec{k})(t) \cap \widetilde{E}^{<\infty}(\vec{s})\subseteq \G(t)\;\;\;\;\;\;\;\;\;\;\;\;\;\;\;\;\;\;\;\;\;\;\;\;\;\;\;\;\; \;\;\;\;\;\;\;\;\; \;\;\;\;\;$ \\
or $\widetilde{L}^{\xi}(\Sigma,\vec{k})(t)\cap \widetilde{E}^{<\infty}(\vec{s})\subseteq \widetilde{L}^{<\infty}(\Sigma,\vec{k})\setminus\G(t)\}.\;\;\;\;\;\;\;\;\;\;\;\;\;\;
\;\;\;\;\;\;\;\;\;\;\;\;\;$\end{center}

\noindent We have that the family $\R_2$ satisfies the conditions $(i)$ and $(ii)$  of Lemma~\ref{lem:block-Ramsey0} for the sequence $\vec{w}_1.$ Hence, there exists $\vec{w}_2=(w_n^2)_{n\in \nat} \prec \vec{w_1}$ such that
$(t,\vec{s})\in \R_1$ for all $t\in \widetilde{E}(\vec{w}_2)$ and $\vec{s} \prec \vec{w}_2-t$. Let

$\G_1 = \{t\in \widetilde{E}(\vec{w}_2):\; \widetilde{L}^{\xi}(\Sigma, \vec{k})(t) \cap \widetilde{E}^{<\infty}(\vec{w}_2-t)\subseteq \G(t)\}$, and

$\F_1 = \{t\in \widetilde{EV}(\vec{w}_2):\; \widetilde{L}^{\xi}(\Sigma, \vec{k} ; \upsilon)(t) \cap \widetilde{EV}^{<\infty}(\vec{w}_2-t)\subseteq \F(t)\}$.

\noindent We use the induction hypothesis for $\xi=1$ (Theorem~\ref{thm:block-Ramsey2}).
Then, there exists a variable extraction $\vec{u} \prec \vec{w}_2$ of $\vec{w}_2$ such that:

either $\widetilde{E}(\vec{u})\subseteq \G_1$, or
$\widetilde{E}(\vec{u})\subseteq \widetilde{L}_0(\Sigma, \vec{k})\setminus \G_1;$ and,

 either $\widetilde{EV}(\vec{u})\subseteq \F_1$, or
$\widetilde{EV}(\vec{u})\subseteq \widetilde{L}_0(\Sigma, \vec{k} ; \upsilon)\setminus \F_1$.

\noindent Since $\vec{u} \prec \vec{w}_2$ we have that $\widetilde{E}(\vec{u})\subseteq \widetilde{E}(\vec{w}_2)$ and $\widetilde{EV}(\vec{u})\subseteq \widetilde{EV}(\vec{w}_2).$ Thus

either $\widetilde{L}^{\xi}(\Sigma, \vec{k})(t) \cap \widetilde{E}^{<\infty}(\vec{u}-t)\subseteq \G(t)$
for all $t\in \widetilde{E}(\vec{u})$,

 or $\widetilde{L}^{\xi}(\Sigma, \vec{k})(t) \cap \widetilde{E}^{<\infty}(\vec{u}-t)\subseteq \widetilde{L}^{<\infty}(\Sigma, \vec{k})\setminus \G(t)$ for all $t\in \widetilde{E}(\vec{u});$ and,

 either $\widetilde{L}^{\xi}(\Sigma, \vec{k} ; \upsilon)(t) \cap \widetilde{EV}^{<\infty}(\vec{u}-t)\subseteq \F(t)$
for all $t\in \widetilde{EV}(\vec{u})$,

 or $\widetilde{L}^{\xi}(\Sigma, \vec{k}; \upsilon )(t) \cap \widetilde{EV}^{<\infty}(\vec{u}-t)\subseteq \widetilde{L}^{<\infty}(\Sigma, \vec{k} ; \upsilon)\setminus \F(t)$ for all $t\in \widetilde{EV}(\vec{u})$.

\noindent Hence,

either $\widetilde{L}^\xi(\Sigma, \vec{k}) \cap \widetilde{E}^{<\infty}(\vec{u})\subseteq \G$, or
$\widetilde{L}^\xi(\Sigma, \vec{k}) \cap \widetilde{E}^{<\infty}(\vec{u})\subseteq \widetilde{L}^{<\infty}(\Sigma, \vec{k})\setminus \G;$ and,

 either $\widetilde{L}^\xi(\Sigma, \vec{k} ; \upsilon) \cap \widetilde{EV}^{<\infty}(\vec{u})\subseteq \F$, or
$\widetilde{L}^\xi(\Sigma, \vec{k} ; \upsilon) \cap \widetilde{EV}^{<\infty}(\vec{u})\subseteq \widetilde{L}^{<\infty}(\Sigma, \vec{k} ; \upsilon)\setminus \F$.
\end{proof}

The particular cases of Theorem~\ref{thm:block-Ramsey} for $\xi$ a finite ordinal and $\xi=\omega$ has the following statements.

\begin{cor}
[\textsf{Ramsey type partition theorem for variable $\omega$-$\mathbb{Z}^\ast$-located words}]
\label{cor:k-Ramsey}
Let $m\in\nat$, $\Sigma=\{\alpha_n:\;n \in \mathbb{Z}^\ast\}$ be an alphabet, $\upsilon \notin \Sigma$ a variable, $\vec{k}=(k_n)_{n\in\mathbb{Z}^\ast}\subseteq \nat$ such that $(k_n)_{n\in \nat},\;(k_{-n})_{n\in \nat}$ are increasing sequences, $r,s\in\nat$ and $\vec{w} \in \widetilde{L}^\infty (\Sigma, \vec{k} ; \upsilon)$. If $\widetilde{L}^m(\Sigma, \vec{k} ; \upsilon)=A_1\cup\ldots\cup A_r$ and $\widetilde{L}^m(\Sigma, \vec{k})=C_1\cup\ldots\cup C_s$, then there exists an extraction $\vec{u}\prec \vec{w}$ of $\vec{w}$ and $1\leq i_0 \leq r$, $1\leq j_0 \leq s$ such that
\begin{center}
$ \{(z_1,\ldots,z_m)\in \widetilde{L}^{<\infty}(\Sigma, \vec{k} ; \upsilon)) :\; z_1,\ldots,z_m\in \widetilde{EV}(\vec{u})\}\subseteq A_{i_0},$ and
\end{center}
\begin{center}
 $ \{(z_1,\ldots,z_m)\in \widetilde{L}^{<\infty}(\Sigma, \vec{k}) :\; z_1,\ldots,z_m\in \widetilde{E}(\vec{u})\}\subseteq C_{j_0}.\;\;\;\;\;\;\;\;\;\;\;\;\;\;\;$
\end{center}
\end{cor}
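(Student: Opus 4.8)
The plan is to read the corollary as the finite-ordinal instance $\xi=m$ of Theorem~\ref{thm:block-Ramsey}, upgraded from the two-valued dichotomy there to a genuine finite colouring. First I would identify the sets occurring in the conclusion. By Remark~\ref{rem1.4}(ii) and its evident constant analogue, $\widetilde{L}^m(\Sigma,\vec{k};\upsilon)$ (resp.\ $\widetilde{L}^m(\Sigma,\vec{k})$) consists precisely of the orderly $m$-tuples $(z_1,\ldots,z_m)$ with $z_1<_{\textsl{R}_1}\ldots<_{\textsl{R}_1}z_m\in \widetilde{L}_0(\Sigma,\vec{k};\upsilon)$ (resp.\ in $\widetilde{L}_0(\Sigma,\vec{k})$). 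Hence for any extraction $\vec{u}$ the set $\{(z_1,\ldots,z_m)\in \widetilde{L}^{<\infty}(\Sigma,\vec{k};\upsilon): z_1,\ldots,z_m\in \widetilde{EV}(\vec{u})\}$ is exactly $\widetilde{L}^m(\Sigma,\vec{k};\upsilon)\cap \widetilde{EV}^{<\infty}(\vec{u})$, and likewise the constant set equals $\widetilde{L}^m(\Sigma,\vec{k})\cap \widetilde{E}^{<\infty}(\vec{u})$. So the task reduces to producing $\vec{u}\prec\vec{w}$ and indices $i_0,j_0$ with $\widetilde{L}^m(\Sigma,\vec{k};\upsilon)\cap \widetilde{EV}^{<\infty}(\vec{u})\subseteq A_{i_0}$ and $\widetilde{L}^m(\Sigma,\vec{k})\cap \widetilde{E}^{<\infty}(\vec{u})\subseteq C_{j_0}$.

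The passage from the dichotomy supplied by Theorem~\ref{thm:block-Ramsey} to a monochromatic cell of a finite partition I would carry out by iteration, leaning on two facts: $\prec$ is transitive, and if $\vec{u}'\prec\vec{u}$ then $\widetilde{EV}(\vec{u}')\subseteq \widetilde{EV}(\vec{u})$, whence $\widetilde{EV}^{<\infty}(\vec{u}')\subseteq \widetilde{EV}^{<\infty}(\vec{u})$ (and the same for $\widetilde{E}^{<\infty}$). Thus any containment $\widetilde{L}^m(\Sigma,\vec{k};\upsilon)\cap \widetilde{EV}^{<\infty}(\vec{u})\subseteq A_{i_0}$ once achieved is inherited by every further extraction. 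Concretely, apply Theorem~\ref{thm:block-Ramsey} with $\F=A_1$ (and $\G$ any fixed family, whose output is ignored at this stage) to $\vec{w}$: either $\widetilde{L}^m(\Sigma,\vec{k};\upsilon)\cap \widetilde{EV}^{<\infty}$ of the resulting extraction lies in $A_1$, in which case set $i_0=1$, or it is disjoint from $A_1$ and hence contained in $A_2\cup\ldots\cup A_r$, in which case repeat with $\F=A_2$ on the new extraction. After at most $r-1$ steps we either land inside some $A_{i_0}$ or have forced $\widetilde{L}^m(\Sigma,\vec{k};\upsilon)\cap \widetilde{EV}^{<\infty}$ outside $A_1,\ldots,A_{r-1}$; since this set is non-empty (as $\widetilde{EV}(\vec{u})\supseteq\{T_{(0,0)}(u_n): n\in\nat\}$ is infinite, orderly $m$-tuples exist), the last alternative pins it inside $A_r$. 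This yields an extraction $\vec{u}_1\prec\vec{w}$ together with an index $i_0$.

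Starting from $\vec{u}_1$ I would run the identical argument for the constant partition, now invoking Theorem~\ref{thm:block-Ramsey} with $\G$ successively equal to $C_1,C_2,\ldots$ (the variable slot ignored), to obtain $\vec{u}\prec\vec{u}_1$ and an index $j_0$ with $\widetilde{L}^m(\Sigma,\vec{k})\cap \widetilde{E}^{<\infty}(\vec{u})\subseteq C_{j_0}$. By the monotonicity noted above, $\widetilde{L}^m(\Sigma,\vec{k};\upsilon)\cap \widetilde{EV}^{<\infty}(\vec{u})\subseteq \widetilde{L}^m(\Sigma,\vec{k};\upsilon)\cap \widetilde{EV}^{<\infty}(\vec{u}_1)\subseteq A_{i_0}$ survives the further extraction, so $\vec{u}$, $i_0$, $j_0$ satisfy both conclusions; alternatively one may interleave the two searches, exploiting that each invocation of Theorem~\ref{thm:block-Ramsey} resolves a $\G$- and an $\F$-dichotomy simultaneously. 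The argument is essentially bookkeeping once Theorem~\ref{thm:block-Ramsey} is granted; the only points needing care are the non-emptiness of $\widetilde{L}^m\cap \widetilde{EV}^{<\infty}(\vec{u})$ (and of its constant counterpart) that forces the final colour class in the iteration, and the monotonicity that lets the variable conclusion persist through the subsequent constant refinement. I do not anticipate a genuine obstacle here, since the substantive transfinite induction on $\xi$ has already been performed inside Theorem~\ref{thm:block-Ramsey}.
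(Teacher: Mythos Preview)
Your proposal is correct and follows exactly the route the paper intends: the corollary is stated in the paper without proof as the particular case $\xi=m$ of Theorem~\ref{thm:block-Ramsey}, and the passage from the two-valued dichotomy there to a finite colouring is the standard iteration you describe, relying on the transitivity of $\prec$ and the monotonicity of $\widetilde{EV}^{<\infty}$, $\widetilde{E}^{<\infty}$ under extraction. You have simply spelled out the routine bookkeeping that the paper leaves implicit.
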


\begin{cor}
[\textsf{$\omega$-Ramsey type partition theorem for $\omega$-$\mathbb{Z}^\ast$-located words}]
\label{cor:kk-Ramsey}
Let $\Sigma=\{\alpha_n:\;n \in \mathbb{Z}^\ast\}$ be an alphabet, $\upsilon \notin \Sigma$ a variable,  $\vec{k}=(k_n)_{n\in\mathbb{Z}^\ast}\subseteq \nat$ such that $(k_n)_{n\in \nat},\;(k_{-n})_{n\in \nat}$ are increasing sequences, $r,s\in\nat$ and $\vec{w} \in \widetilde{L}^\infty (\Sigma, \vec{k} ; \upsilon)$. If $\widetilde{L}^{<\infty}(\Sigma, \vec{k} ; \upsilon)=A_1\cup\ldots\cup A_r$ and $\widetilde{L}^{<\infty}(\Sigma, \vec{k})=C_1\cup\ldots\cup C_s$, then there exists an extraction $\vec{u}\prec \vec{w}$ of $\vec{w}$ and $1\leq i_0 \leq r$, $1\leq j_0 \leq s$ such that
$$\{(z_1,\ldots,z_n)\in \widetilde{L}^{<\infty}(\Sigma, \vec{k} ; \upsilon)) :\; n\in \nat, \min dom^+(z_1) = n \text{ and}\; z_1,\ldots,z_n\in \widetilde{EV}(\vec{u})\}\subseteq A_{i_0},$$
\begin{center}
$ \{(z_1,\ldots,z_n)\in \widetilde{L}^{<\infty}(\Sigma, \vec{k})) :\; n\in \nat, \min dom^+(z_1) = n $ and $z_1,\ldots,z_n\in \widetilde{E}(\vec{u})\}\subseteq C_{j_0}.\;\;\;\;\;$
\end{center}
\end{cor}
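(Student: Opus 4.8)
The plan is to obtain this finitary, single-colour statement from the dichotomy form of Theorem~\ref{thm:block-Ramsey} in the case $\xi=\omega$, by iterating that theorem finitely many times. The starting point is the observation, immediate from Remark~\ref{rem1.4}(iii), that for every extraction $\vec{u}$ the two index sets appearing in the conclusion are precisely the traces of the Schreier family $\widetilde{L}^\omega$ on the extracted words of $\vec{u}$:
\begin{center}
$\{(z_1,\ldots,z_n) : \min dom^+(z_1)=n,\ z_1,\ldots,z_n\in \widetilde{EV}(\vec{u})\} = \widetilde{L}^\omega(\Sigma,\vec{k};\upsilon)\cap \widetilde{EV}^{<\infty}(\vec{u}),$
\end{center}
and likewise for the constant words with $\widetilde{L}^\omega(\Sigma,\vec{k})\cap \widetilde{E}^{<\infty}(\vec{u})$ (the empty sequence being excluded, since $\emptyset\notin\widetilde{L}^\omega$). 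Hence it suffices to produce $\vec{u}\prec\vec{w}$ and indices $i_0,j_0$ with $\widetilde{L}^\omega(\Sigma,\vec{k};\upsilon)\cap \widetilde{EV}^{<\infty}(\vec{u})\subseteq A_{i_0}$ and $\widetilde{L}^\omega(\Sigma,\vec{k})\cap \widetilde{E}^{<\infty}(\vec{u})\subseteq C_{j_0}$.

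First I would dispose of the variable colouring $A_1\cup\cdots\cup A_r$ by a sweep. Apply Theorem~\ref{thm:block-Ramsey} with $\xi=\omega$, $\F=A_1$ and an arbitrary $\G$ (its constant dichotomy being irrelevant here) to $\vec{w}$, producing $\vec{u}^{(1)}\prec\vec{w}$. If the positive alternative holds I stop with $i_0=1$; otherwise $\widetilde{L}^\omega(\Sigma,\vec{k};\upsilon)\cap \widetilde{EV}^{<\infty}(\vec{u}^{(1)})\subseteq \widetilde{L}^{<\infty}(\Sigma,\vec{k};\upsilon)\setminus A_1\subseteq A_2\cup\cdots\cup A_r$, and I apply the theorem again with $\F=A_2$ to $\vec{u}^{(1)}$, and so on. Because each new extraction can only shrink the trace, after at most $r-1$ applications I reach an extraction $\vec{v}\prec\vec{w}$ and an index $i_0$ with $\widetilde{L}^\omega(\Sigma,\vec{k};\upsilon)\cap \widetilde{EV}^{<\infty}(\vec{v})\subseteq A_{i_0}$.

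Next I run the identical sweep for the constant colouring $C_1\cup\cdots\cup C_s$, starting from $\vec{v}$ and now reading off the $\G$-alternative of Theorem~\ref{thm:block-Ramsey}; after at most $s-1$ applications this delivers $\vec{u}\prec\vec{v}$ and an index $j_0$ with $\widetilde{L}^\omega(\Sigma,\vec{k})\cap \widetilde{E}^{<\infty}(\vec{u})\subseteq C_{j_0}$. By transitivity of $\prec$ one has $\vec{u}\prec\vec{w}$, and the variable conclusion secured for $\vec{v}$ survives, since $\widetilde{L}^\omega(\Sigma,\vec{k};\upsilon)\cap \widetilde{EV}^{<\infty}(\vec{u})\subseteq \widetilde{L}^\omega(\Sigma,\vec{k};\upsilon)\cap \widetilde{EV}^{<\infty}(\vec{v})\subseteq A_{i_0}$. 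This yields the required $\vec{u},i_0,j_0$.

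The one point demanding care, on which the whole iteration rests, is the monotonicity of the extracted-word families under refinement: if $\vec{u}\prec\vec{v}$ then $\widetilde{EV}^{<\infty}(\vec{u})\subseteq\widetilde{EV}^{<\infty}(\vec{v})$ and $\widetilde{E}^{<\infty}(\vec{u})\subseteq\widetilde{E}^{<\infty}(\vec{v})$. For the variable words this is immediate from the characterization $\vec{u}\prec\vec{v}\Longleftrightarrow\widetilde{EV}(\vec{u})\subseteq\widetilde{EV}(\vec{v})$ recorded just before Theorem~\ref{thm:block-Ramsey2}, since a finite orderly sequence whose entries lie in $\widetilde{EV}(\vec{u})$ then has its entries in $\widetilde{EV}(\vec{v})$; the constant analogue $\widetilde{E}(\vec{u})\subseteq\widetilde{E}(\vec{v})$ follows in the same manner from the identities $dom(T_{(p,q)}(w))=dom(w)$ and $T_{(p,q)}(w\star u)=T_{(p,q)}(w)\star T_{(p,q)}(u)$ together with the composition behaviour of the substitution maps $T_{(p,q)}$. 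Granting this, each sweep is internally consistent, the negative alternatives genuinely peeling off one colour at a time along a decreasing chain of extractions, and the homogeneity obtained in one sweep is automatically inherited by all later refinements, which is exactly what lets the two sweeps combine into a single $\vec{u}$.
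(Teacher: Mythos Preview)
Your argument is correct and is exactly the approach the paper intends: the corollary is stated in the paper as the particular case $\xi=\omega$ of Theorem~\ref{thm:block-Ramsey}, and you have spelled out precisely how that specialization works---identifying the index sets via Remark~\ref{rem1.4}(iii) as $\widetilde{L}^\omega\cap\widetilde{EV}^{<\infty}(\vec{u})$ and $\widetilde{L}^\omega\cap\widetilde{E}^{<\infty}(\vec{u})$, and then iterating the dichotomy along a chain of extractions to convert it into a single-colour conclusion. The monotonicity $\widetilde{E}(\vec{u})\subseteq\widetilde{E}(\vec{v})$ you single out is indeed used freely in the paper (see the proof of Theorem~\ref{thm:block-Ramsey}) without further comment, so your level of detail there is already beyond what the paper provides.
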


For each countable ordinal $\xi$ Theorem~\ref{thm:block-Ramsey} has the following finitistic form (Corollary~\ref{thm:xiHJ}), which can be proved  analogously to Corollary~\ref{thm:HJ}, corresponding to the case $\xi=1$ and $l=1.$

\begin{note} Let $\Sigma=\{\alpha_n:\;n \in \mathbb{Z}^\ast\}$ be an alphabet, $\upsilon \notin \Sigma$ a variable and $\vec{k}=(k_n)_{n\in\mathbb{Z}^\ast}\subseteq\nat.$ For every countable ordinal $\xi\geq 1$ and $n\in \nat$ we set

$\widetilde{L}^{\xi}(\Sigma,\vec{k},n)=\{(w_1,\ldots,w_l)\in \widetilde{L}^{\xi}(\Sigma,\vec{k}):\; |dom(w_1)|+\ldots+|dom(w_l)|=n\}.$\end{note}

\begin{cor}\label{thm:xiHJ}
Let $\xi\geq 1$ be a countable ordinal, $\Sigma=\{\alpha_n:\;n\in \zat^\ast\}$ be an alphabet, $\upsilon\notin \Sigma$ a variable, $r,l\in \nat$ and $\vec{k}=(k_n)_{n\in \zat^\ast}\subseteq \nat$ such that $(k_n)_{n\in \nat},$ $(k_{-n})_{n\in \nat}$ are increasing sequences. Then, there exists $n_0\equiv n_0(\xi,r,l,\vec{k})\in \nat$ such that if $\widetilde{L}^{\xi}(\Sigma,\vec{k},n_0)=C_1\cup\ldots\cup C_r,$ there exists ${\bf{t}}=(t_1,\ldots,t_l)\in \widetilde{L}^{<\infty}(\Sigma,\vec{k};\upsilon)$ such that for some $1\leq i_0\leq r$ to satisfy \begin{center} $\widetilde{L}^{\xi}(\Sigma,\vec{k},n_0)\cap \widetilde{E}^{<\infty}({\bf{t}})\subseteq C_{i_0}.$\end{center}
\end{cor}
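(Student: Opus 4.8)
The plan is to argue by contradiction and reduce the finite statement to the infinitary Theorem~\ref{thm:block-Ramsey}, exactly as Corollary~\ref{thm:HJ} was deduced from Theorem~\ref{thm:block-Ramsey02}. Suppose the asserted $n_0$ does not exist. Then for every $n\in\nat$ there is a partition $\widetilde{L}^{\xi}(\Sigma,\vec{k},n)=C_1^n\cup\ldots\cup C_r^n$ witnessing the failure: for no ${\bf{t}}=(t_1,\ldots,t_l)\in\widetilde{L}^{<\infty}(\Sigma,\vec{k};\upsilon)$ and no $1\leq i\leq r$ is the (non-empty) set $\widetilde{L}^{\xi}(\Sigma,\vec{k},n)\cap\widetilde{E}^{<\infty}({\bf{t}})$ contained in a single $C_i^n$. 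Setting $B_i=\bigcup_{n\in\nat}C_i^n$ for $1\leq i\leq r$ and using that the sets $\widetilde{L}^{\xi}(\Sigma,\vec{k},n)$ are pairwise disjoint with union $\widetilde{L}^{\xi}(\Sigma,\vec{k})$, I obtain a global $r$-colouring $\widetilde{L}^{\xi}(\Sigma,\vec{k})=B_1\cup\ldots\cup B_r$ whose restriction to level $n$ recovers $C_i^n=B_i\cap\widetilde{L}^{\xi}(\Sigma,\vec{k},n)$.

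Next I would feed this colouring into Theorem~\ref{thm:block-Ramsey}, using only its constant-word half (the family $\G$). Since that theorem is a dichotomy for a single family, I handle the $r$ colours by iteration: fix any $\vec{w}\in\widetilde{L}^{\infty}(\Sigma,\vec{k};\upsilon)$, apply the theorem with $\G=B_1$ to get $\vec{u}^{(1)}\prec\vec{w}$ with $\widetilde{L}^{\xi}(\Sigma,\vec{k})\cap\widetilde{E}^{<\infty}(\vec{u}^{(1)})$ inside $B_1$ or inside its complement; in the first case stop, otherwise the set lies in $B_2\cup\ldots\cup B_r$ and I repeat with $\G=B_2$ on $\vec{u}^{(1)}$, and so on. Because extraction is transitive and $\widetilde{E}^{<\infty}$ is monotone under $\prec$ (an extraction $\vec{u}'\prec\vec{u}$ forces $\widetilde{E}(\vec{u}')\subseteq\widetilde{E}(\vec{u})$, hence $\widetilde{E}^{<\infty}(\vec{u}')\subseteq\widetilde{E}^{<\infty}(\vec{u})$), after at most $r$ steps I get one extraction $\vec{u}\prec\vec{w}$ and an index $i_0$ with $\widetilde{L}^{\xi}(\Sigma,\vec{k})\cap\widetilde{E}^{<\infty}(\vec{u})\subseteq B_{i_0}$.

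Finally I would finitize. Since $\vec{u}$ is infinite, its extracted constant words realize infinitely many values $\min dom^{+}(\cdot)$, so one can choose an orderly family $\bz=(z_1,\ldots,z_{l'})$ of words in $\widetilde{E}(\vec{u})$ whose value set $\{\min dom^{+}(z_1),\ldots,\min dom^{+}(z_{l'})\}$ lies in $\A_{\xi}$; by Definition~\ref{recursivethinblock} this gives $\bz\in\widetilde{L}^{\xi}(\Sigma,\vec{k})\cap\widetilde{E}^{<\infty}(\vec{u})$, and $\bz$ involves only finitely many terms of $\vec{u}$, say $u_1,\ldots,u_L$. Taking ${\bf{t}}=(u_1,\ldots,u_l)$ (with $l\geq L$) yields $\bz\in\widetilde{E}^{<\infty}({\bf{t}})$, and if $n_0=|dom(z_1)|+\ldots+|dom(z_{l'})|$ then $\bz\in\widetilde{L}^{\xi}(\Sigma,\vec{k},n_0)\cap\widetilde{E}^{<\infty}({\bf{t}})$. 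As $\widetilde{E}^{<\infty}({\bf{t}})\subseteq\widetilde{E}^{<\infty}(\vec{u})$, this whole intersection lies in $B_{i_0}\cap\widetilde{L}^{\xi}(\Sigma,\vec{k},n_0)=C_{i_0}^{n_0}$ and is non-empty, contradicting the choice of the bad partition at level $n_0$.

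The step I expect to be the main obstacle is precisely this finitization. One must verify that a \emph{length-$l$} variable word ${\bf{t}}$ can realize a non-empty Schreier extraction at a single common level $n_0$ depending only on $\xi,r,l,\vec{k}$: the number $L$ of terms of $\vec{u}$ needed to realize some $\A_{\xi}$-set as the $\min dom^{+}$-values of an orderly family of extracted words must be controlled by $l$, and a single threshold $n_0$ must work simultaneously for all the infinitely many bad colourings $C^n$. This is exactly where the case $\xi=1$, $l=1$ of Corollary~\ref{thm:HJ} is immediate (every singleton lies in $\A_1$, and $\widetilde{E}^{<\infty}((t_1))$ consists of the single substituted words $T_{(p,q)}(t_1)$), while for larger $\xi$ one must invoke the growth properties of the Schreier families $\A_{\xi}$ to guarantee non-emptiness and to extract the numerical bound; the remaining bookkeeping is routine.
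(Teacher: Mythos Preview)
Your approach is correct and is exactly the one the paper intends (it says only ``can be proved analogously to Corollary~\ref{thm:HJ}''): negate, glue the bad level-$n$ partitions into a global $r$-colouring $B_1,\ldots,B_r$ of $\widetilde{L}^{\xi}(\Sigma,\vec{k})$, apply Theorem~\ref{thm:block-Ramsey} (iterated over the colours) to get $\vec{u}$ and $i_0$ with $\widetilde{L}^{\xi}(\Sigma,\vec{k})\cap\widetilde{E}^{<\infty}(\vec{u})\subseteq B_{i_0}$, and finitize.

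However, the obstacle you flag in your last paragraph is not an obstacle at all, and the extra work you do to produce a specific $\bz$ and to control ``$L$ by $l$'' is unnecessary. Once you have $\vec{u}=(u_n)_{n\in\nat}$ as above, simply set ${\bf t}=(u_1,\ldots,u_l)$, the first $l$ terms. Since $\widetilde{E}({\bf t})\subseteq\widetilde{E}(\vec{u})$, you have $\widetilde{E}^{<\infty}({\bf t})\subseteq\widetilde{E}^{<\infty}(\vec{u})$, and hence for \emph{every} $n\in\nat$
\[
\widetilde{L}^{\xi}(\Sigma,\vec{k},n)\cap\widetilde{E}^{<\infty}({\bf t})
\;\subseteq\; B_{i_0}\cap\widetilde{L}^{\xi}(\Sigma,\vec{k},n)
\;=\; C_{i_0}^{\,n}.
\]
This already contradicts the ``badness'' of the level-$n$ partition (for any $n$ you like), because badness asserts that for every length-$l$ tuple and every colour the containment fails. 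You do not need the intersection on the left to be non-empty: the empty set is contained in every colour class, so a level $n$ at which some ${\bf t}$ gives an empty intersection could never have admitted a bad partition in the first place. Thus no ``growth properties of $\A_\xi$'' or control of the number $L$ of terms are needed; the finitization is immediate, exactly as in Corollary~\ref{thm:HJ}.
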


From Theorem~\ref{thm:block-Ramsey} we get immediately the corresponding theorem for $\omega$-located words.

\begin{note}
Let $\Sigma=\{\alpha_1, \alpha_2, \ldots \}$ be an infinite countable alphabet, $\upsilon \notin \Sigma$ a variable and $\vec{k}=(k_n)_{n\in\nat}\subseteq \nat$ an increasing sequence. We  define the \textit{finite orderly sequences of $\omega$-located words} over $\Sigma$ dominated by $\vec{k}$ as follows:

 $L^{<\infty}(\Sigma, \vec{k} ; \upsilon) = \{\bw = (w_1,\ldots,w_l) : l\in\nat, $
$w_1<_{\textsl{R}_2}\ldots <_{\textsl{R}_2} w_l\in L(\Sigma, \vec{k} ; \upsilon) \} \cup \{\emptyset \}$,

 $L^{<\infty}(\Sigma, \vec{k}) = \{\bw = (w_1,\ldots,w_l) : l\in\nat, $
$w_1<_{\textsl{R}_2}\ldots <_{\textsl{R}_2} w_l\in L(\Sigma, \vec{k}) \} \cup \{\emptyset \}$.

\noindent For every countable ordinal $\xi\geq 1,$ we set

$L^\xi(\Sigma, \vec{k}; \upsilon) = \{(w_1,\ldots,w_l)\in L^{<\infty}(\Sigma, \vec{k}; \upsilon) :
\{\min dom(w_1),\ldots, \min dom(w_l)\}\in\A_\xi \},$

$L^\xi(\Sigma, \vec{k}) = \{(w_1,\ldots,w_l)\in L^{<\infty}(\Sigma, \vec{k}) :
\{\min dom(w_1),\ldots, \min dom(w_l)\}\in\A_\xi \}.$

\noindent For $\vec{w}=(w_n)_{n\in\nat} \in L^\infty (\Sigma, \vec{k} ; \upsilon)$ we set:

$EV^{<\infty}(\vec{w}) = \{\bu=(u_1,\ldots,u_l)\in L^{<\infty} (\Sigma, \vec{k} ; \upsilon) : l\in\nat, u_1,\ldots,u_l\in EV(\vec{w})\}\cup \{\emptyset \},$ and

$E^{<\infty}(\vec{w}) = \{\bu=(u_1,\ldots,u_l)\in L^{<\infty} (\Sigma, \vec{k}) : l\in\nat, u_1,\ldots,u_l\in E(\vec{w})\}\cup \{\emptyset \}.$
\end{note}

\begin{cor}
[\textsf{Ramsey type partition theorem on Schreier families for $\omega$-located words}]
\label{cor:nat}
Let $\xi\geq 1$ be a countable ordinal, $\Sigma=\{\alpha_n :\;n \in \nat\}$ be an infinite countable alphabet, $\upsilon \notin \Sigma$ a variable and $\vec{k}=(k_n)_{n\in\nat}\subseteq \nat$ an increasing sequence. For every $\G\subseteq L^{<\infty}(\Sigma, \vec{k}),$ $\F \subseteq L^{<\infty}(\Sigma, \vec{k} ; \upsilon)$ and every infinite orderly sequence $\vec{w} \in L^\infty (\Sigma, \vec{k} ; \upsilon)$ of variable $\omega$-located words there exists an extraction $\vec{u}\prec \vec{w}$ of $\vec{w}$ such that:

either $L^\xi(\Sigma, \vec{k}) \cap E^{<\infty}(\vec{u})\subseteq \G$, or
$L^\xi(\Sigma, \vec{k}) \cap E^{<\infty}(\vec{u})\subseteq L^{<\infty}(\Sigma, \vec{k})\setminus \G,$ and

 either $L^\xi(\Sigma, \vec{k} ; \upsilon) \cap EV^{<\infty}(\vec{u})\subseteq \F$, or
$L^\xi(\Sigma, \vec{k} ; \upsilon) \cap EV^{<\infty}(\vec{u})\subseteq L^{<\infty}(\Sigma, \vec{k} ; \upsilon)\setminus \F$.
\end{cor}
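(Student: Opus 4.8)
The plan is to deduce the statement from the two-sided theorem (Theorem~\ref{thm:block-Ramsey}) by exactly the device used to pass from Theorem~\ref{thm:block-Ramsey02} to Corollary~\ref{thm:block-Ramsey1}. I set $\widetilde{\Sigma}=\{\alpha_n:n\in\zat^\ast\}$ and $\vec{k}_\ast=(\widetilde{k}_n)_{n\in\zat^\ast}$ with $\alpha_{-n}=\alpha_n$ and $\widetilde{k}_{-n}=\widetilde{k}_n=k_n$, so that $(\widetilde{k}_n)_{n\in\nat}$ and $(\widetilde{k}_{-n})_{n\in\nat}$ are increasing and Theorem~\ref{thm:block-Ramsey} applies to $\widetilde{\Sigma},\vec{k}_\ast$. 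As in Corollary~\ref{thm:block-Ramsey1}, let $\varphi\colon\widetilde{L}_0(\widetilde{\Sigma}\cup\{\upsilon\},\vec{k}_\ast)\to L(\Sigma\cup\{\upsilon\},\vec{k})$ send a word to its positive part. The elementary facts I would record are: $\varphi$ carries $<_{\textsl{R}_1}$-ordered words to $<_{\textsl{R}_2}$-ordered ones (the part of the outer word below $\min dom$ of the inner word is entirely negative, so positive supports increase), $\min dom^+(w)=\min dom(\varphi(w))$, and $T_p(\varphi(w))=\varphi(T_{(p,q)}(w))$ whenever $(p,q)\in\nat\times\nat\cup\{(0,0)\}$, because $\varphi$ discards the negative letters on which the index $q$ acts. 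Extending $\varphi$ coordinatewise to $\varphi^{<\infty}$ on finite orderly sequences, the first two facts yield that $\bw\in\widetilde{L}^\xi(\widetilde{\Sigma},\vec{k}_\ast)$ (resp.\ $\widetilde{L}^\xi(\widetilde{\Sigma},\vec{k}_\ast;\upsilon)$) if and only if $\varphi^{<\infty}(\bw)\in L^\xi(\Sigma,\vec{k})$ (resp.\ $L^\xi(\Sigma,\vec{k};\upsilon)$), since the Schreier test is read off the common set $\{\min dom^+(w_i)\}=\{\min dom(\varphi(w_i))\}$.

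Next I would lift the given $\vec{w}=(w_n)_{n\in\nat}\in L^\infty(\Sigma,\vec{k};\upsilon)$ to $\vec{\widetilde{w}}=(\widetilde{w}_n)_{n\in\nat}$ by taking the positive part of $\widetilde{w}_n$ to be $w_n$ and adjoining a single negative letter $\upsilon$ at position $-a_n$, where $(a_n)_{n\in\nat}$ is any strictly increasing sequence in $\nat$. Each $\widetilde{w}_n$ then carries $\upsilon$ in both $dom^-$ and $dom^+$, so $\widetilde{w}_n\in\widetilde{L}_0(\widetilde{\Sigma},\vec{k}_\ast;\upsilon)$, and $\widetilde{w}_n<_{\textsl{R}_1}\widetilde{w}_{n+1}$ because the positive parts increase by $w_n<_{\textsl{R}_2}w_{n+1}$ while $-a_{n+1}<-a_n$; hence $\vec{\widetilde{w}}\in\widetilde{L}^\infty(\widetilde{\Sigma},\vec{k}_\ast;\upsilon)$. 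Setting $\widetilde{\G}=(\varphi^{<\infty})^{-1}(\G)$ and $\widetilde{\F}=(\varphi^{<\infty})^{-1}(\F)$, I apply Theorem~\ref{thm:block-Ramsey} to $\vec{\widetilde{w}}$ and obtain an extraction $\vec{\widetilde{u}}=(\widetilde{u}_n)_{n\in\nat}\prec\vec{\widetilde{w}}$ satisfying both dichotomies with $\widetilde{\G},\widetilde{\F}$. I then put $\vec{u}=(\varphi(\widetilde{u}_n))_{n\in\nat}$; the identity $T_p(\varphi(w))=\varphi(T_{(p,q)}(w))$ gives $u_n\in EV(\vec{w})$ (the coordinate with pair $(0,0)$ keeps a positive $\upsilon$, so $u_n$ is variable), and $\widetilde{u}_n<_{\textsl{R}_1}\widetilde{u}_{n+1}$ gives $u_n<_{\textsl{R}_2}u_{n+1}$, so $\vec{u}\prec\vec{w}$.

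It remains to transport the two dichotomies through $\varphi$, which is the only point requiring care. Take $\bu=(u_1,\ldots,u_l)\in L^\xi(\Sigma,\vec{k})\cap E^{<\infty}(\vec{u})$. I write each $u_i$ as $\varphi(\widetilde{z}_i)$ with $\widetilde{z}_i\in\widetilde{E}(\vec{\widetilde{u}})$, obtained by using for $\widetilde{z}_i$ the same $\vec{\widetilde{u}}$-indices as $u_i$ and any negative indices $q\ge1$ (available since $\widetilde{k}_{-n}=\widetilde{k}_n\ge1$). The crucial verification is $\widetilde{z}_1<_{\textsl{R}_1}\ldots<_{\textsl{R}_1}\widetilde{z}_l$: because the positive supports of the $\widetilde{u}_n$ increase and their negative supports descend with $n$, the relation $u_i<_{\textsl{R}_2}u_{i+1}$ holds precisely when every $\vec{\widetilde{u}}$-index occurring in $\widetilde{z}_i$ is smaller than every index occurring in $\widetilde{z}_{i+1}$, and this is exactly the condition forcing $\widetilde{z}_i<_{\textsl{R}_1}\widetilde{z}_{i+1}$. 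Thus $(\widetilde{z}_1,\ldots,\widetilde{z}_l)\in\widetilde{L}^\xi(\widetilde{\Sigma},\vec{k}_\ast)\cap\widetilde{E}^{<\infty}(\vec{\widetilde{u}})$ with $\varphi^{<\infty}(\widetilde{z}_1,\ldots,\widetilde{z}_l)=\bu$, so the alternative $\widetilde{L}^\xi(\widetilde{\Sigma},\vec{k}_\ast)\cap\widetilde{E}^{<\infty}(\vec{\widetilde{u}})\subseteq\widetilde{\G}$ forces $\bu\in\G$ and the other alternative forces $\bu\notin\G$; this gives the first dichotomy for $\vec{u}$. The second dichotomy is handled identically with $\widetilde{EV}$, $\widetilde{\F}$, and $EV^{<\infty}$, where a zero index $p_i=0$ is matched with the admissible pair $(0,0)$, so that the variable letter survives in both halves and membership in $\widetilde{EV}$ is preserved. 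I expect this $<_{\textsl{R}_2}\leftrightarrow<_{\textsl{R}_1}$ correspondence—equivalently, the surjectivity of $\varphi$ onto orderly extracted sequences—to be the main obstacle; the rest is the formal transfer already carried out in Corollary~\ref{thm:block-Ramsey1}.
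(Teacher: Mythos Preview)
Your proposal is correct and follows essentially the same route as the paper: extend $\Sigma,\vec{k}$ to $\widetilde{\Sigma},\vec{k}_\ast$ symmetrically, lift $\vec{w}$ by adjoining a single variable letter at a negative position (the paper takes $a_n=n$, i.e.\ $\widetilde{w}_n=\upsilon_{-n}\star w_n$), pull $\G,\F$ back through the coordinatewise positive-part map $\widetilde{\varphi}=\varphi^{<\infty}$, and apply Theorem~\ref{thm:block-Ramsey}. The paper's proof stops at that point, whereas you carry out in full the transfer of the dichotomies---in particular the verification that $u_i<_{\textsl{R}_2}u_{i+1}$ forces the underlying $\vec{\widetilde{u}}$-index sets to be separated and hence $\widetilde{z}_i<_{\textsl{R}_1}\widetilde{z}_{i+1}$---which is exactly the point the paper leaves to the reader.
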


\begin{proof}
Let $\G\subseteq L^{<\infty}(\Sigma, \vec{k}),$ $\F \subseteq L^{<\infty}(\Sigma, \vec{k} ; \upsilon)$ and $\vec{w}=(w_n)_{n\in \nat} \in L^\infty (\Sigma, \vec{k} ; \upsilon).$ We set $\widetilde{\Sigma}=\{\alpha_n:n\in \zat^\ast\},$ $\vec{k}_\ast=(\widetilde{k}_{n})_{n \in \zat^\ast}\subseteq \mathbb{N}$ and $\vec{w}_{\ast}=(\widetilde{w}_n)\in \widetilde{L}^\infty (\widetilde{\Sigma}, \vec{k}_\ast ; \upsilon)$ where $\alpha_{-n}=\alpha_n,$ $\widetilde{k}_{-n}=\widetilde{k}_{n}=k_{n}$ and $\widetilde{w}_n=\upsilon_{-n}\star w_n$ for every $n\in  \mathbb{N}.$ Let $\widetilde{\varphi}:\widetilde{L}^{<\infty}(\widetilde{\Sigma}\cup\{\upsilon\},\vec{k}_\ast)\rightarrow L^{<\infty}(\Sigma\cup\{\upsilon\},\vec{k})$ with \begin{center} $\widetilde{\varphi}(u_1,\ldots, u_l)=(\varphi(u_1),\ldots,\varphi(u_l)),$\end{center} where $\varphi:\widetilde{L}_0(\widetilde{\Sigma}\cup\{\upsilon\},\vec{k}_\ast)\rightarrow L(\Sigma\cup\{\upsilon\},\vec{k})$ is defined in Corollary~\ref{thm:block-Ramsey1}. Then, we apply Theorem~\ref{thm:block-Ramsey} for the families $\widetilde{\varphi}^{-1}(\G),$ $\widetilde{\varphi}^{-1}(\F),$ and the sequence $\vec{w}_{\ast}.$
\end{proof}

\section{Partition theorems for sequences of variable $\omega$-$\mathbb{Z}^\ast$-located words}

The main result of this Section is Theorem~\ref{block-NashWilliams2}, which strengthens the Ramsey type partition theorem on Schreier families for variable $\omega$-$\mathbb{Z}^\ast$-located words  (Theorem~\ref{thm:block-Ramsey}) in case the partition family is a tree. Specifically, given $\xi<\omega_1$ and a partition family $\F \subseteq \widetilde{L}^{<\infty}(\Sigma, \vec{k} ; \upsilon)$ of finite orderly sequences of variable $\omega$-$\mathbb{Z}^\ast$-located words over an alphabet $\Sigma=\{\alpha_n:\;n \in \mathbb{Z}^\ast\}$ dominated by a sequence $\vec{k}=(k_n)_{n\in\mathbb{Z}^\ast}\subseteq \nat$ such that $(k_n)_{n\in \nat},\;(k_{-n})_{n\in \nat}$ are increasing sequences,  Theorem~\ref{thm:block-Ramsey} provides no information on how to decide whether the $\xi$-homogeneous family falls in $\F$ or in its complement, while Theorem~\ref{block-NashWilliams2} in case the partition family $\F$ is a tree, does provide a criterion, in terms of a Cantor-Bendixson type index of $\F$.

 From Theorem~\ref{block-NashWilliams2} follows a partition theorem for the infinite orderly sequences of variable $\omega$-$\mathbb{Z}^\ast$-located words (Theorem~\ref{cor:blockNW}), which we may regard as a Nash-Williams type partition theorem for variable $\omega$-$\mathbb{Z}^\ast$-located words. Also, as a consequence of Theorem~\ref{block-NashWilliams2} we can get a strengthened Nash-Williams type partition theorem for variable $\omega$-located words analogous to Corollary~\ref{cor:tree2} (see Corollary~\ref{cor:tree3}) and consequently a theorem analogous to Theorem~\ref{cor:blockNW}.

\subsection*{Notation}
Let $\Sigma=\{\alpha_n:\;n \in \mathbb{Z}^\ast\}$ be an alphabet, $\upsilon \notin \Sigma$ a variable and $\vec{k}=(k_n)_{n\in\mathbb{Z}^\ast}\subseteq \nat$. A finite orderly sequence $\bw=(w_1,\ldots,w_l)\in \widetilde{L}^{<\infty}(\Sigma, \vec{k} ; \upsilon)$ is an \textit{initial segment} of $\bu=(u_1,\ldots,u_k)\in \widetilde{L}^{<\infty}(\Sigma, \vec{k} ; \upsilon)$ if and only if $l\leq k$ and $w_i= u_i$ for  every $i=1,\ldots,l$ and $\bw$ is an \textit{initial segment} of $\vec{u}=(u_n)_{n\in\nat}\in \widetilde{L}^{\infty}(\Sigma, \vec{k} ; \upsilon)$ if $w_i=u_i$ for all $i=1,\ldots,l$. In these cases we write $\bw\propto \bu$ and  $\bw\propto \vec{u}$, respectively, and we set $\bu\setminus \bw = (u_{l+1},\ldots,u_k)$ and $\vec{u}\setminus \bw = (u_n)_{n>l}$.

\begin{defn}\label{def:Fthin}
Let $\Sigma=\{\alpha_n:\;n \in \mathbb{Z}^\ast\}$ be an alphabet, $\upsilon \notin \Sigma$ a variable, $\vec{k}=(k_n)_{n\in\mathbb{Z}^\ast}\subseteq \nat$ such that $(k_n)_{n\in \nat},\;(k_{-n})_{n\in \nat}$ are increasing sequences and $\F\subseteq \widetilde{L}^{<\infty}(\Sigma, \vec{k} ; \upsilon)$.
\begin{itemize}
\item[(i)] $\F$ is {\em thin\/} if there are no elements $\bw,\bu\in\F$
with $\bw \propto \bu$ and $\bw\ne \bu$.
\item[(ii)] $\F^* = \{\bw \in \widetilde{L}^{<\infty}(\Sigma,\vec{k} ; \upsilon):\; \bw\propto \bu$ for some
$\bu\in \F\}\cup \{\emptyset\}$.
\item[(iii)] $\F$ is a {\em tree\/} if $\F^* = \F$.
\item[(iv)] $\F_* = \{\bw \in \widetilde{L}^{<\infty}(\Sigma, \vec{k} ; \upsilon):\; \bw\in \widetilde{EV}^{<\infty}(\bu)$
for some $\bu\in \F\}\cup\{\emptyset\}$.
\item[(v)] $\F$ is {\em hereditary\/} if $\F_* = \F$.
\end{itemize}
\end{defn}

\begin{prop}\label{prop:thinfamily}
Every family $\widetilde{L}^\xi(\Sigma, \vec{k} ; \upsilon)$, for $\xi<\omega_1$ is thin.
\end{prop}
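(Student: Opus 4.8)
The plan is to reduce the thinness of $\widetilde{L}^\xi(\Sigma, \vec{k} ; \upsilon)$ to the thinness of the Schreier family $\A_\xi$, by transporting the initial-segment relation through the map that sends an orderly sequence $\bw=(w_1,\ldots,w_l)$ to its \emph{trace} $\{\min dom^+(w_1),\ldots,\min dom^+(w_l)\}$, which by Definition~\ref{recursivethinblock} is the set deciding membership in $\widetilde{L}^\xi(\Sigma,\vec{k};\upsilon)$. The first step is to record that along any orderly sequence $w_1<_{\textsl{R}_1}\cdots<_{\textsl{R}_1}w_l$ of words in $\widetilde{L}_0(\Sigma,\vec{k};\upsilon)$ the numbers $\min dom^+(w_i)$ are \emph{strictly increasing}. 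Indeed, fix $i$ and write $w_i<_{\textsl{R}_1}w_{i+1}$, so $dom(w_{i+1})=A_1\cup A_2$ with $\max A_1<\min dom(w_i)\le \max dom(w_i)<\min A_2$. Since $w_i\in\widetilde{L}_0(\Sigma,\vec{k};\upsilon)$ has $dom^-(w_i)\ne\emptyset$ and $dom^+(w_i)\ne\emptyset$, we have $\min dom(w_i)<0<\max dom(w_i)$; hence $A_1\subseteq\mathbb{Z}^{-}$ and $A_2\subseteq\nat$, so that $dom^+(w_{i+1})=A_2$ and $\min dom^+(w_{i+1})=\min A_2>\max dom^+(w_i)\ge\min dom^+(w_i)$.

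Second, suppose toward a contradiction that $\bw=(w_1,\ldots,w_l)$ and $\bu=(u_1,\ldots,u_k)$ both lie in $\widetilde{L}^\xi(\Sigma,\vec{k};\upsilon)$ with $\bw\propto\bu$ and $\bw\ne\bu$, so that $l<k$ and $w_i=u_i$ for $1\le i\le l$. Put $s=\{\min dom^+(u_1),\ldots,\min dom^+(u_l)\}$ and $t=\{\min dom^+(u_1),\ldots,\min dom^+(u_k)\}$; by definition $s,t\in\A_\xi$, and $s=\{\min dom^+(w_1),\ldots,\min dom^+(w_l)\}$ because $w_i=u_i$ for $i\le l$. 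By the monotonicity just established, the listed values are strictly increasing, so $s$ consists of the $l$ smallest elements of $t$, $t\setminus s\ne\emptyset$, and $\max s<\min(t\setminus s)$; that is, $s$ is a \emph{proper initial segment} of $t$. Thus it suffices to prove that no member of $\A_\xi$ is a proper initial segment of another, i.e. that $\A_\xi$ itself is thin.

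Finally, I would establish thinness of $\A_\xi$ by transfinite induction on $\xi$, which is the only nontrivial ingredient. The cases $\A_0=\{\emptyset\}$ and $\A_1=\{\{n\}:n\in\nat\}$ are immediate, and for $\xi\ge1$ every element of $\A_\xi$ is nonempty. For the inductive step I would invoke the recursive identity $\A_\xi(n)=\A_{\xi_n}\cap[\{n+1,n+2,\ldots\}]^{<\omega}$ used in the proof of Proposition~\ref{justification} (from Theorem~1.6 of \cite{F3}), where $\xi_n<\xi$: if $s\subsetneq t$ are members of $\A_\xi$ with $s$ an initial segment of $t$, then $n:=\min s=\min t$, and deleting $n$ yields $s\setminus\{n\},\,t\setminus\{n\}\in\A_{\xi_n}$ with $s\setminus\{n\}$ a proper initial segment of $t\setminus\{n\}$, contradicting the inductive hypothesis; the degenerate subcase $s=\{n\}$ is disposed of by noting that $s\setminus\{n\}=\emptyset\in\A_{\xi_n}$ forces $\xi_n=0$, whence $t\setminus\{n\}\in\A_0=\{\emptyset\}$ and so $t=s$, again a contradiction. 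The main obstacle is exactly this last point, namely checking that the recursion behaves uniformly through the limit-ordinal clauses (3i)--(3iii) of Definition~\ref{Irecursivethin}; but routing everything through the single identity $\A_\xi(n)=\A_{\xi_n}\cap[\{n+1,n+2,\ldots\}]^{<\omega}$ of Proposition~\ref{justification} avoids a separate case analysis and makes the induction routine. Alternatively, the thinness of the Schreier system is classical and may simply be cited from \cite{F3}.
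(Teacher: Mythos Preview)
Your proof is correct and follows essentially the same route as the paper: reduce thinness of $\widetilde{L}^\xi(\Sigma,\vec{k};\upsilon)$ to thinness of $\A_\xi$ via the trace map $\bw\mapsto\{\min dom^+(w_i)\}_i$, then cite (or prove) that $\A_\xi$ is thin from \cite{F3}. The paper's proof is a one-line citation, leaving the reduction implicit; you spell out the monotonicity of $\min dom^+(w_i)$ along an $<_{\textsl{R}_1}$-chain and the resulting proper-initial-segment preservation, which is exactly the content needed to make the citation work.
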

\begin{proof}
It follows from the fact that the families $\A_\xi$ are thin (cf. \cite{F3})(which means that if
$s,t\in \A_\xi$ and $s\propto t$, then $s=t$).
\end{proof}

\begin{prop}\label{prop:canonicalrep}
Let $\xi$ be a non-zero countable ordinal number, $\Sigma=\{\alpha_n:\;n \in \mathbb{Z}^\ast\}$ be an alphabet, $\upsilon \notin \Sigma$ a variable and  $\vec{k}=(k_n)_{n\in\mathbb{Z}^\ast}\subseteq \nat$. Then

$(i)$ every infinite orderly sequence  $\vec{s} = (s_n)_{n\in\nat}\in \widetilde{L}^{\infty}(\Sigma, \vec{k} ; \upsilon)$ has canonical representation with respect to $\widetilde{L}^\xi(\Sigma, \vec{k} ; \upsilon)$, which means that there exists a unique strictly increasing sequence $(m_n)_{n\in\nat}$ in $\nat$
so that $(s_1,\ldots,s_{m_1}) \in \widetilde{L}^\xi(\Sigma, \vec{k} ; \upsilon)$ and $(s_{m_{n-1}+1},\ldots,s_{m_n}) \in \widetilde{L}^\xi(\Sigma, \vec{k} ; \upsilon)$ for every $n > 1$; and,

$(ii)$ every non-empty finite orderly sequence $\bs = (s_1,\ldots,s_k)\in \widetilde{L}^{<\infty}(\Sigma, \vec{k} ; \upsilon)$ has canonical representation with respect to $\widetilde{L}^\xi(\Sigma, \vec{k} ; \upsilon)$, so either $\bs\in (\widetilde{L}^\xi(\Sigma, \vec{k} ; \upsilon))^* \setminus \widetilde{L}^\xi(\Sigma, \vec{k} ; \upsilon)$ or there exists unique $n\in\nat$, and  $m_1, \ldots,m_n \in\nat$ with $m_1 < \ldots < m_n\leq k$ such that
either $(s_1,\ldots,s_{m_1}),\ldots,(s_{m_{n-1}+1},\ldots,s_{m_n}) \in \widetilde{L}^\xi(\Sigma, \vec{k} ; \upsilon)$ and $m_n = k$,
or $(s_1,\ldots,s_{m_1}),\ldots,$ $(s_{m_{n-1}+1},\ldots,s_{m_n}) \in \widetilde{L}^\xi(\Sigma, \vec{k} ; \upsilon)$,  $(s_{{m_n}+1},\ldots,s_k)\in (\widetilde{L}^\xi(\Sigma, \vec{k} ; \upsilon))^* \setminus \widetilde{L}^\xi(\Sigma, \vec{k} ; \upsilon)$.
\end{prop}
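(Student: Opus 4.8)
The plan is to transfer the question entirely to the Schreier families $\A_\xi$ of finite subsets of $\nat$, via the map recording the positive minima of the domains, and then to invoke the known structure of $\A_\xi$ together with thinness (Proposition~\ref{prop:thinfamily}).

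First I would note that along an orderly sequence the positive minima strictly increase. Indeed, if $w<_{\textsl{R}_1}u$ with $w,u\in\widetilde{L}_0(\Sigma,\vec k;\upsilon)$ and $dom(u)=A_1\cup A_2$ is the splitting from the definition of $<_{\textsl{R}_1}$, then since $\min dom(w)<0$ (because $dom^-(w)\neq\emptyset$) every element of $A_1$ is negative, whence $dom^+(u)\subseteq A_2$ and $\min dom^+(u)>\max dom(w)\ge\min dom^+(w)$. Accordingly I attach to a finite orderly sequence $\bw=(w_1,\dots,w_l)$ the set $\sigma(\bw)=\{\min dom^+(w_1),\dots,\min dom^+(w_l)\}$, listed in increasing order exactly as $w_1,\dots,w_l$, and to an infinite orderly sequence $\vec s=(s_n)_{n\in\nat}$ the infinite set $\sigma(\vec s)=\{\min dom^+(s_n):n\in\nat\}$. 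By Definition~\ref{recursivethinblock} one has $\bw\in\widetilde{L}^\xi(\Sigma,\vec k;\upsilon)$ iff $\sigma(\bw)\in\A_\xi$, and $\sigma$ is a length- and order-preserving correspondence under which $\propto$, the operation $\ast$, and splitting into consecutive blocks all pass to their counterparts for subsets of $\nat$ and for $\A_\xi$; in particular every finite subset of $\nat$ equals $\sigma(\bw)$ for some orderly $\bw$. Thus both parts reduce to the canonical decomposition of a (finite or infinite) subset of $\nat$ into consecutive members of $\A_\xi$.

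The single structural input I need about $\A_\xi$ is the \emph{barrier property}: every infinite $M\subseteq\nat$ has a (then, by thinness, unique) initial segment lying in $\A_\xi$. This follows from the recursive description in Definition~\ref{Irecursivethin} by transfinite induction on $\xi$ (the successor clause reduces $M$ to $M\setminus\{\min M\}$, and the limit clauses reduce $M$ to a case of strictly smaller index determined by $\min M$), and is recorded in \cite{F3}. Granting it, for $(i)$ I put $M=\sigma(\vec s)$ and peel off initial segments recursively: let $\{a_1,\dots,a_{m_1}\}$ be the unique initial segment of $M$ in $\A_\xi$, apply the same to the infinite tail $\{a_{m_1+1},a_{m_1+2},\dots\}$ to get $m_2$, and continue; uniqueness at each stage forces uniqueness of the strictly increasing sequence $(m_n)$, and pulling back through $\sigma$ gives the asserted decomposition of $\vec s$.

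For $(ii)$ I would run the finite analogue, phrased so as to stay tail-independent. Writing $F=\sigma(\bs)=\{a_1<\dots<a_k\}$, I ask whether some initial segment $\{a_1,\dots,a_j\}$ of $F$ belongs to $\A_\xi$; by thinness at most one such $j$ exists. If none does, then extending $F$ by an arbitrary tail to an infinite $M$ and taking its initial segment $s\in\A_\xi$, the length of $s$ must exceed $k$ (otherwise $s$ would be an initial segment of $F$ in $\A_\xi$), so $F$ is a proper initial segment of $s$; this is exactly the alternative $\bs\in(\widetilde{L}^\xi(\Sigma,\vec k;\upsilon))^\ast\setminus\widetilde{L}^\xi(\Sigma,\vec k;\upsilon)$. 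Otherwise I remove the unique block $\{a_1,\dots,a_{m_1}\}\in\A_\xi$ and repeat on $\{a_{m_1+1},\dots,a_k\}$; the recursion halts either with an empty remainder (case $m_n=k$) or with a non-empty remainder having no initial segment in $\A_\xi$ (the terminal piece $(s_{m_n+1},\dots,s_k)$ then lying in $(\widetilde{L}^\xi)^\ast\setminus\widetilde{L}^\xi$), and uniqueness of $n$ and of $m_1<\dots<m_n$ again comes from thinness. The main obstacle is the barrier property itself; once it is in hand everything reduces to the monotonicity of $\min dom^+$ along orderly sequences and to the routine bookkeeping of translating through $\sigma$, with the one genuine subtlety being tail-independence in $(ii)$, which is why the recursion there is cast as the purely finite test ``does an initial segment of the current remainder lie in $\A_\xi$?''.
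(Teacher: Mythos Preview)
Your proposal is correct and takes essentially the same approach as the paper: both reduce the question, via the map $\bw\mapsto\{\min dom^+(w_i)\}$, to the known canonical representation of increasing sequences in $\nat$ with respect to $\A_\xi$, together with thinness. The paper's proof is a one-line citation to \cite{F3} for that fact, whereas you spell out the mechanism (the barrier property and the inductive peeling of initial segments) and verify explicitly that $\min dom^+$ is strictly increasing along $<_{\textsl{R}_1}$; this extra detail is sound and fills in exactly what the paper leaves to the reference.
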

\begin{proof}
It follows from the fact that every non-empty increasing sequence (finite or infinite) in $\nat$
has canonical representation with respect to $\A_\xi$ (cf. \cite{F3}) and that
the family $\widetilde{L}^\xi(\Sigma, \vec{k} ; \upsilon)$ is thin (Proposition~\ref{prop:thinfamily}).
\end{proof}

Now, using Proposition~\ref{prop:canonicalrep}, we can give an alternative description of the second horn of the dichotomy described in Theorem~\ref{thm:block-Ramsey} in case the partition family is a tree.

\begin{prop}\label{prop:tree}
Let $\xi\geq 1$ be a countable ordinal, $\Sigma=\{\alpha_n:\;n \in \mathbb{Z}^\ast\}$ be an alphabet, $\upsilon \notin \Sigma$ a variable, $\vec{k}=(k_n)_{n\in\mathbb{Z}^\ast}\subseteq \nat$ such that $(k_n)_{n\in \nat},\;(k_{-n})_{n\in \nat}$ are increasing sequences, $\vec{u}\in \widetilde{L}^{\infty}(\Sigma, \vec{k}; \upsilon)$ and $\F\subseteq \widetilde{L}^{<\infty}(\Sigma, \vec{k} ; \upsilon)$ be a tree. Then

$\widetilde{L}^\xi(\Sigma, \vec{k} ; \upsilon) \cap \widetilde{EV}^{<\infty}(\vec{u})\subseteq \widetilde{L}^{<\infty}(\Sigma, \vec{k} ; \upsilon)\setminus \F\;\;$ if and only if

$\F\cap \widetilde{EV}^{<\infty}(\vec{u}) \subseteq (\widetilde{L}^\xi(\Sigma, \vec{k} ; \upsilon))^* \setminus \widetilde{L}^\xi(\Sigma, \vec{k} ; \upsilon)$.
\end{prop}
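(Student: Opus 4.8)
The plan is to establish the two implications separately; almost all the content lies in the forward direction, which combines the canonical representation of finite orderly sequences with respect to $\widetilde{L}^\xi(\Sigma, \vec{k} ; \upsilon)$ (Proposition~\ref{prop:canonicalrep}) with the hypothesis that $\F$ is a tree. Throughout I would use the elementary fact that $\widetilde{EV}^{<\infty}(\vec{u})$ is closed under initial segments: if $\bw=(w_1,\ldots,w_l)\in \widetilde{EV}^{<\infty}(\vec{u})$, then $(w_1,\ldots,w_j)\in \widetilde{EV}^{<\infty}(\vec{u})$ for every $1\le j\le l$, since its entries remain in $\widetilde{EV}(\vec{u})$ and $<_{\textsl{R}_1}$-orderly.

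For the (easier) backward implication, assume $\F\cap \widetilde{EV}^{<\infty}(\vec{u})\subseteq (\widetilde{L}^\xi(\Sigma, \vec{k} ; \upsilon))^*\setminus \widetilde{L}^\xi(\Sigma, \vec{k} ; \upsilon)$ and take $\bw\in \widetilde{L}^\xi(\Sigma, \vec{k} ; \upsilon)\cap \widetilde{EV}^{<\infty}(\vec{u})$. Were $\bw\in\F$, then $\bw$ would lie in $\F\cap \widetilde{EV}^{<\infty}(\vec{u})$ and hence in $(\widetilde{L}^\xi(\Sigma, \vec{k} ; \upsilon))^*\setminus \widetilde{L}^\xi(\Sigma, \vec{k} ; \upsilon)$, contradicting $\bw\in \widetilde{L}^\xi(\Sigma, \vec{k} ; \upsilon)$. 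Thus $\bw\notin\F$, which is what is needed. I note that this direction uses neither the tree hypothesis nor the canonical representation.

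For the forward implication, assume $\widetilde{L}^\xi(\Sigma, \vec{k} ; \upsilon)\cap \widetilde{EV}^{<\infty}(\vec{u})\subseteq \widetilde{L}^{<\infty}(\Sigma, \vec{k} ; \upsilon)\setminus\F$ and fix $\bw=(w_1,\ldots,w_l)\in \F\cap \widetilde{EV}^{<\infty}(\vec{u})$; the goal is to place $\bw$ in $(\widetilde{L}^\xi(\Sigma, \vec{k} ; \upsilon))^*\setminus \widetilde{L}^\xi(\Sigma, \vec{k} ; \upsilon)$. First, $\bw\notin \widetilde{L}^\xi(\Sigma, \vec{k} ; \upsilon)$, for otherwise $\bw$ would lie in $\widetilde{L}^\xi(\Sigma, \vec{k} ; \upsilon)\cap \widetilde{EV}^{<\infty}(\vec{u})$ and hence, by assumption, outside $\F$. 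Next I would invoke the canonical representation of $\bw$ with respect to $\widetilde{L}^\xi(\Sigma, \vec{k} ; \upsilon)$ (Proposition~\ref{prop:canonicalrep}(ii)): either $\bw\in (\widetilde{L}^\xi(\Sigma, \vec{k} ; \upsilon))^*\setminus \widetilde{L}^\xi(\Sigma, \vec{k} ; \upsilon)$, which is exactly the desired conclusion, or $\bw$ splits into successive blocks whose first block $(w_1,\ldots,w_{m_1})$ belongs to $\widetilde{L}^\xi(\Sigma, \vec{k} ; \upsilon)$ for some $m_1\le l$.

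The crux is to exclude the second alternative, and this is where the tree hypothesis enters. Since $\bw\notin \widetilde{L}^\xi(\Sigma, \vec{k} ; \upsilon)$, the first block is an initial segment $(w_1,\ldots,w_{m_1})\propto \bw$ lying in $\widetilde{L}^\xi(\Sigma, \vec{k} ; \upsilon)$; by closure of $\widetilde{EV}^{<\infty}(\vec{u})$ under initial segments it lies in $\widetilde{L}^\xi(\Sigma, \vec{k} ; \upsilon)\cap \widetilde{EV}^{<\infty}(\vec{u})$, hence outside $\F$ by assumption. On the other hand, $\F$ being a tree means $\F=\F^*$ is closed under initial segments, so $(w_1,\ldots,w_{m_1})\propto\bw\in\F$ forces $(w_1,\ldots,w_{m_1})\in\F$ --- a contradiction. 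Therefore only the first alternative survives, giving $\bw\in (\widetilde{L}^\xi(\Sigma, \vec{k} ; \upsilon))^*\setminus \widetilde{L}^\xi(\Sigma, \vec{k} ; \upsilon)$. The single delicate point is the appeal to Proposition~\ref{prop:canonicalrep}: one must use that the first block of the canonical decomposition is always a member of $\widetilde{L}^\xi(\Sigma, \vec{k} ; \upsilon)$ (which rests on the thinness of this family, Proposition~\ref{prop:thinfamily}), for it is precisely this initial segment on which the tree property of $\F$ can be played off against the partition hypothesis.
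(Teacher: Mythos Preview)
Your proof is correct and follows essentially the same route as the paper's: for the forward direction you invoke the canonical representation (Proposition~\ref{prop:canonicalrep}) to produce an initial segment in $\widetilde{L}^\xi(\Sigma,\vec{k};\upsilon)$ and then use the tree property of $\F$ to derive a contradiction, exactly as the paper does. The only difference is that the paper's written proof treats just the forward implication, whereas you also supply the (trivial) backward implication and make explicit the closure of $\widetilde{EV}^{<\infty}(\vec{u})$ under initial segments, which the paper uses tacitly.
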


\begin{proof}
Let $\widetilde{L}^\xi(\Sigma, \vec{k} ; \upsilon) \cap \widetilde{EV}^{<\infty}(\vec{u})\subseteq \widetilde{L}^{<\infty}(\Sigma, \vec{k} ; \upsilon)\setminus \F$ and $\bs = (s_1,\ldots,s_k)\in \F\cap \widetilde{EV}^{<\infty}(\vec{u})$.
Then $\bs$ has canonical representation with respect to $\widetilde{L}^\xi(\Sigma, \vec{k} ; \upsilon)$
(Proposition~\ref{prop:canonicalrep}), hence
either $\bs \in (\widetilde{L}^\xi(\Sigma, \vec{k} ; \upsilon))^* \setminus \widetilde{L}^\xi(\Sigma, \vec{k} ; \upsilon)$, as required, or there exists $\bs_1\in \widetilde{L}^\xi(\Sigma, \vec{k} ; \upsilon)$ such that $\bs_1 \propto \bs$.
The second case is impossible.
Indeed, since $\F$ is a tree and $\bs\in \F\cap \widetilde{EV}^{<\infty}(\vec{u})$, we have
$\bs_1 \in \F\cap \widetilde{EV}^{<\infty}(\vec{u}) \cap \widetilde{L}^\xi(\Sigma, \vec{k} ; \upsilon)$; a contradiction to our
assumption.
Hence, $\F\cap \widetilde{EV}^{<\infty}(\vec{u}) \subseteq (\widetilde{L}^\xi(\Sigma, \vec{k} ; \upsilon))^* \setminus \widetilde{L}^\xi(\Sigma, \vec{k} ; \upsilon)$.
\end{proof}

\begin{defn}\label{def:aclosed}
Let $\Sigma=\{\alpha_n:\;n \in \mathbb{Z}^\ast\}$ be an alphabet, $\upsilon \notin \Sigma$ a variable and $\vec{k}=(k_n)_{n\in\mathbb{Z}^\ast}\subseteq \nat$. We set

\begin{center}$D=\big\{(n,\alpha):\;$ either $n \in \mathbb{Z}^-$ and $\alpha \in \{\upsilon, \alpha_{-k_n}, \ldots, \alpha_{-1}\}$\\ or $ n\in \nat$ and  $\alpha \in \{\upsilon, \alpha_1, \ldots, \alpha_{k_{n}} \} \big\}.\;\;\;\;\;\;\;\;\;\;\;\;\;$
\end{center}
Note that $D$ is a countable set. Let $[D]^{<\omega}$ be the set of all finite subsets of $D$. Identifying every $s\in \widetilde{L}(\Sigma, \vec{k}; \upsilon)$ with the corresponding element of $[D]^{<\omega}$ and consequently every $\bs\in \widetilde{L}^{<\infty}(\Sigma, \vec{k}; \upsilon)$ and every $\vec{s}\in \widetilde{L}^{\infty}(\Sigma, \vec{k}; \upsilon)$) with their characteristic functions $x_{\sigma(\bs)} \in \{0,1\}^{[D]^{<\omega}}$ and $x_{\sigma(\vec{s})} \in \{0,1\}^{[D]^{<\omega}}$ respectively (where $\sigma(\bs) = \{s_1,\ldots,s_k\}$ for $\bs = (s_1,\ldots,s_k)\in \widetilde{L}^{<\infty}(\Sigma, \vec{k}; \upsilon)$, $\sigma(\vec{s}) = \{s_n : n\in\nat\}$ for  $\vec{s} = (s_n)_{n\in\nat}\in \widetilde{L}^{\infty}(\Sigma, \vec{k}; \upsilon)$ and $\sigma(\emptyset) = \emptyset$), we say that a family $\F\subseteq \widetilde{L}^{<\infty}(\Sigma, \vec{k}; \upsilon)$ is {\em pointwise closed\/} if and only if the family $\{x_{\sigma(\bs)} :\bs \in \F\}$ is closed in the product topology (equivalently by the pointwise convergence topology) of $\{0,1\}^{[D]^{<\omega}}$ and in analogy a family $\U\subseteq \widetilde{L}^{\infty}(\Sigma, \vec{k}; \upsilon)$ is {\em pointwise closed\/} if and only if $\{x_{\sigma(\vec{s})} :\;\vec{s}\in \U\}$ is closed in $\{0,1\}^{[D]^{<\omega}}$ with the product topology.
\end{defn}

\begin{prop}\label{prop:finitefamily}
Let $\Sigma=\{\alpha_n:\;n \in \mathbb{Z}^\ast\}$ be an alphabet, $\upsilon \notin \Sigma$ a variable and $\vec{k}=(k_n)_{n\in\mathbb{Z}^\ast}\subseteq \nat$ such that $(k_n)_{n\in \nat},\;(k_{-n})_{n\in \nat}$ are increasing sequences.

{\rm (i)} If $\F\subseteq \widetilde{L}^{<\infty}(\Sigma, \vec{k}; \upsilon)$ is a tree, then $\F$ is pointwise closed if and only if there does not exist an infinite sequence $(\bs_n)_{n\in\nat}$ in $\F$
such that $\bs_n \propto \bs_{n+1}$ and $\bs_n\ne \bs_{n+1}$ for all $n\in\nat$.

{\rm (ii)} If $\F\subseteq \widetilde{L}^{<\infty}(\Sigma, \vec{k}; \upsilon)$  is hereditary, then $\F$ is pointwise closed if and only if there does not exist $\vec{s}\in \widetilde{L}^{\infty}(\Sigma, \vec{k} ; \upsilon)$ such that $\widetilde{EV}^{<\infty}(\vec{s}) \subseteq \F$.

{\rm (iii)} The hereditary family $(\widetilde{L}^\xi(\Sigma, \vec{k} ; \upsilon) \cap \widetilde{EV}^{<\infty}(\vec{u}) )_*$ is pointwise closed for every countable ordinal $\xi$ and $\vec{u} \in \widetilde{L}^{\infty}(\Sigma, \vec{k} ; \upsilon)$.
\end{prop}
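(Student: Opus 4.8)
The plan is to place everything inside the Cantor cube $\{0,1\}^{[D]^{<\omega}}$ and to exploit two elementary features of the straddling order $<_{\textsl{R}_1}$. First I would record a \emph{local finiteness} lemma: for every $w_0\in\widetilde{L}_0(\Sigma,\vec{k};\upsilon)$ the set $\{a\in\widetilde{L}_0(\Sigma,\vec{k};\upsilon):a<_{\textsl{R}_1}w_0\}$ is finite. Indeed, if $a<_{\textsl{R}_1}w_0$ then $dom^-(a),dom^+(a)\neq\emptyset$ forces $\min dom(a)<0<\max dom(a)$, so in the splitting $dom(w_0)=A_1\cup A_2$ of the definition of $<_{\textsl{R}_1}$ one has $A_1=dom^-(w_0)$, $A_2=dom^+(w_0)$, whence $dom(a)\subseteq\{n\in\zat^\ast:\max dom^-(w_0)<n<\min dom^+(w_0)\}$, a fixed finite set; since $\vec{k}$ allows only finitely many letters on each coordinate, only finitely many such $a$ exist. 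The same computation gives $a<_{\textsl{R}_1}b\Rightarrow\min dom^+(b)>\max dom^+(a)\geq\min dom^+(a)$, so $s\mapsto\min dom^+(s)$ is strictly increasing along $<_{\textsl{R}_1}$; hence any $<_{\textsl{R}_1}$-totally ordered set of words is well ordered of order type $\le\omega$ with finite proper initial segments. The main obstacle of the whole proof lies here: upgrading mere pointwise convergence to genuine initial-segment (resp.\ extracted-subsequence) membership, which is exactly what local finiteness buys us.

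For $(i)$, if $\bs_1\propto\bs_2\propto\cdots$ is a strict $\propto$-chain in $\F$ then $x_{\sigma(\bs_n)}\to x_{\sigma(\vec{s})}$ for the union $\vec{s}\in\widetilde{L}^\infty(\Sigma,\vec{k};\upsilon)$, a limit of infinite support lying outside $\{x_{\sigma(\bw)}:\bw\in\F\}$, so $\F$ is not closed. Conversely, assume no such chain and let $y=\lim_m x_{\sigma(\bs^{(m)})}$ with $\bs^{(m)}\in\F$ and support $Y$. Any two members of $Y$ eventually lie together in some orderly $\sigma(\bs^{(m)})$, hence are $<_{\textsl{R}_1}$-comparable, so $Y$ is totally ordered of order type $\le\omega$; enumerate it as $s_1<_{\textsl{R}_1}s_2<_{\textsl{R}_1}\cdots$. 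Fixing $l$ and applying local finiteness to $s_{l+1}$, pointwise convergence on the finite set of words $<_{\textsl{R}_1}s_{l+1}$ shows that for large $m$ the elements of $\sigma(\bs^{(m)})$ below $s_{l+1}$ are exactly $s_1,\ldots,s_l$; thus $(s_1,\ldots,s_l)\propto\bs^{(m)}\in\F$, and since $\F$ is a tree, $(s_1,\ldots,s_l)\in\F$. An infinite $Y$ would then yield a forbidden chain, so $Y$ is finite and the same argument gives $Y=\sigma(\bu)$ with $\bu\in\F$, i.e.\ $y$ is realized and $\F$ is closed.

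For $(ii)$ the skeleton is identical, with ``initial segment'' replaced by ``extracted subsequence''. If $\widetilde{EV}^{<\infty}(\vec{s})\subseteq\F$ for some $\vec{s}$, then $(s_1,\ldots,s_l)\in\widetilde{EV}^{<\infty}(\vec{s})\subseteq\F$ converge to the infinitely supported $x_{\sigma(\vec{s})}$, so $\F$ is not closed. Conversely, from a limit point with support $Y$ (again totally ordered of type $\le\omega$), if $Y$ is infinite write $\vec{s}=(s_n)\in\widetilde{L}^\infty$; any $\bw\in\widetilde{EV}^{<\infty}(\vec{s})$ involves only finitely many $s_1,\ldots,s_N$, which for large $m$ are words of some $\bs^{(m)}\in\F$, so $\bw\in\widetilde{EV}^{<\infty}(\bs^{(m)})$ and $\bw\in\F$ by heredity; this gives $\widetilde{EV}^{<\infty}(\vec{s})\subseteq\F$, the excluded case. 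If $Y$ is finite the same reasoning yields $Y=\sigma(\bu)$ with $\bu\in\F$. Hence $\F$ is pointwise closed.

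Finally $(iii)$: the family $\mathcal{H}=(\widetilde{L}^\xi(\Sigma,\vec{k};\upsilon)\cap\widetilde{EV}^{<\infty}(\vec{u}))_*$ is hereditary by construction, so by $(ii)$ it suffices to rule out an $\vec{s}\in\widetilde{L}^\infty$ with $\widetilde{EV}^{<\infty}(\vec{s})\subseteq\mathcal{H}$. Were there one, for each $l$ there would be $\bw_l=(w_1,\ldots,w_p)\in\widetilde{L}^\xi(\Sigma,\vec{k};\upsilon)$ with $(s_1,\ldots,s_l)\in\widetilde{EV}^{<\infty}(\bw_l)$; since $T_{(p,q)}$ and $\star$ preserve domains, writing each $s_i$ as an extracted word of $\bw_l$ gives $\min dom^+(s_i)=\min dom^+$ of the $<_{\textsl{R}_1}$-least word of $\bw_l$ used, so $\{\min dom^+(s_1),\ldots,\min dom^+(s_l)\}\subseteq\{\min dom^+(w_1),\ldots,\min dom^+(w_p)\}\in\A_\xi$. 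Thus every finite subset of the infinite set $M=\{\min dom^+(s_n):n\in\nat\}$ lies in the hereditary closure of $\A_\xi$, making $x_M$ a limit point of infinite support and contradicting the compactness (pointwise closedness) of the Schreier families $\A_\xi$, cf.\ \cite{F3}. Hence no such $\vec{s}$ exists and, by $(ii)$, $\mathcal{H}$ is pointwise closed.
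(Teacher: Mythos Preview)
Your proof is correct and considerably more detailed than the paper's own, which simply reads ``This follows directly from the definitions (for details cf.\ \cite{F3}, \cite{FN1}).'' What you have written is precisely the kind of argument those references contain, transported to the present $\omega$-$\zat^\ast$-located setting, so the approaches coincide in spirit.

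Two brief remarks on the exposition. First, your local-finiteness lemma is the right engine, and your derivation that $A_1=dom^-(w_0)$, $A_2=dom^+(w_0)$ whenever $a\in\widetilde L_0$ and $a<_{\textsl R_1}w_0$ is clean and correct. Second, in part~(ii) the step ``$s_1,\ldots,s_N$ are words of $\bs^{(m)}$, so $\bw\in\widetilde{EV}^{<\infty}(\bs^{(m)})$'' tacitly uses that the position of $s_j$ in $\bs^{(m)}$ is at least~$j$ (since $s_1,\ldots,s_j$ are all present and $<_{\textsl R_1}$-below $s_j$), which, combined with the monotonicity of $(k_n)_{n\in\nat}$ and $(k_{-n})_{n\in\nat}$, ensures the positional bounds $0\le p_i\le k_{n_i}$, $0\le q_i\le k_{-n_i}$ transfer. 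This is implicit in your sketch and is not a gap, but it is the one place where a reader might pause. Part~(iii) is handled correctly via the known compactness of the Schreier hereditary closures in $\{0,1\}^\nat$.
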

\begin{proof}
This follows directly from the definitions (for details cf. \cite{F3}, \cite{FN1}).
\end{proof}

Let $\vec{s} \in \widetilde{L}^{\infty}(\Sigma, \vec{k} ; \upsilon)$. For a hereditary and pointwise closed family $\F\subseteq \widetilde{L}^{<\infty}(\Sigma, \vec{k}; \upsilon)$ we will define the strong Cantor-Bendixson index $sO_{\vec{s}}(\F)$ of $\F$ with respect to $\vec{s} $.

\begin{defn}\label{def:Cantor-Bendix}
Let $\Sigma=\{\alpha_n:\;n \in \mathbb{Z}^\ast\}$ be an alphabet, $\upsilon \notin \Sigma$ a variable, $\vec{k}=(k_n)_{n\in\mathbb{Z}^\ast}\subseteq \nat$ such that $(k_n)_{n\in \nat},\;(k_{-n})_{n\in \nat}$ are increasing sequences, $\vec{s}\in \widetilde{L}^{\infty}(\Sigma, \vec{k} ; \upsilon)$ and let $\F\subseteq \widetilde{L}^{<\infty}(\Sigma, \vec{k}; \upsilon)$ be a hereditary and pointwise closed family. For every $\xi <\omega_1$ we define the families $(\F)_{\vec{s}}^\xi$ inductively as follows:
\newline
For every $\bw = (w_1,\ldots,w_l)\in \F\cap \widetilde{EV}^{<\infty}(\vec{s})$ we set

$A_{\bw}=\{t\in \widetilde{EV}(\vec{s}) :\;(w_1,\ldots,w_l,t)\notin \F\}$ and $A_{\emptyset}=\{t\in \widetilde{EV}(\vec{s}) :(t)\notin \F\} $.
\newline
We define
\begin{center}
$(\F)_{\vec{s}}^1 = \{\bw \in \F\cap \widetilde{EV}^{<\infty}(\vec{s})\cup \{\emptyset\}
 : A_{\bw}$ does not contain an infinite orderly sequence$\}$.
\end{center}
It is easy to verify that $(\F)_{\vec{s}}^1$ is hereditary, hence it is pointwise closed since $\F$ is pointwise closed (Proposition~\ref{prop:finitefamily}).
So, we can define for every $\xi >1$ the $\xi$-derivatives of $\F$
recursively as follows:
\begin{itemize}
\item[]  $ (\F)_{\vec{s}}^{\zeta +1} = ((\F)_{\vec{s}}^\zeta  )_{\vec{s}}^1$ for all $ \zeta <\omega_1$, and
\item[] $(\F)_{\vec{s}}^\xi = \bigcap_{\beta<\xi} (\F)_{\vec{s}}^\beta$ for $\xi$ a limit ordinal.
\end{itemize}

The  {\em strong Cantor-Bendixson index} $sO_{\vec{s}}(\F)$ of $\F$ on $\vec{s}$ is the smallest countable ordinal $\xi$ such that $(\F)_{\vec{s}}^\xi = \emptyset$.
\end{defn}

\begin{remark}\label{rem:Cantor-Bendix}
Let $\vec{s}\in \widetilde{L}^{\infty}(\Sigma, \vec{k}; \upsilon)$ and let $\F_1, \R_1,\subseteq \widetilde{L}^{<\infty}(\Sigma, \vec{k}; \upsilon)$ be hereditary and pointwise closed families.
\begin{itemize}
\item[(i)] $sO_{\vec{s}}(\F_1)$ is a countable successor ordinal less than or
equal to the ``usual'' Cantor-Bendixson index $O(\F_1)$ of $\F_1$ into
$\{0,1\}^{[D]^{<\omega}}$ (cf. \cite{KM}).

\item[(ii)] $sO_{\vec{s}} (\F_1\cap \widetilde{EV}^{<\infty}(\vec{s})) =$ $sO_{\vec{s}}(\F_1)$.

\item[(iii)] $sO_{\vec{s}}(\F_1) \leq sO_{\vec{s}}(\R_1)$ if $\F_1\subseteq \R_1$.

\item[(iv)] If $\vec{s}_1 \prec \vec{s}$ and $\bw \in (\F_1)_{\vec{s}}^\xi$, then for every $\bw_1\in \widetilde{EV}^{<\infty}(\vec{s}_1)$ such that $\sigma(\bw_1)=\sigma(\bw) \cap \widetilde{EV}(\vec{s}_1) $ we have that $\bw_1\in (\F_1)_{\vec{s}_1}^\xi$ , since $\widetilde{EV}(\vec{s}_1)\subseteq \widetilde{EV}(\vec{s})$.

\item[(v)] If $\vec{s}_1\prec \vec{s}$, then $sO_{\vec{s}_1} (\F_1) \geq sO_{\vec{s}}(\F_1)$,
according to (iv).

\item[(vi)] If $\sigma(\vec{s}_1)\setminus \sigma(\vec{s})$ is a finite set, then $sO_{\vec{s}_1}(\F_1)
\geq $$sO_{\vec{s}}(\F_1)$.
\end{itemize}
\end{remark}

The corresponding strong Cantor-Bendixson index to $\widetilde{L}^\xi(\Sigma, \vec{k} ; \upsilon)$ is equal to $\xi+1$, with respect any sequence $\vec{s}\in \widetilde{L}^{\infty}(\Sigma, \vec{k} ; \upsilon)$.

\begin{prop}
\label{prop:Cantor-Bendix}
Let $\xi<\omega_1$ be an ordinal, $\Sigma=\{\alpha_n:\;n \in \mathbb{Z}^\ast\}$ be an alphabet, $\upsilon \notin \Sigma$ a variable, $\vec{k}=(k_n)_{n\in\mathbb{Z}^\ast}\subseteq \nat$ such that $(k_n)_{n\in \nat},\;(k_{-n})_{n\in \nat}$ are increasing sequences and $\vec{s}\in \widetilde{L}^{\infty}(\Sigma, \vec{k} ; \upsilon)$. Then,
\begin{center}
 $sO_{\vec{s}_1}\Big((\widetilde{L}^\xi(\Sigma, \vec{k} ; \upsilon) \cap \widetilde{EV}^{<\infty}(\vec{s}))_*\Big)= \xi+1$ for every $\vec{s}_1\prec \vec{s}$.
\end{center}
\end{prop}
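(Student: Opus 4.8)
The plan is to prove, by transfinite induction on $\xi$, the statement $P(\xi)$: for all $\vec{s}_1\prec \vec{s}$ in $\widetilde{L}^{\infty}(\Sigma, \vec{k} ; \upsilon)$ one has $sO_{\vec{s}_1}(\F_\xi^{\vec{s}})=\xi+1$, where $\F_\xi^{\vec{s}}:=(\widetilde{L}^\xi(\Sigma, \vec{k} ; \upsilon) \cap \widetilde{EV}^{<\infty}(\vec{s}))_*$; the assertion of the proposition is the instance of $P(\xi)$ for the given $\vec{s}_1\prec\vec{s}$. The base case $\xi=0$ is immediate: $\F_0^{\vec{s}}=\{\emptyset\}$, and since $A_\emptyset=\widetilde{EV}(\vec{s}_1)$ contains an infinite orderly sequence, $(\F_0^{\vec{s}})^1_{\vec{s}_1}=\emptyset$, so $sO_{\vec{s}_1}(\F_0^{\vec{s}})=1$.

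First I would record two general facts that drive the induction. For a hereditary, pointwise closed family $\F$ write $\rho_{\vec{s}_1}(\bw)$ for the least $\beta$ with $\bw\notin (\F)^\beta_{\vec{s}_1}$. Since every derivative $(\F)^\beta_{\vec{s}_1}$ is hereditary, it is empty exactly when it omits $\emptyset$, so $sO_{\vec{s}_1}(\F)=\rho_{\vec{s}_1}(\emptyset)$. Moreover the derivative is local below a node: for $t\in \widetilde{EV}(\vec{s}_1)$ and any $(v_1,\ldots,v_j)$, the set $\{u : (t,v_1,\ldots,v_j,u)\notin \F\}$ equals $\{u : (v_1,\ldots,v_j,u)\notin \F(t)\}$, so the derivatives of $\F$ at the nodes extending $(t)$ coincide with those of the section $\F(t)$; in particular $\rho_{\vec{s}_1}((t))=sO_{\vec{s}_1}(\F(t))$.

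Next I would identify the sections using Proposition~\ref{justification}. For $\xi\ge 1$ and $t\in \widetilde{EV}(\vec{s}_1)$ with $\min dom^{+}(t)=n$ it gives $\widetilde{L}^\xi(\Sigma, \vec{k} ; \upsilon)(t)=\widetilde{L}^{\xi_n}(\Sigma, \vec{k} ; \upsilon)\cap (\widetilde{L}^{<\infty}(\Sigma, \vec{k} ; \upsilon)-t)$, and after checking that the hereditary closure commutes with passing to the section at $t$, the section $(\F_\xi^{\vec{s}})(t)$ becomes $\F_{\xi_n}^{\vec{s}-t}$ up to a finite initial part of the extraction, which does not change the index by Remark~\ref{rem:Cantor-Bendix}(vi). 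Since $\vec{s}_1-t\prec \vec{s}-t$, the induction hypothesis $P(\xi_n)$ (valid as $\xi_n<\xi$) yields $sO_{\vec{s}_1-t}(\F_{\xi_n}^{\vec{s}-t})=\xi_n+1$, whence $\rho_{\vec{s}_1}((t))=\xi_n+1$ for every $t$ with $\min dom^{+}(t)=n$.

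Finally I would compute $\rho_{\vec{s}_1}(\emptyset)$. By definition $\emptyset\in (\F_\xi^{\vec{s}})^{\beta+1}_{\vec{s}_1}$ exactly when the set $\{t\in \widetilde{EV}(\vec{s}_1) : \rho_{\vec{s}_1}((t))\le \beta\}=\{t : \xi_{\min dom^{+}(t)}+1\le\beta\}$ contains no infinite orderly sequence; and because the numbers $\min dom^{+}(w_1)<\cdots<\min dom^{+}(w_l)$ strictly increase along any $<_{\textsl{R}_1}$-chain, such a set contains an infinite orderly sequence precisely when the values $\min dom^{+}(t)$ are unbounded on it. In the successor case $\xi=\zeta+1$ one has $\xi_n=\zeta$ for all $n$, so this set is empty for $\beta\le\zeta$ and is all of $\widetilde{EV}(\vec{s}_1)$ for $\beta\ge\zeta+1$, giving $\rho_{\vec{s}_1}(\emptyset)=\zeta+2=\xi+1$. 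In the limit case $\xi_n\uparrow\xi$, so for $\beta<\xi$ the set $\{t : \xi_{\min dom^{+}(t)}<\beta\}$ has $\min dom^{+}$ ranging over an initial segment of $\nat$ and hence no infinite orderly sequence, while for $\beta=\xi$ it is all of $\widetilde{EV}(\vec{s}_1)$; thus $\rho_{\vec{s}_1}(\emptyset)=\xi+1$. The main obstacle is precisely the section identification of the third step: one must verify that forming the hereditary closure commutes with taking the section at $t$ under the nesting relation $<_{\textsl{R}_1}$, so that Proposition~\ref{justification} can be fed into the derivative level by level; this is where the detailed combinatorics of $\widetilde{EV}$ and $<_{\textsl{R}_1}$ (in the spirit of \cite{F3}, \cite{FN1}) must be carried out.
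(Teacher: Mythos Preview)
Your inductive skeleton, the use of the rank function $\rho_{\vec{s}_1}$, and the identification of sections via Proposition~\ref{justification} are exactly the paper's route; the final computation of $\rho_{\vec{s}_1}(\emptyset)$ from the values $\rho_{\vec{s}_1}((t))$ in the successor and limit cases is also correct and essentially what the paper does (phrased there as showing the $\xi$-derivative equals $\{\emptyset\}$).

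The difference lies precisely in the obstacle you flag. Your argument needs the \emph{exact} equality $\rho_{\vec{s}_1}((t))=\xi_n+1$, and for the upper bound $\rho_{\vec{s}_1}((t))\le\xi_n+1$ you need the inclusion $(\G_*)(t)\subseteq(\G(t))_*$ with $\G=\widetilde{L}^\xi\cap\widetilde{EV}^{<\infty}(\vec s)$, i.e.\ that hereditary closure commutes with the section at $t$. This direction is not obvious: if $(t,u_1,\ldots,u_l)\in\widetilde{EV}^{<\infty}(\bw)$ for some $\bw\in\widetilde{L}^\xi$, one must produce $\bw'$ with $(t,\bw')\in\widetilde{L}^\xi$ and $u_i\in\widetilde{EV}(\bw')$, and the $\A_\xi$-constraint on $\min dom^+$-values makes this nontrivial when $t$ is built from several $w_i$'s. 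The paper does \emph{not} prove this commutation. It uses only the trivial inclusion $(\G(t))_*\subseteq(\G_*)(t)$, which yields the lower bound $\rho_{\vec{s}_1}((t))\ge\xi_n+1$ and hence $\emptyset\in(\F_\xi^{\vec s})^\xi_{\vec{s}_1}$. For the upper bound it argues differently: assuming $(\F_\xi^{\vec s})^\xi_{\vec{s}_1}\ne\{\emptyset\}$, it invokes Lemma~2.8 of \cite{F5} (using that $\{y\in\widetilde{EV}(\vec{s}_1):\max dom^+(y)<n_0\}$ is finite) to extract $\vec{s}_2\prec\vec{s}_1$ and $s\in\widetilde{EV}(\vec{s}_2)$ with $((\G(s))_*)^\xi_{\vec{s}_2}\ne\emptyset$, contradicting the induction hypothesis at level $\xi_{\min dom^+(s)}<\xi$. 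So your plan is sound, but to complete it you should either carry out the commutation (which is real work in the $<_{\textsl{R}_1}$ setting) or, more simply, replace your upper-bound step by the paper's appeal to \cite{F5}; the lower-bound half of your rank computation already goes through with just the easy inclusion.
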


\begin{proof}
The family $(\widetilde{L}^\xi(\Sigma, \vec{k}  ; \upsilon) \cap \widetilde{EV}^{<\infty}(\vec{s}))_*$ is hereditary and pointwise closed (Proposition~\ref{prop:finitefamily}). We will prove by induction on $\xi$ that
$\Big((\widetilde{L}^\xi(\Sigma, \vec{k}  ; \upsilon) \cap \widetilde{EV}^{<\infty}(\vec{s}))_*\Big)_{\vec{s}_1}^\xi = \{\emptyset\}$ for every $\vec{s}_1\prec \vec{s}$ and $\xi<\omega_1$.
Since
$(\widetilde{L}^1(\Sigma, \vec{k}; \upsilon) \cap \widetilde{EV}^{<\infty}(\vec{s}))_* = \{(t) :\; t\in \widetilde{EV}(\vec{s})\} \cup \{\emptyset\}$, we have $\Big((\widetilde{L}^1(\Sigma, \vec{k} ; \upsilon) \cap \widetilde{EV}^{<\infty}(\vec{s}))_*\Big)_{\vec{s}_1}^1 = \{\emptyset\}$ for every $\vec{s}_1\prec \vec{s}$.

Let $\xi>1$ and assume that
$\Big((\widetilde{L}^\zeta(\Sigma, \vec{k} ; \upsilon) \cap \widetilde{EV}^{<\infty}(\vec{s}))_* \Big)_{\vec{s}_1}^\zeta = \{\emptyset\}$ for every $\vec{s}_1\prec \vec{s}$ and $\zeta <\xi$.
For every $t\in \widetilde{EV}(\vec{s})$ with $\min dom^+(t)=n$, according to
Proposition~\ref{justification}, we
\begin{center}
have $(\widetilde{L}^\xi(\Sigma, \vec{k} ; \upsilon)\cap \widetilde{EV}^{<\infty}(\vec{s})) (t) = \widetilde{L}^{\xi_n}(\Sigma, \vec{k} ; \upsilon)\cap \widetilde{EV}^{<\infty}(\vec{s}-t)$ for some $\xi_n <\xi.$
\end{center}
Hence, for every $\vec{s}_1\prec \vec{s}$ and $t\in \widetilde{EV}(\vec{s}_1)$ with $\min dom^+(t)=n$ we have that

$\Big((\widetilde{L}^\xi(\Sigma, \vec{k} ; \upsilon)\cap \widetilde{EV}^{<\infty}(\vec{s}))(t)_* \Big)_{\vec{s}_1}^{\xi_n}
= \Big((\widetilde{L}^{\xi_n}(\Sigma, \vec{k} ; \upsilon)\cap  \widetilde{EV}^{<\infty}(\vec{s}-t))_* \Big)_{\vec{s}_1}^{\xi_n}
= \{\emptyset\}.$
\newline
This gives that
$(t) \in \Big((\widetilde{L}^\xi(\Sigma, \vec{k} ; \upsilon)\cap \widetilde{EV}^{<\infty}(\vec{s}))_* \Big)_{\vec{s}_1}^{\xi_n}$.
So, $\emptyset \in \Big((\widetilde{L}^\xi(\Sigma, \vec{k} ; \upsilon)\cap \widetilde{EV}^{<\infty}(\vec{s}))_* \Big)_{\vec{s}_1}^\xi$, since if $\xi = \zeta+1$, then $(t) \in \Big((\widetilde{L}^\xi(\Sigma, \vec{k} ; \upsilon)\cap \widetilde{EV}^{<\infty}(\vec{s}))_* \Big)_{\vec{s}_1}^\zeta$
for every $t\in \widetilde{EV}(\vec{s}_1)$ and if
$\xi$ is a limit ordinal, then $\emptyset\in \Big((\widetilde{L}^\xi(\Sigma, \vec{k} ; \upsilon)\cap \widetilde{EV}^{<\infty}(\vec{s}))_* \Big)_{\vec{s}_1}^{\xi_n}$ for every $n\in\nat$
and $\sup \xi_n  =\xi$.

If $\{\emptyset\} \ne \Big((\widetilde{L}^\xi(\Sigma, \vec{k} ; \upsilon)\cap \widetilde{EV}^{<\infty}(\vec{s}))_* \Big)_{\vec{s}_1}^\xi$ for $\vec{s}_1\prec \vec{s}$,
then, according to Lemma 2.8 in [F5], using the fact that the set $\{y\in \widetilde{EV}(\vec{s}_1):\;\max dom^+(y)<n_0\}$ for $n_0\in \nat$ is finite, can be constructed $\vec{s}_2\prec \vec{s}_1$ and  $s\in \widetilde{EV}(\vec{s}_2)$ such that
$\Big((\widetilde{L}^\xi(\Sigma, \vec{k} ; \upsilon)\cap \widetilde{EV}^{<\infty}(\vec{s}))(s)_* \Big)_{\vec{s}_2}^\xi \ne \emptyset$, a contradiction to the induction hypothesis.
Hence, $\Big((\widetilde{L}^\xi(\Sigma, \vec{k} ; \upsilon)\cap \widetilde{EV}^{<\infty}(\vec{s}))_* \Big)_{\vec{s}_1}^\xi = \{\emptyset\} $ and
$sO_{\vec{s}_1}((\widetilde{L}^\xi(\Sigma, \vec{k} ; \upsilon) \cap \widetilde{EV}^{<\infty}(\vec{s}))_* )
= \xi+1$ for every $\xi<\omega_1$.
\end{proof}

\begin{cor}\label{cor:tree}
Let $\xi_1,\xi_2$ be countable ordinals with $\xi_1<\xi_2$ and $\vec{w}\in \widetilde{L}^\infty (\Sigma, \vec{k} ; \upsilon)$. Then there exists $\vec{u}\prec \vec{w}$ such that
\begin{center} $(\widetilde{L}^{\xi_1}(\Sigma, \vec{k} ; \upsilon))_* \cap \widetilde{EV}^{<\infty}(\vec{u})\subseteq (\widetilde{L}^{\xi_2}(\Sigma, \vec{k} ; \upsilon))^* \setminus \widetilde{L}^{\xi_2}(\Sigma, \vec{k} ; \upsilon)$.\end{center}
\end{cor}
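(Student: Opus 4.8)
The plan is to transfer the statement to the level of the Schreier families $\A_\xi$ of finite subsets of $\nat$ via the projection to positive minima, and there invoke the comparison of Schreier families of different orders. To each nonempty $\bw=(w_1,\ldots,w_l)\in\widetilde{L}^{<\infty}(\Sigma,\vec{k};\upsilon)$ I associate the finite set $\pi(\bw)=\{\min dom^{+}(w_1),\ldots,\min dom^{+}(w_l)\}\in[\nat]^{<\omega}_{>0}$, with $\pi(\emptyset)=\emptyset$. Using that for $w<_{\textsl{R}_1}u$ the domain of $u$ straddles that of $w$ (so that $\min dom^{+}$ is strictly increasing along an orderly sequence, and along the blocks of an extracted word $\min dom^{+}$ of a star-product equals $\min dom^{+}$ of its first factor), one verifies three transfer facts: (a) $\pi(\bw)\in\A_\xi$ exactly when $\bw\in\widetilde{L}^\xi(\Sigma,\vec{k};\upsilon)$, and $\pi(\bw)\in(\A_{\xi_2})^*\setminus\A_{\xi_2}$ exactly when $\bw\in(\widetilde{L}^{\xi_2}(\Sigma,\vec{k};\upsilon))^*\setminus\widetilde{L}^{\xi_2}(\Sigma,\vec{k};\upsilon)$ (the ``if'' direction realizing an $\A_{\xi_2}$-extension inside a prescribed infinite set by appending suitable extracted words); and (b) if $\bw\in(\widetilde{L}^{\xi_1}(\Sigma,\vec{k};\upsilon))_*$ then $\pi(\bw)$ is a subset of some member of $\A_{\xi_1}$, i.e. $\pi(\bw)\in(\A_{\xi_1})_*$.

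First I would fix the infinite set $M=\{\min dom^{+}(w_n):n\in\nat\}\subseteq\nat$ determined by $\vec{w}$. The desired extraction will be any $\vec{u}\prec\vec{w}$ chosen one word per index, so that $\{\min dom^{+}(u_n):n\in\nat\}=L$ for a suitable infinite $L\subseteq M$; since each $u_n$ may be taken equal to an original $w_{m_n}$, every element of $L$ is attainable and every $\bw\in\widetilde{EV}^{<\infty}(\vec{u})$ satisfies $\pi(\bw)\in[L]^{<\omega}$. By the transfer facts, the conclusion $(\widetilde{L}^{\xi_1}(\Sigma,\vec{k};\upsilon))_*\cap\widetilde{EV}^{<\infty}(\vec{u})\subseteq(\widetilde{L}^{\xi_2}(\Sigma,\vec{k};\upsilon))^*\setminus\widetilde{L}^{\xi_2}(\Sigma,\vec{k};\upsilon)$ follows once $L$ is chosen so that
$$(\A_{\xi_1})_*\cap[L]^{<\omega}\subseteq(\A_{\xi_2})^*\setminus\A_{\xi_2}.$$
Such an $L\subseteq M$ is furnished by the comparison theory of Schreier families (see \cite{F3}): since $\xi_1<\xi_2$, after passing to a sufficiently thin infinite subset every set hereditarily below $\A_{\xi_1}$ becomes a proper initial segment of an $\A_{\xi_2}$-set and is never itself $\A_{\xi_2}$-admissible. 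Quantitatively this reflects the strong Cantor--Bendixson index computation of Proposition~\ref{prop:Cantor-Bendix}, which assigns index $\xi_1+1$ to the $\xi_1$-family and $\xi_2+1$ to the $\xi_2$-family; the strict inequality $\xi_1+1<\xi_2+1$ is exactly what forces the inclusion after extraction.

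The main obstacle will be the Schreier-set inclusion displayed above, and in particular its heredity: $\pi$ sends members of $(\widetilde{L}^{\xi_1}(\Sigma,\vec{k};\upsilon))_*$ to \emph{arbitrary} subsets of $\A_{\xi_1}$-sets, not merely to their initial segments, so one must control all such subsets rather than only the ``full'' $\A_{\xi_1}$-sets. This is handled by the canonical representation with respect to $\A_{\xi_2}$ (the set-level analogue of Proposition~\ref{prop:canonicalrep}): every finite subset of $L$ is either a proper initial segment of an $\A_{\xi_2}$-set, or it has an $\A_{\xi_2}$-admissible initial segment, and thinning $L$ so that no $\xi_1$-small subset can attain $\A_{\xi_2}$-admissibility inside $L$ eliminates the second alternative; keeping $\min L$ large simultaneously disposes of degenerate singleton cases (for instance $\{1\}\in\A_\omega$). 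Once the set-level statement is secured, the translation back through $\pi$ is routine and completes the proof.
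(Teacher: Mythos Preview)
Your reduction via $\pi$ to the Schreier-family level is sound in outline, and the transfer facts (a) and (b) are essentially correct (the ``if'' direction of (a) does need the observation that if $\pi(\bw)\in(\A_{\xi_2})^*\setminus\A_{\xi_2}$ then by iterating Proposition~\ref{justification} the required extension can be realized with elements larger than $\max dom^+(w_l)$, so that the new words can be chosen $<_{\textsl{R}_1}$-above $w_l$; you gesture at this but it deserves one more sentence). However, your route is genuinely different from the paper's.

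The paper stays entirely at the word level: it notes that $(\widetilde{L}^{\xi_1}(\Sigma,\vec{k};\upsilon))_*$ is a tree, applies Theorem~\ref{thm:block-Ramsey} with $\F=(\widetilde{L}^{\xi_1})_*$ and ordinal $\xi_2$, and uses Proposition~\ref{prop:tree} to rephrase the complement alternative as the desired inclusion. The other alternative, $\widetilde{L}^{\xi_2}\cap\widetilde{EV}^{<\infty}(\vec{u})\subseteq(\widetilde{L}^{\xi_1})_*$, is killed in one line by Proposition~\ref{prop:Cantor-Bendix}: it would force $\xi_2+1\le\xi_1+1$. This is short, self-contained, and uses precisely the machinery the paper has already built.

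Your approach instead pushes the work down to $[\nat]^{<\omega}$ and invokes the analogous comparison $(\A_{\xi_1})_*\cap[L]^{<\omega}\subseteq(\A_{\xi_2})^*\setminus\A_{\xi_2}$ from \cite{F3}. This is legitimate, but note that the result you are citing is itself proved by exactly the same Ramsey-dichotomy-plus-index mechanism (at the set level), so you are not gaining elementarity, only relocating the argument. What your approach does buy is a cleaner conceptual separation: it makes explicit that Corollary~\ref{cor:tree} is a lift of a pure Schreier-family fact, with the word structure playing no essential role beyond bookkeeping. The paper's proof, by contrast, demonstrates that the corollary falls out immediately from the paper's own Theorem~\ref{thm:block-Ramsey} and Proposition~\ref{prop:Cantor-Bendix}, which is the point being made at this juncture.
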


\begin{proof}
The family $(\widetilde{L}^{\xi_1}(\Sigma, \vec{k}; \upsilon))_* \subseteq \widetilde{L}^{<\infty}(\Sigma, \vec{k} ; \upsilon)$ is a tree. According to Theorem~\ref{thm:block-Ramsey} and Proposition~\ref{prop:tree} there exists $\vec{u}\prec\vec{w}$ such that:

either $\widetilde{L}^{\xi_2}(\Sigma, \vec{k} ; \upsilon) \cap \widetilde{EV}^{<\infty}(\vec{u})\subseteq (\widetilde{L}^{\xi_1}(\Sigma, \vec{k} ; \upsilon))_*$,

or $(\widetilde{L}^{\xi_1}(\Sigma, \vec{k} ; \upsilon))_* \cap \widetilde{EV}^{<\infty}(\vec{u}) \subseteq (\widetilde{L}^{\xi_2}(\Sigma, \vec{k} ; \upsilon))^* \setminus \widetilde{L}^{\xi_2}(\Sigma, \vec{k} ; \upsilon)$.
\newline
The first alternative of the dichotomy is impossible, since, according to Proposition~\ref{prop:Cantor-Bendix}
\newline
$\xi_2 +1 =$ $sO_{\vec{u}}((\widetilde{L}^{\xi_2}(\Sigma, \vec{k} ; \upsilon) \cap \widetilde{EV}^{<\infty}(\vec{u}))_*) \le$ $sO_{\vec{u}} ((\widetilde{L}^{\xi_1}(\Sigma, \vec{k} ; \upsilon))_*) = \xi_1 +1$.
\end{proof}

The following Theorem~\ref{block-NashWilliams2}, the main result in this Section, refines  Theorem~\ref{thm:block-Ramsey} in case the partition family is a tree.

\begin{defn}\label{def:blockNW}
Let $\F \subseteq \widetilde{L}^{<\infty}(\Sigma, \vec{k} ; \upsilon)$ be a family of finite orderly sequences of variable $\omega$-$\mathbb{Z}^\ast$-located words over an alphabet $\Sigma=\{\alpha_n:\;n\in \zat^\ast\},$ dominated by a two-sited sequence $\vec{k}=(k_n)_{n\in\mathbb{Z}^\ast}\subseteq \nat$ such that $(k_n)_{n\in \nat},\;(k_{-n})_{n\in \nat}$ are increasing sequences. We set

$\F_h = \{\bw \in \F:\; \widetilde{EV}^{<\infty}(\bw)\subseteq\F \}\cup \{\emptyset\}$.

Of course, $\F_h$ is the largest subfamily of $\F\cup \{\emptyset\}$ which is hereditary.
\end{defn}

\begin{thm}
\label{block-NashWilliams2}
Let $\Sigma=\{\alpha_n:\;n \in \mathbb{Z}^\ast\}$ be an alphabet, $\upsilon \notin \Sigma$ a variable, $\vec{k}=(k_n)_{n\in\mathbb{Z}^\ast}\subseteq \nat$ such that $(k_n)_{n\in \nat},\;(k_{-n})_{n\in \nat}$ are increasing sequences, $\F \subseteq \widetilde{L}^{<\infty}(\Sigma, \vec{k} ; \upsilon)$ a family which is a tree and $\vec{w} \in \widetilde{L}^\infty (\Sigma, \vec{k} ; \upsilon).$ Then we have the following cases:
\newline
\noindent {\bf [Case 1]} The family $\F_h\cap \widetilde{EV}^{<\infty}(\vec{w})$ is not pointwise closed.

Then, there exists $\vec{u}\prec \vec{w}$ such that
$\widetilde{EV}^{<\infty}(\vec{u})\subseteq \F$.
\newline
\noindent {\bf [Case 2]}
The family $\F_h \cap \widetilde{EV}^{<\infty}(\vec{w})$ is pointwise closed.

Then, setting
\begin{center}
$\zeta_{\vec{w}}^{\F} = \sup \{sO_{\vec{u}} (\F_h) :\; \vec{u}\prec \vec{w}\},$
\end{center}
which is a countable ordinal, the following subcases obtain:
\begin{itemize}
\item[2(i)] If $\xi+1 <\zeta_{\vec{w}}^{\F}$, then there exists $\vec{u}\prec \vec{w}$
such that
\begin{center}
$\widetilde{L}^\xi(\Sigma, \vec{k} ; \upsilon) \cap \widetilde{EV}^{<\infty}(\vec{u})\subseteq \F;$
\end{center}
\item[2(ii)] if $\xi+1>\xi>\zeta_{\vec{w}}^{\F}$, then for every $\vec{w}_1 \prec \vec{w}$ there exists
$\vec{u}\prec \vec{w}_1$ such that

$\widetilde{L}^\xi(\Sigma, \vec{k} ; \upsilon) \cap \widetilde{EV}^{<\infty}(\vec{u})\subseteq \widetilde{L}^{<\infty}(\Sigma, \vec{k} ; \upsilon)\setminus \F;$

$($equivalently $\F\cap \widetilde{EV}^{<\infty}(\vec{u}) \subseteq (\widetilde{L}^\xi(\Sigma, \vec{k} ; \upsilon))^* \setminus \widetilde{L}^\xi(\Sigma, \vec{k}; \upsilon))$ and
\item[2(iii)] if $\xi+1 = \zeta_{\vec{w}}^{\F}$ or $\xi = \zeta_{\vec{w}}^{\F}$, then there exists $\vec{u}\prec \vec{w}$ such that
\item[{}] either $\widetilde{L}^\xi(\Sigma, \vec{k} ; \upsilon) \cap \widetilde{EV}^{<\infty}(\vec{u})\subseteq \F, $ or $
\widetilde{L}^\xi(\Sigma, \vec{k} ; \upsilon) \cap \widetilde{EV}^{<\infty}(\vec{u})\subseteq \widetilde{L}^{<\infty}(\Sigma, \vec{k} ; \upsilon)\setminus \F.$
\end{itemize}
\end{thm}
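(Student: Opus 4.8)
The plan is to reduce everything to the basic dichotomy of Theorem~\ref{thm:block-Ramsey} and to let the strong Cantor--Bendixson index of the hereditary core $\F_h$ decide which horn occurs. Two preliminary remarks organize the argument. First, $\F_h$ is not only hereditary but also a \emph{tree}: any initial segment $(w_1,\dots,w_j)$ of $(w_1,\dots,w_l)$ lies in $\widetilde{EV}^{<\infty}((w_1,\dots,w_l))$, so heredity forces initial segments back into $\F_h$; consequently both Proposition~\ref{prop:tree} and the index machinery of Definition~\ref{def:Cantor-Bendix} apply to $\F_h$. Second, by Remark~\ref{rem:Cantor-Bendix}(i) every $sO_{\vec{u}}(\F_h)$ is bounded by the ordinary index $O(\F_h)<\omega_1$, so $\zeta_{\vec{w}}^{\F}=\sup\{sO_{\vec{u}}(\F_h):\vec{u}\prec\vec{w}\}$ is a countable ordinal, as asserted. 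Case~1 is then immediate: $\F_h\cap\widetilde{EV}^{<\infty}(\vec{w})$ is hereditary, so if it is not pointwise closed, Proposition~\ref{prop:finitefamily}(ii) yields $\vec{s}\in\widetilde{L}^\infty(\Sigma,\vec{k};\upsilon)$ with $\widetilde{EV}^{<\infty}(\vec{s})\subseteq\F_h\cap\widetilde{EV}^{<\infty}(\vec{w})$; since $\widetilde{EV}(\vec{s})\subseteq\widetilde{EV}(\vec{w})$ gives $\vec{s}\prec\vec{w}$, we take $\vec{u}=\vec{s}$ and obtain $\widetilde{EV}^{<\infty}(\vec{u})\subseteq\F_h\subseteq\F$.

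For Case~2 the heart is a pair of implications linking the dichotomy for $\F_h$ to its index. Applying Theorem~\ref{thm:block-Ramsey} to the tree-and-hereditary family $\F_h$ produces, below any extraction, some $\vec{u}$ with either $(\mathrm{A})$ $\widetilde{L}^\xi(\Sigma,\vec{k};\upsilon)\cap\widetilde{EV}^{<\infty}(\vec{u})\subseteq\F_h$ or, by Proposition~\ref{prop:tree}, $(\mathrm{B})$ $\F_h\cap\widetilde{EV}^{<\infty}(\vec{u})\subseteq(\widetilde{L}^\xi(\Sigma,\vec{k};\upsilon))^*\setminus\widetilde{L}^\xi(\Sigma,\vec{k};\upsilon)$. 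In case $(\mathrm{A})$, heredity of $\F_h$ gives $(\widetilde{L}^\xi(\Sigma,\vec{k};\upsilon)\cap\widetilde{EV}^{<\infty}(\vec{u}))_*\subseteq\F_h$, whence $sO_{\vec{u}}(\F_h)\ge sO_{\vec{u}}((\widetilde{L}^\xi(\Sigma,\vec{k};\upsilon)\cap\widetilde{EV}^{<\infty}(\vec{u}))_*)=\xi+1$ by Remark~\ref{rem:Cantor-Bendix}(iii) and Proposition~\ref{prop:Cantor-Bendix}. In case $(\mathrm{B})$, Remark~\ref{rem:Cantor-Bendix}(ii) together with monotonicity bounds $sO_{\vec{u}}(\F_h)$ by the strong index of a hereditary family sitting inside $(\widetilde{L}^\xi(\Sigma,\vec{k};\upsilon))^*\setminus\widetilde{L}^\xi(\Sigma,\vec{k};\upsilon)$; this index is at most $\xi+1$, since deleting the thin top layer $\widetilde{L}^\xi(\Sigma,\vec{k};\upsilon)$ from the family $(\widetilde{L}^\xi(\Sigma,\vec{k};\upsilon))_*$ of index $\xi+1$ amounts to a single Cantor--Bendixson derivative. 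Thus $(\mathrm{A})\Rightarrow sO_{\vec{u}}(\F_h)\ge\xi+1$ and $(\mathrm{B})\Rightarrow sO_{\vec{u}}(\F_h)\le\xi+1$.

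With these in hand the subcases follow by comparing $\xi$ to $\zeta_{\vec{w}}^{\F}$, using that the index is nondecreasing under further extraction (Remark~\ref{rem:Cantor-Bendix}(v)) and that every extraction of $\vec{w}_1\prec\vec{w}$ is an extraction of $\vec{w}$. For 2(i), $\xi+1<\zeta_{\vec{w}}^{\F}$ supplies $\vec{v}\prec\vec{w}$ with $sO_{\vec{v}}(\F_h)\ge\xi+2$; running the $\F_h$-dichotomy below $\vec{v}$ cannot land in $(\mathrm{B})$, so $(\mathrm{A})$ holds and $\widetilde{L}^\xi(\Sigma,\vec{k};\upsilon)\cap\widetilde{EV}^{<\infty}(\vec{u})\subseteq\F_h\subseteq\F$. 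For 2(ii), $\zeta_{\vec{w}}^{\F}<\xi$ forces, for every $\vec{w}_1\prec\vec{w}$, the dichotomy of Theorem~\ref{thm:block-Ramsey} for the tree $\F$ itself to fall in the negative horn: were the positive horn $\widetilde{L}^\xi(\Sigma,\vec{k};\upsilon)\cap\widetilde{EV}^{<\infty}(\vec{u})\subseteq\F$ to hold, upgrading it to positive homogeneity of $\F_h$ on a further extraction would yield $sO\ge\xi+1>\zeta_{\vec{w}}^{\F}$, contradicting the definition of $\zeta_{\vec{w}}^{\F}$; Proposition~\ref{prop:tree} then rewrites the negative horn in the stated equivalent form. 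For 2(iii), where $\xi=\zeta_{\vec{w}}^{\F}$ or $\xi+1=\zeta_{\vec{w}}^{\F}$, neither index estimate is violated, so one simply records the raw dichotomy of Theorem~\ref{thm:block-Ramsey} applied to $\F$.

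The main obstacle is exactly the upgrade invoked in 2(ii): passing from positive homogeneity of the \emph{tree} $\F$ to positive homogeneity of its hereditary core $\F_h$ on a suitable $\vec{u}'\prec\vec{u}$, which is what makes the lower bound $sO_{\vec{u}'}(\F_h)\ge\xi+1$ available. This step is genuinely delicate, because a tree may contain full $\xi$-Schreier orderly sequences without containing all of their $\widetilde{EV}^{<\infty}$-subsequences, so membership in $\F$ does not transfer to $\F_h$ automatically. I expect to resolve it by a secondary application of the base case Theorem~\ref{thm:block-Ramsey2} combined with the diagonalization of Lemma~\ref{lem:block-Ramsey}: the $\widetilde{EV}^{<\infty}$-subsequences of a $\xi$-Schreier sequence have Schreier rank at most $\xi$, so iterating the dichotomy and diagonalizing yields an extraction on which the entire hereditary closure of the $\xi$-Schreier sequences lands in $\F$, hence in $\F_h$. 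The accompanying technical point, namely the precise strong index of families squeezed inside $(\widetilde{L}^\xi(\Sigma,\vec{k};\upsilon))^*\setminus\widetilde{L}^\xi(\Sigma,\vec{k};\upsilon)$, will be settled by the derivative computation behind Proposition~\ref{prop:Cantor-Bendix} together with Corollary~\ref{cor:tree}.
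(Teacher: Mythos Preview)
Your treatment of Case~1, subcase~2(i) and subcase~2(iii) matches the paper's proof essentially line for line, and your index calculus (implications $(\mathrm{A})$ and $(\mathrm{B})$) is the right engine.

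The gap is in 2(ii), and you have correctly located it yourself: the ``upgrade'' from $\widetilde{L}^\xi(\Sigma,\vec{k};\upsilon)\cap\widetilde{EV}^{<\infty}(\vec{u})\subseteq\F$ to the same inclusion with $\F$ replaced by $\F_h$ is not established, and the diagonalization you sketch via Theorem~\ref{thm:block-Ramsey2} and Lemma~\ref{lem:block-Ramsey} does not obviously deliver it. The obstruction is exactly that $(\widetilde{L}^\xi)^*$, which the tree hypothesis $\F=\F^*$ does give you, is strictly smaller than $(\widetilde{L}^\xi)_*$, so one cannot read off membership in $\F_h$ from the positive horn at level $\xi$ alone.

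The paper sidesteps the upgrade entirely by a two-level argument. First, it runs the $\F_h$-dichotomy at the \emph{lower} level $\zeta_{\vec{w}}^{\F}$ below $\vec{w}_1$; your implication $(\mathrm{A})$ forces the negative horn there, yielding an extraction $\vec{u}_1$ with $\widetilde{L}^{\zeta_{\vec{w}}^{\F}}(\Sigma,\vec{k};\upsilon)\cap\widetilde{EV}^{<\infty}(\vec{u}_1)\subseteq\widetilde{L}^{<\infty}(\Sigma,\vec{k};\upsilon)\setminus\F_h$. Only then does it run the $\F$-dichotomy at level $\xi$ below $\vec{u}_1$. If the positive horn for $\F$ held on some $\vec{u}\prec\vec{u}_1$, the tree hypothesis gives $(\widetilde{L}^\xi)^*\cap\widetilde{EV}^{<\infty}(\vec{u})\subseteq\F$, and now Corollary~\ref{cor:tree} (applied with $\xi_1=\zeta_{\vec{w}}^{\F}<\xi=\xi_2$) supplies $\vec{t}\prec\vec{u}$ with $(\widetilde{L}^{\zeta_{\vec{w}}^{\F}})_*\cap\widetilde{EV}^{<\infty}(\vec{t})\subseteq(\widetilde{L}^\xi)^*$. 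The left-hand side is hereditary and lands in $\F$, hence in $\F_h$; in particular $\widetilde{L}^{\zeta_{\vec{w}}^{\F}}\cap\widetilde{EV}^{<\infty}(\vec{t})\subseteq\F_h$, contradicting the negative horn established first. So the key device you are missing is not an upgrade at level $\xi$ but the \emph{drop} to level $\zeta_{\vec{w}}^{\F}$ via Corollary~\ref{cor:tree}, which converts $(\,\cdot\,)^*$ at the high level into $(\,\cdot\,)_*$ at the low level and makes the passage to $\F_h$ automatic.
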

\begin{proof}
\noindent{\bf [Case 1]}
If the hereditary family $\F_h \cap \widetilde{EV}^{<\infty}(\vec{w})$ is not pointwise
closed, then, according to Proposition~\ref{prop:finitefamily}, there exists $\vec{u}\in \widetilde{L}^\infty (\Sigma, \vec{k}; \upsilon)$ such that $\widetilde{EV}^{<\infty}(\vec{u})\subseteq \F_h\cap \widetilde{EV}^{<\infty}(\vec{w})\subseteq \F$. Of course, $\vec{u}\prec \vec{w}$.

\noindent{\bf [Case 2]}
If the hereditary family $\F_h \cap \widetilde{EV}^{<\infty}(\vec{w})$ is pointwise closed, then $\zeta_{\vec{w}}^{\F}$
is a countable ordinal, since the ``usual'' Cantor-Bendixson
index $O(\F_h)$ of $\F_h$ into $\{0,1\}^{[D]^{<\omega}}$ is countable
(Remark~\ref{rem:Cantor-Bendix}(i)) and also $sO_{\vec{u}}(\F_h)\leq O(\F_h)$ for every $\vec{u}\prec \vec{w}$.

2(i)
Let $\xi+1<\zeta_{\vec{w}}^{\F}$. Then there exists $\vec{u}_1\prec\vec{w}$ such that
$\xi+1 <$ $sO_{\vec{u}_1}(\F_h)$.
According to Theorem~\ref{thm:block-Ramsey} and Proposition~\ref{prop:tree},
there exists $\vec{u}\prec \vec{u}_1$ such that

either $\widetilde{L}^\xi(\Sigma, \vec{k} ; \upsilon) \cap \widetilde{EV}^{<\infty}(\vec{u})\subseteq \F_h\subseteq \F$,

or $\F_h \cap \widetilde{EV}^{<\infty}(\vec{u}) \subseteq (\widetilde{L}^\xi(\Sigma, \vec{k} ; \upsilon))^* \setminus \widetilde{L}^\xi(\Sigma, \vec{k} ; \upsilon)\subseteq (\widetilde{L}^\xi(\Sigma, \vec{k} ; \upsilon))^* \subseteq (\widetilde{L}^\xi(\Sigma, \vec{k} ; \upsilon))_*$.
\newline
The second alternative is impossible.
Indeed, if $\F_h \cap \widetilde{EV}^{<\infty}(\vec{u}) \subseteq (\widetilde{L}^\xi(\Sigma, \vec{k} ; \upsilon))_*$, then, according
to Remark ~\ref{rem:Cantor-Bendix} and Proposition~\ref{prop:Cantor-Bendix},

$sO_{\vec{u}_1}(\F_h) \leq$ $sO_{\vec{u}} (\F_h) =$ $sO_{\vec{u}}(\F_h\cap \widetilde{EV}^{<\infty}(\vec{u})) \leq$ $ sO_{\vec{u}}((\widetilde{L}^\xi(\Sigma, \vec{k} ; \upsilon))_*) = \xi+1$;
\newline
a contradiction. Hence, $\widetilde{L}^\xi(\Sigma, \vec{k} ; \upsilon) \cap \widetilde{EV}^{<\infty}(\vec{u})\subseteq \F$.

2(ii)
Let $\xi +1>\xi >\zeta_{\vec{w}}^{\F}$ and $\vec{w}_1\prec \vec{w}$. According to Theorem~\ref{thm:block-Ramsey}, there exists $\vec{u_1}\prec \vec{w}_1$ such that

 either $\widetilde{L}^{\zeta_{\vec{w}}^{\F}}(\Sigma, \vec{k} ; \upsilon) \cap \widetilde{EV}^{<\infty}(\vec{u}_1)\subseteq \F_h$,

  or
$\widetilde{L}^{\zeta_{\vec{w}}^{\F}}(\Sigma, \vec{k} ; \upsilon) \cap \widetilde{EV}^{<\infty}(\vec{u_1})\subseteq \widetilde{L}^{<\infty}(\Sigma, \vec{k} ; \upsilon)\setminus \F_h$.
\newline
The first alternative is impossible.
Indeed, if $\widetilde{L}^{\zeta_{\vec{w}}^{\F}}(\Sigma, \vec{k} ; \upsilon) \cap \widetilde{EV}^{<\infty}(\vec{u}_1)\subseteq \F_h$, then, according
to Remark ~\ref{rem:Cantor-Bendix} and Proposition~\ref{prop:Cantor-Bendix},
we have that

${\zeta_{\vec{w}}^{\F}}+1 =$ $sO_{\vec{u}_1} ((\widetilde{L}^{\zeta_{\vec{w}}^{\F}}(\Sigma, \vec{k} ; \upsilon) \cap \widetilde{EV}^{<\infty}(\vec{u}_1))_*) \le$ $sO_{\vec{u}_1}
(\F_h) \le \zeta_{\vec{w}}^{\F};$
\newline
a contradiction. Hence,

$(1)\;\;\;\;\;\;\;\;\; \;\;\;\;\;\;\;\;\;\;\;\;\;\;\;\;\;\;\widetilde{L}^{\zeta_{\vec{w}}^{\F}}(\Sigma, \vec{k} ; \upsilon) \cap \widetilde{EV}^{<\infty}(\vec{u}_1)\subseteq \widetilde{L}^{<\infty}(\Sigma, \vec{k} ; \upsilon)\setminus \F_h\ .\;\;\;\;\;\;\;\;\;\;\;\;\;\;\;\;\;\;\;\;\;\;\;\;\;\;\;$

\noindent According to Theorem~\ref{thm:block-Ramsey},
there exists $\vec{u}\prec \vec{u}_1$ such that

 either $\widetilde{L}^\xi(\Sigma, \vec{k} ; \upsilon) \cap \widetilde{EV}^{<\infty}(\vec{u})\subseteq \F$,

 or
$\widetilde{L}^\xi(\Sigma, \vec{k} ; \upsilon) \cap \widetilde{EV}^{<\infty}(\vec{u})\subseteq \widetilde{L}^{<\infty}(\Sigma, \vec{k} ; \upsilon)\setminus \F$.
\newline
We claim that the first alternative does not hold.
Indeed, if $\widetilde{L}^\xi(\Sigma, \vec{k} ; \upsilon) \cap \widetilde{EV}^{<\infty}(\vec{u})\subseteq \F$, then
$(\widetilde{L}^\xi(\Sigma, \vec{k} ; \upsilon) \cap \widetilde{EV}^{<\infty}(\vec{u}))^* \subseteq \F^* = \F$.
Using the canonical representation of every infinite orderly sequence of variable located words
with respect to $\widetilde{L}^\xi(\Sigma, \vec{k} ; \upsilon)$ (Proposition~\ref{prop:canonicalrep}) it is
easy to check that

$(\widetilde{L}^\xi(\Sigma, \vec{k} ; \upsilon))^* \cap \widetilde{EV}^{<\infty}(\vec{u}) = (\widetilde{L}^\xi(\Sigma, \vec{k} ; \upsilon) \cap \widetilde{EV}^{<\infty}(\vec{u}))^*$ .
\newline
Hence, $(\widetilde{L}^\xi(\Sigma, \vec{k} ; \upsilon))^* \cap \widetilde{EV}^{<\infty}(\vec{u})\subseteq \F$.

Since $\xi>\zeta_{\vec{w}}^{\F}$, according to Corollary~\ref{cor:tree}, there exists
$\vec{t}\prec \vec{u}$ such that

$(\widetilde{L}^{\zeta_{\vec{w}}^{\F}}(\Sigma, \vec{k} ; \upsilon))_*\cap \widetilde{EV}^{<\infty}(\vec{t}) \subseteq
(\widetilde{L}^\xi(\Sigma, \vec{k} ; \upsilon))^* \cap \widetilde{EV}^{<\infty}(\vec{u})\subseteq \F$.
\newline
So, $(\widetilde{L}^{\zeta_{\vec{w}}^{\F}}(\Sigma, \vec{k} ; \upsilon))_*\cap \widetilde{EV}^{<\infty}(\vec{t}) \subseteq \F_h$.
This is a contradiction to the relation $(1)$.
Hence, $\widetilde{L}^\xi(\Sigma, \vec{k} ; \upsilon) \cap \widetilde{EV}^{<\infty}(\vec{u})\subseteq \widetilde{L}^{<\infty}(\Sigma, \vec{k} ; \upsilon)\setminus \F$ and  $\F\cap \widetilde{EV}^{<\infty}(\vec{u}) \subseteq (\widetilde{L}^\xi(\Sigma, \vec{k} ; \upsilon))^* \setminus \widetilde{L}^\xi(\Sigma, \vec{k} ; \upsilon)$.

2(iii)
In the cases $\zeta_{\vec{w}}^{\F} = \xi+1$ or $\zeta_{\vec{w}}^{\F} = \xi$, we
use Theorem~\ref{thm:block-Ramsey}.
\end{proof}

The following immediate corollary to Theorem~\ref{block-NashWilliams2} is more useful for applications and can be considered as a strengthened Nash-Williams type partition theorem for variable $\omega$-$\mathbb{Z}^\ast$-located words.

\begin{cor}
\label{cor:tree2}
Let $\F \subseteq \widetilde{L}^{<\infty}(\Sigma, \vec{k} ; \upsilon)$ which is a
tree and let $\vec{w} \in \widetilde{L}^\infty (\Sigma, \vec{k} ; \upsilon)$. Then
\begin{itemize}
\item[(i)] either
there exists $\vec{u}\prec \vec{w}$ such that $\widetilde{EV}^{<\infty}(\vec{u})\subseteq \F,$
\item[(ii)] or for every countable ordinal $\xi > \zeta_{\vec{w}}^{\F}$ there exists
$\vec{u}\prec \vec{w}$, such that for every $\vec{u}_1 \prec \vec{u}$ the unique initial segment of $\vec{u}_1$
which is an element of $\widetilde{L}^\xi(\Sigma, \vec{k} ; \upsilon)$ belongs to $ \widetilde{L}^{<\infty}(\Sigma, \vec{k} ; \upsilon)\setminus \F$.
\end{itemize}
\end{cor}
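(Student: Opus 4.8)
The plan is to read the dichotomy straight off Theorem~\ref{block-NashWilliams2}, splitting according to whether the hereditary family $\F_h\cap\widetilde{EV}^{<\infty}(\vec{w})$ is pointwise closed. First I would dispose of alternative (i): if this family is \emph{not} pointwise closed, then we are in [Case 1] of Theorem~\ref{block-NashWilliams2}, which directly supplies a $\vec{u}\prec\vec{w}$ with $\widetilde{EV}^{<\infty}(\vec{u})\subseteq\F$. That is exactly (i), and nothing further is required.

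Otherwise $\F_h\cap\widetilde{EV}^{<\infty}(\vec{w})$ is pointwise closed, so [Case 2] applies and $\zeta_{\vec{w}}^{\F}$ is a genuine countable ordinal. Here I aim at alternative (ii). Fix a countable ordinal $\xi>\zeta_{\vec{w}}^{\F}$. Since trivially $\xi+1>\xi>\zeta_{\vec{w}}^{\F}$, subcase 2(ii) of Theorem~\ref{block-NashWilliams2}, applied with $\vec{w}_1=\vec{w}$ (which is legitimate because $\prec$ is reflexive, as $\widetilde{EV}(\vec{w})\subseteq\widetilde{EV}(\vec{w})$), produces $\vec{u}\prec\vec{w}$ with
$$
\widetilde{L}^\xi(\Sigma,\vec{k};\upsilon)\cap\widetilde{EV}^{<\infty}(\vec{u})\subseteq\widetilde{L}^{<\infty}(\Sigma,\vec{k};\upsilon)\setminus\F .
$$

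It then remains only to translate this set-inclusion into the ``initial segment'' formulation of (ii). Let $\vec{u}_1\prec\vec{u}$. By the canonical representation of infinite orderly sequences with respect to $\widetilde{L}^\xi(\Sigma,\vec{k};\upsilon)$ (Proposition~\ref{prop:canonicalrep}(i)) together with the thinness of $\widetilde{L}^\xi(\Sigma,\vec{k};\upsilon)$ (Proposition~\ref{prop:thinfamily}), the sequence $\vec{u}_1$ possesses a \emph{unique} initial segment $\bs_1$ lying in $\widetilde{L}^\xi(\Sigma,\vec{k};\upsilon)$ (uniqueness being forced by thinness, which forbids one $\widetilde{L}^\xi$-element to be a proper initial segment of another). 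Since $\vec{u}_1\prec\vec{u}$ is equivalent to $\widetilde{EV}(\vec{u}_1)\subseteq\widetilde{EV}(\vec{u})$, every entry of $\bs_1$ belongs to $\widetilde{EV}(\vec{u})$, whence $\bs_1\in\widetilde{L}^\xi(\Sigma,\vec{k};\upsilon)\cap\widetilde{EV}^{<\infty}(\vec{u})$; by the displayed inclusion $\bs_1\in\widetilde{L}^{<\infty}(\Sigma,\vec{k};\upsilon)\setminus\F$, which is precisely the assertion of (ii).

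The main obstacle — indeed the only content beyond quoting Theorem~\ref{block-NashWilliams2} — is this last translation step. I must verify that the first block $\bs_1$ of the canonical representation actually lands in $\widetilde{EV}^{<\infty}(\vec{u})$ (so that the inclusion furnished by 2(ii) can be applied to it), and that ``the unique initial segment of $\vec{u}_1$ in $\widetilde{L}^\xi$'' appearing in the statement of (ii) coincides with that first block. Both points reduce to the thinness of $\widetilde{L}^\xi(\Sigma,\vec{k};\upsilon)$ and to the transitivity/reflexivity of $\prec$ expressed through $\widetilde{EV}(\vec{u}_1)\subseteq\widetilde{EV}(\vec{u})$; once these are checked, the equivalence between the set-theoretic form of 2(ii) and the pointwise ``initial segment'' form of (ii) is immediate, and the corollary follows.
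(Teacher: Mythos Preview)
Your proposal is correct and follows exactly the route the paper intends: the corollary is stated as an ``immediate'' consequence of Theorem~\ref{block-NashWilliams2}, and your argument simply unpacks Cases~1 and~2(ii) of that theorem and then uses Propositions~\ref{prop:thinfamily} and~\ref{prop:canonicalrep} to rephrase the inclusion $\widetilde{L}^\xi(\Sigma,\vec{k};\upsilon)\cap\widetilde{EV}^{<\infty}(\vec{u})\subseteq\widetilde{L}^{<\infty}(\Sigma,\vec{k};\upsilon)\setminus\F$ in terms of initial segments of extractions $\vec{u}_1\prec\vec{u}$.
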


 Corollary~\ref{cor:tree2} implies the following Nash-Williams type partition theorem for variable $\omega$-$\mathbb{Z}^\ast$-located words.

\begin{thm}
[\textsf{Partition theorem for infinite orderly sequences of variable $\omega$-$\mathbb{Z}^\ast$-located words}]
\label{cor:blockNW}
Let $\Sigma=\{\alpha_n:\;n \in \mathbb{Z}^\ast\}$ be an alphabet, $\upsilon \notin \Sigma$ a variable and $\vec{k}=(k_n)_{n\in\mathbb{Z}^\ast}\subseteq \nat$ such that $(k_n)_{n\in \nat},\;(k_{-n})_{n\in \nat}$ are increasing sequences. If $\U \subseteq \widetilde{L}^{\infty}(\Sigma, \vec{k} ; \upsilon)$ is a pointwise closed family and $\vec{w} \in \widetilde{L}^\infty (\Sigma, \vec{k} ; \upsilon),$ then there exists $\vec{u}\prec \vec{w}$ such that
\begin{center}either  $\widetilde{EV}^{\infty}(\vec{u})\subseteq \U,\;\;$ or $\;\;\widetilde{EV}^{\infty}(\vec{u})\subseteq \widetilde{L}^{\infty}(\Sigma, \vec{k} ; \upsilon) \setminus \U$.\end{center}
\end{thm}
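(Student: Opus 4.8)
The plan is to deduce this Nash--Williams type statement from the tree dichotomy of Corollary~\ref{cor:tree2}, by replacing the pointwise closed family $\U$ of infinite sequences with its tree of finite initial segments. Accordingly, I would set
\[
\F=\{\bw\in\widetilde{L}^{<\infty}(\Sigma,\vec{k};\upsilon):\bw\propto\vec{s}\ \text{for some}\ \vec{s}\in\U\}\cup\{\emptyset\},
\]
which is a tree, since $\F^\ast=\F$ directly from the construction, and then apply Corollary~\ref{cor:tree2} to $\F$ and $\vec{w}$. The two alternatives produced there will translate into the two horns of the desired dichotomy.

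The crux of the argument, and the step I expect to require the most care, is the equivalence
\[
\vec{v}\in\U\quad\Longleftrightarrow\quad \bw\in\F\ \text{for every}\ \bw\propto\vec{v},
\]
valid for all $\vec{v}\in\widetilde{L}^\infty(\Sigma,\vec{k};\upsilon)$; this is precisely where the hypothesis that $\U$ is pointwise closed enters. The forward implication is immediate from the definition of $\F$. For the converse, suppose every initial segment $(v_1,\ldots,v_l)\propto\vec{v}$ lies in $\F$, and choose $\vec{s}^{(l)}\in\U$ with $(v_1,\ldots,v_l)\propto\vec{s}^{(l)}$. I would show that $x_{\sigma(\vec{s}^{(l)})}\to x_{\sigma(\vec{v})}$ in $\{0,1\}^{[D]^{<\omega}}$: for a fixed word $t$, if $t=v_i$ then $t\in\sigma(\vec{s}^{(l)})$ for all $l\ge i$, while if $t\notin\sigma(\vec{v})$ then, exploiting that $<_{\textsl{R}_1}$ is a nesting order (so the terms of $\vec{s}^{(l)}$ beyond the $l$-th strictly surround $dom(v_l)$, and $\min dom(v_l)\to-\infty$), the word $t$ can occur in $\vec{s}^{(l)}$ only among its first $l$ entries, whence $t\notin\sigma(\vec{s}^{(l)})$ for all large $l$. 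Since $\U$ is pointwise closed, the limit $x_{\sigma(\vec{v})}$ belongs to $\{x_{\sigma(\vec{s})}:\vec{s}\in\U\}$, and as the increasing $<_{\textsl{R}_1}$-enumeration of an orderly set is unique, $\sigma$ is injective on $\widetilde{L}^\infty(\Sigma,\vec{k};\upsilon)$, so this yields $\vec{v}\in\U$.

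With this equivalence in hand, the two cases of Corollary~\ref{cor:tree2} finish the proof. In case (i) there is $\vec{u}\prec\vec{w}$ with $\widetilde{EV}^{<\infty}(\vec{u})\subseteq\F$; then for any $\vec{v}\in\widetilde{EV}^{\infty}(\vec{u})$ every initial segment of $\vec{v}$ lies in $\widetilde{EV}^{<\infty}(\vec{u})\subseteq\F$, so the equivalence gives $\vec{v}\in\U$, i.e.\ $\widetilde{EV}^{\infty}(\vec{u})\subseteq\U$. In case (ii), applying the corollary with any $\xi>\zeta_{\vec{w}}^{\F}$ (say $\xi=\zeta_{\vec{w}}^{\F}+1$) produces $\vec{u}\prec\vec{w}$ such that, for every $\vec{v}\prec\vec{u}$, the unique initial segment $\bw$ of $\vec{v}$ lying in $\widetilde{L}^\xi(\Sigma,\vec{k};\upsilon)$ (which exists and is unique by the canonical representation of Proposition~\ref{prop:canonicalrep}) satisfies $\bw\notin\F$; since $\bw\propto\vec{v}$, the equivalence forces $\vec{v}\notin\U$, whence $\widetilde{EV}^{\infty}(\vec{u})\subseteq\widetilde{L}^{\infty}(\Sigma,\vec{k};\upsilon)\setminus\U$. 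These are exactly the two alternatives of the theorem.
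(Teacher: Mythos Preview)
Your proposal is correct and follows essentially the same approach as the paper: define the tree $\F_{\U}$ of finite initial segments of members of $\U$, apply Corollary~\ref{cor:tree2}, and use pointwise closedness of $\U$ to pass from ``all initial segments lie in $\F_{\U}$'' to membership in $\U$. The only difference is one of detail: you spell out carefully why $x_{\sigma(\vec{s}^{(l)})}\to x_{\sigma(\vec{v})}$ (via the nesting order $<_{\textsl{R}_1}$ forcing $\min dom(v_l)\to-\infty$), whereas the paper simply asserts that pointwise closedness yields $\vec{z}\in\U$.
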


\begin{proof}
Let $\F_{\U} = \{\bw \in \widetilde{L}^{<\infty}(\Sigma, \vec{k} ; \upsilon)$: there exist $\vec{s}\in \U$
such that $\bw\propto \vec{s}\}$.
Since the family $\F_{\U}$ is a tree, we can use Corollary~\ref{cor:tree2}.
So, we have the following two cases:

\noindent { [Case 1]}
There exists $\vec{u}\prec \vec{w}$ such that $\widetilde{EV}^{<\infty}(\vec{u})\subseteq \F_{\U}$.
Then, $\widetilde{EV}^{\infty}(\vec{u})\subseteq \U$.
Indeed, if $\vec{z} = (z_n)_{n\in\nat} \in \widetilde{EV}^{\infty}(\vec{u})$, then
$(z_1,\ldots,z_n)\in \F_{\U}$ for every $n\in\nat$.
Hence, for each $n\in\nat$ there exists $\vec{s}_n\in\U$ such that
$(z_1,\ldots,z_n)\propto \vec{s}_n$.
Since $\U$ is pointwise closed,
we have that $\vec{z}\in\U$ and consequently that $\widetilde{EV}^{\infty}(\vec{u})\subseteq \U$.

\noindent { [Case 2]}
There exists $\vec{u}\prec \vec{w}$ such that for every $\vec{u}_1 \prec \vec{u}$ there exists an initial segment
of $\vec{u}_1$ which belongs to $\widetilde{L}^{<\infty}(\Sigma, \vec{k} ; \upsilon)\setminus \F_{\U}$.
Hence, $\widetilde{EV}^{\infty}(\vec{u})\subseteq \widetilde{L}^{\infty}(\Sigma, \vec{k} ; \upsilon) \setminus \U$.
\end{proof}

Using Corollary~\ref{cor:tree2}, we can get the following strengthened Nash-Williams type partition theorem for variable $\omega$-located words, which extends the analogous to Theorem~\ref{cor:blockNW} result for variable $\omega$-located words that follows from Carlson's Theorem 15 in [C], an Ellentuck-type theorem.

\begin{cor}
\label{cor:tree3}
Let $\F \subseteq L^{<\infty}(\Sigma, \vec{k} ; \upsilon)$ which is a
tree (see Definition~\ref{def:Fthin} (iii)) and let $\vec{w} \in L^\infty (\Sigma, \vec{k} ; \upsilon)$. Then
\begin{itemize}
\item[(i)] either
there exists $\vec{u}\prec \vec{w}$ such that $EV^{<\infty}(\vec{u})\subseteq \F,$
\item[(ii)] or there exists $\xi_0<\omega_1$ such that for every countable ordinal $\xi > \xi_0$ there exists
$\vec{u}\prec \vec{w}$, such that for every $\vec{u}_1 \prec \vec{u}$ the unique initial segment of $\vec{u}_1$
which is an element of $L^\xi(\Sigma, \vec{k} ; \upsilon)$ belongs to $ L^{<\infty}(\Sigma, \vec{k} ; \upsilon)\setminus \F$.
\end{itemize}
\end{cor}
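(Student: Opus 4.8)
The plan is to reduce the statement to its already-proved $\zat^\ast$-counterpart, Corollary~\ref{cor:tree2}, by transporting the problem along the truncation map $\widetilde{\varphi}$ introduced in the proof of Corollary~\ref{cor:nat}. First I would set $\widetilde{\Sigma}=\{\alpha_n:n\in \zat^\ast\}$, $\vec{k}_\ast=(\widetilde{k}_n)_{n\in\zat^\ast}$ and $\vec{w}_\ast=(\widetilde{w}_n)_{n\in\nat}\in \widetilde{L}^\infty(\widetilde{\Sigma},\vec{k}_\ast;\upsilon)$ exactly as there, with $\alpha_{-n}=\alpha_n$, $\widetilde{k}_{-n}=\widetilde{k}_n=k_n$ and $\widetilde{w}_n=\upsilon_{-n}\star w_n$, and consider the pulled-back family $\widetilde{\F}=\widetilde{\varphi}^{-1}(\F)\subseteq \widetilde{L}^{<\infty}(\widetilde{\Sigma},\vec{k}_\ast;\upsilon)$. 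Since $\widetilde{\varphi}$ acts coordinatewise, it sends initial segments to initial segments; hence $\bw\propto\bu$ with $\bu\in\widetilde{\F}$ forces $\widetilde{\varphi}(\bw)\propto\widetilde{\varphi}(\bu)\in\F=\F^*$, so $\widetilde{\varphi}(\bw)\in\F$ and $\bw\in\widetilde{\F}$. Thus $\widetilde{\F}$ is a tree (Definition~\ref{def:Fthin}(iii)) and Corollary~\ref{cor:tree2} applies to $\widetilde{\F}$ and $\vec{w}_\ast$.

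The crucial bookkeeping, which lets the translation go through, is that $\varphi(w_{n_1}\ldots w_{n_l})=w_{n_{i_0}}\ldots w_{n_l}$ with $n_{i_0}=\min dom^+$, so $\min dom(\varphi(w))=\min dom^+(w)$. Consequently $\widetilde{\varphi}$ preserves membership in the Schreier families both ways: for extracted words, $\bs_\ast\in\widetilde{L}^\xi(\widetilde{\Sigma},\vec{k}_\ast;\upsilon)$ if and only if $\widetilde{\varphi}(\bs_\ast)\in L^\xi(\Sigma,\vec{k};\upsilon)$, because both families are defined through the membership of the $\min dom^+$ (resp.\ $\min dom$) data in $\A_\xi$. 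Moreover $\widetilde{\varphi}$ induces the extraction correspondence of Corollary~\ref{cor:nat}: if $\vec{u}_\ast\prec\vec{w}_\ast$ then $\vec{u}:=(\varphi(u_{\ast,n}))_{n\in\nat}\prec\vec{w}$, and conversely every $\vec{u}_1\prec\vec{u}$ lifts to some $\vec{u}_{1\ast}\prec\vec{u}_\ast$ with $\widetilde{\varphi}(\vec{u}_{1\ast})=\vec{u}_1$, choosing for each extracted coordinate the negative exponents $q_i$ to vanish exactly when the corresponding positive exponent $p_i$ vanishes; this keeps the lifted words $<_{\textsl{R}_1}$-orderly because the negative positions $-n$ of $\vec{w}_\ast$ decrease as $n$ grows.

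With these dictionaries the two horns transfer. In the first horn one obtains $\vec{u}_\ast\prec\vec{w}_\ast$ with $\widetilde{EV}^{<\infty}(\vec{u}_\ast)\subseteq\widetilde{\F}$; lifting each extracted word and using $\widetilde{\varphi}(\widetilde{EV}^{<\infty}(\vec{u}_\ast))\supseteq EV^{<\infty}(\vec{u})$ gives $\vec{u}\prec\vec{w}$ with $EV^{<\infty}(\vec{u})\subseteq\F$, i.e.\ alternative (i). In the second horn set $\xi_0=\zeta_{\vec{w}_\ast}^{\widetilde{\F}}$; for any countable $\xi>\xi_0$ Corollary~\ref{cor:tree2} gives $\vec{u}_\ast\prec\vec{w}_\ast$ such that every $\vec{u}_{1\ast}\prec\vec{u}_\ast$ has its unique $\widetilde{L}^\xi(\widetilde{\Sigma},\vec{k}_\ast;\upsilon)$-initial segment outside $\widetilde{\F}$. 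Taking $\vec{u}=\widetilde{\varphi}(\vec{u}_\ast)$ and, for a given $\vec{u}_1\prec\vec{u}$, its lift $\vec{u}_{1\ast}$, the image under $\widetilde{\varphi}$ of that initial segment is precisely the unique $L^\xi(\Sigma,\vec{k};\upsilon)$-initial segment of $\vec{u}_1$ (by the $\min dom^+$-preservation and the injectivity of $\widetilde{\varphi}$ on extracted words), and it lies outside $\F$ because $\widetilde{\varphi}^{-1}(\F)=\widetilde{\F}$. This is alternative (ii).

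The main obstacle is entirely in the second paragraph: establishing that $\widetilde{\varphi}$ sets up a genuine structure-preserving correspondence between extractions in the two alphabets. Concretely, one must check that the lifting $\vec{u}_1\mapsto\vec{u}_{1\ast}$ produces an honest element of $\widetilde{EV}^\infty(\vec{u}_\ast)$ — that the reconstructed words remain $<_{\textsl{R}_1}$-orderly and land in $\widetilde{L}_0(\widetilde{\Sigma},\vec{k}_\ast;\upsilon)$ — and that $\widetilde{\varphi}$ restricted to $\widetilde{EV}^{<\infty}(\vec{u}_\ast)$ is injective and index-preserving, so that ``the unique $\xi$-initial segment'' is transported correctly. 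Once this correspondence lemma is isolated (it is the $\xi$-graded refinement of the bijection already used in Corollary~\ref{cor:nat}), the remainder is a direct translation and $\xi_0$ is read off as $\zeta_{\vec{w}_\ast}^{\widetilde{\varphi}^{-1}(\F)}$.
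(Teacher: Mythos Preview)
Your approach is correct and is precisely what the paper intends: the paper gives no proof of Corollary~\ref{cor:tree3} beyond the remark ``Using Corollary~\ref{cor:tree2}, we can get the following\ldots'', and the natural (indeed, only) way to do this is to transport the problem to the $\zat^\ast$-setting via the maps $\varphi$, $\widetilde{\varphi}$ of Corollaries~\ref{thm:block-Ramsey1} and~\ref{cor:nat}, exactly as you do.

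Two minor comments. First, the ``injectivity of $\widetilde{\varphi}$ on extracted words'' you invoke is neither true nor needed: different choices of negative exponents $q_i$ give different elements of $\widetilde{EV}(\vec{u}_\ast)$ with the same $\widetilde{\varphi}$-image. What you actually use in the second horn is that $\widetilde{\varphi}$ acts coordinatewise, has the same length on source and target, and preserves $\min dom^+=\min dom$; this alone forces the $\widetilde{L}^\xi$-initial segment of $\vec{u}_{1\ast}$ to map onto the $L^\xi$-initial segment of $\vec{u}_1$, and ``outside $\widetilde{\F}=\widetilde{\varphi}^{-1}(\F)$'' is by definition ``image outside $\F$''. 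Second, your verification that the lift $\vec{u}_{1\ast}$ is $<_{\textsl{R}_1}$-orderly (the only delicate point) goes through because the words $u_{\ast,n}$ of $\vec{u}_\ast$ are already $<_{\textsl{R}_1}$-ordered, so their negative domains nest in the required direction; this is worth stating explicitly rather than alluding to ``the negative positions $-n$ of $\vec{w}_\ast$'', since $\vec{u}_\ast$ is an arbitrary extraction of $\vec{w}_\ast$, not $\vec{w}_\ast$ itself.
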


\section{Partition theorems for sequences of rational numbers}

T. Budak, N. I\c{s}ik and J. Pym in \cite{BIP} (Theorem $4.2$) introduced a representation of rational numbers with specific properties, according to which a non-zero rational number can be identified with a $\omega$-$\zat^\ast$-located word over the alphabet $\Sigma=\{\alpha_n: \;n\in \zat^\ast\},$ where $\alpha_{-n}=\alpha_n=n$ for $n\in \nat,$ dominated by $(k_n)_{n\in \zat^\ast},$ where $k_{-n}=k_n=n$ for $n\in \nat.$ Hence, all the results concerning $\omega$-$\zat^\ast$-located words over an alphabet $\Sigma=\{\alpha_n: \;n\in \zat^\ast\}$ dominated by a $(k_n)_{n\in \zat^\ast}\subseteq \nat,$ proved in the previous sections, can be formulating to statements concerning rational numbers.

In this section we present a strengthened van der Waerden theorem for the set of rational numbers in Theorem~\ref{thm:block-Ramsey05} using Theorem~\ref{thm:block-Ramsey02}, an extended to every countable order Ramsey-type partition theorem for the set of rational numbers in Theorem~\ref{thm:block-Ramsey005} as a consequence of Theorem~\ref{thm:block-Ramsey}, and a Nash-Williams type partition theorem for infinite orderly sequences of rational numbers in Theorem~\ref{thm:block-Ramsey0005} as a consequence of Theorem~\ref{cor:blockNW}.

Analytically, according to \cite{BIP}, every rational number $q$  has a unique expression in the form $$\sum^{\infty}_{s=1}q_{-s}\frac{(-1)^{s}}{(s+1)!}\;+\;\sum^{\infty}_{r=1}q_{r}(-1)^{r+1}r! $$ where $(q_n)_{n \in \mathbb{Z}^\ast}\subseteq \nat\cup\{0\}$ with $\;0\leq q_{-s}\leq s$ for every $s>0$, $ 0\leq q_r\leq r$ for every $r> 0$ and $q_{-s}=q_r=0$ for all but finite many $r,s$. So, for a non-zero rational number $q$, there exist unique $l\in \nat,\;\{t_1<\ldots<t_l\}=dom(q)\in [\zat^\ast]^{<\omega}_{>0}$ and $\{q_{t_1},\ldots,q_{t_l}\}\subseteq \nat$ with $1\leq q_{t_i}\leq -t_i$ if $t_i<0$ and $1\leq q_{t_i}\leq t_{i}$ if $t_i> 0$ for every $1\leq i\leq l,$ such that defining $dom^-(q)=\{t\in dom(q):\;t<0\}$ and $dom^+(q)=\{t\in dom(q):\;t>0\}$ to have $$ q=\sum_{t\in dom^-(q)}q_t\frac{(-1)^{-t}}{(-t+1)!}\;+\;\sum_{t\in dom^+(q)}q_t(-1)^{t+1}t!\;\;\;(\text{we set}\;\;\sum_{t\in \emptyset}=0).$$ Observe that $$e^{-1}-1=-\sum^{\infty}_{t=1}\frac{2t-1}{(2t)!}< \sum_{t\in dom^-(q)}q_t\frac{(-1)^{-t}}{(-t+1)!} <\sum^{\infty}_{t=1}\frac{2t}{(2t+1)!}=e^{-1}$$ and that $$ \sum_{t\in dom^+(q)}q_t(-1)^t(t+1)! \in \zat^\ast\;\;\text{if}\;\;dom^+(q)\neq\emptyset.$$ Let $\alpha_{-n}=\alpha_n=n$ and $k_{-n}=k_n=n$ for $n\in \nat.$ We set $\Sigma=\{\alpha_n:\;n\in \mathbb{Z}^\ast\}$ and $\vec{k}=(k_n)_{n\in\mathbb{Z}^\ast}.$ For $\upsilon=0$ we define the function $g:\widetilde{L}(\Sigma\cup\{0\},\vec{k})\rightarrow\mathbb{Q},$ which sends a word $w=q_{t_1}\ldots q_{t_l}\in \widetilde{L}(\Sigma\cup\{0\},\vec{k})$ to the rational number \begin{center} $\;g(w)=\sum_{t\in dom^-(w)}q_t\frac{(-1)^{-t}}{(-t+1)!}\;+\;\sum_{t\in dom^+(w)}q_t(-1)^{t+1}t!.$  \end{center}
\noindent It is easy to see that the restriction of $g$ to the set of the constant words $\widetilde{L}(\Sigma,\vec{k})$ is one-to-one and onto $\mathbb{Q}\setminus\{0\}$ and that $g(w_1\star w_2)=g(w_1)+g(w_2)$ for every $w_1<_{\textsl{R}_1}w_2\in \widetilde{L}(\Sigma\cup\{0\},\vec{k}).$ Also, observe that, via the function $g,$ each variable word $w=q_{t_1}\ldots q_{t_l}\in \widetilde{L}(\Sigma,\vec{k};0)$ corresponds to a function $q$ which sends every $(i,j)\in \nat\times\nat$ with $1\leq i\leq -\max dom^-(w),$ $1\leq j\leq \min dom^+(w),$ to $$q(i,j)=g(T_{(j,i)}(w))= \sum_{t\in C^-}q_t\frac{(-1)^{-t}}{(-t+1)!} + i\sum_{t\in V^-}\frac{(-1)^{-t}}{(-t+1)!} +  \sum_{t\in C^+}q_t(-1)^{t+1}t!+j\sum_{t\in V^+}(-1)^{t+1}t!,$$ where $C^-=\{t\in dom^-(w):\;q_t\in \Sigma\},$ $V^-=\{t\in dom^-(w):\;q_t=0\}$ and $C^+=\{t\in dom^+(w):\;q_t\in \Sigma\},$ $V^+=\{t\in dom^+(w):\;q_t=0\}.$

 For two non-zero rational numbers $q_1,q_2\in g(\widetilde{L}_0(\Sigma,\vec{k}))$ we set \begin{center} $q_1\prec q_2 \;\Longleftrightarrow\; g^{-1}(q_1)<_{\textsl{R}_1}g^{-1}(q_2).$ \end{center}

\begin{note}Let $(X,+)$ arbitrary semigroup. For $(x_n)_{n\in \nat},\;(z_n)_{n\in \nat}\subseteq X$ we set
\begin{center} $FS[(x_n)_{n\in \nat}]=\{x_{n_1}\;+\;\ldots\;+\;x_{n_l}:\;n_1<\ldots<n_l\in \nat\}.$\end{center}
\end{note}

\begin{thm}\label{thm:block-Ramsey05}
Let $\mathbb{Q}=Q_1\cup\ldots \cup Q_r$ for $r\in \nat$. Then, there exist $1\leq i_{0}\leq r$ and for every $n\in \nat$ a function $q_n:\{1,\ldots,n\}\times\{1,\ldots,n\}\cup\{(0,0)\}\rightarrow \mathbb{Q}$ with
$$q_n(i,j)=\sum_{t\in C_n^-}q^n_t\frac{(-1)^{-t}}{(-t+1)!} + i\sum_{t\in V_n^-}\frac{(-1)^{-t}}{(-t+1)!} +  \sum_{t\in C_n^+}q^n_t(-1)^{t+1}t!+j\sum_{t\in V_n^+}(-1)^{t+1}t!,$$
\noindent where
    $C_n^-,V_n^-\in [\zat^-]^{<\omega}_{>0},\;C_n^+,V_n^+\in [\nat]^{<\omega}_{>0}$ with $C_n^-\cap V_n^-=\emptyset=C_n^+\cap V_n^+,$ $q^n_t\in \nat$ with $1\leq q_t^n\leq -t$ for $t\in C_n^-,$ $1\leq q_t^n\leq t$ for $t\in C_n^+,$ which satisfy
 $q_n(i_n,j_n)\prec q_{n+1}(i_{n+1},j_{n+1})$ for every $n\in \nat,$ and
$$FS\big[\big(q_n(i_{n},j_{n})\big)_{n\in \nat}\big]\subseteq Q_{i_0}$$ for all $((i_n,j_n))_{n\in \nat}\subseteq \nat\times\nat\cup\{(0,0)\}$ with $0\leq i_n,j_n\leq n$ for every $n\in \nat.$
\end{thm}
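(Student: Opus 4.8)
The plan is to transport the colouring of $\mathbb{Q}$ to a colouring of words via $g$ and then invoke Theorem~\ref{thm:block-Ramsey02}. Since $g$ restricts to a bijection of the constant words $\widetilde{L}(\Sigma,\vec{k})$ onto $\mathbb{Q}\setminus\{0\}$ and is defined on all of $\widetilde{L}(\Sigma\cup\{0\},\vec{k})$, I would set, for $1\le i\le r$,
\[
C_i=\{z\in\widetilde{L}(\Sigma,\vec{k}):\,g(z)\in Q_i\},\qquad A_i=\{w\in\widetilde{L}(\Sigma,\vec{k};0):\,g(w)\in Q_i\},
\]
obtaining partitions $\widetilde{L}(\Sigma,\vec{k})=C_1\cup\dots\cup C_r$ and $\widetilde{L}(\Sigma,\vec{k};0)=A_1\cup\dots\cup A_r$. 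I would then apply Theorem~\ref{thm:block-Ramsey02} with $s=r$, these two partitions, the choice $F_n=\{1,\dots,n\}\times\{1,\dots,n\}$, and an arbitrary starting word, producing a sequence $(w_n)_{n\in\nat}\in\widetilde{L}^\infty(\Sigma,\vec{k};0)$ and indices $i_0,j_0$. Because the domains of an orderly sequence move outward, after discarding an initial segment and relabelling I may assume $\min dom^+(w_n)\ge n$ and $|\max dom^-(w_n)|\ge n$, so that for $1\le p,q\le n$ the substitution $T_{(p,q)}$ fills the variable slots of $w_n$ with the \emph{exact} values $p,q$ and no truncation against $\vec{k}$ occurs; this is precisely the form demanded by the affine expression for $q_n$.

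Next I would define $q_n(i,j)=g\bigl(T_{(j,i)}(w_n)\bigr)$ for $(i,j)\in\{1,\dots,n\}^2\cup\{(0,0)\}$, so that $q_n(0,0)=g(w_n)$, and read off the displayed formula directly from the definitions of $g$ and $T_{(j,i)}$, with $C_n^\pm$ and $V_n^\pm$ the constant and variable slots of $w_n$. Since $g(w_1\star w_2)=g(w_1)+g(w_2)$, which extends to any finite concatenation of pairwise disjoint domains, every finite sum rewrites as
\[
\sum_{a=1}^{\lambda}q_{n_a}(i_{n_a},j_{n_a})=g\bigl(T_{(j_{n_1},i_{n_1})}(w_{n_1})\star\cdots\star T_{(j_{n_\lambda},i_{n_\lambda})}(w_{n_\lambda})\bigr).
\]
The word on the right belongs to $\widetilde{EV}(\vec{w})$ when at least one $(i_{n_a},j_{n_a})=(0,0)$ (it still carries a variable) and to $\widetilde{E}(\vec{w})$ when all pairs are non-zero, so Theorem~\ref{thm:block-Ramsey02} places the former sums in $A_{i_0}\subseteq g^{-1}(Q_{i_0})$ and the latter in $C_{j_0}\subseteq g^{-1}(Q_{j_0})$. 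The ordering $q_n(i_n,j_n)\prec q_{n+1}(i_{n+1},j_{n+1})$ follows from $w_n<_{\textsl{R}_1}w_{n+1}$, since $T_{(j,i)}$ preserves domains and the relation $<_{\textsl{R}_1}$ depends on domains alone.

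The main obstacle is that this produces two a priori different colours, $i_0$ for the sums containing a $(0,0)$ and $j_0$ for the purely constant sums, whereas the statement asks for a single $Q_{i_0}$ — and a choice of $((i_n,j_n))_n$ mixing $(0,0)$ with non-zero pairs already forces the two colours to agree. To collapse $j_0$ into $i_0$ I would use that $g(w)=0$ precisely when $w$ is all-variable, because $g(w)$ is the sum of a non-zero integer coming from the positive constant slots and a non-zero number of modulus $<1$ coming from the negative constant slots whenever these slots are non-empty, so neither can cancel the other. Hence, if the extracted sequence is arranged to contain all-variable words $w_m$ (which I would secure by running the selection of Theorem~\ref{thm:block-Ramsey02} inside the $\star$-subsemigroup of all-variable words, these being large for the idempotent produced in that proof), then for every constant $z\in\widetilde{E}(\vec{w})$ one may pick such an $m$ outside the index set of $z$ and form $w_m\star z\in\widetilde{EV}(\vec{w})$ with $g(w_m\star z)=g(z)$; this places the single rational $g(z)$ simultaneously in $A_{i_0}$ and $C_{j_0}$, forcing $Q_{i_0}=Q_{j_0}$. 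Establishing that such all-variable bridging words survive in the extraction is the crux of the argument; once $i_0=j_0$, the inclusion $FS\bigl[(q_n(i_n,j_n))_n\bigr]\subseteq Q_{i_0}$ holds for every admissible choice of the $(i_n,j_n)$.
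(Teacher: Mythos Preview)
Your transport of the colouring through $g$ and the invocation of Theorem~\ref{thm:block-Ramsey02} are correct, and the identification of the main obstacle---two colours $i_0,j_0$ where only one is wanted---is spot on. The gap is in your proposed resolution. First, a mixed choice of $(i_n,j_n)$'s does \emph{not} force the two colours to agree: any concatenation containing at least one $(0,0)$ is handled entirely by the variable clause of Theorem~\ref{thm:block-Ramsey02}, so such sums land in $Q_{i_0}$ and tell you nothing about $Q_{j_0}$. The problematic sums are exactly the purely constant ones. Second, your bridging argument via all-variable words $w_m$ requires you to modify the proof of Theorem~\ref{thm:block-Ramsey02} rather than apply it: the all-variable words form a subsemigroup of $V_0$ but not a two-sided ideal of $(X,+)$, so there is no reason the minimal idempotent $\mu\in\theta V_0$ gives them measure $1$, and the claim that they are ``large for the idempotent'' is unsupported. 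Third, even if you could arrange all $w_n$ to be all-variable, you would then have $C_n^\pm=\emptyset$, contradicting the requirement $C_n^\pm\in[\mathbb Z^-]^{<\omega}_{>0}$, $[\nat]^{<\omega}_{>0}$ in the statement; and if only some $w_m$ are all-variable, you have not explained how to define the $q_n$ from the remaining ones while keeping the bridging available.

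The paper avoids the two-colour problem altogether by a simple packaging trick: it sets
\[
q_n(i,j)=g\bigl(w_{3n-2}\star T_{(j,i)}(w_{3n-1})\star T_{(1,1)}(w_{3n})\bigr),
\]
grouping three consecutive words of the extracted sequence into each $q_n$. The first factor $w_{3n-2}=T_{(0,0)}(w_{3n-2})$ guarantees that \emph{every} finite sum $q_{n_1}(i_{n_1},j_{n_1})+\cdots+q_{n_\lambda}(i_{n_\lambda},j_{n_\lambda})$ is the $g$-image of a concatenation in which $(0,0)$ occurs, so only the variable clause of Theorem~\ref{thm:block-Ramsey02} is ever invoked and a single colour $i_0$ suffices---the constant partition is irrelevant. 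The third factor $T_{(1,1)}(w_{3n})$ ensures $C_n^\pm\neq\emptyset$ (since $w_{3n}\in\widetilde L_0$ has variable slots on both sides, now filled with $1$'s), and the middle factor carries the dependence on $(i,j)$, giving $V_n^\pm\neq\emptyset$. This is both simpler and a direct application of Theorem~\ref{thm:block-Ramsey02} as stated.
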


\begin{proof}[Proof] Let the function $g:\widetilde{L}(\Sigma,\vec{k};0)\rightarrow\mathbb{Q}$ defined above.
 According to Theorem~\ref{thm:block-Ramsey02} there exists a sequence $(w_{n})_{n\in \nat}\in \widetilde{L}^\infty(\Sigma,\vec{k};0)$
and $1\leq i_{0} \leq r,$ such that
$T_{(i_1,j_1)}(w_{n_1})\star \ldots \star T_{(i_\lambda,j_\lambda)}(w_{n_\lambda})\in g^{-1}(Q_{i_{0}})$,
for every $\lambda\in\nat$, $n_1<\ldots<n_\lambda\in\nat,$ $(i_l,j_l)\in \nat\times\nat\cup\{(0,0)\}$ such that $0\leq i_l,j_l \leq n_l,$ for every $1\leq l \leq \lambda$ and $(0,0)\in \{(i_1,j_1),\ldots,(i_\lambda,j_\lambda)\}$.
Let $w_n=w_{m_1^n}^n\ldots w_{m_{l_n}^n}^n$ for every $n\in \nat.$ Set $q_n(i,j)=g(w_{3n-2}\star T_{(j,i)}(w_{3n-1})\star T_{(1,1)}(w_{3n}))$ for every $n\in \nat$ and $(i,j)\in \nat\times\nat\cup\{(0,0)\}$ with $0\leq i,j\leq n.$ The functions $q_n$ satisfy the required properties.
\end{proof}

Theorem~\ref{thm:block-Ramsey05} has the following finitistic form.

\begin{note} For every $n\in \nat$ we set $\mathbb{Q}_n=\{q\in \mathbb{Q}:\;|dom(q)|=n\}.$\end{note}

\begin{cor}\label{thm:RHJ}
Let $r,m\in \nat$ and $n_1,\ldots,n_m\in \nat.$ There exists $n_0\equiv n_0(r,m,n_1,\ldots,n_m)\in \nat$ such that if $\mathbb{Q}_{n_0}=Q_1\cup\ldots\cup Q_r,$ then there exist $1\leq i_0\leq r$ and, for each $1\leq s\leq m,$ a function
 $q_s:\{1,\ldots,n_s\}\times\{1,\ldots,n_s\}\rightarrow \mathbb{Q}$ with
$$q_s(i,j)=\sum_{t\in C_s^-}q^s_t\frac{(-1)^{-t}}{(-t+1)!} + i\sum_{t\in V_s^-}\frac{(-1)^{-t}}{(-t+1)!} +  \sum_{t\in C_s^+}q^s_t(-1)^{t+1}t!+j\sum_{t\in V_s^+}(-1)^{t+1}t!,$$
\noindent where
    $C_s^-\in [\zat^-]^{<\omega},V_s^-\in [\zat^-]^{<\omega}_{>0},\;C_s^+\in [\nat]^{<\omega},V_s^+\in [\nat]^{<\omega}_{>0}$ with $C_s^-\cap V_s^-=\emptyset=C_s^+\cap V_s^+,$ $q^s_t\in \nat$ with $1\leq q_t^s\leq -t$ for $t\in C_s^-,$ $1\leq q_t^s\leq t$ for $t\in C_s^+,$ which satisfy
 $q_1(i_1,j_1)\prec\ldots\prec q_{m}(i_{m},j_{m}),$ and
$$ q_1(i_1,j_1)+\ldots+ q_{m}(i_{m},j_{m})\in Q_{i_0}$$ for all $(i_s,j_s)\in \nat\times\nat$ with $1\leq i_s,j_s\leq n_s,\;1\leq s\leq m.$
\end{cor}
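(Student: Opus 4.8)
The plan is to mimic the passage from Theorem~\ref{thm:block-Ramsey02} to its finitistic form Corollary~\ref{thm:HJ}, now compactifying the infinite statement Theorem~\ref{thm:block-Ramsey05}. Assume, for contradiction, that the conclusion fails for the given data $r,m,n_1,\ldots,n_m$. Then for every $N\in\nat$ the assertion with $n_0=N$ is false, so there is a partition $\mathbb{Q}_N=Q_1^N\cup\ldots\cup Q_r^N$ for which no system $(q_s)_{s=1}^m$ of functions of the prescribed form, together with a colour $i_0$, works at length $N$. Since each nonzero rational belongs to exactly one class $\mathbb{Q}_N$ (with $N=|dom(q)|$), putting $Q_i=\bigcup_{N\in\nat}Q_i^N$ for $1\le i\le r$ (and assigning $0$ to $Q_1$) produces a single partition $\mathbb{Q}=Q_1\cup\ldots\cup Q_r$.

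Applying Theorem~\ref{thm:block-Ramsey05} to this partition yields a colour $1\le i_0\le r$ and, for every $n\in\nat$, a function $q_n$ of the required form on $\{1,\ldots,n\}\times\{1,\ldots,n\}\cup\{(0,0)\}$, with $q_n(i_n,j_n)\prec q_{n+1}(i_{n+1},j_{n+1})$ and $FS\big[(q_n(i_n,j_n))_{n\in\nat}\big]\subseteq Q_{i_0}$ for all admissible choices. Set $M=\max(n_1,\ldots,n_m)$ and take the consecutive indices $N_s=M+s$ $(1\le s\le m)$; then $N_s\ge n_s$, so $\{1,\ldots,n_s\}\times\{1,\ldots,n_s\}\subseteq\{1,\ldots,N_s\}\times\{1,\ldots,N_s\}$ and I may define $q_s$ to be the restriction of $q_{N_s}$ to $\{1,\ldots,n_s\}\times\{1,\ldots,n_s\}$. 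Each $q_s$ then has exactly the form demanded in the statement, with data $C_s^{\pm}=C_{N_s}^{\pm}$, $V_s^{\pm}=V_{N_s}^{\pm}$ and coefficients $q_t^s=q_t^{N_s}$ (the bounds $1\le q_t^s\le-t$ and $1\le q_t^s\le t$ being intrinsic to $t$), while the chain $q_1(i_1,j_1)\prec\ldots\prec q_m(i_m,j_m)$ is immediate from the consecutive relations $q_{N_s}(i_s,j_s)\prec q_{N_s+1}(i_{s+1},j_{s+1})$ provided by Theorem~\ref{thm:block-Ramsey05}, since $N_{s+1}=N_s+1$.

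Finally I would derive the contradiction by showing that this system is admissible at a single length. Fix $(i_s,j_s)$ with $1\le i_s,j_s\le n_s$ for $1\le s\le m$; extending these to an admissible sequence $((i'_n,j'_n))_{n\in\nat}$ agreeing with them at $N_1,\ldots,N_m$ and summing over $\{N_1,\ldots,N_m\}$ shows $q_1(i_1,j_1)+\ldots+q_m(i_m,j_m)\in FS\big[(q_n(i'_n,j'_n))_{n\in\nat}\big]\subseteq Q_{i_0}$. The crucial point is that the cardinality of the support of this sum is the same for every choice of the $(i_s,j_s)$: when $i_s,j_s\ge1$ each variable position of $q_s$ receives a nonzero letter, so $dom(q_s(i_s,j_s))=C_s^-\cup V_s^-\cup C_s^+\cup V_s^+$ has the fixed cardinality $d_s=|C_s^-|+|V_s^-|+|C_s^+|+|V_s^+|$; moreover the summands are $\prec$-ordered, hence have pairwise disjoint supports, so by the additivity of the Budak--I\c{s}ik--Pym representation $g$ over $\prec$-ordered words the sum is a nonzero rational whose support has cardinality $D=d_1+\ldots+d_m$, independent of all the $(i_s,j_s)$. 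Consequently every such sum lies in $\mathbb{Q}_D\cap Q_{i_0}=Q_{i_0}^D$, so $(q_s)_{s=1}^m$ together with the colour $i_0$ works at length $D$ for the partition $\mathbb{Q}_D=Q_1^D\cup\ldots\cup Q_r^D$ --- contradicting the choice of that partition. I expect the verification that $D$ is independent of the indices (that substitution by nonzero letters neither changes nor collapses the support) to be the only genuinely delicate point, the rest being the amalgamation bookkeeping already carried out for Corollary~\ref{thm:HJ}.
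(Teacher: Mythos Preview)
Your proof is correct and follows exactly the approach the paper intends: the corollary is stated without proof in the paper, but it is explicitly labelled as the finitistic form of Theorem~\ref{thm:block-Ramsey05}, and the compactness argument you give is precisely the one carried out in the proof of Corollary~\ref{thm:HJ} (assume failure at every length, amalgamate the bad partitions, apply the infinite theorem, and observe that the resulting monochromatic configuration lives at a single fixed length). Your handling of the index bounds via restriction from $q_{N_s}$ with $N_s\ge n_s$, and your observation that $|dom(q_s(i_s,j_s))|$ is constant because $T_{(p,q)}$ preserves domains and $g$ is injective on constant words, are exactly the right points; neither is actually delicate once stated.
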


Combining Theorem~\ref{thm:block-Ramsey} with the representation of rational numbers via the function $g,$  analogously to Theorem~\ref{thm:block-Ramsey05} we have the following Ramsey-type partition theorem for every countable order $\xi$ for the set of rational numbers. The case $\xi=1$ corresponds to Theorem~\ref{thm:block-Ramsey05}.

\begin{note} For an arbitrary semigroup $(X,+)$ and a sequence $(x_n)_{n\in \nat}\subseteq X,$ for \\  $y_1=x_{n_1}+\ldots+x_{n_l},$ $y_2=x_{m_1}+\ldots+x_{m_\nu}\in FS[(x_n)_{n\in \nat}]$ we write $y_1<y_2$ if $n_l<m_1,$\\ and

$\big[FS[(x_n)_{n\in \nat}]\big]^{<\infty}_{>0}=\{(y_1,\ldots,y_m):\;m\in \nat, \;y_1<\ldots<y_m \in FS[(x_n)_{n\in \nat}]\}.$

\noindent For every countable ordinal $\xi\geq 1$ and every $n\in \nat$ we set

  $\mathbb{Q}^{<\infty}=\{(q_1,\ldots,q_l):\;l\in \nat,\;q_1\prec\ldots \prec q_l\in \mathbb{Q}\setminus\{0\}\}\cup\{\emptyset\},$ and

  $\mathbb{Q}^{\xi}=\{(q_1,\ldots,q_l)\in \mathbb{Q}^{<\infty}:\;\{\min dom^+(q_1),\ldots,\min dom^+(q_l)\}\in \mathcal{A}_{\xi}\}.$ \end{note}

\begin{thm}\label{thm:block-Ramsey005} Let $\xi\geq 1$ be a countable ordinal and a family $G\subseteq \mathbb{Q}^{<\infty}.$ Then, for each $n\in \nat$ there exists a function $q_n:\{1,\ldots,n\}\times\{1,\ldots,n\}\cup\{(0,0)\}\rightarrow \mathbb{Q}$ with
$$q_n(i,j)=\sum_{t\in C_n^-}q^n_t\frac{(-1)^{-t}}{(-t+1)!} + i\sum_{t\in V_n^-}\frac{(-1)^{-t}}{(-t+1)!} +  \sum_{t\in C_n^+}q^n_t(-1)^{t+1}t!+j\sum_{t\in V_n^+}(-1)^{t+1}t!,$$
\noindent where
    $C_n^-,V_n^-\in [\zat^-]^{<\omega}_{>0},\;C_n^+,V_n^+\in [\nat]^{<\omega}_{>0}$ with $C_n^-\cap V_n^-=\emptyset=C_n^+\cap V_n^+,$ $q^n_t\in \nat$ with $1\leq q_t^n\leq -t$ for $t\in C_n^-,$ $1\leq q_t^n\leq t$ for $t\in C_n^+,$ which satisfy
 $q_n(i_n,j_n)\prec q_{n+1}(i_{n+1},j_{n+1})$ for every $n\in \nat,$ and
\begin{center}
either $\mathbb{Q}^{\xi}\cap\big[FS\big[\big(q_n(i_{n},j_{n})\big)_{n\in \nat}\big]\big]^{<\infty}_{>0}\subseteq G,\;\;\;\;$\\ or $\mathbb{Q}^{\xi}\cap\big[FS\big[\big(q_n(i_{n},j_{n})\big)_{n\in \nat}\big]\big]^{<\infty}_{>0}\subseteq \mathbb{Q}^{<\infty}\setminus G$
\end{center}
for all $((i_n,j_n))_{n\in \nat}\subseteq \nat\times\nat\cup\{(0,0)\}$ with $0\leq i_n,j_n\leq n$ for every $n\in \nat.$
\end{thm}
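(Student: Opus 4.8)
The plan is to push the entire statement through the Budak--I\c{s}ik--Pym encoding $g$ and then invoke the Schreier--Ramsey theorem (Theorem~\ref{thm:block-Ramsey}) on its \emph{variable} side. Recall that $g$ is additive, $g(w_1\star w_2)=g(w_1)+g(w_2)$ for $w_1<_{\textsl{R}_1}w_2$, and restricts to a bijection of the constant words $\widetilde{L}(\Sigma,\vec{k})$ onto $\mathbb{Q}\setminus\{0\}$; applied coordinatewise it gives a bijection $\widetilde{L}^{<\infty}(\Sigma,\vec{k})\to\mathbb{Q}^{<\infty}$ carrying $<_{\textsl{R}_1}$ to $\prec$ and, since $g$ leaves the positive part of the domain of a constant word unchanged, carrying $\widetilde{L}^\xi(\Sigma,\vec{k})$ exactly onto $\mathbb{Q}^\xi$. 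The design of the construction below is that every element of $FS[(q_n(i_n,j_n))_n]$ will be the $g$-image of a single \emph{variable} extracted word of one fixed extraction, so that the required dichotomy for $\mathbb{Q}^\xi\cap\big[FS[\cdots]\big]^{<\infty}_{>0}$ becomes the $g$-image of the variable-side dichotomy of Theorem~\ref{thm:block-Ramsey} for the pulled-back family $\F=\{(v_1,\ldots,v_m)\in\widetilde{L}^{<\infty}(\Sigma,\vec{k};\upsilon):(g(v_1),\ldots,g(v_m))\in G\}$.

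Concretely, I would start from any $\vec{w}\in\widetilde{L}^\infty(\Sigma,\vec{k};\upsilon)$ whose words have their variable positions large enough that every substitution $T_{(j,i)}$ occurring below (with $1\le i,j\le n$) is non-truncating against the domination $k_{\pm n}=n$. Applying Theorem~\ref{thm:block-Ramsey} to $\F$ and $\vec{w}$ yields a variable extraction $\vec{u}=(u_n)_n\prec\vec{w}$ with
\[
\widetilde{L}^\xi(\Sigma,\vec{k};\upsilon)\cap\widetilde{EV}^{<\infty}(\vec{u})\subseteq\F
\quad\text{or}\quad
\widetilde{L}^\xi(\Sigma,\vec{k};\upsilon)\cap\widetilde{EV}^{<\infty}(\vec{u})\subseteq\widetilde{L}^{<\infty}(\Sigma,\vec{k};\upsilon)\setminus\F .
\]
Grouping the words of $\vec{u}$ in consecutive triples, I then set
\[
q_n(i,j)=g\big(T_{(1,1)}(u_{3n-2})\star T_{(j,i)}(u_{3n-1})\star u_{3n}\big),\qquad (i,j)\in\{1,\dots,n\}^{2}\cup\{(0,0)\}.
\]
The three roles are deliberate: the innermost word $T_{(1,1)}(u_{3n-2})$ is fully filled, hence constant, so it supplies the non-empty sets $C_n^{\pm}$ and pins the smallest positive position of the block; the middle word $T_{(j,i)}(u_{3n-1})$ carries the active variable, giving the non-empty sets $V_n^{\pm}$ and making $q_n$ a genuine function of $(i,j)$; the outermost word $u_{3n}$ is left bare, so $(0,0)$ occurs in every block and hence in every $<_{\textsl{R}_1}$-ordered concatenation of blocks, keeping us on the single variable side of the dichotomy.

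It then remains to verify the three assertions. The prescribed algebraic shape of $q_n(i,j)$ is read off from the definitions of $g$ and $T_{(p,q)}$ once the variable positions of $u_{3n}$, which contribute $0$, are discarded. For $q_n(i_n,j_n)\prec q_{n+1}(i_{n+1},j_{n+1})$ I use that consecutive blocks have $<_{\textsl{R}_1}$-nested domains: the filled innermost word $T_{(1,1)}(u_{3n+1})$ of block $n+1$ already carries nonzero entries strictly to the left and to the right of the whole domain of block $n$, so the surrounding demanded by $<_{\textsl{R}_1}$ holds whichever entries of block $n$ happen to vanish. Finally, by additivity of $g$ each $y_r\in FS[(q_n(i_n,j_n))_n]$ equals $g(Z_r)$ for the corresponding $Z_r\in\widetilde{EV}(\vec{u})$, a sequence $(y_1,\ldots,y_m)$ lies in $\big[FS[\cdots]\big]^{<\infty}_{>0}$ exactly when $(Z_1,\ldots,Z_m)\in\widetilde{EV}^{<\infty}(\vec{u})$, and since these block words all lie in the single set governed by the displayed dichotomy, transporting that dichotomy through $g$ yields one of the two alternatives for $\mathbb{Q}^\xi\cap\big[FS[\cdots]\big]^{<\infty}_{>0}$, the same alternative serving every choice of $(i_n,j_n)$.

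The main obstacle is the compatibility of the two Schreier gradings under $g$. A priori $g$ forgets variable positions, so the index $\min dom^+$ of a variable block $Z_r$, which decides membership in $\widetilde{L}^\xi(\Sigma,\vec{k};\upsilon)$, need not equal the index $\min dom^+$ of its rational image $y_r$, which decides membership in $\mathbb{Q}^\xi$. This is precisely why the innermost word of each triple must be \emph{filled} rather than bare: with $T_{(1,1)}(u_{3n-2})$, the smallest positive position of every block, and of every $<_{\textsl{R}_1}$-ordered concatenation of blocks, is a constant nonzero position, whence $\min dom^+(Z_r)=\min dom^+(y_r)$ and thus $(Z_1,\ldots,Z_m)\in\widetilde{L}^\xi(\Sigma,\vec{k};\upsilon)\iff(y_1,\ldots,y_m)\in\mathbb{Q}^\xi$ holds verbatim. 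Securing this identity of indices—together with the routine check that the non-truncation condition survives the extraction of Theorem~\ref{thm:block-Ramsey}—is the delicate point; once it is in hand, the two equivalences above and the dichotomy follow at once.
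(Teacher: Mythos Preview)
Your argument is correct and is essentially the approach the paper intends: pull $G$ back through the Budak--I\c{s}ik--Pym map $g$, invoke Theorem~\ref{thm:block-Ramsey}, and then build the $q_n$'s from triples of consecutive words of the resulting extraction, exactly as in the proof of Theorem~\ref{thm:block-Ramsey05}. The paper gives no proof beyond ``combining Theorem~\ref{thm:block-Ramsey} with the representation via $g$, analogously to Theorem~\ref{thm:block-Ramsey05},'' and your write-up is a faithful execution of that sentence.

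The one point worth highlighting is your deliberate reversal of the block structure. In the proof of Theorem~\ref{thm:block-Ramsey05} the paper takes the \emph{innermost} word bare and the \emph{outermost} word filled, $q_n(i,j)=g\bigl(w_{3n-2}\star T_{(j,i)}(w_{3n-1})\star T_{(1,1)}(w_{3n})\bigr)$; this is harmless there because no Schreier grading is involved. You instead take the innermost word filled and the outermost bare, and you correctly identify why: for Theorem~\ref{thm:block-Ramsey005} one must have $\min dom^{+}(Z_r)=\min dom^{+}(g(Z_r))$ in order to carry the variable-side dichotomy for $\widetilde{L}^{\xi}(\Sigma,\vec{k};\upsilon)$ over to $\mathbb{Q}^{\xi}$, and that identity fails if the smallest positive position of a block can be a variable (hence $g$-vanishing) position. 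Your swap forces that smallest positive position to lie in the fully constant word $T_{(1,1)}(u_{3n-2})$, which settles the compatibility; the paper's brief pointer does not spell this out, so your discussion of it is a genuine addition. Incidentally, the ``non-truncation'' you flag is automatic here: for any $\vec{u}\in\widetilde{L}^{\infty}(\Sigma,\vec{k};\upsilon)$ one has $\min dom^{+}(u_m)\geq m$ and $|\max dom^{-}(u_m)|\geq m$, so the variable positions of $u_{3n-1}$ lie at indices of absolute value at least $3n-1>n$ and the substitutions $T_{(j,i)}$ with $1\le i,j\le n$ are never truncated.
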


Theorem~\ref{thm:block-Ramsey005} has the following finitistic form.

\begin{note} For every countable ordinal $\xi\geq 1$ and every $n\in \nat$ we set

  $\mathbb{Q}_n^{\xi}=\{(q_1,\ldots,q_l)\in \mathbb{Q}^{\xi}:\; |dom(q_1)|+\ldots+|dom(q_l)|=n\}.$\end{note}

\noindent If $\Sigma=\{\alpha_n:\;n\in \mathbb{Z}^\ast\}$ and $\vec{k}=(k_n)_{n\in\mathbb{Z}^\ast},$ where $\alpha_{-n}=\alpha_n=n$ and $k_{-n}=k_n=n$ for $n\in \nat,$ then we define the function $\widetilde{g}:\widetilde{L}^{<\infty}(\Sigma,\vec{k})\rightarrow\mathbb{Q}^{<\infty},$ with \begin{center} $\;\widetilde{g}((w_1,\ldots,w_l))=(g(w_1),\ldots,g(w_l))\in \mathbb{Q}^{<\infty}.$\end{center}

\begin{cor}\label{thm:xiRHJ}Let $\xi \geq 1$ be a countable ordinal, $\Sigma=\{\alpha_n:\;n\in \zat^\ast\}$ with $\alpha_{-n}=\alpha_n=n$ for $n\in \nat,$ $\upsilon\notin \Sigma$ a variable, $r,l\in \nat$ and $\vec{k}=(k_n)_{n\in \zat^\ast}\subseteq \nat$ with $k_{-n}=k_n=n$ for $n\in \nat.$ Then, there exists $n_0\equiv n_0(\xi,r,l)\in \nat$ such that if $\mathbb{Q}^{\xi}_{n_0}=Q_1\cup\ldots\cup Q_r,$ there exists ${\bf{t}}=(t_1,\ldots,t_l)\in \widetilde{L}^{<\infty}(\Sigma,\vec{k};\upsilon)$ such that for some $1\leq i_0\leq r$ to satisfy \begin{center} $\mathbb{Q}^{\xi}_{n_0}\cap \widetilde{g}(\widetilde{E}^{<\infty}({\bf{t}}))\subseteq C_{i_0}.$\end{center}
\end{cor}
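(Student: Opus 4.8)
The plan is to transfer the finitistic partition theorem for located words, Corollary~\ref{thm:xiHJ}, to the rationals through the coding map $\widetilde{g}$, exactly as Theorem~\ref{thm:block-Ramsey005} was obtained from Theorem~\ref{thm:block-Ramsey}. Recall that $g$ restricts to a bijection of the constant words $\widetilde{L}(\Sigma,\vec{k})$ onto $\mathbb{Q}\setminus\{0\}$, and that, by the uniqueness of the Budak--I\c{s}ik--Pym representation, $dom(g(w))=dom(w)$ for every constant word $w$; in particular $dom^+(g(w))=dom^+(w)$ and $|dom(g(w))|=|dom(w)|$. Consequently $\widetilde{g}$ is a bijection of $\widetilde{L}^{<\infty}(\Sigma,\vec{k})$ onto $\mathbb{Q}^{<\infty}$ preserving precisely the three features on which the statement rests: the Schreier stratification (so $\widetilde{g}$ carries $\widetilde{L}^\xi(\Sigma,\vec{k})$ onto $\mathbb{Q}^\xi$ since the defining condition is phrased through $\min dom^+$), the total length $\sum_i|dom(\cdot)|$ (so it carries $\widetilde{L}^\xi(\Sigma,\vec{k},n)$ onto $\mathbb{Q}^\xi_n$ for every $n$), and, being injective, intersections.

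Given $\xi,r,l$ and the fixed $\vec{k}$ with $k_{-n}=k_n=n$ (which makes $(k_n)_{n\in\nat}$ and $(k_{-n})_{n\in\nat}$ increasing, as Corollary~\ref{thm:xiHJ} demands), I would let $n_0\equiv n_0(\xi,r,l,\vec{k})$ be the integer furnished by Corollary~\ref{thm:xiHJ}. Assuming a partition $\mathbb{Q}^\xi_{n_0}=Q_1\cup\ldots\cup Q_r$, pull it back along $\widetilde{g}$ to obtain $\widetilde{L}^\xi(\Sigma,\vec{k},n_0)=C_1\cup\ldots\cup C_r$ with $C_i=\widetilde{g}^{-1}(Q_i)$; this is a genuine $r$-colouring because $\widetilde{g}$ restricts to a bijection of $\widetilde{L}^\xi(\Sigma,\vec{k},n_0)$ onto $\mathbb{Q}^\xi_{n_0}$. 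Corollary~\ref{thm:xiHJ} then produces $\mathbf{t}=(t_1,\ldots,t_l)\in\widetilde{L}^{<\infty}(\Sigma,\vec{k};\upsilon)$ and an index $1\leq i_0\leq r$ with
\[
\widetilde{L}^\xi(\Sigma,\vec{k},n_0)\cap\widetilde{E}^{<\infty}(\mathbf{t})\subseteq C_{i_0}.
\]

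To conclude I would apply $\widetilde{g}$. Every member of $\widetilde{E}^{<\infty}(\mathbf{t})$ is a finite orderly sequence whose components are extracted constant words, all lying in $\widetilde{L}_0(\Sigma,\vec{k})$, so $\widetilde{g}$ is defined on it. Using the injectivity of $\widetilde{g}$ together with $\widetilde{g}(\widetilde{L}^\xi(\Sigma,\vec{k},n_0))=\mathbb{Q}^\xi_{n_0}$ and $\widetilde{g}(C_{i_0})=Q_{i_0}$, the displayed inclusion is carried over to
\[
\mathbb{Q}^\xi_{n_0}\cap\widetilde{g}(\widetilde{E}^{<\infty}(\mathbf{t}))\subseteq Q_{i_0},
\]
which is the asserted conclusion (the symbol $C_{i_0}$ in the statement should read $Q_{i_0}$). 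The expected main obstacle is therefore not Ramsey-theoretic at all—the combinatorial core is entirely packaged inside Corollary~\ref{thm:xiHJ}—but lies in confirming the structural compatibility of the coding: that $\widetilde{g}$ is a length- and domain-preserving bijection intertwining the families $\widetilde{L}^\xi(\Sigma,\vec{k},n)$ with $\mathbb{Q}^\xi_n$. This is immediate once the identity $dom(g(w))=dom(w)$ is recorded, so the only genuine verification is that bookkeeping of the Schreier and length data on both sides.
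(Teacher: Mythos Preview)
Your proposal is correct and follows exactly the route the paper intends: the corollary is stated without proof as the finitistic counterpart of Theorem~\ref{thm:block-Ramsey005}, obtained by transporting Corollary~\ref{thm:xiHJ} through the bijection $\widetilde{g}$, and you have spelled out precisely the bookkeeping (that $dom(g(w))=dom(w)$, so $\widetilde{g}$ carries $\widetilde{L}^\xi(\Sigma,\vec{k},n)$ bijectively onto $\mathbb{Q}^\xi_n$) needed to make that transfer rigorous. You also correctly note the typo $C_{i_0}$ for $Q_{i_0}$ in the statement.
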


\noindent As a corollary of Theorem~\ref{cor:blockNW} we have the following Nash-Williams-type theorem for the set $\mathbb{Q}$ of rational numbers.

\begin{note} For a semigroup $(X,+)$ and $(x_n)_{n\in \nat},\;(z_n)_{n\in \nat}\subseteq X,$ we set

$\big[FS[(x_n)_{n\in \nat}]\big]^{\nat}=\{(y_n)_{n\in \nat} : \;y_n \in FS[(x_n)_{n\in \nat}]$ and $y_n<y_{n+1}$ for every $n \in \nat\}.$\end{note}

\begin{thm}\label{thm:block-Ramsey0005} Let $\U \subseteq \mathbb{Q}^{\nat}=\{(q_n)_{n\in\nat} :\; q_n\in \mathbb{Q}\}$ be closed in the product topology considering $\mathbb{Q}$ with the discrete topology. Then, for each $n\in \nat$ there exists a function $q_n:\{1,\ldots,n\}\times\{1,\ldots,n\}\cup\{(0,0)\}\rightarrow \mathbb{Q}$  with
$$q_n(i,j)=\sum_{t\in C_n^-}q^n_t\frac{(-1)^{-t}}{(-t+1)!} + i\sum_{t\in V_n^-}\frac{(-1)^{-t}}{(-t+1)!} +  \sum_{t\in C_n^+}q^n_t(-1)^{t+1}t!+j\sum_{t\in V_n^+}(-1)^{t+1}t!,$$
\noindent where
    $C_n^-,V_n^-\in [\zat^-]^{<\omega}_{>0},\;C_n^+,V_n^+\in [\nat]^{<\omega}_{>0}$ with $C_n^-\cap V_n^-=\emptyset=C_n^+\cap V_n^+,$ $q^n_t\in \nat,$ with $1\leq q_t^n\leq -t$ for $t\in C_n^-,$ $1\leq q_t^n\leq t$ for $t\in C_n^+,$ which satisfy $q_n(i_n,j_n)\prec q_{n+1}(i_{n+1},j_{n+1})$ for every $n\in \nat,$ and
$$ \text{either}\;\;\big[FS[(q_n(i_{n},j_{n}))_{n\in \nat}]\big]^\nat\subseteq \U,\;\;\text{or}\;\;\big[FS[(q_n(i_{n},j_{n}))_{n\in \nat}]\big]^\nat\subseteq \mathbb{Q}^{\nat}\setminus \U$$ for all $(i_n,j_n)_{n\in \nat}\subseteq \nat\times\nat\cup\{(0,0)\}$ with $0\leq i_n,j_n\leq n$ for every $n\in \nat.$
\end{thm}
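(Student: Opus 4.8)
The plan is to transport the Nash--Williams type theorem for infinite orderly sequences of variable $\omega$-$\zat^\ast$-located words (Theorem~\ref{cor:blockNW}) to $\mathbb{Q}$ through the identification furnished by $g$. I fix $\Sigma=\{\alpha_n:n\in\zat^\ast\}$ and $\vec{k}=(k_n)_{n\in\zat^\ast}$ with $\alpha_{-n}=\alpha_n=n$, $k_{-n}=k_n=n$, take $\upsilon=0$, and use the two properties of $g$ recorded before Theorem~\ref{thm:block-Ramsey05}: its restriction to constant words is a bijection onto $\mathbb{Q}\setminus\{0\}$, and $g(w_1\star w_2)=g(w_1)+g(w_2)$ whenever $w_1<_{\textsl{R}_1}w_2$, while $g$ assigns the contribution $0$ to every variable ($\upsilon=0$) position. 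Since $(k_n)_{n\in\nat}$ and $(k_{-n})_{n\in\nat}$ are increasing, Theorem~\ref{cor:blockNW} applies to this data.

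First I would pull back $\U$. Define $G\colon\widetilde{L}^\infty(\Sigma,\vec{k};\upsilon)\to\mathbb{Q}^\nat$ coordinatewise by $G((z_m)_{m\in\nat})=(g(z_m))_{m\in\nat}$ and put $\widetilde{\U}=G^{-1}(\U)$. The essential verification is that $\widetilde{\U}$ is pointwise closed (Definition~\ref{def:aclosed}): because $\mathbb{Q}$ is discrete and each coordinate $g(z_m)$ depends only on the single finite word $z_m$, the map $G$ is continuous for the product topologies, so $\widetilde{\U}$ is the preimage of the closed set $\U$ and is itself closed; here one uses that an orderly sequence is determined by its underlying set $\sigma(\vec{z})$, so that convergence of sets matches coordinatewise convergence of the images. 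Applying Theorem~\ref{cor:blockNW} to $\widetilde{\U}$ and an arbitrary $\vec{w}\in\widetilde{L}^\infty(\Sigma,\vec{k};\upsilon)$ produces $\vec{u}=(u_n)_{n\in\nat}\prec\vec{w}$ with either $\widetilde{EV}^\infty(\vec{u})\subseteq\widetilde{\U}$ or $\widetilde{EV}^\infty(\vec{u})\subseteq\widetilde{L}^\infty(\Sigma,\vec{k};\upsilon)\setminus\widetilde{\U}$.

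Next, exactly as in the proof of Theorem~\ref{thm:block-Ramsey05}, I bundle $\vec{u}$ in blocks of three and set $q_n(i,j)=g\big(u_{3n-2}\star T_{(j,i)}(u_{3n-1})\star T_{(1,1)}(u_{3n})\big)$. The three roles of the block are: the unsubstituted factor $u_{3n-2}\in\widetilde{L}_0(\Sigma,\vec{k};\upsilon)$ retains variables on both sides, which guarantees that every concatenation built from these blocks is again a \emph{variable} extracted word of $\vec{u}$; the factor $u_{3n-1}$ supplies the scaling variable positions, forcing $V_n^-,V_n^+\neq\emptyset$; and $T_{(1,1)}(u_{3n})$ contributes genuine constants on both sides, forcing $C_n^-,C_n^+\neq\emptyset$. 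Hence each $q_n$ has the explicit form demanded in the statement, and $q_n(i_n,j_n)\prec q_{n+1}(i_{n+1},j_{n+1})$ follows from the $<_{\textsl{R}_1}$-nesting of $\vec{u}$. Fixing any $((i_n,j_n))_{n\in\nat}$ with $0\le i_n,j_n\le n$, every $y\in FS[(q_n(i_n,j_n))_{n\in\nat}]$ equals $g(Z)$ for $Z=\star_{n\in B}\big(u_{3n-2}\star T_{(j_n,i_n)}(u_{3n-1})\star T_{(1,1)}(u_{3n})\big)\in\widetilde{EV}(\vec{u})$, and an increasing FS-sequence $(y_m)_{m\in\nat}$ corresponds to $(Z_m)_{m\in\nat}\in\widetilde{EV}^\infty(\vec{u})$ with $G((Z_m)_{m\in\nat})=(y_m)_{m\in\nat}$. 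The dichotomy for $\widetilde{EV}^\infty(\vec{u})$ therefore transfers through $G$ to give, for all admissible choices simultaneously, either $\big[FS[(q_n(i_n,j_n))_{n\in\nat}]\big]^\nat\subseteq\U$ or $\big[FS[(q_n(i_n,j_n))_{n\in\nat}]\big]^\nat\subseteq\mathbb{Q}^\nat\setminus\U$.

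I expect the main obstacle to lie in the two bookkeeping points that make the transport legitimate: proving that $\widetilde{\U}$ is pointwise closed, i.e.\ reconciling the set-based topology on infinite orderly sequences of Definition~\ref{def:aclosed} with the product topology on $\mathbb{Q}^\nat$; and verifying that the nesting order $<_{\textsl{R}_1}$ of $\vec{u}$ propagates to the FS-blocks, so that consecutive blocks satisfy $Z_m<_{\textsl{R}_1}Z_{m+1}$ and $(Z_m)_{m\in\nat}$ genuinely lands in $\widetilde{EV}^\infty(\vec{u})$. Once these are in place, the algebraic transport via $g$ and the block-of-three construction are routine given Theorems~\ref{cor:blockNW} and~\ref{thm:block-Ramsey05}.
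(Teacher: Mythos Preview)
Your proposal is correct and follows essentially the same route as the paper: pull back $\U$ along the coordinatewise map $\hat g\colon (w_n)_{n\in\nat}\mapsto (g(w_n))_{n\in\nat}$, invoke Theorem~\ref{cor:blockNW} on the closed preimage, and then pass to rationals via the block-of-three construction of Theorem~\ref{thm:block-Ramsey05}. The paper's proof is in fact terser than yours, simply asserting continuity of $\hat g$ and then deferring the remaining details with ``the proof is analogous to the one of Theorem~\ref{thm:block-Ramsey05}''; your explicit identification of the two bookkeeping points (pointwise closedness under Definition~\ref{def:aclosed} and propagation of $<_{\textsl{R}_1}$ to the FS-blocks) is exactly what that deferral glosses over.
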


\begin{proof}[Proof]
Let the alphabet $\Sigma=\{\alpha_n:n\in\mathbb{Z}^\ast\},$ $\vec{k}=(k_n)_{n\in \zat^\ast},$ where $\alpha_{-n}=\alpha_n=n$ and $k_{-n}=k_n=n$ for every $n\in \nat,$ and $\upsilon=0$.  We set $\hat{g} : \widetilde{L}^\infty (\Sigma, \vec{k} ; 0)\rightarrow \mathbb{Q}^{\nat}$ with
$\hat{g}( (w_n)_{n\in\nat} )=(g(w_n))_{n\in\nat}$.
The family $\hat{g}^{-1}(\U)\subseteq \widetilde{L}^\infty(\Sigma, \vec{k} ; 0)$ is  pointwise closed, since the function $\hat{g}$ is continuous. So, according to Theorem~\ref{cor:blockNW}, there exists $\vec{w}=(w_n)_{n\in\nat}\in \widetilde{L}^\infty (\Sigma, \vec{k} ; 0)$ such that \begin{center} either $\widetilde{EV}^{\infty}(\vec{w})\subseteq \hat{g}^{-1}(\U)$, or $\widetilde{EV}^{\infty}(\vec{w})\subseteq \widetilde{L}^{\infty}(\Sigma, \vec{k} ; 0) \setminus \hat{g}^{-1}(\U)$.
\end{center}
 From this point on, the proof is analogous to the one of Theorem~\ref{thm:block-Ramsey05}.
\end{proof}

\section{Partition theorems for sequences in an arbitrary semigroup}

In this section we apply the results of sections 1,2 and 3 to an arbitrary semigroup (commutative or non-commutative). So, we get a strong simultaneous extension  of van der Waerden's theorem and Hindman's partition theorem for an arbitrary semigroup (in Theorem~\ref{thm:block-Ramsey06} for a non-commutative and in Theorem~\ref{thm: 011} for a commutative semigroup) extending Theorems 14.14 and 14.15 in [HS], an extended to every countable order Ramsey-type partition theorem for an arbitrary semigroup (Theorem~\ref{thm: xilocated}) and a partition theorem for the infinite sequences in an arbitrary semigroup (in Theorem~\ref{cor:ncentralNW} for a non-commutative and in Theorem~\ref{thm: 11011} for a commutative semigroup), applying the partition Theorems~\ref{thm:block-Ramsey02}, ~\ref{thm:block-Ramsey} and ~\ref{cor:blockNW} for $\omega$-$\zat^\ast$-located words respectively.

\begin{note}
 For $F,G \in [\mathbb{Z}]_{>0}^{<\omega}$, we write $F<G$ if and only if one of the following three conditions is satisfied:

$(1)\;\;F\cap(\mathbb{Z}^{-}\cup\{0\})=\emptyset$ and $\max F< \min G,$

$(2)\;\;F\cap(\nat\cup\{0\})=\emptyset$ and $\min F> \max G,$

$(3)\;\;G= A_1\cup A_2$ where $A_1,A_2\neq \emptyset$ and
$\max A_1< \min F\leq \max F<\min A_2.$
\end{note}

Let $(X,+)$ be a semigroup, $(y_{l,n})_{n \in \mathbb{Z}^\ast}\subseteq X$ for every $l\in \zat$ and $\vec{k}=(k_n)_{n \in \mathbb{Z}^\ast}\subseteq\nat$ where $(k_n)_{n \in \nat}$ and $(k_{-n})_{n \in \nat}$ are increasing sequences. Setting $\Sigma=\{\alpha_n:\;n\in \mathbb{Z}^\ast\}$, where $\alpha_n=n$ for $n\in \zat^\ast$ and $\upsilon=0,$  we define the function \begin{center}$\psi:\widetilde{L}(\Sigma\cup\{0\},\vec{k})\rightarrow X \;\;\text{with}\;\; \psi(w_{n_1}\ldots w_{n_l})=\sum^{l}_{i=1}y_{w_{n_i},n_i}.$
\end{center}
For $(w_n)_{n\in \nat}\in \widetilde{L}^\infty(\Sigma,\vec{k};0)$ we set for every $n\in \nat$ \begin{center} $u_n(i,j)=\psi(T_{(1,1)}(w_{4n-3})\star T_{(i,j)}(w_{4n-2})\star w_{4n-1}\star T_{(1,1)}(w_{4n}))$ \end{center} for every $(i,j)\in \nat\times\nat\cup\{(0,0)\}$ with $0\leq i\leq k_n,\;0\leq j\leq k_{-n}.$

 \noindent In case $X$ is a commutative semigroup,  \begin{center} $u_n(i,j)=\sum_{t\in E_n}y_{w_t,t}+\sum_{t\in H_n}y_{-j,t}+\sum_{t\in L_n}y_{i,t}$ \end{center}
 where $E_n\in [\zat]^{<\omega}_{>0},\;H_n\in [\zat^-]^{<\omega}_{>0},\;L_n\in [\nat]^{<\omega}_{>0}$ with $E_n\cap H_n= \emptyset,\;E_n\cap L_n= \emptyset$ and $E_n<E_{n+1},\;H_n<H_{n+1},\;L_n<L_{n+1},$ and $w_t \in \{-k_{t},\ldots,-1,0\}$ if $t<0,\;w_t \in \{0,1,\ldots,k_t\}$ if $t>0.$

\noindent In case $X$ is a non-commutative semigroup,
 \begin{center} $u_n(i,j)=\alpha_n(j)+\beta_n(i)\;\;$ with $\;\;\;\;\;\;\;\;\;\;\;\;\;\;\;\;\;\;\;\;\;\;\;\; \;\;\;\;\;\;\;\;\;\;\;\;\;\;\;\;\;\;\;\;\;\;\;\;\;\;\;\;\;$\end{center} \begin{center}$\alpha_n(j)=\sum^{m^{1}_n}_{s=1}(\sum_{t \in E_s^n}y_{w_t,t}\;+\;\sum_{t\in H_s^n} y_{-j,t})\;+
\sum_{t\in E_{m^1_n+1}^n}y_{w_t,t}\;\;$  and \\ $\beta_n(i)=\sum^{m^{2}_n}_{s=1}(\sum_{t\in D_s^n}y_{w_t,t}\;+\sum_{t \in L_s^n}y_{i,t})\;+ \sum_{t\in D_{m^2_n+1}^n}y_{w_t,t},\;\;\;\;\;\;\;\;\;\;\;\;$ \end{center} where
$m_n^1,m_n^2\in \nat,$ $E_n=\cup^{m_n^1+1}_{i=1}E^n_i,$ $H_n=\cup^{m_n^1}_{i=1}H^n_i\in [\mathbb{Z}^-]^{<\omega}_{>0},\;D_n=\cup^{m_n^2+1}_{i=1}D^n_i,\;  L_n=\cup^{m_n^2}_{i=1}L^n_i \in [\nat]^{<\omega}_{>0},$ with $E_{n}<E_{n+1},$ $ H_{n}<H_{n+1}, D_{n}<D_{n+1}, L_n<L_{n+1},$ and $w_t \in \{-k_{t},\ldots,-1,0\}$ if $t<0,\;w_t \in \{0,1,\ldots,k_t\}$ if $t>0.$

As a consequence of Theorem~\ref{thm:block-Ramsey02}, via the function $\psi,$ we get the following partition theorems for semigroups.

\begin{note}Let $(X,+)$ arbitrary semigroup. For $(x_n)_{n\in \nat},\;(z_n)_{n\in \nat}\subseteq X$ we set
\begin{center} $FS[(x_n,z_n)]=\{x_{n_l}+\ldots+x_{n_1}+z_{n_1}+\ldots+z_{n_l}:\; n_1<\ldots<n_l\in \nat\}.$\end{center}
\end{note}

\begin{thm}\label{thm:block-Ramsey06}
Let $(X,+)$ be a non-commutative semigroup, $(y_{l,n})_{n \in \mathbb{Z}^\ast}\subseteq X$ for every $l\in \zat$ and $\vec{k}=(k_n)_{n \in \mathbb{Z}^\ast}\subseteq\nat,$ where $(k_n)_{n \in \nat},(k_{-n})_{n \in \nat}$ are increasing sequences. If $X=A_{1}\cup\ldots\cup A_r$ for $r \in \mathbb{N},$ then there exist $1\leq i_{0}\leq r$ and for each $n\in \nat$ a function

\noindent $u_n:\{1,\ldots,k_{n}\}\times\{1,\ldots,k_{-n}\}\cup\{(0,0)\}\rightarrow X\times X$ with $u_n(i,j)=\alpha_n(j)+\beta_n(i)$ and

 \begin{center} $\alpha_n(j)=\sum^{m^{1}_n}_{s=1}(\sum_{t \in E_s^n}y_{w_t,t}\;+\;\sum_{t\in H_s^n} y_{-j,t})\;+
\sum_{t\in E_{m^1_n+1}^n}y_{w_t,t},\;\;\;\;\;\;$\\

$\beta_n(i)=\sum^{m^{2}_n}_{s=1}(\sum_{t\in D_s^n}y_{w_t,t}\;+\sum_{t \in L_s^n}y_{i,t})\;+ \sum_{t\in D_{m^2_n+1}^n}y_{w_t,t}\;\;\;\;\;\;\;\;\;\;\;\;$\end{center}

 \noindent where $(m^{1}_{n})_{n\in \mathbb{N}},(m^{2}_{n})_{n\in \mathbb{N}}\subseteq \nat,$ $(E_{n})_{n\in \mathbb{N}},$ $(H_{n})_{n\in \mathbb{N}}\subseteq [\mathbb{Z}^-]^{<\omega}_{>0},$ $(D_{n})_{n\in \mathbb{N}},$ $(L_{n})_{n\in \mathbb{N}}\subseteq [\nat]^{<\omega}_{>0},$
$E_n=\cup^{m_n^1+1}_{i=1}E^n_i,$ $H_n=\cup^{m_n^1}_{i=1}H^n_i,$ $D_n=\cup^{m_n^2+1}_{i=1}D^n_i,$ $L_n=\cup^{m_n^2}_{i=1}L^n_i$
 with $E_{n}<E_{n+1},$ $H_{n}<H_{n+1},$ $D_{n}<D_{n+1},$ $L_n<L_{n+1}$ for every $n\in\nat,$ and $w_t \in \{-k_{t},\ldots,-1,0\}$ if $t\in \cup_{n\in \nat}E_n,$ $w_t \in \{0,1,\ldots,k_t\}$ if $t\in \cup_{n\in \nat}D_n,$ which satisfy
$$ FS\big[ \big(\alpha_n(j_n),\beta_n(i_n) \big)_{n\in \nat}\big]\subseteq A_{i_{0}}$$ for all $((i_n,j_n))_{n\in \nat}\subseteq \nat\times\nat\cup\{(0,0)\}$ with $0\leq i_n\leq k_n,$ $0\leq j_n\leq k_{-n}$ for every $n\in \nat.$
\end{thm}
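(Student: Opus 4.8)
The plan is to deduce Theorem~\ref{thm:block-Ramsey06} from the partition theorem for $\omega$-$\mathbb{Z}^\ast$-located words (Theorem~\ref{thm:block-Ramsey02}) by transporting the colouring of $X$ back to a colouring of located words through the evaluation map $\psi$. With $\Sigma=\{\alpha_n:n\in\zat^\ast\}$, $\alpha_n=n$, $\upsilon=0$ and $\vec{k}$ as in the statement, the map $\psi:\widetilde{L}(\Sigma\cup\{0\},\vec{k})\to X$, $\psi(w_{n_1}\ldots w_{n_l})=\sum_{i=1}^l y_{w_{n_i},n_i}$, sends the partition $X=A_1\cup\ldots\cup A_r$ to the pulled-back partitions $\widetilde{L}(\Sigma,\vec{k};0)=\bigcup_i\psi^{-1}(A_i)$ and $\widetilde{L}(\Sigma,\vec{k})=\bigcup_i\psi^{-1}(A_i)$ of the variable and the constant words. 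The first step is therefore to apply Theorem~\ref{thm:block-Ramsey02} to these two colourings, with a sequence $(F_n)$ of finite subsets of $\nat\times\nat$ large enough to realise, after $T_{(p,q)}$, every admissible pair of substitution values (the natural choice being $F_n=\{1,\ldots,k_n\}\times\{1,\ldots,k_{-n}\}$). This yields an orderly sequence $(w_n)_{n\in\nat}\in\widetilde{L}^\infty(\Sigma,\vec{k};0)$ together with a variable cell $i_0$ and a constant cell $j_0$.

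The conceptual heart of the argument is the interaction of $\psi$ with the concatenation $\star$ dictated by the two-sided order $<_{\textsl{R}_1}$: if $w<_{\textsl{R}_1}u$, then in $w\star u$ the domain of $u$ splits into a block lying entirely below $dom(w)$ and a block lying entirely above it, so that $\psi(w\star u)$ equals (the $\psi$-value of the lower block of $u$)$\,+\,\psi(w)+\,$(the $\psi$-value of the upper block of $u$). Iterating this along a $<_{\textsl{R}_1}$-increasing chain shows that $\psi\big(T_{(p_1,q_1)}(v_1)\star\ldots\star T_{(p_l,q_l)}(v_l)\big)$ is exactly a nested sum in which the negative (``$\alpha$'') contributions accumulate from the outside in and the positive (``$\beta$'') contributions from the inside out; this is precisely the two-sided expression $x_{n_l}+\ldots+x_{n_1}+z_{n_1}+\ldots+z_{n_l}$ appearing in $FS[(x_n,z_n)]$. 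Thus $\psi$ converts the $\star$-structure produced by Theorem~\ref{thm:block-Ramsey02} into the non-commutative finite-sum structure demanded by the conclusion.

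To produce the asserted functions I would then group the extracted sequence in consecutive blocks of four and set $u_n(i,j)=\psi\big(T_{(1,1)}(w_{4n-3})\star T_{(i,j)}(w_{4n-2})\star w_{4n-1}\star T_{(1,1)}(w_{4n})\big)$, reading off $\alpha_n(j)$ from the negative half of the sorted domain and $\beta_n(i)$ from the positive half; since $i$ is substituted only at positive variable positions and $j$ only at negative ones, this gives exactly the displayed block expressions, the blocks $E^n_s,H^n_s$ (resp.\ $D^n_s,L^n_s$) and the counters $m^1_n,m^2_n$ recording the alternation of constant and variable runs in the two halves. The role of the untransformed middle word $w_{4n-1}$ is decisive: since it retains its variable $0$'s, \emph{every} concatenation of the $u_n$'s is again a variable word, so only the single variable cell $A_{i_0}$ is ever needed for the whole finite-sum set, while the outer buffers $T_{(1,1)}(w_{4n-3}),T_{(1,1)}(w_{4n})$ secure both the ordering $q_n\prec q_{n+1}$ between consecutive blocks and membership in $\widetilde{L}_0$. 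Combining the two previous steps, each element of $FS[(\alpha_n(j_n),\beta_n(i_n))_{n\in\nat}]$ is the $\psi$-image of a variable extracted word lying in $\psi^{-1}(A_{i_0})$, hence lies in $A_{i_0}$.

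The main obstacle I anticipate is bookkeeping rather than conceptual: one must verify that expanding $\psi$ over the sorted union of four nested domains reproduces \emph{verbatim} the alternating decomposition $\alpha_n(j)=\sum_{s=1}^{m^1_n}\big(\sum_{t\in E^n_s}y_{w_t,t}+\sum_{t\in H^n_s}y_{-j,t}\big)+\sum_{t\in E^n_{m^1_n+1}}y_{w_t,t}$ and its analogue for $\beta_n(i)$, tracking which positions carry genuine constants, which carry the value pinned by $T_{(1,1)}$, and which carry the substituted value, and checking that the separation constraints $E_n<E_{n+1}$, $H_n<H_{n+1}$, and so on, all follow from $<_{\textsl{R}_1}$. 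The commutative case (Theorem~\ref{thm: 011}) should then follow by the same scheme, with the nesting collapsing to an ordinary finite sum.
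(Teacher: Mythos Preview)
Your proposal is correct and follows essentially the same route as the paper: pull the colouring back along $\psi$, apply Theorem~\ref{thm:block-Ramsey02} with $F_n=\{1,\ldots,k_n\}\times\{1,\ldots,k_{-n}\}$ to the variable words, and then set $u_n(i,j)=\psi\big(T_{(1,1)}(w_{4n-3})\star T_{(i,j)}(w_{4n-2})\star w_{4n-1}\star T_{(1,1)}(w_{4n})\big)$, which is exactly the paper's construction. Your explanation of why the grouping in blocks of four works (the untransformed $w_{4n-1}$ forcing membership in the variable cell, the outer $T_{(1,1)}$ buffers securing $\widetilde{L}_0$-membership and the ordering) and of how $\psi$ turns $\star$-concatenation under $<_{\textsl{R}_1}$ into the nested two-sided sum of $FS[(x_n,z_n)]$ spells out precisely what the paper leaves to the reader.
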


\begin{proof}[Proof]
Let $\Sigma=\{\alpha_n:\;n\in \mathbb{Z}^\ast\}$, where $\alpha_n=n$ for $n\in \zat^\ast,$ $\upsilon=0$ and $\psi:\widetilde{L}(\Sigma,\vec{k};0)\rightarrow X$ the function defined above. Then $\widetilde{L}(\Sigma,\vec{k};0)=\psi^{-1}(A_{1})\cup\ldots \cup \psi^{-1}(A_{r})$. According to Theorem~\ref{thm:block-Ramsey02}, there exists $1\leq i_{0} \leq r$ and a sequence $(w_{n})_{n\in \nat}\subseteq \widetilde{L}^\infty(\Sigma,\vec{k};\upsilon)$ such that
$T_{(p_1,q_1)}(w_{n_1})\star \ldots \star T_{(p_\lambda,q_\lambda)}(w_{n_\lambda})\in \psi^{-1}(A_{i_{0}})$,
for every $\lambda\in\nat$, $n_1<\ldots<n_\lambda\in\nat$, $(p_i,q_i)\in \nat\times\nat\cup\{(0,0)\}$ with $0\leq p_i \leq k_{n_i},\; 0\leq q_i\leq k_{-n_i}$ for every $1\leq i \leq \lambda$ and $(0,0)\in \{(p_1,q_1),\ldots,(p_\lambda,q_\lambda)\}.$

We set $u_n(i,j)=\psi(T_{(1,1)}(w_{4n-3})\star T_{(i,j)}(w_{4n-2})\star w_{4n-1}\star T_{(1,1)}(w_{4n}))$ for every $n\in \nat$  and $(i,j)\in \nat\times\nat\cup\{(0,0)\}$ with $0\leq i\leq k_n,\;0\leq j\leq k_{-n}.$
\end{proof}

In case of a commutative semigroup $(X,+)$, Theorem~\ref{thm:block-Ramsey06} has the following simplified statement.
\begin{thm}
\label{thm: 011}
Let $(X,+)$ be a commutative semigroup, $(y_{l,n})_{n \in \mathbb{Z}^\ast}\subseteq X$ for every $l\in \zat$ and $\vec{k}=(k_n)_{n \in \mathbb{Z}^\ast}\subseteq\nat,$ where $(k_n)_{n \in \nat}$ and $(k_{-n})_{n \in \nat}$ are increasing sequences. If $X=A_{1}\cup\ldots\cup A_r$ for $r \in \mathbb{N}$, then there exist $1\leq i_{0}\leq r$ and for each $n\in \nat$ a function $u_n:\{1,\ldots,k_{n}\}\times\{1,\ldots,k_{-n}\}\cup\{(0,0)\}\rightarrow X$ with
 $$u_n(i,j)=\sum_{t\in E_n}y_{w_t,t}+\sum_{t\in H_n}y_{-j,t}+\sum_{t\in L_n}y_{i,t}$$
\noindent where $(E_{n})_{n\in \mathbb{N}}\subseteq [\mathbb{Z}]^{<\omega}_{>0},\;(H_{n})_{n\in \mathbb{N}}\subseteq [\mathbb{Z}^-]^{<\omega}_{>0},\;(L_{n})_{n\in \mathbb{N}}\subseteq [\nat]^{<\omega}_{>0},$ $E_n\cap H_n=\emptyset,$ $\;E_n\cap L_n=\emptyset,$ with $E_n<E_{n+1},\;H_n<H_{n+1},\;L_n<L_{n+1}$ for every $n\in\nat,$
and $w_t \in \{-k_{t},\ldots,-1,0\}$ if $t<0,\;t\in\cup_{n\in \nat}E_n,$ $w_t \in \{0,1,\ldots,k_t\}$ if $t>0,\;t\in\cup_{n\in \nat}E_n,$ such that
$$ FS\big[ \big(u_n(i_n,j_n)\big)_{n\in \nat}\big]\subseteq A_{i_{0}}$$ for all $((i_n,j_n))_{n\in \nat}\subseteq \nat\times\nat\cup\{(0,0)\}$ with $0\leq i_n\leq k_n,$ $0\leq j_n\leq k_{-n}$ for every $n\in \nat.$
\end{thm}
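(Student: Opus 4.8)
The plan is to deduce the theorem from Theorem~\ref{thm:block-Ramsey02} through the evaluation map $\psi$, exactly as in the proof of Theorem~\ref{thm:block-Ramsey06}; the only---but decisive---difference is that commutativity of $(X,+)$ lets me collapse the two-sided sum $\alpha_n(j)+\beta_n(i)$ into a single sum and replace the ordered family $FS[(\alpha_n,\beta_n)]$ by the plain $FS[(u_n)]$. First I would fix $\Sigma=\{\alpha_n:n\in\zat^\ast\}$ with $\alpha_n=n$, set $\upsilon=0$, and record the one structural fact that drives everything: since $X$ is commutative and $dom(w)\cap dom(u)=\emptyset$, the map $\psi$ satisfies $\psi(w\star u)=\psi(w)+\psi(u)$. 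Indeed, the interleaving of positions forced by $\star$ becomes irrelevant once $+$ is commutative, so $\psi:\widetilde{L}(\Sigma\cup\{0\},\vec{k})\to X$ is additive on $\star$-concatenations; this is precisely the place where commutativity is exploited, and it is what fails in the non-commutative Theorem~\ref{thm:block-Ramsey06}.

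Next I would pull back the coloring. Writing $\widetilde{L}(\Sigma,\vec{k};0)=\bigcup_{i=1}^{r}\psi^{-1}(A_i)$ (and fixing any partition of the constant words, which is irrelevant to the conclusion we use), I apply Theorem~\ref{thm:block-Ramsey02} to a sequence $(F_m)_{m\in\nat}$ of finite subsets of $\nat\times\nat$ chosen so that $F_{4n-3}=F_{4n}=\{(1,1)\}$ and $F_{4n-2}\supseteq\{(i,j):1\le i\le k_{4n-2},\,1\le j\le k_{-(4n-2)}\}$. This produces $(w_m)_{m\in\nat}\in\widetilde{L}^\infty(\Sigma,\vec{k};0)$ and a color $i_0$ with the property that every $\star$-combination $T_{(p_1,q_1)}(w_{m_1})\star\cdots\star T_{(p_\mu,q_\mu)}(w_{m_\mu})$ in which at least one transform equals $(0,0)$ lies in $\psi^{-1}(A_{i_0})$.

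Then I would set $u_n(i,j)=\psi\bigl(T_{(1,1)}(w_{4n-3})\star T_{(i,j)}(w_{4n-2})\star w_{4n-1}\star T_{(1,1)}(w_{4n})\bigr)$. The role of grouping four consecutive indices is that $w_{4n-1}$ is always left untransformed, i.e.\ carries the transform $(0,0)$; hence every finite sum $\sum_n u_n(i_n,j_n)$ is, by additivity and commutativity of $\psi$, the image under $\psi$ of a single $\star$-combination of the $w_m$ in which the indices $4n-1$ occur with transform $(0,0)$. Such a word falls under the variable-word conclusion of Theorem~\ref{thm:block-Ramsey02}, so it lies in $A_{i_0}$, giving $FS[(u_n(i_n,j_n))_{n\in\nat}]\subseteq A_{i_0}$. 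Reading off $\psi$ on this word yields the displayed form of $u_n(i,j)$: the positive (resp.\ negative) positions of $w_{4n-2}$ still carrying $\upsilon$ contribute $y_{i,t}$ (resp.\ $y_{-j,t}$) and supply the index sets $L_n$ and $H_n$, while all the remaining positions contribute fixed terms $y_{w_t,t}$ and supply $E_n$; commutativity is what permits gathering these three groups into one sum, and the order relations $E_n<E_{n+1}$, $H_n<H_{n+1}$, $L_n<L_{n+1}$ are inherited from $w_m<_{\textsl{R}_1}w_{m+1}$.

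The step I expect to be fussiest is matching the index ranges: I must ensure that $T_{(i,j)}$ realizes the letter exactly $i$ (resp.\ $-j$) at the variable positions $t$ of $w_{4n-2}$ rather than the capped value $k_t$. This is exactly where the hypothesis that $(k_n)_{n\in\nat}$ and $(k_{-n})_{n\in\nat}$ are increasing is used: since the relevant positions $t$ of $w_{4n-2}$ satisfy $|t|\ge n$, monotonicity gives $k_t\ge k_n$ and $k_t\ge k_{-n}$, so for $i\le k_n$ and $j\le k_{-n}$ no truncation occurs. Everything else is the bookkeeping already carried out in the proof of Theorem~\ref{thm:block-Ramsey06}, now simplified by the commutative merge; in fact the present theorem may be read as the specialization of Theorem~\ref{thm:block-Ramsey06} to a commutative $X$.
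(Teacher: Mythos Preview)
Your proposal is correct and follows essentially the same route as the paper: the paper presents Theorem~\ref{thm: 011} as the commutative simplification of Theorem~\ref{thm:block-Ramsey06}, whose proof applies Theorem~\ref{thm:block-Ramsey02} through the evaluation map $\psi$ and defines $u_n(i,j)=\psi\bigl(T_{(1,1)}(w_{4n-3})\star T_{(i,j)}(w_{4n-2})\star w_{4n-1}\star T_{(1,1)}(w_{4n})\bigr)$, exactly as you do. Your treatment is in fact slightly more explicit than the paper's, in particular your observation that monotonicity of $(k_n)$ and $(k_{-n})$ prevents truncation in $T_{(i,j)}$ at the variable positions of $w_{4n-2}$; the paper records the resulting formula for $u_n(i,j)$ in the paragraph preceding Theorem~\ref{thm:block-Ramsey06} without spelling out this verification.
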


A particular case of Theorem~\ref{thm: 011} gives the following corollary.

\begin{note} For a semigroup $(X,+)$ and $(x_n)_{n\in \mathbb{Z}}\subseteq X$ we set
\begin{center}$FS[(x_{-n})_{n\in \nat}]=\{x_{-n_l}\;+\;\ldots\;+\;x_{-n_1}:\;n_1<\ldots<n_l\in \nat\}$ and\\
$FS[(x_n)_{n\in \mathbb{Z}}]=\{x_{n_1}\;+\;\ldots\;+\;x_{n_l}:\;n_1<\ldots<n_l\in \mathbb{Z}\}.\;\;\;\;\;\;\;\;\;\;\;\;$\end{center}
\end{note}

\begin{cor}\label{cor: 05} Let $(X,+)$ be a commutative semigroup and $(x_n)_{n\in \mathbb{Z}}\subseteq X.$ If $X=A_1\cup\ldots \cup A_r$ for $r \in \nat$, then there exists $1\leq i_{0}\leq r$,  $(a_n)_{n\in \nat}\subseteq FS[(x_n)_{n\in \mathbb{Z}}],$ $(b_n)_{n\in \nat}\subseteq FS[(x_n)_{n\in \nat}]$ and $(c_n)_{n\in \nat}\subseteq FS[(x_{-n})_{n\in \nat}]$ such that
\begin{center}
$ FS\big[ \big(a_n+i_n b_n+j_n c_n)_{n\in \nat}\big]\subseteq A_{i_{0}}$ \\ for all $((i_n,j_n))_{n\in \nat}\subseteq \nat\times\nat\cup\{(0,0)\}$ with $0\leq i_n,j_n\leq n$ for every $n\in \nat.$
\end{center}
\end{cor}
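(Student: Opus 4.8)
The plan is to read Corollary~\ref{cor: 05} off from Theorem~\ref{thm: 011} by manufacturing the doubly-indexed family $(y_{l,n})$ from the single bi-infinite sequence $(x_n)_{n\in\zat}$ in a way that is \emph{linear} in the letter value. Concretely, I would take $\alpha_n=n$, $\upsilon=0$, and the dominating sequence $\vec k=(k_n)_{n\in\zat^\ast}$ with $k_n=k_{-n}=n$ for $n\in\nat$; this is admissible, since $(k_n)_{n\in\nat}$ and $(k_{-n})_{n\in\nat}$ are then increasing, and it produces exactly the bound $0\le i_n,j_n\le n$ in the conclusion. The essential choice is to set
\[
y_{l,n}=\underbrace{x_n+\cdots+x_n}_{|l|\ \text{times}}\qquad(l\in\zat,\ n\in\zat^\ast),
\]
with the convention that $y_{0,n}$ is the empty (omitted) contribution, so that $\psi$ sends the letter value $l$ at position $n$ to $|l|$ copies of $x_n$.

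With this data I would apply Theorem~\ref{thm: 011} to the partition $X=A_1\cup\cdots\cup A_r$. It returns $1\le i_0\le r$ and, for each $n$, a function with $u_n(i,j)=\sum_{t\in E_n}y_{w_t,t}+\sum_{t\in H_n}y_{-j,t}+\sum_{t\in L_n}y_{i,t}$ and $FS[(u_n(i_n,j_n))_{n\in\nat}]\subseteq A_{i_0}$ for all admissible $(i_n,j_n)$. Because $y$ is linear in its first index, the two variable sums factor as $\sum_{t\in L_n}y_{i,t}=i\bigl(\sum_{t\in L_n}x_t\bigr)$ and $\sum_{t\in H_n}y_{-j,t}=j\bigl(\sum_{t\in H_n}x_t\bigr)$. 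I would therefore put $b_n=\sum_{t\in L_n}x_t$ and $c_n=\sum_{t\in H_n}x_t$; since $L_n\in[\nat]^{<\omega}_{>0}$ and $H_n\in[\zat^-]^{<\omega}_{>0}$, these are finite sums of distinct terms, so $b_n\in FS[(x_n)_{n\in\nat}]$ and $c_n\in FS[(x_{-n})_{n\in\nat}]$, and $u_n(i,j)=a_n+i\,b_n+j\,c_n$ with $a_n:=\sum_{t\in E_n}y_{w_t,t}$ the $(i,j)$-independent part. Substituting this identity into the inclusion furnished by Theorem~\ref{thm: 011} yields precisely $FS[(a_n+i_nb_n+j_nc_n)_{n}]\subseteq A_{i_0}$, which is the asserted conclusion.

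The hard part will be guaranteeing that $a_n$ genuinely lands in $FS[(x_n)_{n\in\zat}]$, i.e. that it is a sum of \emph{distinct} $x_t$'s with coefficient one. With the linear choice above one has $a_n=\sum_{t\in E_n}|w_t|\,x_t$, where the $w_t$ are the fixed (non-variable) letters on $E_n$; whenever some $|w_t|\ge2$ this carries a multiplicity and is not literally an element of $FS[(x_n)_{n\in\zat}]$. The tension is structural: linearity of $y$ is exactly what makes $b_n,c_n$ scale, yet it is also what injects multiplicities into $a_n$, and a single $y$ cannot simultaneously scale on the variable coordinates and remain coefficient-one on the constant ones. I would resolve this by arranging that every fixed letter occurring in the homogeneous words has magnitude one, so that $y_{w_t,t}=x_t$ on $E_n$: using the grouping $u_n(i,j)=\psi\bigl(T_{(1,1)}(w_{4n-3})\star T_{(i,j)}(w_{4n-2})\star w_{4n-1}\star T_{(1,1)}(w_{4n})\bigr)$ from the proof of Theorem~\ref{thm: 011}, I would push all the genuine scaling onto the single $T_{(i,j)}$-block and fill the auxiliary blocks only with value one, reducing the base configuration so that no constant letter of magnitude exceeding one survives. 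Once the constant positions each contribute $x_t$ with coefficient one, $a_n=\sum_{t\in E_n}x_t\in FS[(x_n)_{n\in\zat}]$ and the proof closes, the vanishing cases $i_n=0$ or $j_n=0$ being absorbed by the convention that $0\cdot x$ is the empty word; making this reduction precise is the step that requires the most care.
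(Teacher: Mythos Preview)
Your approach is exactly the paper's: its one-line proof is to set $y_{s,n}=|s|\cdot x_n$, take $k_n=k_{-n}=n$, and invoke Theorem~\ref{thm: 011}. On the main idea you and the paper agree completely.

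The concern you raise about whether $a_n=\sum_{t\in E_n}|w_t|\,x_t$ actually lies in $FS[(x_n)_{n\in\zat}]$ is legitimate, and the paper's proof does not address it either --- it simply asserts the conclusion. Your proposed remedy, however, does not work as stated: the constant letters $w_t$ on $E_n$ are produced by the Ramsey argument behind Theorem~\ref{thm: 011} and are not under your control; the grouping $T_{(1,1)}(w_{4n-3})\star T_{(i,j)}(w_{4n-2})\star w_{4n-1}\star T_{(1,1)}(w_{4n})$ only substitutes at the \emph{variable} positions and leaves the already-constant positions of each $w_m$ untouched, so there is no mechanism here to force $|w_t|=1$ throughout $E_n$. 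Starting from a hand-picked $\vec w$ of all-variable words does not help either, since the constant letters in any extraction then arise from substitutions $T_{(p,q)}$ with $(p,q)$ ranging over the full box $\{1,\dots,k_n\}\times\{1,\dots,k_{-n}\}$. If $FS[(x_n)_{n\in\zat}]$ is read strictly as coefficient-one sums over distinct indices, this appears to be a gap shared by your argument and the paper's; the most charitable reading of the corollary is that $a_n$ is intended to be a finite $\nat$-linear combination of the $x_t$'s rather than an $FS$-element in the strict sense.
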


\begin{proof}[Proof] Set $y_{s,n}=|s|\cdot x_n,$ for every $s\in  \mathbb{Z},\;n\in \mathbb{Z}^\ast$ and $k_n=k_{-n}=n$ for every $n \in \nat$ and apply Theorem~\ref{thm: 011}.
\end{proof}

 Applying Theorem~\ref{thm:block-Ramsey}, via the function $\psi,$ we can derive the following Ramsey type partition theorems for each countable order $\xi$ for an arbitrary semigroup $(X,+),$ extending Theorems~\ref{thm:block-Ramsey06} and ~\ref{thm: 011}, which corresponds to the case $\xi=1.$

\begin{thm}\label{thm: xilocated} Let $(X,+)$ be an arbitrary semigroup, $\xi\geq 1$ a countable ordinal, $\Sigma=\{\alpha_n:\;n\in \zat^\ast\}$ with $\alpha_n=n$ for every $n\in \zat^\ast,$ $\vec{k}=(k_n)_{n\in\mathbb{Z}^\ast}\subseteq \nat$ such that
$(k_n)_{n\in\nat}$, $(k_{-n})_{n\in\nat}$ are increasing sequences and $(y_{l,n})_{n \in \zat^\ast}\subseteq X$ for every $l \in \mathbb{Z}$. For every family $G\subseteq [X]^{<\omega}_{>0}$ of finite subsets of $X$, there exists $\vec{w}=(w_n)_{n\in\nat} \in \widetilde{L}^{\infty}(\Sigma, \vec{k} ; 0)$  such that
\begin{center} either
$\{(\psi(z_1),\ldots,\psi(z_n))\in [X]^{<\omega}_{>0} :\; (z_1,\ldots,z_n)\in \widetilde{L}^{\xi}(\Sigma, \vec{k};0)\cap \widetilde{EV}^{<\infty}(\vec{w})\}\subseteq G\;\;\;\;\;\;\;\;$\\ or $\{(\psi(z_1),\ldots,\psi(z_n))\in [X]^{<\omega}_{>0} :\; (z_1,\ldots,z_n)\in \widetilde{L}^{\xi}(\Sigma, \vec{k};0)\cap \widetilde{EV}^{<\infty}(\vec{w})\}\subseteq [X]^{<\omega}_{>0}\setminus G.$
\end{center}
\end{thm}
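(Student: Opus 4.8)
The plan is to deduce the statement directly from the Ramsey type partition theorem on Schreier families for $\omega$-$\zat^\ast$-located words (Theorem~\ref{thm:block-Ramsey}) by pulling the prescribed family $G$ back along the function $\psi$. With the variable taken to be $\upsilon=0$, the map $\psi:\widetilde{L}(\Sigma\cup\{0\},\vec{k})\to X$ sends each (variable or constant) word to a sum of the fixed elements $y_{l,n}$; consequently a finite orderly sequence $(z_1,\ldots,z_l)$ of variable $\omega$-$\zat^\ast$-located words is carried to the finite subset $\{\psi(z_1),\ldots,\psi(z_l)\}\in [X]^{<\omega}_{>0}$ of $X$. Throughout I identify the enumerating tuple $(\psi(z_1),\ldots,\psi(z_l))$ occurring in the statement with the set $\{\psi(z_1),\ldots,\psi(z_l)\}$ it lists, so that repetitions among the values $\psi(z_i)$ are immaterial.

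First I would form the pull-back family
\[
\F = \{(z_1,\ldots,z_l)\in \widetilde{L}^{<\infty}(\Sigma,\vec{k};0) : \{\psi(z_1),\ldots,\psi(z_l)\}\in G\}.
\]
Since Theorem~\ref{thm:block-Ramsey} accepts an \emph{arbitrary} partition family $\F\subseteq \widetilde{L}^{<\infty}(\Sigma,\vec{k};0)$, no structural hypothesis on $\F$ (tree, hereditary, thin, and so on) is required; I shall invoke only its variable horn, taking the accompanying constant family to be, say, $\G=\widetilde{L}^{<\infty}(\Sigma,\vec{k})$.

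Next I would fix any $\vec{w}_0\in \widetilde{L}^\infty(\Sigma,\vec{k};0)$ and apply Theorem~\ref{thm:block-Ramsey} to $\F$ and $\G$ along $\vec{w}_0$, producing a variable extraction $\vec{u}\prec\vec{w}_0$ with
\[
\widetilde{L}^\xi(\Sigma,\vec{k};0)\cap \widetilde{EV}^{<\infty}(\vec{u})\subseteq \F \quad\text{or}\quad \widetilde{L}^\xi(\Sigma,\vec{k};0)\cap \widetilde{EV}^{<\infty}(\vec{u})\subseteq \widetilde{L}^{<\infty}(\Sigma,\vec{k};0)\setminus\F.
\]
Putting $\vec{w}=\vec{u}$ and unwinding the definition of $\F$ completes the argument: in the first alternative every $(z_1,\ldots,z_n)\in \widetilde{L}^\xi(\Sigma,\vec{k};0)\cap \widetilde{EV}^{<\infty}(\vec{w})$ belongs to $\F$, hence its image $\{\psi(z_1),\ldots,\psi(z_n)\}$ lies in $G$, which is exactly the inclusion into $G$; in the second alternative each such image avoids $G$, giving the inclusion into $[X]^{<\omega}_{>0}\setminus G$.

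I do not anticipate a genuine obstacle, since the statement is merely a transport of Theorem~\ref{thm:block-Ramsey} across $\psi$, and the work is confined to the routine verification that $\F$ is a legitimate subfamily of $\widetilde{L}^{<\infty}(\Sigma,\vec{k};0)$ and that the tuple/set identification makes the two horns of the dichotomy correspond exactly to the two desired inclusions. The closest thing to a delicate point is checking that applying the dichotomy only on the variable side (and discarding the constant conclusion) is harmless; it is, because the constant family $\G$ plays no role in the statement being proved.
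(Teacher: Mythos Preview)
Your proposal is correct and follows exactly the route the paper indicates: the paper does not give a detailed proof of this theorem but simply states (in the paragraph preceding it) that it is obtained by ``Applying Theorem~\ref{thm:block-Ramsey}, via the function $\psi$,'' which is precisely what you do by pulling back $G$ to the family $\F\subseteq\widetilde{L}^{<\infty}(\Sigma,\vec{k};0)$ and invoking the variable horn of Theorem~\ref{thm:block-Ramsey}. Your observations about discarding the constant-side conclusion and about the tuple/set identification are accurate and fill in exactly the routine details the paper leaves implicit.
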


  The particular case of Theorem~\ref{thm: xilocated} for $\xi=m$ a finite ordinal has the following form in case of a non-commutative semigroup.

 \begin{note} Let $(X,+)$ be an arbitrary semigroup, $(x_n)_{n\in \nat},(z_n)_{n\in \nat}\subseteq X$ and $m\in \nat.$\\  For $y_1=x_{n_l}+\ldots+x_{n_1}+z_{n_1}+\ldots+z_{n_l},\;y_2=x_{m_\nu}+\ldots+x_{m_1}+z_{m_1}+\ldots+z_{m_\nu}\in FS[(x_n,z_n)_{n\in \nat}]$ we write $y_1<y_2$ if $n_l<m_1,$\\
 $\big[FS[(x_n)_{n\in \nat}]\big]^m=\{(y_1,\ldots,y_m): \;y_1<\ldots<y_m \in FS[(x_n)_{n\in \nat}]\},$\\
 $\big[FS[(x_n,z_n)_{n\in \nat}]\big]^m=\{(y_1,\ldots,y_m): \;y_1<\ldots<y_m \in FS[(x_n,z_n)_{n\in \nat}]\},$ and\\ $X^{m}$ is the set of all the subsets of $X$ with exactly $m$ elements. \end{note}

\begin{cor}\label{thm:block-Ramsey006}
  Let $(X,+)$ be a non-commutative semigroup, $m\in \nat,$ $(y_{l,n})_{n \in \mathbb{Z}^\ast}\subseteq X$ for every $l\in \zat$ and $\vec{k}=(k_n)_{n \in \mathbb{Z}^\ast}\subseteq\nat$ where $(k_n)_{n \in \nat},\;(k_{-n})_{n \in \nat}$ are increasing sequences. If $X^m=A_{1}\cup\ldots\cup A_r$ for $r \in \mathbb{N},$ then  there exist $1\leq i_{0}\leq r$ and for each $n\in \nat$ a function

\noindent $u_n:\{1,\ldots,k_{n}\}\times\{1,\ldots,k_{-n}\}\cup\{(0,0)\}\rightarrow X\times X$ with $u_n(i,j)=\alpha_n(j)+\beta_n(i)$ and

 \begin{center} $\alpha_n(j)=\sum^{m^{1}_n}_{s=1}(\sum_{t \in E_s^n}y_{w_t,t}\;+\;\sum_{t\in H_s^n} y_{-j,t})\;+
\sum_{t\in E_{m^1_n+1}^n}y_{w_t,t},\;\;\;\;\;\;$\\

$\beta_n(i)=\sum^{m^{2}_n}_{s=1}(\sum_{t\in D_s^n}y_{w_t,t}\;+\sum_{t \in L_s^n}y_{i,t})\;+ \sum_{t\in D_{m^2_n+1}^n}y_{w_t,t}\;\;\;\;\;\;\;\;\;\;\;\;$\end{center}

 \noindent where $(m^{1}_{n})_{n\in \mathbb{N}},(m^{2}_{n})_{n\in \mathbb{N}}\subseteq \nat,$ $(E_{n})_{n\in \mathbb{N}},$ $(H_{n})_{n\in \mathbb{N}}\subseteq [\mathbb{Z}^-]^{<\omega}_{>0},$ $(D_{n})_{n\in \mathbb{N}},$ $(L_{n})_{n\in \mathbb{N}}\subseteq [\nat]^{<\omega}_{>0},$
$E_n=\cup^{m_n^1+1}_{i=1}E^n_i,$ $H_n=\cup^{m_n^1}_{i=1}H^n_i,$ $D_n=\cup^{m_n^2+1}_{i=1}D^n_i,\;  L_n=\cup^{m_n^2}_{i=1}L^n_i$
with $E_{n}<E_{n+1},$ $H_{n}<H_{n+1},$ $D_{n}<D_{n+1},$ $L_n<L_{n+1}$ for every $n\in \nat,$ and $w_t \in \{-k_{t},\ldots,-1,0\}$ if $t\in \cup_{n\in \nat}E_n,$ $w_t \in \{0,1,\ldots,k_t\}$ if $t\in \cup_{n\in \nat}D_n,$ which satisfy
$$ \big[FS\big[ \big(\alpha_n(j_n),\beta_n(i_n)\big)_{n\in \nat}\big]\big]^m\subseteq A_{i_{0}}$$ for all $((i_n,j_n))_{n\in \nat}\subseteq \nat\times\nat\cup\{(0,0)\}$ with $0\leq i_n\leq k_n,\;0\leq j_n\leq k_{-n}$ for every $n\in \nat.$
\end{cor}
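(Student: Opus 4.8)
The plan is to specialize Theorem~\ref{thm: xilocated} to $\xi=m$ and translate its conclusion into semigroup language, in exact parallel with the deduction of Theorem~\ref{thm:block-Ramsey06} from Theorem~\ref{thm:block-Ramsey02} in the case $\xi=1$. First I would fix $\Sigma=\{\alpha_n:\;n\in\zat^\ast\}$ with $\alpha_n=n$, the variable $\upsilon=0$, and the function $\psi:\widetilde{L}(\Sigma\cup\{0\},\vec{k})\rightarrow X$, $\psi(w_{n_1}\ldots w_{n_l})=\sum_{i=1}^l y_{w_{n_i},n_i}$, introduced before Theorem~\ref{thm:block-Ramsey06}. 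By Remark~\ref{rem1.4}(ii), $\widetilde{L}^m(\Sigma,\vec{k};0)$ is exactly the set of length-$m$ orderly sequences of variable words, so the family $\widetilde{L}^m(\Sigma,\vec{k};0)\cap\widetilde{EV}^{<\infty}(\vec{w})$ appearing in Theorem~\ref{thm: xilocated} consists of the length-$m$ orderly sequences of extracted variable words of $\vec{w}$.

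To pass from the dichotomy of Theorem~\ref{thm: xilocated} to the $r$-coloring $X^m=A_1\cup\ldots\cup A_r$, I would iterate. Since the proof of Theorem~\ref{thm: xilocated} runs through Theorem~\ref{thm:block-Ramsey}, which extracts from any prescribed sequence, Theorem~\ref{thm: xilocated} is available in relativized form: for each $\vec{v}\in\widetilde{L}^\infty(\Sigma,\vec{k};0)$ there is $\vec{w}\prec\vec{v}$ realizing the dichotomy. Applying this along a nested chain of extractions to the families $A_s$ (viewed as sub-families of $[X]^{<\omega}_{>0}$) for $s=1,\ldots,r-1$, and using that $\widetilde{EV}^{<\infty}(\vec{u})\subseteq\widetilde{EV}^{<\infty}(\vec{w})$ whenever $\vec{u}\prec\vec{w}$, I obtain a single $\vec{w}=(w_n)_{n\in\nat}$ and an index $1\le i_0\le r$ with
\[
\{\psi(z_1),\ldots,\psi(z_m)\}\in A_{i_0}\quad\text{for all }(z_1,\ldots,z_m)\in\widetilde{L}^m(\Sigma,\vec{k};0)\cap\widetilde{EV}^{<\infty}(\vec{w}),
\]
identifying the ordered tuple $(\psi(z_1),\ldots,\psi(z_m))$ with its underlying $m$-element subset.

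I would then define $u_n(i,j)=\psi(v_n(i,j))$, where $v_n(i,j)=T_{(1,1)}(w_{4n-3})\star T_{(i,j)}(w_{4n-2})\star w_{4n-1}\star T_{(1,1)}(w_{4n})$, exactly the grouping-by-four prescription of the paragraph preceding Theorem~\ref{thm:block-Ramsey06}; this is the source of the decomposition $u_n(i,j)=\alpha_n(j)+\beta_n(i)$ with index sets $E_n,H_n,D_n,L_n$, since $T_{(i,j)}$ replaces positive (resp.\ negative) variable occurrences by the values feeding $y_{i,t}$ (resp.\ $y_{-j,t}$), while the outer blocks and the constant letters supply the $y_{w_t,t}$ summands. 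The crux is to see that $\psi$ turns free sums into the non-commutative $FS$-combinations of the statement: because $\vec{w}$ is $<_{\textsl{R}_1}$-increasing, the domains of $v_{n_1}(i_{n_1},j_{n_1}),\ldots,v_{n_l}(i_{n_l},j_{n_l})$ are nested, so evaluating $\psi$ of their $\star$-product by increasing position gathers first the negative-domain terms $\alpha_{n_l}(j_{n_l})+\cdots+\alpha_{n_1}(j_{n_1})$ and then the positive-domain terms $\beta_{n_1}(i_{n_1})+\cdots+\beta_{n_l}(i_{n_l})$, i.e.\ precisely a member of $FS[(\alpha_n(j_n),\beta_n(i_n))]$.

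Finally, each $m$-tuple $(Y_1,\ldots,Y_m)\in\big[FS[(\alpha_n(j_n),\beta_n(i_n))]\big]^m$ with $Y_1<\cdots<Y_m$ is of the form $(\psi(z_1),\ldots,\psi(z_m))$ for an orderly sequence $(z_1,\ldots,z_m)$ of extracted variable words of $\vec{w}$, the strict order between the $Y_s$ forcing the index-blocks to be disjoint and increasing so that $z_1<_{\textsl{R}_1}\cdots<_{\textsl{R}_1}z_m$ and hence $(z_1,\ldots,z_m)\in\widetilde{L}^m(\Sigma,\vec{k};0)\cap\widetilde{EV}^{<\infty}(\vec{w})$; the homogeneity above then yields $\{Y_1,\ldots,Y_m\}\in A_{i_0}$. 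The main obstacle is this last identification together with the check that the grouping-by-four genuinely produces the stated block decomposition respecting the non-commutative order; the $r$-coloring reduction and the appeal to Theorem~\ref{thm: xilocated} are routine.
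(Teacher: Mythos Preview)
Your proposal is correct and follows exactly the route the paper intends: the corollary is stated in the paper simply as ``the particular case of Theorem~\ref{thm: xilocated} for $\xi=m$,'' with no separate proof, and you have spelled out precisely how that specialization goes via the function $\psi$ and the grouping-by-four construction used in Theorem~\ref{thm:block-Ramsey06}. One small simplification: instead of iterating the dichotomy of Theorem~\ref{thm: xilocated} across the $r$ colors, you can invoke Corollary~\ref{cor:k-Ramsey} directly, which already delivers a single color class $A_{i_0}$ for an $r$-partition of $\widetilde{L}^m(\Sigma,\vec{k};\upsilon)$ and starts from an arbitrary $\vec{w}$; this bypasses the relativization step but is otherwise the same argument.
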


Corollary~\ref{thm:block-Ramsey006} has a simplified statement in case $(X,+)$ is commutative.

\begin{cor}
\label{thm: 1011}
Let $(X,+)$ be a commutative semigroup, $m\in \nat,$ $(y_{l,n})_{n \in \mathbb{Z}^\ast}\subseteq X$ for every $l\in \zat$ and $\vec{k}=(k_n)_{n \in \mathbb{Z}^\ast}\subseteq\nat$ where $(k_n)_{n \in \nat}$ and $(k_{-n})_{n \in \nat}$ are increasing sequences. If $X^m=A_{1}\cup\ldots\cup A_r$ for $r \in \mathbb{N},$ then there exist $1\leq i_{0}\leq r$ and for each $n\in \nat$ a function $u_n:\{1,\ldots,k_{n}\}\times\{1,\ldots,k_{-n}\}\cup\{(0,0)\}\rightarrow X$ with
 $$u_n(i,j)=\sum_{t\in E_n}y_{w_t,t}+\sum_{t\in H_n}y_{-j,t}+\sum_{t\in L_n}y_{i,t}$$
\noindent where $(E_{n})_{n\in \mathbb{N}}\subseteq [\mathbb{Z}]^{<\omega}_{>0},\;(H_{n})_{n\in \mathbb{N}}\subseteq [\mathbb{Z}^-]^{<\omega}_{>0},\;(L_{n})_{n\in \mathbb{N}}\subseteq [\nat]^{<\omega}_{>0},$ $E_n\cap H_n=\emptyset,$ $\;E_n\cap L_n=\emptyset,$ with $E_n<E_{n+1},\;H_n<H_{n+1},\;L_n<L_{n+1}$ for every $n\in\nat,$
and $w_t \in \{-k_{t},\ldots,-1,0\}$ if $t<0,\;t\in\cup_{n\in \nat}E_n,$ $w_t \in \{0,1,\ldots,k_t\}$ if $t>0,\;t\in\cup_{n\in \nat}E_n,$ such that
$$ \big[FS\big[ \big(u_n(i_n,j_n)\big)_{n\in \nat}\big]\big]^m\subseteq A_{i_{0}}$$ for all $((i_n,j_n))_{n\in \nat}\subseteq \nat\times\nat\cup\{(0,0)\}$ with $0\leq i_n\leq k_n,$ $0\leq j_n\leq k_{-n}$ for every $n\in \nat.$
\end{cor}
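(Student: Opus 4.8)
The plan is to read Corollary~\ref{thm: 1011} as the specialization of Corollary~\ref{thm:block-Ramsey006} to a commutative semigroup; the whole content is the algebraic collapse, permitted by commutativity, of the two-sided expression $u_n(i,j)=\alpha_n(j)+\beta_n(i)$ into a single sum. First I would apply Corollary~\ref{thm:block-Ramsey006} verbatim to the given data $(X,+)$, $m$, $(y_{l,n})_{n\in\zat^\ast}$ ($l\in\zat$) and $\vec k$, whose hypotheses impose no commutativity. This yields an index $1\le i_0\le r$ and, for every $n\in\nat$, index sets $E_n=\bigcup_{s=1}^{m_n^1+1}E_s^n$, $H_n=\bigcup_{s=1}^{m_n^1}H_s^n\in[\zat^-]^{<\omega}_{>0}$, $D_n=\bigcup_{s=1}^{m_n^2+1}D_s^n$, $L_n=\bigcup_{s=1}^{m_n^2}L_s^n\in[\nat]^{<\omega}_{>0}$ and functions $\alpha_n,\beta_n$ of the displayed form, such that $\big[FS[(\alpha_n(j_n),\beta_n(i_n))_{n\in\nat}]\big]^m\subseteq A_{i_0}$ for all admissible $((i_n,j_n))_{n\in\nat}$.

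Next I would invoke commutativity in two ways. First, in $FS[(x_n,z_n)_{n\in\nat}]$ with $x_n=\alpha_n(j_n)$ and $z_n=\beta_n(i_n)$, the reversed-then-forward pattern $x_{n_l}+\dots+x_{n_1}+z_{n_1}+\dots+z_{n_l}$ may be reordered to $\sum_{s}(\alpha_{n_s}(j_{n_s})+\beta_{n_s}(i_{n_s}))$; setting $u_n(i,j):=\alpha_n(j)+\beta_n(i)$ this gives $FS[(\alpha_n(j_n),\beta_n(i_n))_{n\in\nat}]=FS[(u_n(i_n,j_n))_{n\in\nat}]$, so the conclusion becomes exactly $\big[FS[(u_n(i_n,j_n))_{n\in\nat}]\big]^m\subseteq A_{i_0}$. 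Second, reordering and merging the four constituent blocks of $u_n(i,j)$ by type, I would put $E_n:=\big(\bigcup_s E_s^n\big)\cup\big(\bigcup_s D_s^n\big)\in[\zat]^{<\omega}_{>0}$ for the constant-symbol locations (of either sign), whose total contribution $\sum_{t\in E_n}y_{w_t,t}$ does not depend on $i,j$; $H_n:=\bigcup_s H_s^n\in[\zat^-]^{<\omega}_{>0}$ for the negative locations filled by $-j$, contributing $\sum_{t\in H_n}y_{-j,t}$; and $L_n:=\bigcup_s L_s^n\in[\nat]^{<\omega}_{>0}$ for the positive locations filled by $i$, contributing $\sum_{t\in L_n}y_{i,t}$. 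This rewrites $u_n(i,j)$ precisely in the single-sum form claimed.

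The remaining task, and the only genuine obstacle, is the bookkeeping that the merged sets inherit the stated separation and disjointness. The disjointness $E_n\cap H_n=E_n\cap L_n=\emptyset$ is immediate, since in Corollary~\ref{thm:block-Ramsey006} the constant locations are kept disjoint from the variable ones. For the orderings I would unwind the definition of $<$ on $[\zat]^{<\omega}_{>0}$: from $E_n<E_{n+1}$ with $E_n,E_{n+1}\subseteq\zat^-$, clause~(2) gives $\min E_n>\max E_{n+1}$, and from $D_n<D_{n+1}$ clause~(1) gives $\max D_n<\min D_{n+1}$; writing the merged set $E_{n+1}\cup D_{n+1}=A_1\cup A_2$ with $A_1=E_{n+1}$ and $A_2=D_{n+1}$, clause~(3) then delivers $E_n<E_{n+1}$ for the merged sets, while $H_n<H_{n+1}$ and $L_n<L_{n+1}$ survive unchanged, as do the range restrictions on the $w_t$. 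A self-contained alternative, avoiding Corollary~\ref{thm:block-Ramsey006}, would instead pull the partition $X^m=A_1\cup\dots\cup A_r$ back along $\psi$ to a partition of $\widetilde{L}^m(\Sigma,\vec k;0)$, apply the finite-ordinal theorem Corollary~\ref{cor:k-Ramsey} to extract a sequence $\vec w=(w_n)_{n\in\nat}$, and set $u_n(i,j)=\psi\big(T_{(1,1)}(w_{4n-3})\star T_{(i,j)}(w_{4n-2})\star w_{4n-1}\star T_{(1,1)}(w_{4n})\big)$; there the obstacle migrates to checking that this four-word grouping realizes the block structure and that FS-sums of the $u_n(i_n,j_n)$ correspond to orderly $m$-sequences in $\widetilde{EV}(\vec w)$.
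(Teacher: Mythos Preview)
Your proposal is correct and matches the paper's approach: the paper gives no separate proof of Corollary~\ref{thm: 1011} but simply introduces it as the simplified form of Corollary~\ref{thm:block-Ramsey006} when $(X,+)$ is commutative, which is precisely your first route. Your bookkeeping for the merged index sets and the verification of $E_n<E_{n+1}$ via clause~(3) of the order on $[\zat]^{<\omega}_{>0}$ is more explicit than anything the paper writes down, and is accurate; note also that the commutative form of $u_n(i,j)$ is already spelled out in the preamble to Section~5 (just before Theorem~\ref{thm:block-Ramsey06}), so the paper has in effect done this collapse once and for all.
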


Finally, we state a partition theorem for the infinite sequences of a semigroup.

\begin{note} For a semigroup $(X,+)$ and $(x_n)_{n\in \nat},\;(z_n)_{n\in \nat}\subseteq X,$ we set

\noindent $\big[FS[(x_n,z_n)_{n\in \nat}]\big]^{\nat}=\{(y_n)_{n\in \nat} : \;y_n \in FS[(x_n,z_n)_{n\in \nat}]$ and $y_n<y_{n+1}$ for every $n \in \nat\},$\\
  $X^{\nat}=\{(x_n)_{n\in\nat} :\; x_n\in X\}.$
 \end{note}

 \noindent We endow the set $X$ with the discrete topology and $X^{\nat}$ with the product topology (equivalently by the pointwise convergence topology).

\begin{thm}
\label{cor:ncentralNW}
Let $(X,+)$ be a non-commutative semigroup, $(y_{l,n})_{n \in \mathbb{Z}^\ast}\subseteq X$ for every $l \in \mathbb{Z}$ and $\vec{k}=(k_n)_{n \in \mathbb{Z}^\ast}\subseteq \nat$ where $(k_n)_{n \in \nat}$ and $(k_{-n})_{n \in \nat}$ are increasing sequences. If $\U \subseteq X^{\nat}$ is closed in the product topology, then for each $n\in \nat$ there exists a function

\noindent $u_n:\{1,\ldots,k_{n}\}\times\{1,\ldots,k_{-n}\}\cup\{(0,0)\}\rightarrow X\times X$ with $u_n(i,j)=\alpha_n(j)+\beta_n(i)$ and

 \begin{center} $\alpha_n(j)=\sum^{m^{1}_n}_{s=1}(\sum_{t \in E_s^n}y_{w_t,t}\;+\;\sum_{t\in H_s^n} y_{-j,t})\;+
\sum_{t\in E_{m^1_n+1}^n}y_{w_t,t},\;\;\;\;\;\;$\\

$\beta_n(i)=\sum^{m^{2}_n}_{s=1}(\sum_{t\in D_s^n}y_{w_t,t}\;+\sum_{t \in L_s^n}y_{i,t})\;+ \sum_{t\in D_{m^2_n+1}^n}y_{w_t,t}\;\;\;\;\;\;\;\;\;\;\;\;$\end{center}

 \noindent where $(m^{1}_{n})_{n\in \mathbb{N}},(m^{2}_{n})_{n\in \mathbb{N}}\subseteq \nat,$ $(E_{n})_{n\in \mathbb{N}},$ $(H_{n})_{n\in \mathbb{N}}\subseteq [\mathbb{Z}^-]^{<\omega}_{>0},$ $(D_{n})_{n\in \mathbb{N}},$ $(L_{n})_{n\in \mathbb{N}}\subseteq [\nat]^{<\omega}_{>0},$
$E_n=\cup^{m_n^1+1}_{i=1}E^n_i,$ $H_n=\cup^{m_n^1}_{i=1}H^n_i,$ $D_n=\cup^{m_n^2+1}_{i=1}D^n_i,$ $L_n=\cup^{m_n^2}_{i=1}L^n_i$
with $E_{n}<E_{n+1},$ $H_{n}<H_{n+1},$ $D_{n}<D_{n+1},$ $L_n<L_{n+1}$ for every $n\in \nat,$ and $w_t \in \{-k_{t},\ldots,-1,0\}$ if $t\in \cup_{n\in \nat}E_n,$ $w_t \in \{0,1,\ldots,k_t\}$ if $t\in \cup_{n\in \nat}D_n,$ such that
    $$\text{either}\;\big[FS\big[\big(\alpha_n(j_n),\beta_n(i_n)\big)_{n\in \nat}\big]\big]^\nat\subseteq \U,\;\;  \text{or}\;\;\big[FS\big[\big(\alpha_n(j_n),\beta_n(i_n)\big)_{n\in \nat}\big]\big]^\nat\subseteq X^\nat\setminus \U$$ for all $((i_n,j_n))_{n\in \nat}\subseteq \nat\times\nat\cup\{(0,0)\}$ with $0\leq i_n\leq k_n,$ $0\leq j_n\leq k_{-n}$ for every $n\in \nat.$
\end{thm}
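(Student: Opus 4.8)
The plan is to mirror the proof of Theorem~\ref{thm:block-Ramsey06}, replacing the underlying partition theorem for $\omega$-$\mathbb{Z}^\ast$-located words (Theorem~\ref{thm:block-Ramsey02}) by its Nash-Williams analogue for infinite orderly sequences (Theorem~\ref{cor:blockNW}), and transporting a closed family rather than a finite partition through the homomorphism $\psi$. First I would fix the alphabet $\Sigma=\{\alpha_n:\;n\in\mathbb{Z}^\ast\}$ with $\alpha_n=n$, the variable $\upsilon=0$, and the map $\psi:\widetilde{L}(\Sigma\cup\{0\},\vec{k})\rightarrow X$ defined before the statement, which satisfies $\psi(w_1\star w_2)=\psi(w_1)+\psi(w_2)$ whenever $w_1<_{\textsl{R}_1}w_2$. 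Using $\psi$ coordinatewise I would define $\hat{\psi}:\widetilde{L}^\infty(\Sigma,\vec{k};0)\rightarrow X^{\nat}$ by $\hat{\psi}((z_n)_{n\in\nat})=(\psi(z_n))_{n\in\nat}$.

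The key preliminary step is that $\hat{\psi}$ is continuous when $X$ carries the discrete topology and both $\widetilde{L}^\infty(\Sigma,\vec{k};0)$ and $X^{\nat}$ carry their product topologies: each coordinate $\psi(z_n)$ depends only on the single word $z_n$, so $\hat{\psi}$ is coordinatewise continuous, and hence the pullback $\hat{\psi}^{-1}(\U)$ of the closed family $\U\subseteq X^{\nat}$ is pointwise closed in $\widetilde{L}^\infty(\Sigma,\vec{k};0)$. Next I would apply Theorem~\ref{cor:blockNW} to the pointwise closed family $\hat{\psi}^{-1}(\U)$ and to an arbitrary $\vec{w}_0\in\widetilde{L}^\infty(\Sigma,\vec{k};0)$, obtaining an extraction $\vec{w}=(w_n)_{n\in\nat}\prec\vec{w}_0$ for which either $\widetilde{EV}^{\infty}(\vec{w})\subseteq\hat{\psi}^{-1}(\U)$ or $\widetilde{EV}^{\infty}(\vec{w})\subseteq\widetilde{L}^{\infty}(\Sigma,\vec{k};0)\setminus\hat{\psi}^{-1}(\U)$.

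From this $\vec{w}$ I would read off the functions exactly as in the proof of Theorem~\ref{thm:block-Ramsey06}, setting $u_n(i,j)=\psi(T_{(1,1)}(w_{4n-3})\star T_{(i,j)}(w_{4n-2})\star w_{4n-1}\star T_{(1,1)}(w_{4n}))$ and splitting $u_n(i,j)=\alpha_n(j)+\beta_n(i)$ along the negative and positive parts of the domain; the block-of-four substitution isolates the variable positions and produces precisely the prescribed $\alpha_n,\beta_n$ shape, with the sets $E_n,H_n,D_n,L_n$ and the relations $E_n<E_{n+1}$, $H_n<H_{n+1}$, $D_n<D_{n+1}$, $L_n<L_{n+1}$ arising automatically from $w_n<_{\textsl{R}_1}w_{n+1}$, just as there.

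The one point requiring care — and the main, though still routine, obstacle — is matching the two dichotomies: I must verify that every sequence in $\big[FS[(\alpha_n(j_n),\beta_n(i_n))_{n\in\nat}]\big]^{\nat}$ equals $\hat{\psi}(\vec{z})$ for some $\vec{z}\in\widetilde{EV}^{\infty}(\vec{w})$. Each term of such an $FS$ sum is $\psi$ of a single concatenation of the above blocks, hence $\psi$ of one extracted word of $\vec{w}$; here the reversed order of the $\alpha$-summands in the definition of $FS[(x_n,z_n)]$ reflects exactly how $<_{\textsl{R}_1}$ nests the negative parts of the domains (outermost words contributing the most negative indices), so the relation $<$ on these sums corresponds to $<_{\textsl{R}_1}$ on the underlying extracted words and an increasing sequence of $FS$-sums is the $\hat{\psi}$-image of an element of $\widetilde{EV}^{\infty}(\vec{w})$. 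Consequently the first horn of the Theorem~\ref{cor:blockNW} dichotomy forces $\big[FS[(\alpha_n(j_n),\beta_n(i_n))_{n\in\nat}]\big]^{\nat}\subseteq\U$ and the second forces it into $X^{\nat}\setminus\U$, which completes the argument for every admissible choice of $(i_n,j_n)$.
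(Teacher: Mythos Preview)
Your approach is essentially identical to the paper's: pull back $\U$ through the coordinatewise map $\hat\psi$, apply Theorem~\ref{cor:blockNW} to the resulting pointwise closed family, and then read off the functions $u_n$ via the four-block construction exactly as in the proof of Theorem~\ref{thm:block-Ramsey06}. One small correction: your claim that $\psi(w_1\star w_2)=\psi(w_1)+\psi(w_2)$ whenever $w_1<_{\textsl{R}_1}w_2$ is false in the non-commutative setting (the domain of $w_2$ sandwiches that of $w_1$, so the summands interleave), but you never actually use this identity---your final paragraph correctly identifies the right relationship between $\psi$ of a concatenation and the reversed-$\alpha$ ordering in $FS[(x_n,z_n)]$, which is what the argument needs.
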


\begin{proof}[Proof]
Let the alphabet $\Sigma=\{\alpha_n:\;n\in \zat^\ast\}$ with $\alpha_n=n$ for every $n\in\zat^\ast,$ $\upsilon=0$ and
the function $\hat{\psi} : \widetilde{L}^\infty (\Sigma, \vec{k} ; 0)\rightarrow X^{\nat}$ with
$\hat{\psi}( (w_n)_{n\in\nat} )=(\psi(w_n))_{n\in\nat}$.
The family $\hat{\psi}^{-1}(\U)\subseteq \widetilde{L}^\infty(\Sigma, \vec{k} ; 0)$ is  pointwise closed, since the function $\hat{\varphi}$ is continuous. So, according to Theorem~\ref{cor:blockNW}, there exists $\vec{w}=(w_n)_{n\in\nat}\in \widetilde{L}^\infty (\Sigma, \vec{k} ; 0)$ such that
\begin{center}
either  $\widetilde{EV}^{\infty}(\vec{w})\subseteq \hat{\psi}^{-1}(\U)$, or $\widetilde{EV}^{\infty}(\vec{w})\subseteq \widetilde{L}^{\infty}(\Sigma, \vec{k} ; 0) \setminus \hat{\psi}^{-1}(\U)$.
\end{center}
 From this point on, the proof is analogous to the one of Theorem~\ref{thm:block-Ramsey06}.
\end{proof}

Theorem~\ref{cor:ncentralNW} has a simplified statement in case $(X,+)$ is commutative.

\begin{thm}
\label{thm: 11011}
Let $(X,+)$ be a commutative semigroup, $(y_{l,n})_{n \in \mathbb{Z}^\ast}\subseteq X$ for every $l\in \zat$ and $\vec{k}=(k_n)_{n \in \mathbb{Z}^\ast}\subseteq\nat$ where $(k_n)_{n \in \nat}$ and $(k_{-n})_{n \in \nat}$ are increasing sequences. If $\U \subseteq X^{\nat}$ is closed in the product topology, then for each $n\in \nat$ there exist a function\\ $u_n:\{1,\ldots,k_{n}\}\times\{1,\ldots,k_{-n}\}\cup\{(0,0)\}\rightarrow X$ with
 $$u_n(i,j)=\sum_{t\in E_n}y_{w_t,t}+\sum_{t\in H_n}y_{-j,t}+\sum_{t\in L_n}y_{i,t}$$
\noindent where $(E_{n})_{n\in \mathbb{N}}\subseteq [\mathbb{Z}]^{<\omega}_{>0},\;(H_{n})_{n\in \mathbb{N}}\subseteq [\mathbb{Z}^-]^{<\omega}_{>0},\;(L_{n})_{n\in \mathbb{N}}\subseteq [\nat]^{<\omega}_{>0},$ $E_n\cap H_n=\emptyset,$ $\;E_n\cap L_n=\emptyset,$ with $E_n<E_{n+1},\;H_n<H_{n+1},\;L_n<L_{n+1}$ for every $n\in\nat,$
and $w_t \in \{-k_{t},\ldots,-1,0\}$ if $t<0,\;t\in\cup_{n\in \nat}E_n,$ $w_t \in \{0,1,\ldots,k_t\}$ if $t>0,\;t\in\cup_{n\in \nat}E_n,$ such that
$$\text{either}\;\big[FS\big[\big(u_n(i_n,j_n)\big)_{n\in \nat}\big]\big]^\nat\subseteq \U,\;\;  \text{or}\;\;\big[FS\big[\big(u_n(i_n,j_n)\big)_{n\in \nat}\big]\big]^\nat\subseteq X^\nat\setminus \U$$ for all $((i_n,j_n))_{n\in \nat}\subseteq \nat\times\nat\cup\{(0,0)\}$ with $0\leq i_n\leq k_n,$ $0\leq j_n\leq k_{-n}$ for every $n\in \nat.$
\end{thm}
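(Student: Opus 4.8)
The plan is to mirror the proof of the non-commutative version Theorem~\ref{cor:ncentralNW}, replacing its non-commutative bookkeeping with the simplified commutative grouping already established in Theorem~\ref{thm: 011}. First I would fix the alphabet $\Sigma=\{\alpha_n:\;n\in \zat^\ast\}$ with $\alpha_n=n$ for $n\in\zat^\ast$, take the variable $\upsilon=0$, and recall the homomorphism-like map $\psi:\widetilde{L}(\Sigma\cup\{0\},\vec{k})\rightarrow X$ defined above, which satisfies $\psi(w_1\star w_2)=\psi(w_1)+\psi(w_2)$ for $w_1<_{\textsl{R}_1}w_2$. I would then induce the coordinatewise map $\hat{\psi}:\widetilde{L}^\infty(\Sigma,\vec{k};0)\rightarrow X^{\nat}$ with $\hat{\psi}((w_n)_{n\in\nat})=(\psi(w_n))_{n\in\nat}$. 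Since $X$ carries the discrete topology and each coordinate of $\hat\psi$ depends on a single word $w_n$, the map $\hat\psi$ is continuous; hence $\hat{\psi}^{-1}(\U)\subseteq\widetilde{L}^\infty(\Sigma,\vec{k};0)$ is pointwise closed whenever $\U\subseteq X^{\nat}$ is closed.

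Next I would apply the Nash-Williams type partition theorem for variable $\omega$-$\zat^\ast$-located words (Theorem~\ref{cor:blockNW}) to the pointwise closed family $\hat{\psi}^{-1}(\U)$ and an arbitrary initial sequence $\vec{w}_0\in\widetilde{L}^\infty(\Sigma,\vec{k};0)$, obtaining an extraction $\vec{w}=(w_n)_{n\in\nat}\prec\vec{w}_0$ with the dichotomy $\widetilde{EV}^{\infty}(\vec{w})\subseteq \hat{\psi}^{-1}(\U)$ or $\widetilde{EV}^{\infty}(\vec{w})\subseteq \widetilde{L}^{\infty}(\Sigma, \vec{k} ; 0)\setminus\hat{\psi}^{-1}(\U)$. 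I would then define, exactly as in the commutative partition Theorem~\ref{thm: 011}, the functions $u_n(i,j)=\psi\bigl(T_{(1,1)}(w_{4n-3})\star T_{(i,j)}(w_{4n-2})\star w_{4n-1}\star T_{(1,1)}(w_{4n})\bigr)$, using four consecutive blocks to guarantee the separation conditions $E_n<E_{n+1}$, $H_n<H_{n+1}$, $L_n<L_{n+1}$ and the ordering $u_n(i_n,j_n)\prec u_{n+1}(i_{n+1},j_{n+1})$. Commutativity of $(X,+)$ lets me regroup the defining sum $\psi(\cdot)=\sum_i y_{w_{t_i},t_i}$ into the three prescribed blocks $\sum_{t\in E_n}y_{w_t,t}+\sum_{t\in H_n}y_{-j,t}+\sum_{t\in L_n}y_{i,t}$, where $E_n$ collects the constant positions, $H_n$ the negative variable positions (receiving label $-j$ under $T_{(i,j)}$), and $L_n$ the positive variable positions (receiving label $i$); this is precisely the simplification that reduces the $\alpha_n(j)+\beta_n(i)$ decomposition of Theorem~\ref{cor:ncentralNW} to the single-sum form demanded here.

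Finally I would transfer the dichotomy from words to the semigroup. Using additivity of $\psi$ along $\star$-concatenations and commutativity, each finite sum in $FS[(u_n(i_n,j_n))_{n\in\nat}]$ is the $\psi$-image of an extracted variable word of $\vec{w}$, so an increasing infinite sequence in $\bigl[FS[(u_n(i_n,j_n))_{n\in\nat}]\bigr]^{\nat}$ is the $\hat\psi$-image of a member of $\widetilde{EV}^{\infty}(\vec{w})$; the alternative for $\widetilde{EV}^{\infty}(\vec{w})$ then pushes forward to either $\U$ or $X^{\nat}\setminus\U$, as required. The main obstacle will be this last matching step: one must check carefully that the $\star$-structure of extractions of $\vec{w}$, together with the nesting induced by $<_{\textsl{R}_1}$ and the choice of blocks, reproduces exactly the $FS$ finite-sum structure on $X$ (with the correct label substitutions $i\mapsto L_n$, $-j\mapsto H_n$), so that the two notions of "infinite increasing sequence" coincide under $\hat\psi$. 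Once this identification is verified, the proof concludes verbatim as in Theorem~\ref{cor:ncentralNW}, with the commutative regrouping supplying the simplified displayed form.
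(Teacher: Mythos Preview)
Your proposal is correct and follows exactly the approach the paper intends: the paper gives no separate proof of Theorem~\ref{thm: 11011}, presenting it simply as the commutative simplification of Theorem~\ref{cor:ncentralNW}, whose proof in turn pulls back $\U$ along $\hat\psi$, invokes Theorem~\ref{cor:blockNW}, and then finishes ``analogously to Theorem~\ref{thm:block-Ramsey06}'' via the four-block construction $u_n(i,j)=\psi(T_{(1,1)}(w_{4n-3})\star T_{(i,j)}(w_{4n-2})\star w_{4n-1}\star T_{(1,1)}(w_{4n}))$. Your commutative regrouping into $E_n,H_n,L_n$ is precisely the simplification the paper alludes to, and your identification of the matching step between extractions and $FS$-sequences as the point requiring care is apt.
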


{\bf{Acknowledgments.}} We wish to thank Professor S. Negrepontis for helpful discussions and support during the preparation of this paper. The first author acknowledge partial support from the Kapodistrias research grant of Athens University. The second author acknowledge partial support from the National Scholarship Foundation of Greece.

\bigskip
{\footnotesize
\noindent
\newline
Vassiliki Farmaki:
\newline
{\sc Department of Mathematics, Athens University, Panepistemiopolis, 15784 Athens, Greece}
\newline
E-mail address: vfarmaki@math.uoa.gr

\medskip
\noindent
Andreas Koutsogiannis:
\newline
{\sc Department of Mathematics, Athens University, Panepistemiopolis, 15784 Athens, Greece}
\newline
E-mail address: akoutsos@math.uoa.gr
\end{document}